\newcommand{%
    \def\svgwidth{\columnwidth}
    \import{./}{.pdf_tex}
}[1]{%
    \def\svgwidth{\columnwidth}
    \import{./}{#1.pdf_tex}
}
\numberwithin{equation}{section}
\newtheorem{thm}{Theorem}[section]
\newtheorem{lm}{Lemma}[section]
\newtheorem{pp}{Proposition}[section]
\newtheorem{df}{Definition}[section]
\newtheorem{example}[thm]{Example}
\newtheorem{remark}[thm]{Remark}
\newtheorem{problem}{Problem}
\newcommand{\sff}{\textrm{I}\!\textrm{I}}
\newcommand{\diff}{\mathrm{d}}
\newcommand{\tv}{\tilde{v}}
\newcommand{\Tset}{{T}_{-1}\partial M}
\newcommand{\tM}{\tilde{M}}
\begin{document}
\title[]{Time Separation and Scattering Rigidity for Analytic Lorentzian Manifolds}


\author[]{Yuchao Yi, Yang Zhang}
\begin{abstract}

In this work, we prove the following three rigidity results: (i) in a real-analytic globally hyperbolic spacetime $(M,g)$ without boundary, the time separation function restricted to a thin exterior layer of a unknown compact subset $K \subset M$ determines $K$ up to an analytic isometry, assuming no lightlike cut points in $K$; (ii) in a real-analytic globally hyperbolic spacetime $(M,g)$  with timelike boundary, the boundary time separation function determines $M$ up to an analytic isometry, assuming no lightlike cut points near $M$ and lightlike geodesics are non-trapping; (iii) in a real-analytic Lorentzian manifold $(M,g)$ with timelike boundary, the interior and complete scattering relations near the light cone, each determines $M$ up to an analytic isometry, assuming that lightlike geodesics are non-trapping. 
We emphasize in all of these three cases we do not assume the convexity of the boundary of the subset or the manifold.  
Moreover, in (iii) we do not assume causality of the Lorentzian manifold, and allow the existence of cut points. 
Along the way, we also prove some boundary determination results, the connections between the interior and complete scattering relations, and the connections between the lens data and the scattering relation, for Riemannian manifolds and Lorentzian manifolds with boundaries.
\end{abstract}
\maketitle

\tableofcontents

\section{Introduction}\label{sec: introduction}

Let $(M, g)$ be a Lorentzian manifold with timelike boundary $\partial M$, where $g$ has the signature $(-, +, \cdots, +)$. {A geodesic is said to be \textit{non-trapping} if it exits the manifold in finite time, so that no information remains trapped inside.} In this paper, we study several rigidity problems for Lorentzian manifolds whose lightlike geodesics are non-trapping, in the analytic category.

Boundary rigidity refers to the determination of metric from its boundary distance function. For Riemannian manifolds and Lorentzian manifolds, these distance functions behave quite differently. In a Riemannian manifold, the distance between two arbitrary points $x$ and $y$ is defined as the infimum of the lengths of all piecewise smooth curves connecting $x$ and $y$. 
It endows the manifold a metric structure that is compatible with its topology. 
In a Lorentzian manifold, since it is not physically reasonable to travel faster than the speed of light, 
the distance function is usually defined only for causal curves. Moreover, because in physical applications most Lorentzian models are time-orientable, we will assume throughout the manifold is time-oriented when discussing the distance function. 
A time-oriented connected smooth Lorentzian manifold is also called a \textit{spacetime}, see \cite[Definition 3.1]{BEE96}.
Let \[
LM = \{(x, v) \in TM \setminus 0: g(v, v) = 0\}
\]
be the light cone bundle.  
The time-orientablility allows a global and continuous choice of the future and past directions. 
For \(x \in M\),
let $J^+(x)$ be its causal future, for more details see Section \ref{sec: lorentzian geometry}.
The\textit{ Lorentzian distance function}, or more commonly known as the \textit{time separation function}, is defined as follows: if $y \in J^+(x)$, then
\[
d(x, y) = \sup\{L(\alpha): \alpha \text{ future pointing piecewise smooth causal curve from $x$ to $y$}\};
\]
otherwise we set $d(x, y) = 0$. 
Here $L$ denotes the Lorentzian arc length, see Section \ref{sec: null and timelike cut locus}.
Then, we consider the following boundary rigidity problem.
\begin{problem}[\textbf{Lorentzian boundary rigidity}]
    For $j=1,2$, let $(M_j, g_j)$ be a spacetime with timelike boundary. Suppose there exists a boundary diffeomorphism $\varphi_0: \partial M_1 \to \partial M_2$, such that
    \[
    d_1(x, y) = d_2(\varphi_0(x), \varphi_0(y)), \quad \forall x, y \in \partial M_1.
    \]
    Then can $\varphi_0$ be extended to a global isometry?
\end{problem}

Another commonly studied rigidity problem is the scattering rigidity problem. Since many definitions coincide with the ones for Riemannian manifolds, here we may view $(M, g)$ as either a compact Riemannian manifold with boundary, or a Lorentzian manifold with timelike boundary. We denote the inward $(-)$ and outward $(+)$ pointing vectors on the boundary as
\[
\partial_\pm TM = \{(x, v) \in \partial TM: g(v, \nu) > 0\},
\]
where $\nu$ is the unit outward pointing vector field. Clearly
\[
\partial TM = \partial_-TM \cup \partial_+TM \cup T\partial M, \quad \overline{\partial_\pm TM} = \partial_\pm TM \cup T\partial M.
\]
There are two similar but different definitions of scattering information in the literature: one records the information when the geodesic first leaves $M^\circ$, and the other records the information when the geodesic fully leaves $M$. See Figure \ref{fig: interior and complete}.


\begin{figure}
\begin{tikzpicture}[scale=0.8, line cap=round, line join=round]
\draw[thick]
    plot[smooth cycle, tension=0.5]
        coordinates{
            (-3,-0.8) (-2.8,1.4) (-1.6,2.7) (0.0,3.0) (1.6,2.7) (2.8,1.5) (3,-0.5)
            (2.2,-1.4) (1.0,-1.3) (0, -0.3) (-1.2,-1.5) (-2.3,-1.5)
        };
\draw[line width=1.2pt, blue]
    (-3,-0.9) .. controls (-2.3,-0.7) and (-1.1,-0.35)
    .. (0.2,-0.25) .. controls (0.9,-0.2) and (2.0,-0.55)
    .. (3,-0.95);
    \draw[->,red,line width=1.0pt] (-3,-0.9) -- ++(0.6,0.17) node[above] {$v$};
    \draw[->,red,line width=1.0pt] (2.8, -0.88) -- ++(0.5,-0.2) node[right] {$u$};
    \draw[->,red,line width=1.0pt] (0,-0.25) -- ++(0.7,0.03) node[right] {$w$};
    \fill (-3,-0.9) circle (2pt) node[below left=-2pt] {$x$};
    \fill (0,-0.3)   circle (2pt) node[above=2pt] {$y$};
    \fill (2.8,-0.88)   circle (2pt) node[above=2pt] {$z$};
\end{tikzpicture}

    \caption{For $(x, v) \in \partial_-TM$, denote the corresponding geodesic by $\gamma$, then $(y, w)$ is the point and direction at which $\gamma$ leaves $M^\circ$ for the first time, or equivalently its first time reaching $\partial M$; and $(z, u)$ is where the geodesic fully leaves $M$. The scattering relation $(x,v) \to (z, u)$ and $(y, w) \to (z, u)$ will be included in the complete scattering relation, but interior scattering relation will only record $(x, v) \to (y, w)$. The $(y, w) \to (z, u)$ part will not be recorded in the interior scattering relation as $(y, w)$ is tangential to the boundary.}
    \label{fig: interior and complete}
\end{figure}
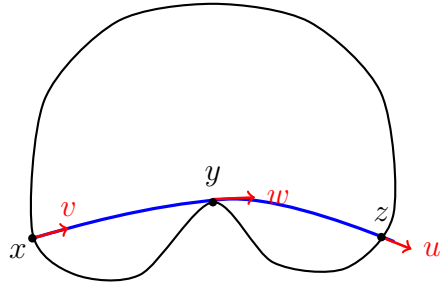

For $(x, v) \in \partial_-TM$, we denote the corresponding geodesic by $\gamma$ below. Define the \textit{interior travel time} as
\[
\tau^{in}(x, v) = \sup\{t>0: \gamma((0, t)) \subset M^\circ\}.
\]
If $\tau^{in}(x, v)$ is finite, then we define
\[
S^{in}(x, v) = (\gamma(\tau^{in}(x, v)), \dot{\gamma}(\tau^{in}(x, v))), \quad \ell^{in}(x, v) = \tau^{in}(x, v) \cdot |g(v, v)|^{1/2}
\]
as the \textit{interior scattering relation} and the \textit{interior length function}. The tuple $(S^{in}, \ell^{in})$ is called the \textit{interior lens data}, and the tuple $(S^{in}, \tau^{in})$ is called the \textit{interior travel time data}. 
Note that the domain of the interior scattering information is a subset of $\partial_- TM$, excluding the trapping directions and boundary tangential directions.

Now let $(\tM, \tilde{g})$ be any extension of $(M, g)$. 
For any $(x, v) \in \overline{\partial_- TM}$, 
as before, let $\gamma$ be the corresponding geodesic. We define the \textit{complete travel time} as
\[
\tau(x, v) = \sup \{t>0: \gamma([0, t]) \subset M\}.
\]
If $\tau(x, v)$ is finite, then we define
\[
S(x, v) = (\gamma(\tau(x, v)), \dot{\gamma}(\tau(x, v))), \quad \ell(x, v) = \tau(x, v) \cdot |g(v, v)|^{1/2}
\]
as the \textit{complete scattering relation} and the \textit{complete length function}. The tuple $(S, \ell)$ is called the \textit{complete lens data}, and the tuple $(S, \tau)$ is called the \textit{complete travel time data}. Note that the domain of the complete scattering information is a subset of $\overline{\partial_- TM}$, which may include the boundary tangential directions when they are non-trapping.

For both Riemannian manifold and Lorentzian manifold with timelike boundary, the unit outward normal vector field $\nu$ is well-defined. In this paper, an $(x, v) \in T\partial M$ is called a \textit{strictly convex direction}, if the second fundamental form is positive, i.e.,
\[
\sff (v, v) := g(\nabla_v \nu, v) > 0.
\]
In particular, if all boundary tangential vectors are strictly convex directions, we say the boundary is {strictly convex}.\footnote{More commonly, for a Riemannian manifold with boundary, strict convexity is defined by the property that any two sufficiently close points on the boundary are connected by a distance minimizing geodesic that lies in the interior except the two end points. This implies $\sff$ is positive semi-definite. Conversely, positive definite $\sff$ implies this property. See for example \cite[Section 3.2]{GM25}. We will only use the definition of a strictly convex direction in this paper.} In this case, the two kinds of scattering information coincide because any geodesic in $M$ can only intersect the boundary transversally. Moreover, in a Lorentzian manifold with timelike boundary, if all boundary tangential lightlike vectors are strictly convex directions, then we say the manifold is strictly null-convex (see \cite{HU19}). In this case, the two scattering relations for lightlike geodesics also coincide.

In general, interior and complete scattering information are different, and recovering one from the other is not obvious. 
Complete scattering information appeared, for example, in \cite{Var09, GMT21, SUV21}. In applications, it models through-transmission setups where rays are detected only after they fully exit the body, such as medical or industrial CT, cross-well seismics, and through-thickness ultrasound. 
In these setups, the recorded data are the entry and final exit states (possibly with travel time). Meanwhile, interior scattering information has also made its appearance, for example, in \cite{SU09, Cro14, Wen15, SUVZ19, CW15}.
Interior scattering relation matches first-arrival measurements made on the boundary, as in reflection seismology, ground-penetrating radar, or surface-mounted acoustic sensing, where a signal is registered as soon as it first returns to the boundary even if the ray later glides along it or exits elsewhere.
We study how the two definitions are related to each other in Section \ref{sec: interior and complete scattering information}.

For $j = 1, 2$, suppose $(M_j, g_j)$ are either both compact Riemannian manifolds with boundaries or both Lorentzian manifolds with timelike boundaries. Furthermore, suppose there exists a boundary isometry $\varphi_0: \partial M_1 \to \partial M_2$, that is, $\bar{g}_1 = \varphi_0^*\bar{g}_2$, where $\bar{g}_j = g_j|_{T\partial M_j \times T\partial M_j}$ is the boundary metric. In the boundary normal coordinates, one may write $(x, v)$ as $(x, v', v^n)$, where the boundary is locally given by $x^n = 0$ and is the interior corresponds to $x^n > 0$. 
By abuse of notation, we view $(\varphi_0)_*$ as
\[
(\varphi_0)_*: \partial TM_1 \to \partial TM_2, \quad (x, v', v^n) \mapsto (\varphi_0(x), (\varphi_0)_*v', v^n).
\]
Then we have the following two rigidity problems.
\begin{problem}[\textbf{Interior scattering rigidity}]
    For $j=1,2$, let $(M_j, g_j)$ be either both compact Riemannian manifolds with boundaries or both Lorentzian manifolds with timelike boundaries. Suppose there exists a boundary isometry $\varphi_0: \partial M_1 \to \partial M_2$. Suppose furthermore that in boundary normal coordinates,
    \[
    (\varphi_0)_* \circ S^{in}_1 = S^{in}_2 \circ (\varphi_0)_*
    \]
    on the domain of $S^{in}_1$ (which is usually the entire $\partial_-TM_1$ for Riemannian manifolds with the non-trapping assumption). Then can $\varphi_0$ be extended to a global isometry?
\end{problem}
\begin{problem}[\textbf{Complete scattering rigidity}]
    For $j=1,2$, let $(M_j, g_j)$ be either both compact Riemannian manifolds with boundaries or both Lorentzian manifolds with timelike boundaries. 
    Suppose there exists a boundary isometry $\varphi_0: \partial M_1 \to \partial M_2$. Suppose furthermore that in boundary normal coordinates,
    \[
    (\varphi_0)_* \circ S_1 = S_2 \circ (\varphi_0)_*
    \]
    on the domain of $S_1$ (which is usually the entire $\overline{\partial_-TM_1}$ for Riemannian manifolds with the non-trapping assumption). Then can $\varphi_0$ be extended to a global isometry?
\end{problem}


In this paper, we focus on Lorentzian manifolds with timelike boundaries. 
As a result, it is natural to focus on scattering information associated with causal geodesics. Specifically, we will assume the lightlike geodesics are non-trapping, and work with both the interior and complete scattering information for causal geodesics sufficiently close to light cones.

In much of the literature, the \textit{projected scattering relation} is considered instead for Riemannian manifolds, see for example \cite{SU09,ILS23}. In this paper, when we talk about scattering rigidity problems, we always assume the boundary metric is already given. Knowledge of the boundary metric makes the scattering relation and the projected scattering relation equivalent, so we do not work with the projected version. Nevertheless, we include a brief discussion of it for completeness. 
For both the interior and complete scattering relation, suppose $(x, v) \in \partial SM$ is a unit vector mapped to $(y, w) \in \partial SM$, where $SM$ is the spherical bundle of the Riemannian manifold. 
Then the projected scattering relation maps $(x, \kappa(v))$ to $(y, \kappa(w))$, where $\kappa: \partial SM \to \overline{B\partial M}$ is the orthogonal projection in boundary normal coordinates and $\overline{B_x\partial M}$ is the closed unit ball in $T_x\partial M$. 
Hence, the domain and range of the projected scattering relation are both subsets of $\overline{B\partial M}$. Similarly, one can consider the projected scattering relation in the Lorentzian setting. More precisely, for any $(x, v') \in T\partial M$ such that $g(v', v') < -1$, there exists a unique inward pointing unit timelike vector $(x, v)$ such that $\kappa(v) = v'$, given by 
$
v = v' + \sqrt{-1-g(v',v')}\partial_n,
$
in boundary normal coordinates.
Then the projected timelike scattering relation can thus be defined as a map from a subset of $\Tset$ to $\Tset$, where we write
$\Tset  = \{(x,v') \in T\partial M: g(v', v') \leq -1\}.$
However, there also exists a unique inward pointing lightlike vector $(x, \tilde{v})$ such that $\kappa(\tilde{v}) = v'$, given by $\tilde{v} = v' + |v'|_g\partial_n$. 
Therefore, when discussing the projected scattering relation for Lorentzian manifolds with timelike boundaries, it is important to distinguish timelike and lightlike ones. We emphasize again that throughout this work, we do not work with the projected version to avoid any confusion, as we assume the boundary metric is already known.

\subsection{Main results}\label{sec: main results}
We first study the case when the unknown region is embedded in a larger analytic globally hyperbolic Lorentzian manifold, and we assume lightlike geodesics do not have cut points there. We show that given the time separation function supported in a thin exterior layer of the unknown region, the unknown region can be determined up to isometry.
\begin{thm}
\label{thm: exterior data no cut points}
    For $j = 1, 2$, let $(N_j, g_j)$ be an analytic globally hyperbolic Lorentzian manifold of dimension $n \geq 3$ and let $d_j$ be the corresponding time separation function. Let $K_j \subset \subset \tM_j \subset N_j$ be such that $K_j$ is a compact subset and $\tM_j$ is an open neighborhood of $K_j$. Denote by $K^c_j = \tM_j \backslash K_j$. 
    Suppose there exists a diffeomorphism $\varphi_0: K^c_1 \to K^c_2$ such that
    \[
    d_1(x, y) = d_2(\varphi_0(x), \varphi_0(y)), \quad \forall x, y \in K^c_1,
    \]
    and $\varphi_0$ extends continuously to a bijection between $\partial K_1$ and $\partial K_2$.
    Suppose every lightlike geodesic segment in $K_j$ does not have cut point. Then there exists an analytic isometry $\varphi: \tM_1 \to \tilde{M_2}$ such that $\varphi|_{K^c_1} = \varphi_0$.
\end{thm}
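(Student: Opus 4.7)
The plan is to reduce the theorem to an analytic scattering rigidity statement for lightlike geodesics traversing $K_j$. The first step is to upgrade the hypothesis $d_1(x,y)=d_2(\varphi_0(x),\varphi_0(y))$ on $K^c_1\times K^c_1$ to the stronger assertion that $\varphi_0:K^c_1\to K^c_2$ is an analytic isometry. In an analytic globally hyperbolic spacetime, the time separation function locally determines the light cone at each point (as the boundary of the zero set of $d$ inside $J^+$) and hence the conformal class of $g$; once the conformal class is known, $d$ itself fixes the conformal factor. This classical recovery, in the spirit of the results of Hawking and Malament, makes $\varphi_0$ an analytic isometry on the open set $K^c_j$.

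Next I would read off the lightlike scattering relation of $K_j$ from the exterior time separation data. For $x,y\in K^c$ with $y\in J^+(x)\setminus\{x\}$, the equation $d(x,y)=0$ detects the existence of a future-directed lightlike achronal curve from $x$ to $y$. Under the no-cut-point hypothesis, such a curve, where it passes through $K_j$, is necessarily a single unbroken lightlike geodesic segment. Letting $x,y$ limit onto $\partial K_j$ identifies, for each inward-pointing lightlike vector $(p,v)$ with $p\in\partial K_j$ and $v$ entering $K_j$, the corresponding outward-pointing exit vector $(q,w)$ at another boundary point. Since $\varphi_0$ is an analytic isometry on $K^c_j$ and extends continuously to $\partial K_j$, it intertwines these entry/exit pairs on the two manifolds, furnishing matched lightlike scattering data for $K_1$ and $K_2$.

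With the boundary isometry $\varphi_0|_{\partial K_1}$ and the matched lightlike scattering data in hand, the final step is to extend $\varphi_0$ across $\partial K_j$ into $K_j$. Analyticity of $g_j$, together with the fact that $\varphi_0$ is an isometry on the open neighborhood $K^c_j$ of $\partial K_j$, determines the full Taylor jet of $g_j$ at each boundary point, giving formal compatibility across $\partial K_j$. The absence of lightlike cut points in $K_j$ ensures that every point of $K_j$ lies on a lightlike geodesic whose endpoints and tangent vectors on $\partial K_j$ are already matched by $\varphi_0$; transporting the local isometry along these geodesics and using analyticity to glue across overlaps then produces an analytic isometry $\varphi:\tM_1\to\tM_2$ extending $\varphi_0$.

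The principal obstacle is this last step: analytically continuing the exterior isometry into $K_j$ using only lightlike scattering data, without any convexity assumption on $\partial K_j$. Lightlike geodesics tangent to $\partial K_j$, as well as geodesics that re-enter $K_j$ after briefly exiting into $K^c_j$, must be treated with care, and the well-definedness of the resulting extension has to be argued purely from the matched scattering data together with analyticity. This is essentially the analytic lightlike scattering rigidity statement (iii) of the abstract, specialized to the compact region $K_j$, and it is where the analytic category enters decisively.
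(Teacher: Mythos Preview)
Your overall architecture matches the paper's: first upgrade $\varphi_0$ to an analytic isometry on $K^c_j$, then recover lightlike scattering-type data for geodesics crossing the unknown region, and finally run an analytic scattering-rigidity argument to extend $\varphi_0$ inward. Step 1 is fine; the paper does this by a direct local argument (in a geodesically convex $U\subset K^c$, $\nabla d_U(y,\cdot)|_x$ sweeps out the unit timelike sphere), but your Hawking--Malament style reasoning is equivalent in content.

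The genuine gap is in your Step 2, which you treat as routine but which carries most of the paper's technical weight. Three issues are glossed over. First, $K_j$ is an arbitrary compact set with no regularity on $\partial K_j$, so speaking of ``inward-pointing lightlike vectors at $\partial K_j$'' or a scattering relation for $K_j$ is not meaningful; the paper fixes this by inserting a middle layer $K_j\subset\!\subset M_j\subset\!\subset\tM_j$ with \emph{piecewise analytic} boundary, so that any lightlike geodesic meets $\partial M_j$ only finitely often. Second, knowing $d(x,y)=0$ on $K^c$ does not directly distinguish ``$y\notin J^+(x)$'' from ``$y\in\partial J^+(x)$'', nor does it single out a particular lightlike geodesic or give you its tangent vectors and travel time; the paper recovers $\partial J^+(x)\cap K^c$ as the boundary of $\{d(x,\cdot)>0\}$, then intersects with $\partial J^+(x')$ for nearby $x'$ to isolate a single geodesic ray, and finally reads off directions and travel time from $\nabla d$ via limits of timelike approximants. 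Third, a lightlike geodesic may enter and exit $K$ many times, and your no-cut-point hypothesis applies only to segments \emph{inside} $K$---cut points may well occur in $K^c$. The paper handles this by a two-step iterative recovery along the geodesic with a uniform positive lower bound per two steps, guaranteeing termination.

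Your Step 3 is correctly identified as the common engine with statement (iii), but note that it requires the complete lightlike \emph{travel time data} $(S,\tau)$, not merely the scattering relation; the Vargas-type map $\varphi(x,v)=\exp^{g_2}_{\varphi_0(y)}(t(x,v)\,(\varphi_0)_*w)$ needs the travel time $t(x,v)$ explicitly. Once that data is in hand, the extension is comparatively clean (fiber-constancy via an auxiliary analytic Riemannian distance, then Hawking's lightlike-geodesic criterion for conformality, then kill the conformal factor using the preserved parametrization). So your instinct that the last step is where analyticity enters decisively is right, but the locus of real difficulty is earlier than you placed it.
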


We emphasize that no regularity assumption is imposed on the unknown region $K_j$ for $j = 1,2$; it may be any compact subset. 
The exterior region $K^c_j$ can also be an arbitrarily thin layer.
To prove this theorem, 
we use the complete travel time data for lightlike geodesics with respect to a smaller neighborhood of $K_j$, for $j = 1,2$.
This smaller neighborhood is more convenient to work with, since the boundary of $K_j$ may have low regularity. 
We reconstruct the lightlike complete travel time data from the exterior time separation function, see Section \ref{sec: lightlike complete travel time data}.
Then in Proposition \ref{prop: complete light scattering recover metric}, inspired by \cite{Var09}, we show that this lightlike information establishes a correspondence between the two neighborhoods, which is proved to preserve the geometric structure and can be extended to an analytic isometry that matches the given exterior identification.
For more details, see Section \ref{sec: exterior time separation function}.

Next, we study the boundary rigidity problem for analytic globally hyperbolic spacetimes with timelike boundaries. 
For the definition of globally hyperbolic spacetimes with timelike boundaries, see Section \ref{sec: lorentzian geometry}. By an analytic spacetime with timelike boundary, we mean a smooth manifold with boundary whose transition maps are analytic, the metric is analytic, and the boundary is timelike. Certainly the boundary and boundary metric are analytic with respect to the induced analytic structure. We require that after a small analytic collar neighborhood extension, see Section \ref{sec: normal exponential extension}, it is still globally hyperbolic, and lightlike geodesics do not have cut points in the extension. We also require the existence of a strictly convex direction on the boundary, for the recovery of the jet of boundary metric.

\begin{thm}
\label{thm: boundary distance main thm}
    For $j=1,2$, let $(M_j, g_j)$ be an analytic globally hyperbolic spacetime with timelike boundary of dimension $n \geq 3$, and suppose lightlike geodesics are non-trapping. 
    Assume there exists an analytic collar neighborhood extension,
    see Section \ref{sec: normal exponential extension}, which is globally hyperbolic and contains no lightlike cut points.
    Suppose there exists an analytic diffeomorphism $\varphi_0: \partial M_1 \to \partial M_2$ such that
    \[
    d_1(x, y) = d_2(\varphi_0(x), \varphi_0(y)), \quad 
    \forall x, y \in \partial M_1.
    \]
    Moreover, assume for any connected component $C$ of $\partial M_1$, there exists a causal direction $(x, v) \in TC$ such that $(x, v)$ and $(\varphi_0(x), (\varphi_0)_*v)$ are strictly convex directions in $(M_1, g_1)$ and $(M_2, g_2)$, respectively. Then there exists an analytic isometry $\varphi: M_1 \to M_2$ such that $\varphi|_{\partial M} = \varphi_0$.
\end{thm}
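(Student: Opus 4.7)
The plan is to reduce Theorem \ref{thm: boundary distance main thm} to Theorem \ref{thm: exterior data no cut points} by attaching analytic collar extensions to $M_j$ on the exterior side of $\partial M_j$, identifying those collars via $\varphi_0$ promoted to an analytic isometry through boundary determination, and transferring the given boundary time-separation data to exterior time-separation data on the collars. The starting point is to use the strictly convex causal direction $(x, v) \in TC$ to recover the full Taylor jet of $g_j$ at $x$ in boundary normal coordinates. The mechanism is the standard one for Lorentzian boundary determination: pairs of boundary points near $x$ whose separation approaches the causal direction $v$ have time-separation asymptotics that encode the successive transverse derivatives of $g_j$, and strict convexity guarantees that the causal maximizer between such points enters the interior of $M_j$, so the expansion can be inverted order by order. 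Since $\varphi_0$ intertwines $d_1$ and $d_2$ on the boundary, the recovered jets at $x$ and $\varphi_0(x)$ agree in boundary normal coordinates; real-analyticity then propagates the jet identification along the entire connected component $C$ and across the analytic collar extension of Section \ref{sec: normal exponential extension}, yielding an analytic isometry $\tilde\varphi_0: U_1 \to U_2$ between thin exterior collar extensions. The extended manifolds $\tilde{M}_j = M_j \cup U_j$ are analytic, globally hyperbolic, and free of lightlike cut points by hypothesis.

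Next I would show that $d_{\tilde{M}_1}(x, y) = d_{\tilde{M}_2}(\tilde\varphi_0(x), \tilde\varphi_0(y))$ for every $x, y \in U_1$. Given a future-pointing causal curve $\gamma: [0, T] \to \tilde{M}_1$ from $x$ to $y$, either $\gamma$ stays in $\overline{U_1}$, in which case $\tilde\varphi_0 \circ \gamma$ is a causal curve in $\tilde{M}_2$ of the same length, or it enters $M_1$, and I set $t_1 = \inf\{t : \gamma(t) \in M_1\}$, $t_2 = \sup\{t : \gamma(t) \in M_1\}$, $p = \gamma(t_1)$, $q = \gamma(t_2) \in \partial M_1$. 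Writing $L(\gamma) = L(\gamma|_{[0, t_1]}) + L(\gamma|_{[t_1, t_2]}) + L(\gamma|_{[t_2, T]})$, I transfer the outer pieces isometrically via $\tilde\varphi_0$ and bound the inner piece using $L(\gamma|_{[t_1, t_2]}) \le d_1(p, q) = d_2(\varphi_0(p), \varphi_0(q))$; concatenating with a near-maximizing causal curve in $M_2$ from $\varphi_0(p)$ to $\varphi_0(q)$ produces a causal curve in $\tilde{M}_2$ from $\tilde\varphi_0(x)$ to $\tilde\varphi_0(y)$ of essentially the same length, and taking the supremum, together with the symmetric argument, yields the desired equality.

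Finally I would apply Theorem \ref{thm: exterior data no cut points} with $K_j = M_j$ compact, open neighborhood $\tilde{M}_j$, and $N_j$ any ambient analytic globally hyperbolic spacetime containing $\tilde{M}_j$ (obtained by iterating the collar construction of Section \ref{sec: normal exponential extension}). The exterior time separation hypothesis is furnished by the previous step, $\tilde\varphi_0$ extends continuously to $\varphi_0$ on the boundary, and the absence of lightlike cut points in $K_j = M_j$ follows from the corresponding hypothesis on the extension; the resulting analytic isometry between the extensions restricts to the sought analytic isometry $\varphi: M_1 \to M_2$ with $\varphi|_{\partial M_1} = \varphi_0$. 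I expect the main obstacle to be the boundary jet recovery from a single strictly convex causal direction per connected component, since the Lorentzian analysis of near-boundary maximizing causal curves is more subtle than its Riemannian counterpart, and the subsequent analytic propagation of the jet identification depends delicately on the analyticity hypothesis together with the absence of lightlike cut points.
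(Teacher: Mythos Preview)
Your overall architecture matches the paper's: recover the boundary jet from $d_j$ near a strictly convex causal direction, analytically extend to an exterior collar with metric determined by the jet, transfer the time-separation data to the collar, and then recover the interior.  The boundary-jet and collar steps are essentially what the paper does.  Your transfer-of-time-separation argument has a small hole---the segment $\gamma|_{[t_1,t_2]}$ need not stay in $M_1$, so $L(\gamma|_{[t_1,t_2]})\le d_1(p,q)$ is not justified---but this is repairable by decomposing into finitely many alternating $M_j$/$U_j$ pieces, as the paper does in Proposition~\ref{prop: extend time separation function}.

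The genuine gap is the final step.  You cannot invoke Theorem~\ref{thm: exterior data no cut points} as a black box here: the hypotheses require $K_j$ compact and an ambient analytic globally hyperbolic manifold $N_j$ \emph{without} boundary, and $M_j$ is never compact (globally hyperbolic spacetimes are noncompact), nor does iterating the collar construction produce a boundaryless $N_j$.  More fundamentally, even if you try to rerun the \emph{proof} of Theorem~\ref{thm: exterior data no cut points} in your setting, the key step (Proposition~\ref{prop: lightlike travel time data}) relies on the ambient time separation coming from a boundaryless globally hyperbolic manifold, where $\partial J^+(x)$ is swept out by null geodesics and maximizers are geodesics.  Your $\tilde d$ is the time separation of $\tilde M_j$, which has timelike boundary; near concave parts of $\partial\tilde M_j$ the $\tilde d$-maximizers need not be geodesics at all, so the identification of $\partial J^+(x)$ with the null cone fails and you cannot read off the lightlike geodesics from $\tilde d$ alone.

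The paper confronts exactly this issue and does \emph{not} reduce to Theorem~\ref{thm: exterior data no cut points}.  Instead it introduces a new criterion (Proposition~\ref{prop: identify light cone on boundary}) that uses \emph{both} $d$ and $\tilde d$ simultaneously: the intersection of the forward null cone from $x\in\partial M$ with $\partial M$ is
\[
\partial\{y\in\partial M:\ d(x,y)>0\}\ \cap\ \{y\in\partial M:\ \tilde d(x,y)=0\}.
\]
From this set one reconstructs the lightlike complete travel time data directly (using the cut-locus analysis of Section~\ref{sec: cut locus in lorentzian manifolds with boundaries} in the manifold-with-boundary $\tilde M$), and then applies Proposition~\ref{prop: complete light scattering recover metric} to obtain the isometry.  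Your plan is missing precisely this mechanism for extracting null geodesics from boundary data when the exterior time separation lives on a manifold with boundary.
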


Comparing to the exterior case in Theorem \ref{thm: exterior data no cut points}, 
boundary rigidity problem is more subtle due to the presence of the boundary. 
Since we do not assume the timelike boundary to be strictly convex, the distance maximizing curve may no longer be a geodesic.
To prove this theorem, by Proposition \ref{prop: complete light scattering recover metric},
it suffices to reconstruct the lightlike complete travel time data with respect to $M_j$, for $j =1,2$, from the boundary time separation function $d_j$. 
For this purpose, we first determine the jet of the metric in an analytic collar neighborhood of each $M_j$.
Then we determine the exterior time separation function $\tilde{d}_j$ in this neighborhood, from the knowledge of $d_j$. 
However, since $\tilde{d}_j$ is the time separation function only defined within this neighborhood, 
the maximal distance is not necessarily realized by a causal geodesic. 
We emphasize this situation differs from the exterior case and a different reconstruction procedure is required to recover the lightlike complete travel time data, see Section \ref{sec: boundary rigidity}.
In addition, in this setting, we assume there are no lightlike cut points in the extension of $(M,g)$. 
This simplifies the recovery, avoiding the case that a boundary point of $M$ becomes a cut point in the extended manifold.

In \cite{LOY16}, the authors show that the Lorentzian universal covering of an analytic Lorentzian spacetime $(M,g)$ is determined by the time separation function restricted to a small timelike submanifold contained in $M$, assuming that $(M,g)$ is geodesically complete modulo scalar curvature singularities. 
As only local information is assumed, one can only expect uniqueness up to an isometry of the universal covering space, not of the manifold itself.
In contrast, in this work, we study the boundary rigidity problem for globally hyperbolic Lorentzian manifolds with timelike boundary. As a manifold with boundary, it is always geodesically incomplete. 
Moreover, since we have global information, we can completely determine the metric up to isometry.
In \cite{Ste25}, the local lightlike scattering rigidity for analytic metrics is established.
For a more comprehensive overview of previous results, see Section \ref{sec: history}.


Finally, we study the scattering rigidity problem for analytic Lorentzian manifolds with timelike boundaries. 
In this setting, we do not require any causality conditions. 
To state the result, we first introduce a mild non-conjugacy condition, which comes from \cite{SU09}.

\begin{df}
    Let $(M,g)$ be a semi-Riemannian manifold  with boundary. 
    Consider a boundary tangential direction $(x, v) \in T\partial M$. Let $\gamma$ be the corresponding geodesic. The direction $(x, v)$ is said to satisfy the \textit{non-conjugacy condition}, if $\gamma$ is non-trapping and $x$ is not conjugate to any point in $\gamma \cap \partial M$.
\end{df}
Note that in a Lorentzian manifold with strictly null-convex timelike boundary, any boundary tangential timelike vector sufficiently close to the light cone will satisfy this condition.
Recall by abuse of notation, we can view the differential of a boundary isometry $\varphi_0$ as a map from $\partial TM_1$ to $\partial TM_2$, in boundary normal coordinates. 
For simplicity, we denote by 
\[
JM = \{(x, v) \in TM \setminus 0: g(v, v) \leq 0\}
\]
the set of causal vectors in $M$.
Then let $\partial_\pm JM$ be the inward $(-)$ and outward $(+)$ pointing ones on the boundary. We show that both the interior and complete scattering rigidity hold.

\begin{thm}
\label{thm: interior scattering main thm}
    For $j=1,2$, let $(M_j, g_j)$ be an analytic Lorentzian manifold of dimension $n \geq 3$ with analytic timelike boundary. Assume all null geodesics are non-trapping.
    Then, for sufficiently small conic neighborhoods $\mathcal{U}_j \subset \partial TM_j$ containing $\partial LM_j$, the interior scattering relation $S^{in}_j$ is well-defined in $\mathcal{V}_j^{in} \coloneqq \mathcal{U}_j \cap \partial_-JM_j$. Moreover, assume for each connected component $C$ of $\partial M_1$, there exists $(x, v) \in TC \ \cap\  \overline{\mathcal{V}_1^{in}}$ such that the non-conjugacy condition holds. If there exists a boundary isometry $\varphi_0: \partial M_1 \to \partial M_2$ such that
    \[
    (\varphi_0)_*(\mathcal{V}_1^{in}) = \mathcal{V}_2^{in}, \quad (\varphi_0)_* \circ S^{in}_1|_{\mathcal{V}_1^{in}} = S^{in}_2|_{\mathcal{V}_2^{in}} \circ (\varphi_0)_*,
    \]
    then there exists an analytic isometry $\varphi: M_1 \to M_2$ such that $\varphi|_{\partial M_1} = \varphi_0$.
\end{thm}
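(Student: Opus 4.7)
The overall plan is to recover the metric in a collar of $\partial M_j$, extend $(M_j, g_j)$ analytically to a larger manifold, and then reduce the interior lightlike scattering data to complete lightlike scattering data on this extension; once the latter data agree under $(\varphi_0)_*$, the conclusion follows from Proposition \ref{prop: complete light scattering recover metric} applied to $K_j = M_j$ inside the extension. The argument therefore proceeds in three steps: a boundary-jet recovery, a collar identification plus an interior-to-complete bootstrap, and a final appeal to the already-established complete lightlike rigidity.

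First, I would recover the jet of $g_j$ at each connected component $C$ of $\partial M_j$. Fixing a direction $(x_0, v_0) \in TC \cap \overline{\mathcal{V}_1^{in}}$ satisfying the non-conjugacy condition, I run the Lorentzian analogue of the Stefanov--Uhlmann boundary-jet recovery \cite{SU09}: I perturb $(x_0, v_0)$ to nearby causal directions in $\mathcal{V}_1^{in}$ and study the limits of $S_1^{in}$ and its tangential derivatives as the perturbation tends back to $(x_0, v_0)$. The non-conjugacy hypothesis ensures that the finitely many intermediate boundary intersections of the associated null geodesic depend smoothly on the perturbation, so the jet of $S_1^{in}$ at $(x_0, v_0)$ determines, through an invertible system, the full normal Taylor series of $g_1$ along these intersection points. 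Comparing with $S_2^{in}$ via $(\varphi_0)_*$ shows that all normal derivatives of $g_j$ agree at some point of $C$, and the analyticity of $g_j$ together with connectedness of $C$ upgrades this to an analytic isometry $\tilde\varphi$ of a collar neighborhood of $\partial M_1$ onto one of $\partial M_2$ extending $\varphi_0$.

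Next, I would pass to an analytic extension $(\tilde{M}_j, \tilde{g}_j)$ of $(M_j, g_j)$ as in Section \ref{sec: normal exponential extension} and glue the two extensions along the known collar using $\tilde\varphi$. I then recover the complete lightlike scattering relation $S_j$ of $M_j \hookrightarrow \tilde{M}_j$ from $S_j^{in}$ as follows. For $(x, v) \in \mathcal{V}_j^{in}$ sufficiently close to $\partial L M_j$, if the associated null geodesic exits $M_j$ transversally then $S_j(x, v) = S_j^{in}(x, v)$; otherwise $S_j^{in}(x, v) = (y, w) \in T\partial M_j$ is a tangential contact, from which I analytically continue $\gamma$ through $\tilde{M}_j$ using the known metric on the collar until it either exits $M_j$ transversally or re-enters $M_j^\circ$, at which point $S_j^{in}$ applies again. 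The non-trapping hypothesis forces this iteration to terminate in finitely many steps, producing $S_j$ together with the lightlike complete travel-time data as an explicit function of $S_j^{in}$ and the collar metric. The given identifications $(\varphi_0)_*(\mathcal{V}_1^{in}) = \mathcal{V}_2^{in}$ and $(\varphi_0)_* \circ S_1^{in} = S_2^{in} \circ (\varphi_0)_*$ then transfer to the corresponding identifications of the complete lightlike data on the two extensions.

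Finally, I would apply Proposition \ref{prop: complete light scattering recover metric} to $K_j = M_j \subset \tilde{M}_j$ with the matched complete lightlike travel-time data; this yields an analytic isometry $\varphi: \tilde{M}_1 \to \tilde{M}_2$ agreeing with $\tilde\varphi$ on the collar, whose restriction to $M_1$ is the desired extension of $\varphi_0$. The main obstacle will be the bootstrap in the second step: without null-convexity of $\partial M_j$, a null geodesic can touch the boundary tangentially and thereby be ``lost'' by $S_j^{in}$ even though it later re-enters or glides along $\partial M_j$, so following the geodesic past the tangency demands both the analytic collar identification from the first step and the non-trapping hypothesis. One must further verify that the iterated procedure is well-defined near the limiting directions in $\overline{\mathcal{V}_j^{in}}$ and produces a scattering relation whose domain meets all the hypotheses of Proposition \ref{prop: complete light scattering recover metric}; this is the technical heart of the argument, the rest being analytic bookkeeping.
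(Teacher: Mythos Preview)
Your three-step outline---boundary jet via a Lorentzian version of \cite{SU09}, analytic collar extension plus an interior-to-complete bootstrap, then Proposition~\ref{prop: complete light scattering recover metric}---is exactly the paper's route (Theorem~\ref{thm: lorentzian determine jet}, Lemma~\ref{lm: lorentzian recover complete from interior}, then Proposition~\ref{prop: complete light scattering recover metric}).

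One point your sketch leaves underspecified is the recovery of the \emph{travel time} $\tau$ along the lightlike geodesics, which Proposition~\ref{prop: complete light scattering recover metric} needs in addition to the complete scattering relation $S$. Your iteration (follow in the collar, apply $S^{in}$ on transversal re-entry, repeat) recovers the image $S(x,v)$ but not $\tau(x,v)$: the interior scattering relation alone does not report the time spent inside $M\setminus U$. The paper handles this with the first-variation formula for travel time (Proposition~\ref{pp_traveltime}): take a one-parameter family of \emph{timelike} perturbations $v(\lambda)\to v$ with $h(\lambda)=g(v(\lambda),v(\lambda))$ having $h'(0)\neq 0$, observe the corresponding exit points $y(\lambda)$ in the collar, and read off $\tau$ from $h'(0)\tau = 2g(y'(0),w)-2g(x'(0),v)$. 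This is precisely why the hypothesis gives you $S^{in}$ on a conic neighborhood $\mathcal{V}^{in}$ of the light cone inside the \emph{causal} cone, not just on $\partial_-LM$; purely lightlike data would make $h'\equiv 0$ and the formula degenerates. You should make this mechanism explicit in your second step, since without it the complete travel-time data is not actually produced and Proposition~\ref{prop: complete light scattering recover metric} cannot be invoked.
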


\begin{thm}\label{thm: complete scattering main thm}
    For $j=1,2$, let $(M_j, g_j)$ be an analytic Lorentzian manifold of dimension $n \geq 3$ with analytic timelike boundary. Assume all null-geodesics are non-trapping.
    Then, for sufficiently small conic neighborhoods $\mathcal{U}_j \subset \partial TM_j$ containing $\partial LM_j$, the complete scattering relation $S_j$ is well-defined in ${\mathcal{V}_j} \coloneqq \mathcal{U}_j \cap \overline{\partial_-JM_j}$.
    Moreover, assume for each connected component $C$ of $\partial M_1$, there exists $(x, v) \in TC \ \cap\ \mathcal{V}_1 $ such that the non-conjugacy condition holds.
    If there exists a boundary isometry $\varphi_0: \partial M_1 \to \partial M_2$ such that
    \[
    (\varphi_0)_*(\mathcal{V}_1) = \mathcal{V}_2, \quad
    (\varphi_0)_* \circ S_1|_{\mathcal{V}_1} = S_2|_{\mathcal{V}_2} \circ (\varphi_0)_*,
    \]
    then there exists an analytic isometry $\varphi: M_1 \to M_2$ such that $\varphi|_{\partial M_1} = \varphi_0$.
\end{thm}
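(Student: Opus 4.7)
The plan is to reduce Theorem \ref{thm: complete scattering main thm} to the content of Proposition \ref{prop: complete light scattering recover metric}, which was the engine behind both Theorem \ref{thm: exterior data no cut points} and Theorem \ref{thm: boundary distance main thm}, by reconstructing the complete lightlike travel time data from the given complete scattering relation. Concretely, I will (i) use the non-conjugacy condition at one causal boundary-tangent vector per component to determine the full Taylor expansion of $g_j$ at $\partial M_j$, (ii) promote $\varphi_0$ to an analytic isometry on a collar neighborhood via a Cauchy--Kovalevskaya argument, (iii) extract the null travel time $\tau_j$ from $S_j$, and (iv) apply Proposition \ref{prop: complete light scattering recover metric} to propagate the isometry into the interior.

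The first step is boundary determination. Because $\mathcal{V}_j$ contains tangential causal vectors and $S_j$ is defined on them, the derivatives of $S_j$ along boundary tangential perturbations at a point $(x, v)$ satisfying the non-conjugacy condition encode the Jacobi field structure along the corresponding tangent near-null geodesic. A linearization argument in the spirit of \cite{SU09}, adapted to the Lorentzian setting, then gives the full jet of $g_j$ in boundary normal coordinates at $x$; analyticity and connectedness propagate this jet identification to the whole component, and over all components Cauchy--Kovalevskaya yields an analytic isometric collar neighborhood identification. From the same linearization one also reads off the complete travel time $\tau_j$ of near-null causal geodesics near the boundary, since the radial behavior of $S_j$ determines $\tau_j$ modulo quantities already known. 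Extending $\tau_j$ to all of $\mathcal{V}_j$ by continuity and the non-trapping hypothesis completes the reconstruction of the complete lightlike travel time data on $M_j$, at which point Proposition \ref{prop: complete light scattering recover metric} extends $\varphi_0$ to an analytic isometry on all of $M_j$.

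The main obstacle I expect is the handling of tangential null directions where a geodesic may glide along $\partial M_j$ or re-enter the interior, since no strict null-convexity is assumed. A null geodesic entering with $(x,v)\in\partial_-LM_j$ can hit $\partial M_j$ tangentially at an intermediate point, travel along the boundary, and re-enter before finally exiting; the complete scattering relation only records the final exit, so all intermediate encounters with $\partial M_j$ are invisible to the data. Making the travel time reconstruction and the propagation in Proposition \ref{prop: complete light scattering recover metric} robust to such trajectories, and verifying that the interior/complete comparison worked out in Section \ref{sec: interior and complete scattering information} covers these degenerate configurations, is where I expect the bulk of the technical effort to lie. The absence of any causality hypothesis and the possibility of interior cut points are comparatively benign, since the argument is local along each null geodesic and is then closed up by analytic continuation.
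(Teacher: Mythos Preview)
Your overall architecture matches the paper's: boundary jet determination via the non-conjugacy assumption (Theorem~\ref{thm: lorentzian determine jet}), analytic collar extension of $\varphi_0$, recovery of the lightlike complete travel time data, and then Proposition~\ref{prop: complete light scattering recover metric}. Steps (i), (ii), (iv) are on target.

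The gap is in step (iii). Your description --- ``the radial behavior of $S_j$ determines $\tau_j$ modulo quantities already known'' near the boundary, followed by ``extending $\tau_j$ to all of $\mathcal{V}_j$ by continuity'' --- does not give a working mechanism. Continuity cannot propagate $\tau$ from short geodesics to long ones, and there is no natural ``near the boundary'' subclass of null geodesics on which $\tau$ is easily read off. What the paper actually does (Lemma~\ref{lm: lorentzian recover complete from interior}, Step~2) uses the collar in an essential way: given a null $(x,v)$ with $(y,w)=S(x,v)$, one follows the geodesic past $y$ into the exterior region $\tM\setminus M$ to a point $(z,u)$, takes a one-parameter family of nearby \emph{timelike} directions $v(\lambda)\to v$ with $h'(0)\neq 0$, tracks those geodesics (via $S$ and the known exterior metric) to a smooth transversal slice through $z$ in the exterior, and applies the first-variation-of-travel-time formula (Proposition~\ref{pp_traveltime}). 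At $\lambda=0$ the term $2h(0)\tau'(0)$ drops out, so $h'(0)\tau_0$ is expressed purely in terms of $g(z'(0),u)$ and $g(x'(0),v)$, all of which are known. An infimum over exterior points $(z,u)$ then yields $\tau(x,v)$ itself. The collar provides a screen on which the variation is smooth regardless of whether the actual exit $(y,w)$ is tangential; this is the point you are missing.

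This also recalibrates your expectation about where the difficulty lies. For the \emph{complete} scattering relation the intermediate tangential encounters with $\partial M$ are irrelevant --- only the final exit and the exterior continuation matter, and the exterior metric is already known. The glancing-trajectory issue you flag is real, but it is the crux of the \emph{interior} scattering theorem (Step~1 of Lemma~\ref{lm: lorentzian recover complete from interior}, where one must rebuild $S$ from $S^{in}$); here that step is simply skipped.
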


The proof of Theorem \ref{thm: interior scattering main thm} and Theorem \ref{thm: complete scattering main thm} rely on certain boundary determination results, which we develop in Section \ref{sec: determination of jet}, following the ideas in \cite{SU09}.
In order to prove the rigidity result for both interior and complete scattering relations, we study their connections in Section \ref{sec: interior and complete scattering information}.
In particular, in Section \ref{sec: lorentzian setting}, we prove that both the lightlike interior and complete travel time data can be recovered from either the interior or the complete scattering relation for timelike vectors sufficiently close to the light cone, see Lemma \ref{lm: lorentzian recover travel time from interior} - \ref{lm: lorentzian recover complete from interior}.
The results in Section \ref{sec: interior and complete scattering information} do not rely on any causality or analyticity assumptions on the manifold, or on any convexity of the timelike boundary. 
Instead, we derive these results using geometric arguments and the first variation of the travel time in Section \ref{sec: the first variation of the travel time}.
Then apply Proposition \ref{prop: complete light scattering recover metric} again, we complete the proof of these two theorems in Section \ref{sec: scattering rigidity}.

\subsection{Previous literature}\label{sec: history}
In Riemannian manifolds, a  geometric formulation of boundary distance rigidity goes back to Michel \cite{Mic81}.
In \cite{PU05}, it is proved two dimensional compact simple Riemannian manifolds are boundary distance rigid. 
Besides, this result is known to hold for simple subdomains in Euclidean spaces, in an open hemisphere in two dimensions, in symmetric spaces of constant negative curvature, and for two dimensional spaces of negative curvature, see \cite{Gro83, Mic81,BCG95,Cro90}.
When one metric is sufficiently close to the Euclidean metric, boundary rigidity was established in \cite{LSU03} and later improved in \cite{BI10}.
In particular, for smooth metrics in dimensions $n \geq 3$, local and global boundary rigidity results under foliation conditions are established, see \cite{UV16, SUV16, SUV21}.
Lens and scattering data provide alternative boundary measurements beyond the boundary distance. Local lens rigidity with incomplete data for classes of non-simple manifolds was proved in \cite{SU09}. 
In the presence of trapped geodesics, scattering rigidity and lens rigidity were obtained 
under different geometric assumptions, see \cite{Cro14, CH16, Gui17}.
In the analytic category, lens rigidity was established for non-trapping Riemannian manifolds in \cite{Var09}, and rigidity problems for such manifolds with an analytic magnetic field are considered as well, see \cite{DPSU07, HV11}.
For other related works and generalizations, see 
\cite{Mun24a, Mun24b,AZ15,LS19,ILS23,dHILS22,PSU13,Cro04,Muh81,BCG95}, and we refer the readers to surveys \cite{SUVZ19, IM19}.
These rigidity problems are closely related to the (anisotropic) Calderón problem for the Laplace–Beltrami operator, for example, see \cite{LTU03, LU01, LU89} for real-analytic Riemannian metrics and surveys \cite{Uhl12, Sal13} for more results.

In Lorentzian manifolds, boundary distance rigidity is stated in terms of the time separation function.
The recovery of stationary metrics from the time separation function is studied in
\cite{ADH96} for two-dimensional product Lorentzian manifolds, and in \cite{LOY16} for universal covering spaces of real-analytic Lorentzian manifolds, in \cite{UYZ21} for manifolds with dimensions three and higher.
The recovery of stationary metrics from the lens relation is studied in \cite{Ste24}. Then it is considered in \cite{Mun24c} via timelike geodesics and MP-systems.
Analogously, these Lorentzian rigidity problems are closely related to the Lorentzian Calder\'{o}n problems, where we consider the recovery of the Lorentzian manifold $(M,g)$ from the Dirichlet-to-Neumann map associated with a wave equation.
For ultra-static manifold with stationary metrics or metrics real-analytic in $t$,
this problem has been extensively studied in the literature, including
\cite{Bel87, Esk07,KKL01,KOP18,LO14}.
The most recent progress is in \cite{AFO22, AFO24}, where the assumption of real-analyticity in $t$ is replaced by bounds on the Lorentzian curvature. 

In both Riemannian and Lorentzian settings, boundary determination problems are considered usually as the first step towards interior recovery.
In Riemannian geometry, boundary determination has been studied, with well-established results on uniqueness, stability, and constructive reconstruction methods, see \cite{LSU03, SU05, UW03} for cases with strictly convex boundaries, and \cite{SU09, Zho12} for cases with concave points.
In Lorentzian geometry, 
the determination of the jet of the metric on a timelike submanifold from the time separation function is considered in \cite{LOY16}.
Boundary determination of the metric, the magnetic field, and the potential from the knowledge of the DN map has been studied in \cite{SY18}, and later discussed in \cite{YZ25} using partial data in disjoint boundary sets.

\subsection{Outline}
In Section \ref{sec: preliminaries}, we include some preliminary results and some useful lemmas. In Section \ref{sec: exterior time separation function}, we prove Theorem \ref{thm: exterior data no cut points}. In Section \ref{sec: boundary rigidity}, we prove Theorem \ref{thm: boundary distance main thm}. Section \ref{sec: determination of jet} to Section \ref{sec: scattering rigidity} are related to the scattering rigidity results. Specifically, in Section \ref{sec: determination of jet}, we study the determination of the jet of metric on the boundary for both Riemannian manifold and Lorentzian manifold with timelike boundary. In Section \ref{sec: interior and complete scattering information}, we study how the interior and complete scattering information relates to each other. The results from these two sections will be used in Section \ref{sec: scattering rigidity} to prove Theorem \ref{thm: interior scattering main thm} and Theorem \ref{thm: complete scattering main thm}. Finally, Appendix \ref{sec: transversal geodesics} includes some transversal intersection results, and Appendix \ref{sec: construction of jet with strictly convex direction} includes the constructive recovery of the jet of metric on the boundary near strictly convex directions. 

\subsection*{Acknowledgment}
The authors thank Plamen Stefanov for bringing Lorentzian rigidity problems in the analytic setting to their attention during a discussion. Motivated by that conversation, we explored a related setting, leading to the results presented here.
The authors thank Gunter Uhlmann for numerous discussions and helpful suggestions. YY was supported in part by the National Science Foundation.

\section{Preliminaries}\label{sec: preliminaries}
In this section, we briefly recall some basic definitions and properties of Lorentzian manifolds that will be used in the paper.

\subsection{Lorentzian geometry}\label{sec: lorentzian geometry}
Recall that a smooth Lorentzian manifold $(M,g)$ is a spacetime if it is connected and has a time orientation.
We say a smooth path $\mu:(a,b) \rightarrow M$ is timelike, if $g(\dot{\mu}(s), \dot{\mu}(s)) < 0$ for all $s \in (a,b).$
We say a smooth path $\mu:(a,b) \rightarrow M$ is causal, if $g(\dot{\mu}(s), \dot{\mu}(s)) \leq 0$ with $\dot{\mu}(s) \neq 0$ for all $s \in (a,b).$
Given $x, y \in M$, we say $x \ll y$ if there exists a future directed piecewise smooth timelike curve from $x$ to $y$.
We say $x < y$ if there is a future directed piecewise smooth causal curve from $x$ to $y$. 
We write $x \leq y$ if $x = y$ or $x < y$.
The chronological future and past of $x \in M$ are defined as $I^+(x) = \{y \in M: x \ll y\}$ and 
$I^-(x) = \{y \in M: y \ll x\}$ respectively.
The causal future and past of $x \in M$ are defined as
$J^+(x) = \{y \in M: x \leq y\}$ and 
$J^-(x) = \{y \in M: y \leq x\}$ respectively.
We denote by $J(x, y) = J^+(x) \cap J^-(y)$ the causal diamond set. 
For a subset $U \subseteq M$, its causal futures (or past) are defined as
$
    J^\pm(U) = \bigcup_{x \in U} J^\pm(x).
$

In a spacetime $(M, g)$, 
a \emph{pregeodesic} is a smooth curve whose image is a geodesic, possibly with a non-affine parameterization.
An open set $U \subset M$ is called a \textit{geodesically convex neighborhood}, provided $U$ is a normal neighborhood of each of its points, see \cite[Definition 5.5]{One83}.
In particular, for any two points $x, y \in U$ there is a unique geodesic segment $\gamma:[0, 1] \rightarrow N$ that lies entirely in $U$, while there are possibly other geodesics from $x$ to $y$ that do not remain in $U$.

We say a spacetime is \textit{causal} if it does not have any closed causal curves. 
For a spacetime $(M, g)$ without boundary, 
we say it is \textit{globally hyperbolic}, if it is strongly causal (see \cite[Definition 14.11]{One83}) and every causal diamond is compact.
In fact, in this definition, it suffices to assume causal instead of strongly causal. 
This provides one possible definition for a subset to be globally hyperbolic (see \cite[Definition 14.20]{One83}), i.e., $H \subset M$ is said to be a \textit{globally hyperbolic subset}, if it satisfies the strong causality condition, and every causal diamond is compact in $H$. With this definition, one can then study subsets of $M$ with timelike boundaries, and refer to them as globally hyperbolic Lorentzian manifolds with timelike boundaries. However, this definition is not intrinsic, as the strong causality depends on the ambient manifold. 
Throughout this paper, we instead use the following intrinsic definition of being globally hyperbolic for manifolds with timelike boundaries. 
\begin{df}{\cite[Definition 2.15]{AFS21}}
    Let $(M, g)$ be a Lorentzian manifold with timelike boundary. It is globally hyperbolic if it is causal and the causal diamond set $J(x, y)$ is compact for any $x, y \in M$. 
\end{df}
The definition is intrinsic as we only consider causal curves in $M$. By \cite[Corollary 5.8]{AFS21}, every globally hyperbolic Lorentzian manifold with timelike boundary can be isometrically embedded into a globally hyperbolic Lorentzian manifold without boundary of the same dimension. 
Moreover, globally hyperbolic Lorentzian manifolds with or without boundaries are always time-orientable, so we may refer to them as globally hyperbolic spacetime.

\subsection{Null and timelike cut locus}\label{sec: null and timelike cut locus}
In a spacetime $(M,g)$ without boundary, for $x < y$, we define the Lorentzian arc length
\[
L(\alpha) = \int_0^1 \sqrt{-g(\dot{\alpha}(s), \dot{\alpha}(s))}\,\mathrm{d}s,
\]
for a  piecewise smooth causal path $\alpha: [0,1] \to M$ from $x$ to $y$.
Note that this definition is independent of the parameterization of $\alpha$.
In addition, by \cite[Remark 3.35]{BEE96}, the Lorentzian arc length functional $L$ is upper semicontinuous.

Recall the time separation function $d$ for a Lorentzian manifold is the supremum of the arc length of all piecewise smooth causal curves from $x$ to $y$, if $x \leq y$.
It satisfies the reverse triangle inequality
\[
d(x,y) + d(y,z) \leq d(x,z), \qquad \text{for } x \leq y \leq z.
\]
For $(x,v) \in L^+ M$, recall the \emph{null cut locus function}
\[
\rho(x,v) = \sup \{\, s \in [0, \mathcal{T}(x,v)): \tau(x, \gamma_{x,v}(s)) = 0 \,\},
\]
where $\gamma_{x,v}(s)$ is the unique null geodesic starting from $x$ in the direction $v$, and $\mathcal{T}(x,v)$ is supremum of the parameter value for which $\gamma_{x,v}(s) \in M$ is defined.
For a timelike vector $(x,v) \in TM$, recall the \textit{timelike cut locus function}
\[
\rho(x,w) = \sup \{\, s \in [0, \mathcal{T}(x,v)): d(x, \gamma_{x,w}(s)) = s\,\},
\]
where $\gamma_{x,v}(s)$ is the unique timelike geodesic starting from $x$ in the direction $v$,
see \cite[Definition 9.3]{BEE96}.
In both cases, when $\rho(x,v) < \mathcal{T}(x,v)$, we call $\gamma_{x,v}(\rho(x,v))$ the first cut point of $x$ along the null or timelike geodesic  $\gamma_{x,v}$.
By \cite[Theorem 9.12]{BEE96} and \cite[Theorem 9.15]{BEE96},  
in a globally hyperbolic Lorentzian manifold without boundaries, the first cut point $\gamma_{x,v}(\rho(x,v))$ is either the first conjugate point to $x$ along $\gamma_{x,v}$, or the first point on $\gamma_{x,v}$ at which there exists another distinct null or timelike geodesic from $x$ to $\gamma_{x,v}(\rho(x,v))$.

Moreover, we include the following lemma about the boundary of the causal future in a globally hyperbolic Lorentzian manifold without boundary. 
\begin{lm}\label{lm: boundary of causal future}
    {Let $(M, g)$ be a globally hyperbolic Lorentzian manifold.}
    {Let $x, y \in M$ and $\partial J^+(x) \coloneqq J^+(x) \setminus I^+(x)$ be the boundary of the causal future of $x$.
    The following statements are equivalent.
    \begin{enumerate}
        \item $y \in \partial J^+(x)$;
        \item $y \in J^+(x)$ and $d(x, y) = 0$;
        \item there exists $y_j \to y$ such that $d(x, y_j) > 0$ and $d(x, y_j) \to 0$;
        \item there exists a future pointing lightlike geodesic from $x$ to $y$ entirely lying on $\partial J^+(x)$, with no cut points strictly before $y$.
    \end{enumerate}
    In particular, when there are no lightlike cut points, one has $\partial J^+(x) = \mathcal{L}^+_x$, where $\mathcal{L}^+_x$ is the forward light cone defined as the union of all lightlike geodesics starting from $x$.
    }
\end{lm}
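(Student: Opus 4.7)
The plan is to chain the implications $(1) \Leftrightarrow (2) \Leftrightarrow (3)$ via basic properties of the causal future and of the time separation, and then establish $(2) \Leftrightarrow (4)$ using a maximizing null geodesic argument together with the definition of the lightlike cut locus. I would rely on three standard facts in a globally hyperbolic spacetime without boundary: the chronological future $I^+(x)$ is open and the causal future $J^+(x)$ is closed, so that $\partial J^+(x) = J^+(x)\setminus I^+(x)$; the time separation $d$ is finite and continuous on $M\times M$; and the equivalence $d(x,y)>0 \Leftrightarrow y \in I^+(x)$. With these at hand, $(1)\Leftrightarrow(2)$ is immediate.

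For $(2)\Rightarrow(3)$, since $y\in \partial J^+(x)=\partial I^+(x)$, I would choose $y_j\in I^+(x)$ with $y_j\to y$, which gives $d(x,y_j)>0$, and continuity of $d$ forces $d(x,y_j)\to d(x,y)=0$. Conversely for $(3)\Rightarrow(2)$, the hypothesis $d(x,y_j)>0$ places each $y_j$ in $J^+(x)$, which is closed, so $y \in J^+(x)$; continuity again gives $d(x,y)=\lim d(x,y_j)=0$.

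The substantive step is $(2)\Rightarrow(4)$. By the Avez--Seifert theorem available in globally hyperbolic spacetimes, there is a maximizing causal geodesic $\gamma:[0,s_0]\to M$ from $x$ to $y$ with $L(\gamma)=d(x,y)=0$, which forces $\gamma$ to be lightlike. For any intermediate point $z=\gamma(s)$, the reverse triangle inequality
\[
0 = d(x,y) \geq d(x,z) + d(z,y)
\]
combined with $d\geq 0$ yields $d(x,z)=0$, so $z\in J^+(x)\setminus I^+(x)=\partial J^+(x)$; hence $\gamma$ lies entirely on $\partial J^+(x)$. Moreover, by the very definition of $\rho(x,\dot\gamma(0))$ as the supremum of parameters $s$ along $\gamma$ at which $d(x,\gamma(s))=0$, the parameter $s_0$ of $y$ satisfies $s_0\leq \rho(x,\dot\gamma(0))$, so no cut point occurs strictly before $y$. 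The reverse direction $(4)\Rightarrow(2)$ is the same cut-locus characterization read the other way: any point on $\gamma$ at or before the first cut point has zero time separation from $x$ by definition of $\rho$, and it lies in $J^+(x)$ because $\gamma$ is causal.

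For the final corollary, the absence of lightlike cut points means $\rho(x,v)=\mathcal{T}(x,v)$ for every future-directed $v\in L_xM$, so the full null geodesic $\gamma_{x,v}$ lies in $\partial J^+(x)$ by the cut-locus characterization, giving $\mathcal{L}^+_x\subseteq \partial J^+(x)$; the reverse inclusion is $(1)\Rightarrow(4)$. The one point that requires care is the invocation of continuity of $d$ and of the Avez--Seifert existence of a maximizing causal geodesic; both are classical under global hyperbolicity, so I would simply cite them from standard references (e.g.\ \cite{BEE96,One83}) rather than reprove them.
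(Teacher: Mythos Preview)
Your argument is correct. For the equivalences $(1)\Leftrightarrow(2)\Leftrightarrow(3)$ your proof is essentially identical to the paper's: both use that $I^+(x)$ is open with closure $J^+(x)$, that $d$ is continuous under global hyperbolicity, and that $d(x,y)>0$ characterizes $y\in I^+(x)$.

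The one substantive difference is in handling $(2)\Leftrightarrow(4)$. The paper does not argue this directly; it simply cites \cite[Theorem~1]{ABHR18} and \cite[Lemma~2.3]{KLU18} for the equivalence $(1)\Leftrightarrow(4)$. You instead give a self-contained proof: Avez--Seifert produces a maximizing causal geodesic, vanishing length forces it to be null, the reverse triangle inequality pins every intermediate point to $\partial J^+(x)$, and the definition of $\rho$ handles the cut-point claim in both directions. This is a cleaner and more transparent route than deferring to external references, and it makes explicit why the two clauses in $(4)$ (lying on $\partial J^+(x)$ and having no cut point strictly before $y$) are in fact equivalent to one another once a null geodesic from $x$ to $y$ is given. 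The only minor caveat is the degenerate case $y=x$, where $(4)$ must be read as allowing the constant geodesic; neither you nor the paper addresses this, and it is harmless.
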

\begin{proof}
First, we prove the statement (1) is equivalent to (2). 
Indeed, with $y \in J^+(x)$, one has $d(x, y) \geq 0$.
Recall $y \in I^+(y)$ exactly when $d(x, y) > 0$. 
Then $y \in J^+(x)$ with $d(x, y) = 0$  if and only if 
$y \in J^+(x) \setminus I^+(x)$.

Next, we prove (2) is equivalent to (3). 
Indeed, as $(M,g)$ is globally hyperbolic, the causal future $J^+(x)$ is closed and is the closure of the open set $I^+(x)$.
Moreover, the time separation function $d(x, y)$ is continuous in $M \times M$. 
Suppose $y \in J^+(x)$ with $d(x, y) = 0$. Then we can find a sequence $y_j \in I^+(x)$ such that $y_j \rightarrow y$. By continuity, one has $d(x, y_j) \rightarrow d(x, y) =0$. 
On the other hand, if (3) holds, then $y_j \in I^+(x)$ and therefore we must have $y \in J^+(x)$ with $d(x,y) = 0$ by continuity again.

Finally, using \cite[Theorem 1]{ABHR18}, the statement (1) is equivalent to (4), see also \cite[Lemma 2.3]{KLU18}.
\end{proof}

We emphasize these definitions above about cut locus do not work well in manifolds with boundaries.
Indeed, using these definitions, in a globally hyperbolic Lorentzian manifold $(M,g)$ with timelike boundary, $\rho(x,v)$ may give us the exit time $s_0$ 
instead of the true cut locus. 
Therefore, we define the cut locus in $M$ as the infimum time when the geodesic fails to be maximal, see the definitions in (\ref{def: null cut bd}) and (\ref{def: time cut bd}).  
Moreover, suppose $(M,g)$ is embedded in a globally hyperbolic Lorentzian manifold $(N, g)$ with or without timelike boundary, such that $M \subset N^\circ$. Let $d_N$ be the Lorentzian distance function and $\rho_N$ be the cut locus in $N$. 
Then one has $d(x, y) \leq d_N(x,y)$ for any $x, y \in N$ and 
$\rho(x,v) \geq \rho_N(x,v)$ for any lightlike or timelike $(x,v) \in TM$.

\subsection{Normal exponential extension}\label{sec: normal exponential extension}
Next, we briefly talk about the extension of a Lorentzian manifold with boundary, by using a collar neighborhood. 
This is usually a crucial first step for the recovery of interior metric. 
Given an analytic Lorentzian manifold $(M, g)$ with timelike boundary, one can always extend the smooth manifold slightly to a larger smooth manifold $\tM$. When the extension is sufficiently small, one can analytically extend the metric $g$ to $\tilde{g}$ on $\tM$. 
For every point $x \in \partial M$, 
let $U_x$ be a small neighborhood of $x$ where the metric can be written in the boundary normal coordinates. 
Choose a subset of $\{U_x: x\in \partial M\}$ that is locally finite and covers $\partial M$.
Note such subset can be made countable and we denote it by $\{U_j\}_1^\infty$. 
Take $V = \cup_j U_j$, and we shrink $V$ if necessary. For any $y \in V$, there exists a unique $x \in \partial M$ such that $y$ is on the normal geodesic from $x$ in $V$. This is possible by shrinking $V$, since the cover is locally finite. Then the following map is well-defined:
\[
\exp^{-1}_{\nu}: V \to \partial M \times (-\epsilon, \epsilon), \quad y= \exp_x(s\nu) \mapsto (x, s),
\]
where $\nu$ is the unit normal vector field on $\partial M$. We refer to its inverse, $\exp_{\nu}$, as the \textit{normal exponential map}, whose domain is the range of $\exp_\nu^{-1}$. Comparing to the compact Riemannian manifold case in \cite[Section 2]{Var09}, the domain of $\exp_\nu$ in our setting may not be uniform in size, since $M$ is not compact. Without loss of generality, we may assume $\tM = V \cup M$ in the first place. In this paper, we refer to such an extension as the \textit{analytic collar neighborhood extension}. Finally, we can always make sure that after the extension, the outer boundary is still timelike.





\subsection{Limit curves}\label{sec: limit curves}
In a globally hyperbolic spacetime $M$ with timelike boundary, we recall the following concept of limit curves, which are only used in Section \ref{sec: boundary rigidity}.
\begin{df}[{\cite[Definition 3.28]{BEE96}}]
A curve $\gamma$ is a limit curve of the sequence $\{\gamma_n\}$ if there is a subsequence $\{\gamma_m\}$ such that for all $p$ in the image of $\gamma$, each neighborhood of $p$ intersects all but a finite number of curves of the subsequence $\{\gamma_m\}$. 
\end{df}
Note that a sequence of curves $\{\gamma_n\}$ may not have limit curves or may have many limit curves.
Moreover, we consider the convergence of curves in $C^{0}$ topology as below.
\begin{df}[{\cite[Definition 3.33]{BEE96}}]
Let $\gamma$ and all sequence curves $\gamma_n$ be defined on the closed interval $[a,b]$. 
Then $\{\gamma_n\}$ is said to
{converge to $\gamma$ in the $C^{0}$ topology on curves} if
$\gamma_n(a)\to \gamma(a)$, $\gamma_n(b)\to \gamma(b)$,
and given any open set $\Gamma$ containing $\gamma$, we have $\gamma_n \subseteq \Gamma$ for sufficiently large $n$.
\end{df}
As is explained in \cite{BEE96}, in any spacetime, there exists a sequence $\{\gamma_n\}$ that has a limit curve $\gamma$, while it doe not converge to $\gamma$ in the $C^0$ topology. 
However, if the spacetime without boundary is strongly causal, then two types of convergence are almost equivalent for sequences of causal curves. 
Additionally, in a strongly causal spacetime $(M,g)$ with timelike boundary, the same results holds, see Lemma \ref{lm: limit_C0}. 

Although the definitions of the causal structure and the Lorentzian distance function are usually defined using piecewise smooth curve, the limit curve of a sequence of piecewise smooth curves may have much lower regularity. Thus we need the following definition about $H^1$-curves, when considering limit curves. 
\begin{df}[{\cite[Definition 2.16]{AFS21}}]
Let $(M,g)$ be a spacetime with timelike boundary and $I \subset \mathbb{R}$ be any interval. 
A continuous curve $\gamma: I \rightarrow M$ is an $H^1$-causal curve if 
\begin{itemize}
    \item in any local coordinates, $\gamma$ restricted to any compact interval $J$ belongs to $H^1(J;\mathbb{R}^n)$; and
    \item its almost everywhere derivative is causal.
\end{itemize}
Here the Sobolev space $H^1(J;\mathbb{R}^n)$ is the set of curves that are absolutely continuous and whose derivatives are $L^2$ integrable, in a compact interval.
\end{df}

One has the following lemmas about the limit curve of a sequence of $H^1$-causal curves. 
\begin{lm}[{\cite[Proposition 2.19 (1)]{AFS21}}]\label{lm: limit_C0}
Suppose that $\{\gamma_n\}$ is a sequence of causal curves defined on $[a,b]$
such that $\gamma_n(a)\to p$, $\gamma_n(b)\to q$.
A causal curve $\gamma\colon[a,b]\to M$, with $\gamma(a)=p$ and $\gamma(b)=q$,
is a limit curve of $\{\gamma_n\}$  if and only if there is a subsequence
$\{\gamma_m\}\subset\{\gamma_n\}$ which converges to $\gamma$ in the $C^{0}$ topology.
\end{lm}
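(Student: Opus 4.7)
The plan is to treat the two directions separately, with the forward implication $(\Rightarrow)$ being the substantive one. For the reverse direction $(\Leftarrow)$, suppose a subsequence $\{\gamma_m\}$ converges to $\gamma$ in the $C^{0}$ topology on curves. Fix $p=\gamma(t_0)$ on the image of $\gamma$ and an arbitrary open neighborhood $U$ of $p$. Using a background Riemannian metric $h$ on the ambient manifold, I would build a thin tubular neighborhood $\Gamma$ of $\gamma([a,b])$ whose intersection with a small ball around $p$ is contained in $U$. Since $\gamma_m\subseteq\Gamma$ for large $m$ by the definition of $C^{0}$ convergence, continuity of $\gamma_m$ together with the convergence $\gamma_m(a)\to p$ and $\gamma_m(b)\to q$ forces $\gamma_m$ to traverse every slice of the tubular neighborhood, so in particular $\gamma_m\cap U\neq\emptyset$ for large $m$; this is exactly the limit-curve property.

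For the forward direction $(\Rightarrow)$, let $\{\gamma_m\}$ be the subsequence witnessing that $\gamma$ is a limit curve. First I would reparameterize each $\gamma_m$ on $[a,b]$ in a uniform way, for instance by $h$-arclength with respect to a fixed background Riemannian metric, so that the ambient notion of $C^{0}$ convergence applies consistently to causal $H^{1}$-curves. Next I would exploit strong causality: using the intrinsic definition of global hyperbolicity with timelike boundary together with the isometric embedding of $(M,g)$ into a globally hyperbolic spacetime without boundary from \cite[Corollary 5.8]{AFS21}, every point $p$ in the compact image $\gamma([a,b])$ admits a \emph{non-return} neighborhood $U_p$ inside which any causal curve meets $U_p$ in a single connected parameter sub-interval. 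Extract a finite subcover $\{U_{p_1},\dots,U_{p_N}\}$, with the $p_i$ ordered along $\gamma$.

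The limit-curve hypothesis guarantees that for each $i$ and all large $m$ one has $\gamma_m\cap U_{p_i}\neq\emptyset$; by the non-return property this intersection corresponds to a single interval $[s_i^m,t_i^m]\subseteq[a,b]$. Using the ordering of the $p_i$ along $\gamma$ and endpoint convergence, one shows that these sub-intervals link up consecutively, so that $\gamma_m\subseteq\bigcup_i U_{p_i}$ for large $m$. Iterating with a nested sequence of finite covers whose unions shrink to $\gamma([a,b])$, and extracting a diagonal subsequence, yields that $\gamma_m$ is eventually contained in every prescribed open neighborhood of $\gamma$, which together with endpoint convergence is precisely $C^{0}$ convergence to $\gamma$.

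The main obstacle is the presence of the timelike boundary. The non-return neighborhoods must be chosen so that causal $H^{1}$-curves cannot re-enter after leaving, even when $p\in\partial M$; classical geodesically convex neighborhoods do not directly apply. The invocation of the boundaryless extension in \cite[Corollary 5.8]{AFS21} reduces this to the standard strongly causal case, where the non-return property is well known. A secondary technical point is controlling the reparameterizations of the $\gamma_m$, so that neither their domains collapse nor their derivatives blow up; the $h$-arclength reparameterization handles this uniformly across the sequence.
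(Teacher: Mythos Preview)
The paper does not supply its own proof of this lemma: it is stated with the attribution \cite[Proposition~2.19~(1)]{AFS21} and used as a black box. So there is no in-paper argument to compare against. Your sketch follows the classical route (essentially the strongly causal version in \cite[Proposition~3.34]{BEE96}), and your use of the boundaryless embedding from \cite[Corollary~5.8]{AFS21} to obtain non-return neighborhoods even at boundary points is the natural way to reduce to that case.

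Two minor comments on your write-up. First, you overload the symbol $p$ in the $(\Leftarrow)$ direction, using it both for the endpoint $\gamma(a)$ and for a generic point $\gamma(t_0)$; this should be cleaned up. Second, the ``traverse every slice'' step is where the causal structure actually enters: a purely topological tubular-neighborhood argument is not enough (a continuous curve in a thin tube from one end to the other need not hit every cross-section in general), but because the $\gamma_m$ are causal you can use a temporal function on the ambient globally hyperbolic extension to define the slices and apply the intermediate value theorem. As written, that step reads as an appeal to intuition rather than to causality. Also note that the $C^0$ topology in \cite[Definition~3.33]{BEE96} concerns the \emph{images} of the curves together with endpoint convergence, so your reparameterization step is harmless but not strictly needed for the stated conclusion.
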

\begin{lm}{\cite[Proposition 2.19 (2)]{AFS21}}\label{lm: limit_H1}
Let $(M,g)$ be a globally hyperbolic spacetime with timelike boundary.
Suppose that $\{p_n\}$ and $\{q_n\}$ are sequences in $M$ converging
to $p$ and $q$ in $M$, respectively, with $p\neq q$, and
$p_n \le q_n$ for each $n$. Let $\gamma_n$ be a future pointing causal curve
from $p_n$ to $q_n$ for each $n$. Then there exists a future pointing causal
limit curve $\gamma$ which joins $p$ to $q$.
\end{lm}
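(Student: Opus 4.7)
The plan is to reduce the statement to the classical (boundaryless) limit curve lemma by isometrically embedding $(M,g)$ into a globally hyperbolic spacetime without boundary, extracting a uniform Lipschitz subsequential limit, and then verifying that the limit lies in $M$ and is $H^1$-causal with the correct endpoints.

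First I would invoke \cite[Corollary 5.8]{AFS21} to obtain an isometric embedding of $(M,g)$ into a globally hyperbolic spacetime $(\tilde N,\tilde g)$ of the same dimension without boundary, in which $M$ sits as a closed subset. Fix an auxiliary complete Riemannian metric $h$ on $\tilde N$ and reparametrize each $\gamma_n$ by $h$-arclength, producing a $1$-Lipschitz family $\gamma_n:[0,L_n]\to M\subset\tilde N$. To bound the parameters, choose $p',q'\in M$ with $p'\ll p$ and $q\ll q'$; for $n$ large one has $p'\ll p_n\le q_n\ll q'$, so every $\gamma_n$ lies in the causal diamond $J^+(p')\cap J^-(q')$, which is compact in $M$ by global hyperbolicity. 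A standard consequence of strong causality (which $(\tilde N,\tilde g)$ enjoys as a globally hyperbolic spacetime) is that causal curves confined to a compact subset have uniformly bounded $h$-arclength, giving $L_n\le L_0$ for some $L_0$ independent of $n$.

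After rescaling to the fixed interval $[0,1]$ and passing to a subsequence with $L_n\to L_\infty$, Arzelà--Ascoli supplies a Lipschitz curve $\gamma:[0,1]\to\tilde N$ with $\gamma_n\to\gamma$ uniformly, $\gamma(0)=p$, $\gamma(1)=q$, and $\gamma\subset M$ since $M$ is closed in $\tilde N$. It remains to check that $\gamma$ is a future pointing $H^1$-causal curve. The uniform Lipschitz bound yields $\dot\gamma_n\in L^\infty$ with uniform norm, so after a further subsequence $\dot\gamma_n\rightharpoonup\dot\gamma$ weakly in $L^2$ on every compact subinterval; because the set of future pointing causal vectors at each point is a closed convex cone depending continuously on the base point, this constraint is preserved under weak $L^2$ limits, and hence $\dot\gamma$ is future pointing causal almost everywhere, making $\gamma$ an $H^1$-causal curve from $p$ to $q$. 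Finally, Lemma \ref{lm: limit_C0} guarantees that the uniform $C^0$-convergence exhibits $\gamma$ as a limit curve in the sense of the original definition.

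The main obstacle is the uniform $h$-arclength bound: merely lying in a fixed compact subset does not prevent a causal curve from spiraling with arbitrarily large $h$-length. Strong causality is essential here, since it allows the compact causal diamond to be covered by finitely many geodesically and causally convex open sets in each of which any causal curve has $h$-length controlled by the coordinate diameter, after which a standard patching argument produces the uniform bound $L_0$. A secondary subtlety is preserving causality of the limit at points of $\partial M$, but since $M\subset\tilde N$ is closed and the almost-everywhere cone condition is a pointwise statement in $\tilde N$, no separate boundary analysis is required.
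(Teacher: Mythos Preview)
The paper does not supply its own proof of this lemma: it is stated as a direct citation of \cite[Proposition~2.19(2)]{AFS21}, with no argument given. So there is no ``paper's proof'' to compare against beyond the reference itself.

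Your argument is a correct and standard route to such a limit curve result, and is essentially how the cited proposition is proved. A couple of minor remarks. First, you assert that $M$ sits as a closed subset of the ambient $\tilde N$ produced by \cite[Corollary~5.8]{AFS21}; this is true (the extension glues an open collar onto $\partial M$), but it is worth saying explicitly since the conclusion $\gamma\subset M$ hinges on it. Second, for the arclength bound you could equally well work with a causal diamond in $\tilde N$ rather than in $M$: since $\tilde N$ is globally hyperbolic without boundary, picking $p'\ll_{\tilde N} p$ and $q\ll_{\tilde N} q'$ gives a compact $J^+_{\tilde N}(p')\cap J^-_{\tilde N}(q')$ containing all $\gamma_n$, and strong causality of $\tilde N$ then yields the bound directly. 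This sidesteps having to locate $p',q'$ inside $M$ itself, though with a timelike boundary that is not a real obstruction. Finally, invoking Lemma~\ref{lm: limit_C0} at the end is legitimate---it is the companion part (1) of the same proposition in \cite{AFS21}---but note that the limit curve notion in the paper is image-based, so your reparametrization to $[0,1]$ is harmless.
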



Recall in a spacetime $(M,g)$ with timelike boundary, the causal future $J^+(p)$ and past $J^-(p)$, for some $p \in M$, is defined using piecewise smooth causal curves. 
One may also consider their definitions using $H^1$-causal curves, i.e., $J^+_{H^1}(p)$ and $J^-_{H^1}(p)$. 
\begin{lm}[{\cite[Proposition 2.22]{AFS21}}]\label{lm_causalH1}
Let $(M,g)$ be a globally hyperbolic Lorentzian manifold with timelike boundary (in our conventions, always defined by causal futures and pasts using piecewise smooth casual curves). 
Then $J^\pm(p) = J^\pm_{H^1}(p)$ for all $p \in M$. 
\end{lm}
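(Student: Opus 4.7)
The plan is to prove the double inclusion. The inclusion $J^\pm(p)\subseteq J^\pm_{H^1}(p)$ is immediate because any piecewise smooth causal curve is $H^1$ with almost everywhere derivative causal. So the substance is the reverse inclusion: given an $H^1$-causal curve $\gamma\colon[a,b]\to M$ from $p$ to $q$, I must produce a piecewise smooth causal curve from $p$ to $q$.

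The first step is to subdivide $\gamma$ finely. Global hyperbolicity gives a compact causal diamond $J(p,q)$ containing the image of $\gamma$, and one can cover this compact set by finitely many geodesically convex neighborhoods (constructed via the normal exponential map at interior points and via boundary normal coordinates plus an analytic collar neighborhood extension, as recalled in Section \ref{sec: normal exponential extension}, at boundary points, so that the definition makes sense up to the boundary). By uniform continuity of $\gamma$, I can pick a partition $a=t_0<t_1<\dots<t_N=b$ so that each piece $\gamma([t_i,t_{i+1}])$ lies inside a single convex neighborhood $U_i$. Inside each $U_i$ the restricted curve is still $H^1$ and almost everywhere causal, so $\gamma(t_i)\le_{H^1}\gamma(t_{i+1})$ within $U_i$.

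The second step is to replace each $H^1$-causal piece by a piecewise smooth causal curve between the same endpoints. Within a convex neighborhood $U_i$, I can express the curve in coordinates and mollify the derivative while keeping the endpoints fixed; the key local fact, standard for convex neighborhoods, is that two points connected by an $H^1$-causal segment in $U_i$ are also connected by the unique $U_i$-geodesic, which is automatically causal (via the characterization of the causal future in a convex neighborhood by $\exp^{-1}$ landing in the causal cone of the tangent space, together with closedness of the causal cone). For boundary points one argues in the extension and then verifies the geodesic segment still lies in $M$, using that the boundary is timelike and the two endpoints are in $M$; if the geodesic were to leave $M$ one can perturb along a tangential direction, or replace it by a convex combination with a piecewise smooth causal curve obtained by smoothing $\gamma$ against a mollifier adapted to the boundary normal coordinates. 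Concatenating these $N$ pieces yields a piecewise smooth causal curve from $p$ to $q$, which places $q$ in $J^+(p)$ as required.

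The main obstacle is the step where an $H^1$-causal curve is upgraded to a piecewise smooth causal curve with the same endpoints, particularly when the piece abuts the timelike boundary. The issue is that the almost everywhere causal condition allows pathological bad sets of measure zero, and a naive mollification of $\gamma$ need not be causal everywhere. The fix uses two ingredients: (i) within a convex neighborhood the causal future of a point is a closed convex cone under $\exp^{-1}$, so endpoint-preserving interpolation stays inside the cone, and (ii) for the boundary case the collar neighborhood extension from Section \ref{sec: normal exponential extension} provides a smooth ambient manifold in which the convex neighborhood argument applies, while the timelike character of $\partial M$ prevents an interior-to-interior causal geodesic from being forced out of $M$ on short scales. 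Combining these, the gluing produces the desired piecewise smooth causal curve and establishes $J^\pm_{H^1}(p)\subseteq J^\pm(p)$, completing the equality.
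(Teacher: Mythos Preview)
The paper does not supply its own proof of this lemma; it is quoted directly from \cite[Proposition 2.22]{AFS21} with no argument given. So there is nothing in the paper to compare your proposal to, and the question is simply whether your argument stands on its own.

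The inclusion $J^\pm(p)\subseteq J^\pm_{H^1}(p)$ and the subdivision of the $H^1$-curve by convex neighborhoods are fine. The genuine gap is the boundary step. When $U_i$ meets $\partial M$, the unique geodesic in an ambient extension joining $\gamma(t_i)$ to $\gamma(t_{i+1})$ can leave $M$ entirely; this is exactly what happens near any strictly concave portion of the timelike boundary, and no smallness of the subdivision prevents it. Your proposed remedies do not work: ``perturb along a tangential direction'' does not, in general, push a causal geodesic back into $M$ while keeping it causal with the same endpoints, and ``convex combination with a piecewise smooth causal curve'' is not a well-defined operation here (there is no linear structure on $M$, and even in coordinates a convex combination of causal curves need not be causal). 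The mollification suggestion has the same defect you yourself flag: smoothing an $H^1$-causal curve can destroy causality on a set of positive measure, and you have not said how to repair this while keeping the curve inside $M$. Finally, invoking the \emph{analytic} collar extension of Section~\ref{sec: normal exponential extension} is inappropriate, since the lemma is stated for arbitrary globally hyperbolic manifolds with timelike boundary and makes no analyticity assumption.

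In short, the interior replacement-by-geodesic idea is correct, but the argument as written does not handle pieces that touch $\partial M$, and that is precisely the nontrivial content of the result in the presence of a boundary.
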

This leads to the following lemma. 
\begin{lm}\label{lm: H1 curve to piecewise}
Let $(M,g)$ a globally hyperbolic spacetime with timelike boundary. 
Let $x,y \in M$ and $\mu:[0,1] \rightarrow M$ be an $H^1$-causal curve from $x$ to $y$.
Then there exists a piecewise smooth casual curve connecting $x$ and $y$. 
\end{lm}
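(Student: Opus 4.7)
The plan is to reduce the statement to a direct application of Lemma \ref{lm_causalH1}, which asserts $J^\pm(p) = J^\pm_{H^1}(p)$ for every $p \in M$. Indeed, the conclusion I am after is precisely that $y$ lies in one of $J^+(x)$ or $J^-(x)$ (as defined via piecewise smooth causal curves), so once I can phrase the hypothesis as $y \in J^+_{H^1}(x)$ or $y \in J^-_{H^1}(x)$, the work is done: by Lemma \ref{lm_causalH1} this transfers to the piecewise smooth causal future/past, which by definition furnishes the desired piecewise smooth causal curve from $x$ to $y$ (or the constant curve if $x=y$).

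The concrete steps are as follows. First, dispose of the trivial case $x = y$ using a constant curve. Next, reparametrize $\mu$ so that it is defined on $[0,1]$ and assume, after reversing parameterization if necessary, that $\mu$ is future-directed; since $(M,g)$ is time-oriented and the causal cone field is continuous, the almost-everywhere future/past orientation of $\dot\mu$ is locally constant along $\mu$, hence globally constant on the connected curve. With $\mu$ future-directed, its endpoints satisfy $y \in J^+_{H^1}(x)$ by the very definition of the $H^1$-causal future. Applying Lemma \ref{lm_causalH1} then gives $y \in J^+(x)$, which is exactly the existence of a future pointing piecewise smooth causal curve from $x$ to $y$.

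The only subtlety I anticipate is the orientation argument, because an $H^1$-causal curve only has a causal derivative almost everywhere, so it takes a brief justification that $\mu$ has a well-defined global time orientation that can be matched to the direction in which it traverses $x$ to $y$. Once this is handled, the lemma follows immediately from Lemma \ref{lm_causalH1} and no further analytic or geometric input is required.
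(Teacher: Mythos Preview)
Your proposal is correct and follows essentially the same route as the paper: both arguments observe that the existence of an $H^1$-causal curve from $x$ to $y$ means $y \in J^+_{H^1}(x)$, invoke Lemma~\ref{lm_causalH1} to conclude $y \in J^+(x)$, and read off the piecewise smooth causal curve from the definition of $J^+(x)$. The paper's proof is a two-line sketch that omits the trivial case and the orientation discussion you include, but the substance is identical.
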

\begin{proof}
Indeed, with the $H^1$-causal curve $\mu$, we know $y \in J^+_{H^1}(x)$. 
It follows that $y \in J^+(x)$ and therefore there is a piecewise smooth causal curve from $x$ to $y$.

\end{proof}

\section{Exterior time separation function}\label{sec: exterior time separation function}

We prove Theorem \ref{thm: exterior data no cut points} in this section. Recall that the ambient manifold $N_j$ is globally hyperbolic, $K_j$ is the unknown compact region, $\tM_j$ is an open neighborhood of $K_j$, and $K^c_j = \tM_j \backslash K_j$ is the exterior region. The steps are as follows:
\begin{enumerate}
    \item First we determine the metric in $K^c_j$, and find a middle layer $K_j \subset\subset M_j \subset\subset \tM_j$ with piecewise analytic boundary.
    \item Then we use the exterior time separation function to recover the complete travel time data $(S, \tau)$ for lightlike geodesics with respect to $M_j$.
    \item Use the travel time data, we construct a map from $L^+M_1$ and $M_2$, and show that it is actually fiber-preserving and therefore can be upgraded to a bijective function between $M_1$ and $M_2$.
    \item We show that the map can be extended and upgraded to an analytic isometry between $\tM_1$ and $\tM_2$, which agrees with $\varphi_0$ on the exterior region.
\end{enumerate}

We summarize Step (3) and Step (4) into the following proposition, as it will also be used in the proof of Theorem \ref{thm: boundary distance main thm}, Theorem \ref{thm: interior scattering main thm} and Theorem \ref{thm: complete scattering main thm}.
In this proposition, we do not require $M_j$ to be contained in a globally hyperbolic Lorentzian manifold, as assumed in Theorem 1.1. Instead, it suffices to assume that all lightlike geodesics in $M_j$ are non-trapping. 
In the setting of Theorem \ref{thm: exterior data no cut points}, this non-trapping property follows automatically from the global hyperbolicity assumption and compactness of $M_j$.
\begin{pp}\label{prop: complete light scattering recover metric}
    For $j=1,2$, let $(\tM_j, g_j)$ be analytic Lorentzian manifolds of dimension $n \geq 3$. Let $M_j \subset \tM_j$ be a closed subset in the interior with piecewise analytic boundary. Assume the lightlike geodesics are non-trapping in $M_j$. Suppose there exists an isometry $\varphi_0: M^c_1 \sqcup \partial M_1 \to M_2^c \sqcup \partial M_2$, where $M^c_j = \tM_j \backslash M_j$. Assume the lightlike complete travel time data are equivalent, that is
    \[
    (\varphi_0)_* \circ S_1 = S_2 \circ (\varphi_0)_*, \quad \tau_1 = \tau_2 \circ (\varphi_0)_* \quad \text{on } \overline{\partial_-LM_1}.
    \]
    Then there exists an analytic isometry $\varphi: \tM_1 \to \tM_2$ such that $\varphi|_{M^c_1} = \varphi_0$.
\end{pp}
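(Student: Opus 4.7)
My plan is to extend $\varphi_0$ from $M_1^c\cup\partial M_1$ into $M_1$ by propagating along lightlike geodesics, using the scattering hypothesis to pair each lightlike geodesic in $\tM_1$ with a canonical one in $\tM_2$. For $(x,v)\in\overline{\partial_-LM_1}$, write $\gamma^j_{x,v}$ for the affinely parameterized lightlike geodesic in $\tM_j$ starting from $(x,v)$, and set $(x_2,v_2)=(\varphi_0)_*(x,v)$. Since $\varphi_0$ is an analytic isometry on the open set $M_1^c$, which meets $\partial M_1$ from the outside, and geodesics are determined by their initial data, the curves $\gamma^1_{x,v}$ and $\varphi_0^{-1}\circ \gamma^2_{x_2,v_2}$ coincide on an interval of $t$-values around $0$ where both remain in $M_1^c$. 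The scattering hypothesis then forces them to exit $M_j$ at the matching parameter $\tau_1(x,v)=\tau_2(x_2,v_2)$ with corresponding exit data, so the entire geodesic $\gamma^1_{x,v}$ in $M_1$ gets paired with $\gamma^2_{x_2,v_2}$ in $M_2$ in a scattering-compatible way.

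\textbf{Constructing the candidate map.} For $p\in M_1$, I would pick any $(x,v,t)$ with $(x,v)\in\overline{\partial_-LM_1}$, $t\in[0,\tau_1(x,v)]$, and $\gamma^1_{x,v}(t)=p$; such a triple exists because lightlike geodesics are non-trapping and every causal direction at $p$ can be perturbed into a nearby lightlike one, so $M_1$ is swept by lightlike geodesics anchored on $\partial M_1$. I would then set
\[
\varphi(p)=\gamma^2_{(\varphi_0)_*(x,v)}(t),\qquad \varphi|_{M_1^c\cup\partial M_1}=\varphi_0.
\]
Locally in the parameters $(x,v,t)$ this is analytic because it factors through the lightlike exponential maps in $\tM_j$ and the analytic boundary differential $(\varphi_0)_*$.

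\textbf{Well-definedness and isometry property.} The main obstacle is showing that $\varphi(p)$ is independent of the choice of $(x,v,t)$ through $p$. To handle this, I would first use the analytic collar construction of Section \ref{sec: normal exponential extension} together with the real-analyticity of $g_j$ and of $\varphi_0$ to extend $\varphi_0$ analytically across $\partial M_1$ into a thin layer inside $M_1$, producing an analytic isometry $\tilde\varphi_0$ defined on a two-sided open neighborhood $U$ of $\partial M_1$ in $\tM_1$. For any $p\in U$ and any admissible triple $(x,v,t)$ with $\gamma^1_{x,v}(t)=p$, the identification $\gamma^2_{(\varphi_0)_*(x,v)}(s)=\tilde\varphi_0(\gamma^1_{x,v}(s))$ holds for small $|s|$ by the isometry property of $\varphi_0$ and extends along the geodesic inside $U$ by uniqueness of solutions to the geodesic equation; in particular the candidate value $\varphi(p)$ equals $\tilde\varphi_0(p)$. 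For a general $p\in M_1$, two distinct choices of lightlike geodesic through $p$ extend, by smoothly varying the initial data on $\partial_- LM_1$, to two analytic candidate maps defined on overlapping open sets in $\tM_1$, both agreeing with $\tilde\varphi_0$ on $U$; by the identity principle on the connected analytic manifold $\tM_1$ they coincide wherever both are defined, yielding a single globally defined analytic $\varphi$. Finally, $\varphi^* g_2 = g_1$ holds on $M_1^c$ by hypothesis and propagates to all of $\tM_1$ by analyticity, and the symmetric construction applied to $\varphi_0^{-1}$ supplies the inverse, giving the desired analytic isometry extending $\varphi_0$.
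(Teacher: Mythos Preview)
Your overall strategy matches the paper's: define the extension by pushing lightlike geodesics from $\partial M_1$ forward into $M_2$ via $(\varphi_0)_*$, then argue well-definedness and isometry. The gap is in the well-definedness step, and it is not a matter of missing detail but a genuine logical hole.

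You claim that for two lightlike geodesics through a deep interior point $p$, the resulting ``analytic candidate maps'' both agree with the collar extension $\tilde\varphi_0$ on $U$, and then invoke the identity principle. But each candidate map $\Psi_\sigma$ is only known to agree with $\tilde\varphi_0$ near its own entry point $x\in\partial M_1$: the equality $\gamma^2_{(\varphi_0)_*(x,v)}(s)=\tilde\varphi_0(\gamma^1_{x,v}(s))$ holds initially and propagates only while the geodesic remains in $U$. Once the geodesic leaves $U$ and later returns, you have no argument that the identity persists, so you only control $\Psi_\sigma$ on the component of its domain $\cap\, U$ containing $x$. If the second geodesic enters at a distant $x''\in\partial M_1$, the regions where $\Psi_\sigma$ and $\Psi_{\sigma'}$ are known to equal $\tilde\varphi_0$ may be disjoint, and the identity principle does not apply. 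There is also a secondary issue: promoting the $(x,v,t)$-parametrized construction to an analytic map of $p$ requires $(x,v,t)\mapsto\gamma^1_{x,v}(t)$ to admit an analytic local section, which fails at conjugate points.

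The paper resolves this by working fiberwise at a fixed interior point $x$ and proving directly that $\varphi(x,\cdot)$ is locally constant on $L^+_xM_1$. The device is to build Fermi coordinates along the fixed geodesic $\gamma^1$ and the paired geodesic $\gamma^2$, define auxiliary analytic \emph{Riemannian} metrics $g^R_j$ on tubular neighborhoods, and compare the squared $g^R_j$-distances $\rho_j(t)$ between $\gamma^j$ and a nearby perturbed geodesic. These are analytic functions of $t$ that agree for small $t$ (where everything lives in $M_j^c$ and $\varphi_0$ is an isometry), hence agree for all $t$, forcing $\rho_2=0$ at the terminal time and hence $\varphi(x,v)=\varphi(x,v')$. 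Connectedness of $L^+_xM_1$ for $n\ge 3$ then gives a well-defined map $M_1\to M_2$, after which bijectivity and the isometry property are handled separately. Your proposal does not supply any substitute for this step.
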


In this section, we will only work with complete travel time data, so we sometimes omit the complete and simply refer to it as the travel time data. Complete travel time data is more natural in the exterior case, because measurements are made outside the unknown region, which requires the information to fully pass through.

\subsection{Setup}\label{sec: setup}
First, we prove the metric in the exterior region is determined in the following lemma.
\begin{lm}\label{lm: exterior metric}
    Let $\varphi_0$ be defined as in Theorem \ref{thm: exterior data no cut points}. Then $\varphi_0$ is an analytic isometry on $K^c_j$.
\end{lm}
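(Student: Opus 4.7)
The plan is to recover the metric pointwise on $K^c_1$ from germs of the time separation function, then upgrade the resulting smooth isometry to an analytic one. Since $K_j$ is compact in the open set $\tM_j$, the exterior $K^c_j=\tM_j\setminus K_j$ is open in the globally hyperbolic spacetime $N_j$. Strong causality (implied by global hyperbolicity) produces arbitrarily small \emph{causally convex} convex-normal neighborhoods about each point, so for any $x\in K^c_1$ I would choose such a neighborhood $U_1\subset K^c_1$ of $x$ and, independently, such a neighborhood $U_2\subset K^c_2$ of $\varphi_0(x)$, then work on $V=U_1\cap\varphi_0^{-1}(U_2)$.

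The key local identity is that in a causally convex convex-normal neighborhood $U\subset N_j$, every $p,q\in U$ with $p\le q$ satisfies $d_j(p,q)=\sqrt{-g_j(\exp_p^{-1}(q),\exp_p^{-1}(q))}$: causal convexity forces every ambient causal curve from $p$ to $q$ to lie in $U$, and inside $U$ the unique connecting timelike geodesic is the local arc-length maximizer. Applied to $U_1$ and $U_2$ and combined with $d_1=\varphi_0^*d_2$, this yields
\[
g_1(v,v)=g_2\bigl(F(v),F(v)\bigr),\qquad F(v):=\exp_{\varphi_0(x)}^{-1}\circ\varphi_0\circ\exp_x(v),
\]
for every future-directed timelike $v\in T_xN_1$ with $\exp_x(v)\in V$. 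Since $F$ is smooth with $F(0)=0$ and $F'(0)=(\varphi_0)_{*,x}$, substituting $tv$ and letting $t\to 0^+$ leaves $g_1(v,v)=g_2((\varphi_0)_{*,x}v,(\varphi_0)_{*,x}v)$ on the future timelike cone of $T_xN_1$. Both sides are quadratic forms and the timelike cone is an open subset of $T_xN_1$, so polarization extends the identity to all of $T_xN_1$. Varying $x$, $\varphi_0$ is a smooth isometry on $K^c_1$.

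For the analyticity upgrade I would pass to normal coordinates at $x$ and $\varphi_0(x)$, which are analytic charts because geodesics of an analytic metric depend analytically on initial data. Any smooth isometry must intertwine the exponential maps, so in these charts $\varphi_0$ acts as the linear map $(\varphi_0)_{*,x}$ and is therefore analytic. The main subtle point in the whole argument is the local-equals-global identity for $d_j$: a causal curve from $p$ to $q$ could a priori dip into the unknown region $K_j$ and come out longer, and ruling this out is exactly the role of requiring the chosen neighborhoods to be causally convex in $N_j$. Everything else reduces to the standard scaling and polarization calculation.
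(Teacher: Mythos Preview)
Your proof is correct and follows the same overall strategy as the paper: use strong causality to pass to a causally convex, convex-normal neighborhood $V\subset K^c_1$ with $\varphi_0(V)$ likewise contained in $K^c_2$, so that the ambient $d_j$ agrees with the local time separation there, and then recover the metric from that local function. The difference is only in the local step. The paper invokes \cite[Lemma~5]{LOY16}: for $y\in I^-(x)\cap V$ the gradient $\nabla_g d(y,\cdot)|_x$ is a unit future-timelike vector, and as $y$ varies these sweep out the full unit timelike indicatrix at $x$, so $d_1=\varphi_0^*d_2$ forces $(\varphi_0)_*$ to preserve unit timelike vectors. You instead use the identity $d(x,\exp_x v)^2=-g(v,v)$ directly, pull it through $F=\exp_{\varphi_0(x)}^{-1}\circ\varphi_0\circ\exp_x$, and extract the quadratic form via the scaling limit $t\to0^+$ and polarization. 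Both finish analyticity the same way (an isometry is linear in normal coordinates, which are analytic charts for an analytic metric). Your argument is more self-contained and avoids the external lemma; the paper's gradient formulation has the advantage that the same identity $\nabla_g d(y,\cdot)= $ unit tangent is reused verbatim later (e.g.\ in Proposition~\ref{prop: lightlike travel time data} and the proof of Theorem~\ref{thm: boundary distance main thm}) where only differentials of $d$, not its values, carry the needed information.
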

\begin{proof}
    We first claim that the for any geodesically convex open set $U$, the time separation function $d_U$ in $U$ determines the metric there. 
    Indeed, for any $x \in U$, 
    the intersection $I^-({x}) \cap U = \{y \in U: d_U(y, x) > 0\}$ is a non-empty open set. 
    Now we pick some $y$ in it. 
    By \cite[Lemma 5]{LOY16}, $d_U(y, \cdot)$ is smooth and $\nabla_g d_U(y, \cdot)|_x$ gives a unit timelike vector at $x$. 
    Computing this for any $y \in I^-(x) \cap U$, we recover all future pointing timelike vectors at $x$ and therefore the metric at $x$.

    Now consider some $x \in K^c_1$. Let $U \subset K^c_1$ be a geodesically convex neighborhood of $x$. 
    By strong causality, there exists a smaller neighborhood $V \ni x$ such that for any $y \in V$, any causal curve connecting $x$ and $y$ are fully contained in $U$. As a result, $d_1(y, x) = d_U(y, x)$ for any $y \in V$, where recall $d_1$ is the time separation function restricted to $K_1^c$.
    Since $\varphi_0$ is a diffeomorphism on $K^c_j$ and the strong causality holds on $K^c_2$ as well, we may assume that $d_2(\varphi_0(y), \varphi_0(x)) = d_{\varphi_0(U)}(\varphi_0(y), \varphi_0(x))$ for any $y \in V$ by shrinking the size of $V$.
    Use the fact that $d_1 = \varphi^*d_2$, one has $I^-(\varphi_0(x)) \cap \varphi_0(V) = \varphi_0(I^-(x) \cap V)$. Then the set of (past pointing) unit timelike vector at $\varphi_0(x)$ is given by
    \begin{align*}
        &\{\nabla_{g_2} d_2(z, \cdot)|_{\varphi_0(x)}: z \in I^-(\varphi_0(x)) \cap \varphi_0(V)\}\\
        = & \{\nabla_{g_2} d_2(\varphi_0(y), \cdot)|_{\varphi_0(x)}: y \in I^-(x) \cap V\}\\
        = & (\varphi_0)_*\{\nabla_{g_1}d_1(y, \cdot)|_x: y \in I^-(x) \cap V\}.
    \end{align*}
    That is, $\varphi_0$ preserves the length of timelike vectors. As a result, $g_1|_{K^c_1} = \varphi_0^*(g_2|_{K^c_2})$. Since metrics are analytic and $\varphi_0$ is an isometry, it is automatically analytic.
\end{proof}

Since $K_j$ may have very rough boundary, we now find a middle layer $M_j$ with piecewise analytic boundary such that $K_j \subset \subset M_j \subset\subset \tM_j$.
\begin{lm}\label{lm: find middle layer}
    There exists $M_j \subset\subset \tM_j$ such that $K_j \subset\subset M_j$ and $M_j$ has piecewise analytic boundary. 
    In particular, we can write $\partial M_j \subset \bigcup_{k=1}^N \partial B_k^j$, where $B_k^j \subset M^c_j$ are compact sets with analytic boundary.
    Moreover, denoting by $M^c_j = \tM_j \backslash M_j$, we have $\varphi_0(M^c_1) = M^c_2$ and $\varphi_0(\partial M_1) = \partial M_2$.
\end{lm}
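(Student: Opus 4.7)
The plan is to construct $M_1$ on the first side by a finite-cover argument and then transport it to the second side via $\varphi_0$. For $M_1$, cover the compact set $K_1$ by open analytic coordinate balls: for each $x \in K_1$, pick an analytic chart at $x$ in $\tM_1$ together with a small coordinate ball $A_x$ centered at $x$ whose closure lies compactly in that chart, hence compactly in $\tM_1$, with analytic (spherical) boundary. By compactness of $K_1$ extract a finite subcover $A_1^1, \ldots, A_N^1$ and set $M_1 := \bigcup_{k=1}^N A_k^1$. Then $M_1$ is open, $\overline{M_1}$ is compact in $\tM_1$, $K_1 \subset M_1$, and $\partial M_1 \subset \bigcup_k \partial A_k^1$ is piecewise analytic. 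For each $k$, writing $A_k^1 = \{f_k < 0\}$ in local analytic coordinates, I would take $B_k^1$ to be a closed analytic region that realizes $\partial A_k^1 = \{f_k = 0\}$ as part of its boundary while sitting in $M_1^c$, for instance $B_k^1 := \{f_k \geq 0\} \cap \overline{\Omega_k}$, where $\Omega_k$ is a coordinate neighborhood chosen large enough so that $\partial M_1 \cap \partial A_k^1 \subset \Omega_k$. Then $B_k^1$ is compact in $M_1^c$ with $\partial B_k^1$ containing the corresponding piece of $\partial M_1$.

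To produce the matching $M_2$, set $M_2 := \tM_2 \setminus \varphi_0(\tM_1 \setminus M_1)$. Since $K_1 \subset M_1$ implies $\tM_1 \setminus M_1 \subset K^c_1$, the map $\varphi_0$ is defined on $\tM_1 \setminus M_1$ as an analytic isometry by Lemma \ref{lm: exterior metric}. The continuous bijective extension of $\varphi_0$ to $\partial K_1 \to \partial K_2$ is automatically a homeomorphism, since both $\partial K_j$ are compact Hausdorff; consequently $\varphi_0$ extends to a homeomorphism $\overline{K^c_1} \to \overline{K^c_2}$ that is an analytic isometry on the open part. By construction $M_2^c = \varphi_0(M_1^c)$, and since $\partial M_1 \subset K^c_1$ (because $K_1 \subset M_1^\circ$), the pushed boundary $\partial M_2 = \varphi_0(\partial M_1) \subset \bigcup_k \varphi_0(\partial A_k^1)$ is piecewise analytic. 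The corresponding $B_k^2$ can then be obtained by the analogous construction on the $\tM_2$ side, or directly as $\varphi_0$-images of the relevant portions of $B_k^1$.

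The remaining checks are $K_2 \subset\subset M_2$ and $M_2 \subset\subset \tM_2$. For compactness I would write
\[
M_2 = K_2 \cup \varphi_0\big(\overline{M_1} \setminus K_1^\circ\big),
\]
noting that $\overline{M_1} \setminus K_1^\circ$ is closed in the compact set $\overline{M_1}$, hence compact, and lies in $\overline{K^c_1}$ where the extension of $\varphi_0$ is continuous; so the right-hand side is a union of two compact sets in $\tM_2$. For the interior, the preimage of $\partial K_2$ under the extended homeomorphism is $\partial K_1 \subset M_1^\circ$, which is disjoint from $\overline{M_1^c}$; applying $\varphi_0$, $\partial K_2$ is disjoint from $\overline{M_2^c}$, giving $K_2 \subset M_2^\circ$. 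The main obstacle I anticipate is the bookkeeping around the extension of $\varphi_0$: verifying it is a genuine homeomorphism (not just a continuous bijection) and that topological operations commute with it, so the analytic boundary structure transports faithfully. This is ultimately handled by the observation that $\partial M_1 \subset K^c_1$, the region where $\varphi_0$ is already an analytic isometry by Lemma \ref{lm: exterior metric}, so no regularity is lost in the transfer.
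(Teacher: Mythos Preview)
Your approach is correct and close in spirit to the paper's, with one structural difference worth noting. The paper first fixes an intermediate open set $U_1$ with $K_1\subset\subset U_1\subset\subset\tM_1$, and then covers the compact set $\partial U_1$ (not $K_1$) by finitely many closed coordinate balls $B_{x_j}$ lying \emph{entirely in} $K^c_1$, setting $M_1=U_1\cup\bigcup_j B_{x_j}$. Because each $B_{x_j}\subset K^c_1$, the image $\varphi_0(B_{x_j})$ is immediately a compact set with analytic boundary in $K^c_2$, so the piecewise-analytic boundary structure of $M_2=\tM_2\setminus\varphi_0(\tM_1\setminus M_1)$ follows without ever touching the continuous extension of $\varphi_0$ to $\partial K_1$. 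Your variant---covering $K_1$ directly by balls centered in $K_1$---also works, since as you correctly observe $\partial M_1\subset K^c_1$; but it forces the extra bookkeeping you flag around the extended map $\overline{K^c_1}\to\overline{K^c_2}$, which the paper's construction sidesteps entirely. One small point: your sets $B_k^1=\{f_k\ge 0\}\cap\overline{\Omega_k}$ need not lie in $M_1^c$, since a point outside $A_k^1$ may lie inside another $A_l^1$; however the paper's own $B_{x_j}$ are contained in $M_1\cap K^c_1$ rather than $M_1^c$, so the condition ``$B_k^j\subset M^c_j$'' in the statement appears to be a slip for $K^c_j$, and only the analytic-boundary property is used downstream (in Lemma~\ref{lm: finite intersection}).
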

\begin{proof}
     We start by considering some open neighborhood $U_1$ of $K_1$ such that $K_1 \subset\subset U_1 \subset\subset \tM_1$. For every $x \in U_1$, we may choose local coordinates around $x$ whose domain is contained entirely in $K^c_1$. 
     Denote the closed unit ball in that coordinate by $B_x$. 
     By our construction, each $B_x$ has an analytic boundary. By the compactness, there are finitely many such $B_{x_j}$ covering $\partial U_1$, and we take $M_1 = U_1 \cup (\bigcup_{j=1}^N B_{x_j})$. Now $M_1$ satisfies the desired property. In addition, since $\varphi_0$ is an analytic isometry on $K^c_j$, the set $\varphi_0(B_{x_j})$ also has an analytic boundary. In particular, $M_2 = \tM_2 \backslash \varphi_0(\tM_1 \backslash M_1)$ also satisfies the desired property.
\end{proof}

As $M_j$ has piecewise analytic boundary (here piecewise refers to finitely many pieces), the intersection between the boundary and lightlike geodesics become much simpler.

\begin{lm}\label{lm: finite intersection}
    Let $M_j$ be given by Lemma \ref{lm: find middle layer}.
    Then any causal geodesic only intersects $\partial M_j$ finitely many times. In particular, for any $(x, v) \in \partial L^+M_j$, there exists some $\epsilon > 0$ such that $\gamma_{x, v}((-\epsilon, \epsilon)) \cap \partial M_j = \{x\}$.
\end{lm}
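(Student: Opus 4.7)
The plan is to exploit the analyticity of the metric $g_j$---so that causal geodesics are analytic curves---together with the analyticity of each piece $\partial B_k^j$ furnished by Lemma \ref{lm: find middle layer}, for $k = 1, \ldots, N$ with $N$ finite. The central observation is an analytic dichotomy: for any causal geodesic $\gamma : I \to \tM_j$ and any $k$, the preimage $A_k \coloneqq \gamma^{-1}(\partial B_k^j) \subset I$ is either a discrete subset of $I$ or equal to all of $I$.

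To establish the dichotomy, I would suppose $A_k$ has an accumulation point $t_0 \in I$. In a neighborhood of $\gamma(t_0)$ the hypersurface $\partial B_k^j$ admits an analytic defining function $f$; then $f \circ \gamma$ is real-analytic with a non-isolated zero at $t_0$, and the identity theorem forces $f \circ \gamma \equiv 0$ on an open interval around $t_0$, i.e., $\gamma$ lies in $\partial B_k^j$ on this subinterval. The subset of $I$ on which $\gamma$ lies locally in $\partial B_k^j$ is open by definition and closed by the accumulation-point argument just given, so connectedness of $I$ yields $\gamma \subset \partial B_k^j$ on all of $I$. The degenerate branch can be ruled out thanks to the flexibility built into Lemma \ref{lm: find middle layer}: the coordinate unit balls $B_{x_j}$ may be taken slightly generic so that none of their boundaries contains a causal pregeodesic of $\tM_j$, a condition that is robust under arbitrarily small perturbations of the coordinate centers and radii.

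With the dichotomy in place, both conclusions follow quickly. For the \emph{in particular} claim, fix $(x,v) \in \partial L^+M_j$ and set $K \coloneqq \{k : x \in \partial B_k^j\}$, which is nonempty and finite. For $k \in K$ the dichotomy yields $\epsilon_k > 0$ with $\gamma_{x,v}((-\epsilon_k, \epsilon_k)) \cap \partial B_k^j = \{x\}$, while for $k \notin K$ the closedness of $\partial B_k^j$ together with continuity of $\gamma_{x,v}$ yields $\epsilon_k > 0$ with $\gamma_{x,v}((-\epsilon_k, \epsilon_k)) \cap \partial B_k^j = \emptyset$. The inclusion $\partial M_j \subset \bigcup_k \partial B_k^j$ and the choice $\epsilon \coloneqq \min_k \epsilon_k$ finish this part. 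For the global assertion, on any compact subinterval of $I$ each discrete set $A_k$ is finite, and the finite union $\bigcup_k A_k$ contains $\gamma^{-1}(\partial M_j)$. The main obstacle I foresee is precisely justifying that no causal pregeodesic lies entirely in any $\partial B_k^j$---the exceptional branch on which both assertions would fail---and the cleanest resolution is the genericity argument sketched above, which uses only the flexibility already present in the construction of Lemma \ref{lm: find middle layer}.
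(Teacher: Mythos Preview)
Your analytic dichotomy is exactly the core observation the paper uses, and your treatment of the \emph{in particular} claim is fine. The difference---and the gap---lies in how the degenerate branch is excluded and, relatedly, in how you pass from local to global finiteness.

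The paper does not perturb the balls. Instead it invokes global hyperbolicity of the ambient manifold $N_j$: an inextendible causal geodesic must eventually leave every compact set. Since each $B_k^j$ is compact, the degenerate branch $\gamma(I)\subset\partial B_k^j$ is impossible outright---no genericity is needed. This same fact is what makes the \emph{global} finiteness work: if $\gamma$ met $\partial M_j$ infinitely often, then by pigeonhole it would meet some single compact $\partial B_k^j$ infinitely often, and compactness forces an accumulation point, triggering the dichotomy and hence the contradiction. Your argument, by contrast, only establishes that each $A_k$ is discrete, hence finite on every compact subinterval of $I$; but $I$ need not be compact, so this alone does not give global finiteness. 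You are missing precisely the input that the causal geodesic can intersect the compact set $\bigcup_k B_k^j$ only on a compact parameter set, and that input is global hyperbolicity.

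So: drop the genericity argument (which, as you note, is the weak point and would itself require justification), and instead use that causal geodesics leave compact sets. This simultaneously rules out the degenerate branch and upgrades your local finiteness to the global statement.
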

\begin{proof}
    By Lemma \ref{lm: find middle layer}, we may assume $\partial M_j \subset \cup_{k=1}^N B_{k}^j$, where each $B_k^j \subset M^c_j$ is a compact set with analytic boundary. Consider a causal geodesic $\gamma$.
    By the global hyperbolicity, it eventually leaves any compact set. Suppose $\gamma$ intersects with $\partial M_j$ infinitely many times. 
    Then by the pigeonhole principle, it intersects $B_k^j$ infinitely many times for some $k$. By the compactness, their intersection has accumulation point, which forces the geodesic to lie completely inside of $\partial B_k^j$.
    However, this would contradict $\gamma$ leaving any compact set. The second statement immediately follows from finite intersection points.
\end{proof}

\subsection{Lightlike complete travel time data}\label{sec: lightlike complete travel time data}
In this subsection, for convenience we ignore the subscript $j$ and work with $K, M, \tM, N, g$. The goal of this subsection is to explicitly compute the lightlike complete travel time data $(S, \tau)$ for $M$, using $d$ and $g$ on $K^c \sqcup \partial K$. Recall that even though $d(x, y)$ is only given for all $x, y \in K^c$, the time separation function $d(x, y)$ for all $x, y \in K^c \sqcup \partial K$ is actually known via continuity by \cite[Lemma 14.21]{One83}. (Here we used the assumption that $\varphi_0$ extends continuously to $\partial K_j$.)

\begin{lm}\label{lm: exterior recover light cone boundary}
    For any $x \in K^c$, we have
    \[
    \partial(\{ y \in K^c: d(x, y) > 0 \}) \cap K^c = \partial J^+(x) \cap K^c.
    \]
\end{lm}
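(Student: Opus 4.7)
My plan is to rewrite the left-hand side as the boundary of $I^+(x)\cap K^c$, and then use Lemma \ref{lm: boundary of causal future} together with continuity of $d$ and global hyperbolicity of $N$ to match it with $\partial J^+(x) \cap K^c$.

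First I would set $A := \{y\in K^c : d(x,y)>0\}$ and observe that $d(x,y)>0$ iff $y\in I^+(x)$, so $A = I^+(x)\cap K^c$. Since $I^+(x)$ is open in $N$ and $K^c$ is open in $N$, $A$ is open, so any $y\in \partial A$ automatically satisfies $y\notin A$, i.e.\ $d(x,y)=0$.

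For the inclusion $\subseteq$, take $y\in \partial A \cap K^c$. Pick $y_j\in A$ with $y_j\to y$. Then $y_j\in I^+(x)\subset J^+(x)$, and since $N$ is globally hyperbolic $J^+(x)$ is closed, giving $y\in J^+(x)$. By continuity of the time separation function on $N\times N$, $d(x,y_j)\to d(x,y)=0$ while $d(x,y_j)>0$, which by Lemma \ref{lm: boundary of causal future}(3) yields $y\in\partial J^+(x)$. For the inclusion $\supseteq$, take $y\in \partial J^+(x)\cap K^c$; Lemma \ref{lm: boundary of causal future}(3) produces $y_j\to y$ with $d(x,y_j)>0$ and $d(x,y_j)\to 0$. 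Since $K^c$ is open, eventually $y_j\in K^c$, so $y_j\in A$, showing $y\in\overline{A}$. On the other hand $y\in\partial J^+(x)$ means $y\notin I^+(x)$, so $d(x,y)=0$ and $y\notin A$, hence $y\in\overline{N\setminus A}$. Combined, $y\in\partial A\cap K^c$.

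There is essentially no obstacle here; the statement is a bookkeeping lemma whose content is already packaged into Lemma \ref{lm: boundary of causal future}. The only subtle point is verifying that the approximating sequences can be chosen inside $K^c$, which is immediate because $K^c$ is open so any sequence converging to a point of $K^c$ eventually lies in $K^c$.
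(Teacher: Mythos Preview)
Your proof is correct and follows essentially the same idea as the paper: both identify $\{y\in K^c:d(x,y)>0\}$ with $I^+(x)\cap K^c$ and then use that $\partial I^+(x)=\partial J^+(x)$ in a globally hyperbolic spacetime. The paper does this in one line via the set-theoretic identity $\partial(I^+(x)\cap K^c)\cap K^c = \partial I^+(x)\cap K^c$ (valid because $K^c$ is open), whereas you verify both inclusions by hand using the sequence characterization from Lemma~\ref{lm: boundary of causal future}; the content is the same.
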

\begin{proof}
    On a globally hyperbolic spacetime we have
    \begin{align*}
        \partial (\{y \in K^c: d(x, y) > 0\}) \cap K^c &= \partial(I^+(x) \cap K^c) \cap K^c \\
        &= \Big((\partial I^+(x) \cap \overline{K^c}) \cup (\partial K^c \cap \overline{I^+(x)})\Big) \cap K^c\\
        &= \partial J^+(x) \cap K^c.
    \end{align*}
\end{proof}

\begin{lm}\label{lm: determine geodesic before cut}
    For every $x \in K^c$ and $v \in L^+_x\tM$, 
    consider some $x' = \gamma_{x,v}(\epsilon) \in K^c$ in a geodesically convex neighborhood of $x$. Then
    \[
    \partial J^+(x) \cap \partial J^+(x') \cap K^c = \gamma_{x,v}([\epsilon, \rho(x, v)]) \cap K^c.
    \]
\end{lm}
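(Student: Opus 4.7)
The plan is to establish the two inclusions separately, using Lemma \ref{lm: boundary of causal future}, the reverse triangle inequality, and the classical corner-smoothing fact that a broken causal curve which is not a null pregeodesic can be deformed into a timelike curve joining its endpoints (see e.g.\ \cite[Proposition 10.46]{One83}). Throughout, I work inside the ambient globally hyperbolic manifold $N$, in which the time separation function is continuous and $J^+$ is closed.

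For the inclusion $\supseteq$, given $y = \gamma_{x,v}(s)$ with $s \in [\epsilon, \rho(x,v)]$, the defining property of $\rho(x,v)$ yields $d(x,y) = 0$, while $\gamma_{x,v}|_{[0,s]}$ witnesses $y \in J^+(x)$; Lemma \ref{lm: boundary of causal future} then gives $y \in \partial J^+(x)$. The reverse triangle inequality $d(x,x') + d(x',y) \leq d(x,y) = 0$ forces $d(x',y) = 0$, and since $\gamma_{x,v}|_{[\epsilon,s]}$ witnesses $y \in J^+(x')$, another application of Lemma \ref{lm: boundary of causal future} gives $y \in \partial J^+(x')$.

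For the reverse inclusion, let $y \in \partial J^+(x) \cap \partial J^+(x') \cap K^c$. Lemma \ref{lm: boundary of causal future}(4) supplies a future-pointing lightlike geodesic $\mu$ from $x'$ to $y$ lying on $\partial J^+(x')$ with no cut points strictly before $y$. The key claim is that $\dot\mu(0)$ is a positive multiple of $\dot\gamma_{x,v}(\epsilon)$. Otherwise the concatenation of $\gamma_{x,v}|_{[0,\epsilon]}$ with $\mu$ would be a broken causal curve from $x$ to $y$ with a genuine corner at $x'$, hence not a null pregeodesic, so the corner-smoothing result would deform it into a piecewise smooth timelike curve from $x$ to $y$. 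This would yield $d(x,y) > 0$ and $y \in I^+(x)$, contradicting $y \in \partial J^+(x)$. Therefore $\mu$ is a reparametrization of the continuation of $\gamma_{x,v}$ beyond $x'$, so $y = \gamma_{x,v}(s)$ for some $s \geq \epsilon$; the bound $s \leq \rho(x,v)$ then follows from $d(x,y) = 0$ together with the definition of $\rho(x,v)$ as the supremum of $t$ for which $d(x,\gamma_{x,v}(t)) = 0$.

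The only non-routine step I anticipate is the corner-smoothing argument, which ultimately reduces to the elementary observation that two non-proportional future-directed null vectors sum to a future-directed timelike vector; everything else is a direct invocation of Lemma \ref{lm: boundary of causal future}, the reverse triangle inequality, and the defining property of the null cut locus function.
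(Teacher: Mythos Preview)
Your proof is correct and follows essentially the same approach as the paper's: both directions use Lemma~\ref{lm: boundary of causal future} together with the corner-smoothing fact that a broken causal curve which is not a null pregeodesic forces $d(x,y)>0$, yielding the contradiction with $y\in\partial J^+(x)$. Your write-up is in fact slightly more explicit than the paper's (you spell out the reverse triangle inequality for $d(x',y)=0$ and cite \cite[Proposition~10.46]{One83} directly, whereas the paper invokes Lemma~\ref{lm: boundary of causal future} somewhat tersely for the same conclusion), but the logical skeleton is identical.
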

\begin{proof}
    Indeed, for $\epsilon \leq t \leq \rho(x, v)$, then $d(x, \gamma_{x, v}(t)) = d(x', \gamma_{x, v}(t)) = 0$. 
    As $\gamma_{x, v}(t)$ is contained in $J^+(x)$ and $J^+(x')$, this implies $\gamma_{x, v}(t) \in \partial J^+(x) \cap \partial J^+(x')$, by the global hyperbolicity of the spacetime. On the other hand, if $y \in \partial J^+(x) \cap \partial J^+(x')$, then there exists $v' \in L^+_{x'}\tM$ such that $\gamma_{x', v'}$ passes through $y$. If $v' \neq \dot{\gamma}_{x,v}(\epsilon)$ up to rescaling, then $x$ and $y$ are connected by $\gamma_{x, v}([0, \epsilon]) \circ \gamma_{x',v'}$, which is a causal path that is not a pregeodesic. By Lemma \ref{lm: boundary of causal future}, $d(x, y) > 0$, contradicting $y \in \partial J^+(x)$. Thus $y = \gamma_{x, v}(t)$ for some $t \geq \epsilon$. Again use $y \in \partial J^+(x)$ to deduce $d(x, y) = 0$, so $\epsilon \leq t \leq \rho(x, v)$.
\end{proof}

\begin{lm}\label{lm: lower bound for rho}
    Let $\gamma:[0, T] \to \tM$ be a lightlike geodesic. Then $\rho(\gamma(t), \dot{\gamma}(t))$ has a uniform positive lower bound for all $t \in [0, T]$.
\end{lm}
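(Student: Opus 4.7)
The plan is to argue by contradiction using the classical cut point dichotomy from \cite[Theorem 9.15]{BEE96}. Suppose there exist $t_n \in [0, T]$ with $F(t_n) := \rho(\gamma(t_n), \dot\gamma(t_n)) \to 0$. By compactness of $[0, T]$, pass to a subsequence so that $t_n \to t_*$, and set $p_n := \gamma(t_n)$ and $q_n := \gamma(t_n + F(t_n))$; both sequences converge to $p_* := \gamma(t_*)$. Observe that $F(t_*) > 0$, since in a geodesically convex normal neighborhood $U$ of $p_*$ the null geodesic $\gamma_{p_*, \dot\gamma(t_*)}$ lies on $\partial J^+(p_*) \cap U$ for a short initial interval, by Lemma~\ref{lm: boundary of causal future}; the contradiction will come from this strict positivity.

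After passing to a further subsequence, \cite[Theorem 9.15]{BEE96} guarantees one of the following: (a) each $q_n$ is the first point on $\gamma$ after $p_n$ conjugate to $p_n$ along $\gamma$; or (b) there exists a second null geodesic $\sigma_n$ from $p_n$ to $q_n$ distinct from $\gamma|_{[t_n, t_n + F(t_n)]}$. In case (a), continuous dependence of Jacobi fields on initial data along the fixed geodesic $\gamma$ yields lower semi-continuity in $t$ of the first conjugate parameter $\tau^{\mathrm{conj}}(t)$ from $\gamma(t)$ along $\gamma$, so $0 = \lim F(t_n) = \lim \tau^{\mathrm{conj}}(t_n) \geq \tau^{\mathrm{conj}}(t_*) > 0$, a contradiction.

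Case (b) is the main obstacle, which I would handle by constructing a non-trivial closed null pregeodesic at $p_*$ and invoking causality. Fix an auxiliary complete Riemannian metric $h$ on $N$ and a geodesically convex normal neighborhood $U$ of $p_*$. For $n$ large, both $p_n$ and $q_n$ lie in $U$, and the unique geodesic in $U$ between them is the sub-segment of $\gamma$, so $\sigma_n$ must exit $U$. Reparametrizing $\sigma_n$ by $h$-arclength, its total $h$-length $\ell_n$ is bounded below by $2\operatorname{dist}_h(p_*, \partial U) > 0$. Since $\sigma_n$ lies in the causal diamond $J(p_n, q_n)$, which by compactness of causal diamonds in the globally hyperbolic spacetime $N$ and convergence of endpoints is contained in a fixed compact subset of $N$ for $n$ large, $\ell_n$ is also bounded above. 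Extracting a subsequence, the $h$-unit null tangents $w_n := \sigma_n'(0)$ and the lengths $\ell_n$ converge to some $w_*$ and $\ell_* \geq 2\operatorname{dist}_h(p_*, \partial U) > 0$, and the limit $\sigma_*$ is a null pregeodesic from $p_*$ back to $p_*$ of positive parameter length. This is a non-trivial closed causal curve, contradicting the causality of $N$. The technical heart of the proof is this limit curve construction, which relies on global hyperbolicity to confine $\sigma_n$ to a fixed compact set and on the convex normal neighborhood to force the $h$-length of $\sigma_n$ to stay bounded away from zero.
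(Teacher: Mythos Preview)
Your argument is correct, but it is far more elaborate than what is needed. The paper's proof is two sentences: since the ambient manifold is globally hyperbolic, the null cut locus function $\rho$ is lower semi-continuous (this is \cite[Theorem 9.33]{BEE96}, invoked elsewhere in the paper); a lower semi-continuous function on the compact set $\{(\gamma(t),\dot\gamma(t)) : t\in[0,T]\}$ attains its infimum, and since $\rho$ is everywhere positive, that infimum is positive.

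What you have done is essentially reprove, from the dichotomy \cite[Theorem 9.15]{BEE96}, the special case of lower semi-continuity of $\rho$ along the lift of a single null geodesic. Your case (a) is exactly the standard lower semi-continuity of the first conjugate time, and your case (b) is the standard limit-curve argument that underlies the proof of \cite[Theorem 9.33]{BEE96} itself: the second geodesics $\sigma_n$ must leave a convex neighborhood $U$ of $p_*$, stay in a fixed causal diamond by global hyperbolicity, and hence subconverge to a non-trivial closed causal curve through $p_*$, contradicting causality. A minor remark: you do not actually need the limit $\sigma_*$ to be a null \emph{pregeodesic}; any non-constant causal limit curve from $p_*$ to $p_*$ already contradicts causality, so the Arzel\`a--Ascoli step together with the standard fact that causal curves in a compact subset of a strongly causal spacetime have uniformly bounded $h$-arclength suffices. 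Your approach buys self-containment at the cost of length; the paper's approach simply cites the packaged result.
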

\begin{proof}
    Since $\tM$ is globally hyperbolic, $\rho$ is lower semi-continuous. Clearly $\rho$ is always positive, then on a compact set it has positive lower bound.
\end{proof}

We now start to recover the lightlike complete travel time data for $M$. Specifically, for any lightlike geodesic $\gamma$ in $M$, $\gamma \cap K^c$ and its travel time data are computed step by step. We show the procedure terminates after finitely many steps by proving that there is a uniform lower bound for every two steps. See Figure \ref{fig: 2step procedure} for the different cases of the two step recovery appeared in the proof.

\begin{figure}[ht]
  \centering
  \begin{subfigure}{0.48\textwidth}
    \centering
    \def\svgwidth{0.8\linewidth}
    \def\svgwidth{\columnwidth}
    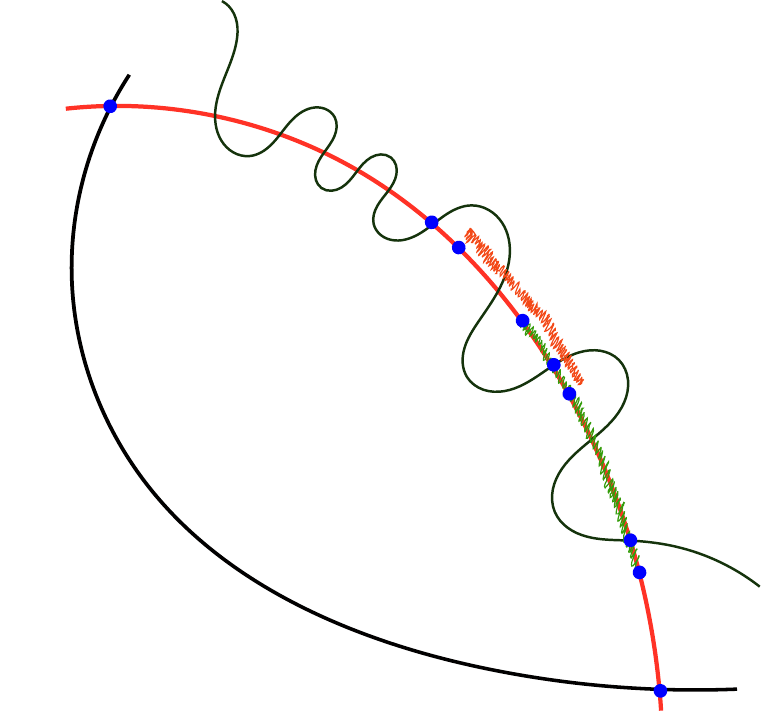

  \end{subfigure}
  \hfill
  \begin{subfigure}{0.48\textwidth}
    \centering
    \def\svgwidth{0.8\linewidth}
    \def\svgwidth{\columnwidth}
    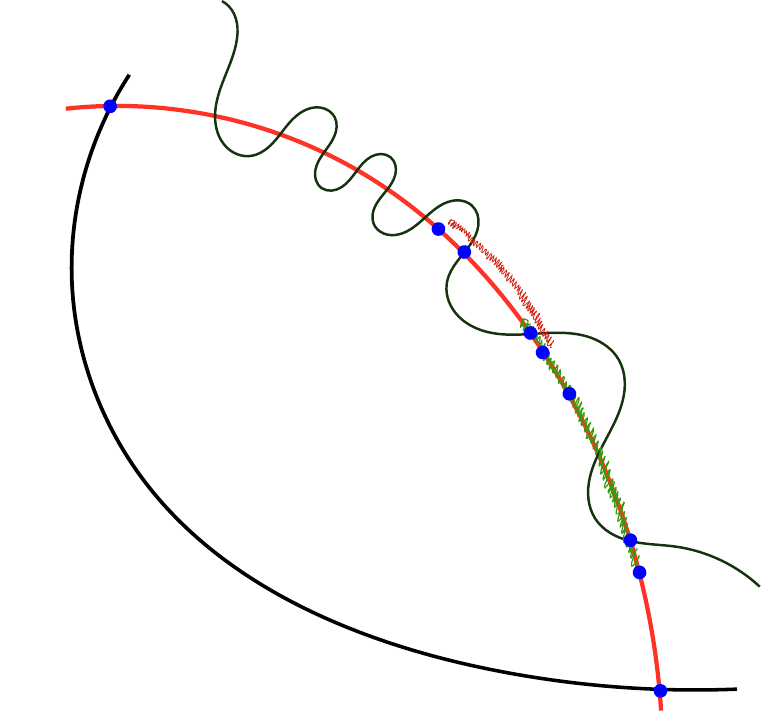

  \end{subfigure}
  \caption{The first step goes from $t_1$ to $t_2'$, but there are two cases based on whether $\gamma(s_1)$ is in $K$ (left graph) or $K^c$ (right graph). The second step goes to $t_3'$, which can be shown is strictly larger than $s_1$ in both cases. Since $s_1 - t_1$ has a uniform lower bound, this shows for every two steps we gain a uniform lower bound for the recovery.}
    \label{fig: 2step procedure}
\end{figure}

\begin{pp}\label{prop: lightlike travel time data}
    The lightlike complete travel time data $(S, \tau)$ for $M$ can be computed. In particular, this means for any $(x, v) \in \partial L^+M_1$:
    \begin{equation}
        (S_1, \tau_1)(x, v) = (y, w; T) \iff (S_2, \tau_2)(\varphi_0(x), (\varphi_0)_*v) = (\varphi_0(y), (\varphi_0)_*w; T).
    \end{equation}
\end{pp}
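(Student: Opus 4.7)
The plan is to reconstruct, for each lightlike $(x,v) \in \partial_-L^+M$, the full trajectory of $\gamma_{x,v}$ through $M$ together with its exit data, by iteratively filling in the portions that traverse the hidden region $K$. Within $K^c$ the metric is known by Lemma~\ref{lm: exterior metric}, so geodesic flow is explicit there; the only thing we cannot see directly is $\gamma_{x,v}$ while it is passing through $K$. To cross such a dark passage at step $k$, the idea is to start from a known point $x_k = \gamma_{x,v}(t_k) \in K^c$ with known direction $v_k$, pick $x_k' = \gamma_{x_k,v_k}(\epsilon) \in K^c$ in a geodesically convex neighborhood of $x_k$, and compute both $\partial J^+(x_k) \cap K^c$ and $\partial J^+(x_k') \cap K^c$ from the given exterior time separation function via Lemma~\ref{lm: exterior recover light cone boundary}. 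By Lemma~\ref{lm: determine geodesic before cut} their intersection equals $\gamma_{x_k,v_k}([\epsilon,\rho(x_k,v_k)]) \cap K^c$, so all pieces of $\gamma_{x,v}$ lying in $K^c$ up to the first null cut point of $x_k$ become visible, with travel times read off using the known exterior metric.

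To iterate, the hypothesis that lightlike geodesic segments in $K$ carry no cut points guarantees that whenever the cut point $\gamma_{x_k,v_k}(\rho(x_k,v_k))$ occurs before $\gamma_{x,v}$ exits $M$, it must lie in $K^c$. We may therefore choose a restart point $x_{k+1} = \gamma_{x,v}(t_{k+1})$ in the last visible connected arc of $\gamma_{x,v}\cap K^c$ before the cut, extract $v_{k+1}$ from that arc using the exterior metric, and repeat. Lemma~\ref{lm: lower bound for rho} furnishes a uniform positive lower bound on null cut distances along $\gamma_{x,v}$, and Lemma~\ref{lm: finite intersection} says $\gamma_{x,v}$ meets $\partial M$ only finitely many times; together with the non-trapping of lightlike geodesics, each iteration advances the recovered interval by at least a uniformly positive amount, so finitely many steps suffice to track $\gamma_{x,v}$ all the way to its last crossing of $\partial M$. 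From that last crossing one reads off $S(x,v)$ and $\tau(x,v)$.

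The $\varphi_0$-equivariance is then automatic: every ingredient used above, namely the exterior metric (preserved by $\varphi_0$ via Lemma~\ref{lm: exterior metric}), the exterior time separation function (preserved by hypothesis), and the boundary hypersurface $\partial M$ (whose image is $\partial M_2$ by Lemma~\ref{lm: find middle layer}), transforms equivariantly, so the recovered scattering relation and travel time on the two sides agree under $\varphi_0$ in the stated manner.

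The main obstacle I anticipate is the bookkeeping at the restart step. The visible trace $\gamma_{x_k,v_k}([\epsilon,\rho(x_k,v_k)])\cap K^c$ is in general a disjoint union of arcs corresponding to the oscillation of $\gamma$ between $K$ and $K^c$, and one must carefully select $x_{k+1}$ in the arc immediately preceding the first cut point, with strictly positive remaining interval so that $v_{k+1}$ is unambiguous. Verifying that such an arc exists with a length bounded below is precisely the content of the two-step analysis indicated in Figure~\ref{fig: 2step procedure}: even when $\gamma(s_1)$ might fall in $K$, a suitable restart slightly earlier in $K^c$ gains at least a uniformly positive amount over the previous step. This simultaneously underwrites termination in finitely many iterations and rules out infinite oscillation.
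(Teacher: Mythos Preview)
Your overall strategy matches the paper's, but there are two genuine gaps.

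First, and most importantly, you write that once the set $\gamma_{x_k,v_k}([\epsilon,\rho(x_k,v_k)])\cap K^c$ is identified, ``travel times [are] read off using the known exterior metric.'' This is the heart of the matter and it is not correct as stated. The intersection of the two light-cone boundaries gives you only a \emph{point set} in $K^c$: a disjoint union of arcs. On each arc you can recover an affine parameter locally from $g|_{K^c}$, but you cannot determine the \emph{global} parameter value $t$ along $\gamma_{x,v}$, because the time the geodesic spent inside $K$ is invisible to the exterior metric. The paper recovers $t$ (and $\dot\gamma(t)$) by a separate argument: for $\gamma(t)\in K^c$ before the cut point of $z_1=\gamma(t_1-\epsilon)$, one takes $y_j\in I^+(\gamma(t))\cap K^c$ with $y_j\to\gamma(t)$, observes that near $y_j$ and $z_1$ the global time separation satisfies $d(z_1,\cdot)=|\exp_{z_1}^{-1}(\cdot)|_g$ and $d(\cdot,y_j)=|\exp_{y_j}^{-1}(\cdot)|_g$, and then reads off $u_j=-d(z_1,y_j)\nabla_g d(\cdot,y_j)|_{z_1}$ and $v_j=d(z_1,y_j)\nabla_g d(z_1,\cdot)|_{y_j}$; passing to the limit gives $t$ and $\dot\gamma(t)$. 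The point is that the \emph{exterior time separation function} (not the exterior metric) carries the travel-time information through $K$. Your sketch omits this mechanism entirely.

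Second, your claim that the no-cut-point hypothesis in $K$ forces the cut point $\gamma_{x_k,v_k}(\rho(x_k,v_k))$ to lie in $K^c$ is false: the hypothesis constrains only segments whose \emph{basepoint} lies in $K$, while here $x_k\in K^c$. Indeed the paper's proof (and Figure~\ref{fig: 2step procedure}) explicitly treats the case $\gamma(s_1)\in K$. You seem to recognize this in your final paragraph, which contradicts the second paragraph; but the resolution you gesture at (``a suitable restart slightly earlier in $K^c$ gains at least a uniformly positive amount'') is exactly what requires the delicate two-step argument: when $\gamma(s_1)\in K$, the restart point $t_2=t_2'$ is on $\partial K$, and one must invoke the no-cut-point-in-$K$ hypothesis at \emph{that} basepoint to guarantee $\rho(\gamma(t_2),\dot\gamma(t_2))>s'-t_2$ where $s'$ is the next exit from $K$, hence $t_3'>s_1$. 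This is the step that actually uses the hypothesis, and it is not in your outline.
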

\begin{proof}
    Pick some $(x, v) \in \partial L^+M$, denote by $\gamma$ the corresponding lightlike geodesic. By Lemma \ref{lm: finite intersection}, we know $\gamma$ intersects with $\partial M$ only at finitely many points. Suppose $\gamma([0, T]) \subset M$ and $\gamma((T, T+\epsilon)) \subset M^c$. Then $(S, \tau)(x, v) = (y, w)$ where $(y, w) = (\gamma(T), \dot{\gamma}(T))$ and $(S, \tau)$ is the complete travel time data with respect to $M$. The goal is to determine $(y, w; T)$.

    Since $g|_{K^c \sqcup \partial K}$ is given, $(y, w; T)$ is directly recovered if $\gamma$ never entered $K$ before exiting $M$, so suppose $\gamma$ intersects with $K$ before leaving $M$. Then there exists $0 < t_1 < T$ such that $\gamma(t_1) \in \partial K$ and $\gamma([0, t_1)) \subset K^c$. Certainly we can determine $t_1$, since $g|_{K^c \sqcup \partial K}$ is given. We show that $\gamma \cap K^c$ can be recovered step by step, where for each step we can recover the travel time along $\gamma$, and $T$ will be reached in finitely many steps.

    Denote $i_0$ the uniform positive lower bound for $\rho(\gamma(t), \dot{\gamma}(t))$ for $t \in [0, T]$ from Lemma \ref{lm: lower bound for rho}. Pick sufficiently small $0<\epsilon< i_0/2$. Denote $z_1 = \gamma(t_1-\epsilon)$ and $z_1' = \gamma(t_1 - \epsilon/2)$, then both are in $K^c$. By Lemma \ref{lm: exterior recover light cone boundary} and Lemma \ref{lm: determine geodesic before cut}, $\gamma([t_1 - \epsilon/2, s_1]) \cap K^c$ can be determined where $s_1 - (t_1 - \epsilon) = \rho(\gamma(t_1 -\epsilon), \dot{\gamma}(t_1 -\epsilon))$. In fact, $\gamma([t_1 - \epsilon/2, s_1)) \cap K^c$ can also be determined. Indeed, $\gamma([t_1 - \epsilon/2, s_1]) \cap K^c$ is union of line segments, $\gamma(s_1) \in K^c$ if and only if one of the segment has an end point in $K^c$ that is not $\gamma(t_1-\epsilon/2)$. The benefit of removing $\gamma(s_1)$ is that, for any $\gamma(t) \in \gamma([t_1 - \epsilon/2, s_1)) \cap K^c$, the first cut point for $z_1$ comes after $\gamma(t)$. As a result, $\gamma$ is the unique causal geodesic connecting $z_1$ and $\gamma(t)$.

    We now recover $t$ and $\dot{\gamma}(t)$ for any $\gamma(t) \in \gamma([t_1 - \epsilon/2, s_1)) \cap K^c$. Since $\gamma(t) \in K^c$, we can pick a sequence of $y_j \in I^+(\gamma(t)) \cap K^c$ converging to $\gamma(t)$. Then $d(z_1, y_j) > 0$. We claim that $d(z_1, \cdot) = |\exp_{z_1}^{-1}(\cdot)|_g$ locally around $y_j$ and $d(\cdot, y_j) = |\exp_{y_j}^{-1}(\cdot)|_g$ locally around $z_1$, provided $j$ is sufficiently large. Indeed, this comes from the fact that $\gamma(t)$ is before the first cut point for $z_1$. By lower semi-continuity of $\rho$ and the fact that there is no conjugate point before the first cut point, $\exp_{z_1}$ and $\exp_{y_j}$ are local diffeomorphisms around $y_j$ and $z_1$, respectively. Moreover, the time separation function is positive around $z_1$ and $y_j$ because they are connected by a unique distance maximizing timelike geodesic.

    Similar computation as \cite[Lemma 5]{LOY16} gives
    \[
    v_j := d(z_1, y_j) \cdot \nabla_g d(z_1, \cdot)|_{y_j}, \quad u_j := -d(z_1, y_j) \cdot \nabla_g d(\cdot, y_j)|_{z_1}
    \]
    satisfy $\exp_{z_1}(u_j) = y_j$ and $d\exp_{z_1}|_{u_j}u_j = v_j$. Since $y_j \to \gamma(t)$, denote $u = \lim u_j$ and $v = \lim v_j$, we must have $\gamma(t) = \exp_{z_1}(u)$ and $v = d\exp_{z_1}|_u u$. So the travel time data from $z_1$ to $\gamma(t)$ along $\gamma$ is $(z_1, u; \gamma(t), v; 1)$, which is equivalent to $(\gamma(t_1-\epsilon), \dot{\gamma}(t_1-\epsilon); \gamma(t), \dot{\gamma}(t); t-(t_1-\epsilon))$. As a result, there exists $k > 0$ such that $u = k \dot{\gamma}(t_1-\epsilon)$, so $t = (t_1-\epsilon)+k$ and $\dot{\gamma}(t) = \frac{1}{k}v$ are recovered.
    
    As $t$ and $\dot{\gamma}(t)$ can be recovered for any $\gamma(t) \in \gamma([t_1-\epsilon/2, s_1))\cap K^c$, we are done if $\gamma([t_1-\epsilon/2, s_1))\cap M^c \neq \emptyset$. Otherwise it lies in $M$, we denote the supremum of all such $t$ by $t_2'$, and $\dot{\gamma}(t_2')$ can be computed by taking limit. Since we know $t_2'$, $\gamma(t_2')$ and $\dot{\gamma}(t_2')$, we keep extending $\gamma$ in $K^c$ until one of the following two things happen: either it leaves $M$, in which case we are done since this means $T$, $\gamma(T)$ and $\dot{\gamma}(T)$ are recovered; or it hits $\partial K$ again, say at time $t_2 \geq t_2'$. Again pick $0< \epsilon < i_0/2$ sufficiently small, the exact same argument shows that one can recover the travel time data on $\gamma([t_2-\epsilon/2, s_2)) \cap K^c$ where $s_2 - (t_2-\epsilon) = \rho(\gamma(t_2-\epsilon), \dot{\gamma}(t_2-\epsilon))$. Denote $t_3'$ the supremum of all $t$ such that $\gamma(t) \in \gamma([t_2-\epsilon/2, s_2)) \cap K^c$. We claim that $t_3' > s_1$. Note that then $t_3' - t_1 > s_1 - (t_1-\epsilon) - \epsilon > i_0/2$ is bounded below.

    To prove the claim, consider two cases: $\gamma(s_1) \in K^c$ or $\gamma(s_1) \notin K^c$. If $\gamma(s_1) \in K^c$, then $t_2' = s_1$ and $t_2 > t_2'$. Since $\epsilon < i_0/2$, $\gamma(t_2-\delta) \in \gamma([t_2-\epsilon/2, s_2)) \cap K^c$ for all sufficiently small $\delta$. In particular, this means $t_3' \geq t_2 > t_2' = s_1$. On the other hand, if $\gamma(s_1) \notin K^c$, then $\gamma([t_2', s_1]) \subset K$, hence $t_2 = t_2'$. Let $s'$ be the first time $\gamma$ leaves $K$ after $t_2'$, clearly $s' \geq s_1$. By the assumption that lightlike geodesic in $M$ does not have cut point, we know $\rho(\gamma(t_2), \dot{\gamma}(t_2)) > s'-t_2$. In particular, there exists $\delta > 0$ sufficiently small such that $\gamma(s'+\delta) \in K^c$ and $s'+\delta-t_2 < \rho(\gamma(t_2), \dot{\gamma}(t_2))$. Thus $t_3' \geq s' + \delta > s_1$.

    Thus we have proved that, after two steps, the amount of recovery of the travel time along $\gamma$ is $t_3'-t_1$, which is bounded below uniformly by $i_0/2$. We repeat the two-step procedure, by the uniform lower bound of the two-step size, $T$, $\gamma(T)$ and $\dot{\gamma}(T)$ can be recovered in finitely many steps. As $(x, v) \in \overline{\partial_-L^+M}$ is arbitrarily chosen, the entire lightlike complete travel time data for $M$ is recovered.
\end{proof}

Thus we have proved $M_1$ and $M_2$ admit equivalent lightlike complete travel time data (identified via $\varphi_0$). It now suffices to prove Proposition \ref{prop: complete light scattering recover metric} in Section \ref{sec: construction of bijection} and Section \ref{sec: analytic isometry}.

\subsection{Construction of bijection}\label{sec: construction of bijection}

As explained at the beginning of Section \ref{sec: exterior time separation function}, we now start the construction of the extension. The construction is mostly similar to \cite{Var09}, the difference being we only have lightlike directions and the fact that Lorentzian distance function is not a true distance. Recall that in the assumption of Theorem \ref{thm: exterior data no cut points}, the manifolds are time-orientable, so $L^+M_j$ is well-defined. 

For any $(x, v) \in L^+M_1$, for clarity we denote $\gamma^1_{x, -v}$ as the unique past pointing lightlike geodesic, where the superscript $1$ indicates it is a geodesic in $\tM_1$. Let
\[
t(x, v) = \sup\{t > 0: \gamma^1_{x, -v}([0, t]) \subset M_1\}
\]
be the first exit time for the past pointing lightlike geodesic $\gamma^1_{x,-v}$. Denote
\[
(y, w) = (\gamma^1_{x, -v}(t(x, v)), -\dot{\gamma}^1_{x, -v}(t(x, v))).
\]
Define
\[
\varphi: L^+M_1 \to M_2, \quad (x, v) \mapsto \exp^{g_2}_{\varphi_0(y)}(t(x, v)(\varphi_0)_*w).
\]
\begin{lm}\label{lm: well-defined}
    The map $\varphi$ is well-defined.
\end{lm}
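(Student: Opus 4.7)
The plan is to check three things: (a) $t(x,v) \in (0,\infty)$ and the exit point $y := \gamma^1_{x,-v}(t(x,v))$ lies in $\partial M_1$; (b) the transported datum $(\varphi_0(y),(\varphi_0)_*w)$ is a bona fide future-pointing, inward null element of $\partial_- L^+ M_2$, so that the exponential on the right-hand side makes sense; and (c) the resulting point $\varphi(x,v)$ actually lies in $M_2$, as the stated codomain requires. Once these are in place, the formula defining $\varphi$ is unambiguous on all of $L^+M_1$.

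For (a), the non-trapping hypothesis on null geodesics in $M_1$, applied to $\gamma^1_{x,-v}$, gives $t(x,v)<\infty$. If $y$ were in $M_1^\circ$, continuity of the geodesic would keep it in $M_1^\circ$ slightly past $t(x,v)$, contradicting the sup defining $t(x,v)$; combined with closedness of $M_1 \subset \tM_1$, this forces $y \in \partial M_1$, and then $w=-\dot\gamma^1_{x,-v}(t(x,v))$ is a future-pointing null vector with strictly inward component at $y$. For (b), the isometry $\varphi_0: M_1^c \sqcup \partial M_1 \to M_2^c \sqcup \partial M_2$ restricts to a diffeomorphism of boundaries, and in boundary normal coordinates its differential (which preserves the normal component by the abuse of notation fixed in the introduction) sends inward future-pointing null vectors at $y$ to inward future-pointing null vectors at $\varphi_0(y)$. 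Hence $(\varphi_0(y),(\varphi_0)_*w) \in \partial_-L^+M_2$ and the null geodesic $\gamma^2_{\varphi_0(y),(\varphi_0)_*w}$ is defined in $\tM_2$ at least up to the parameter $\tau_2(\varphi_0(y),(\varphi_0)_*w)$.

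For (c), reparameterize the forward null geodesic from $(y,w)$ inside $\tM_1$ by $\gamma^1_{y,w}(t)=\gamma^1_{x,-v}(t(x,v)-t)$ for $t\in[0,t(x,v)]$. By the definition of $t(x,v)$ together with closedness of $M_1$, one has $\gamma^1_{y,w}([0,t(x,v)])\subset M_1$, so $\tau_1(y,w)\ge t(x,v)$. The hypothesized identity $\tau_1=\tau_2\circ(\varphi_0)_*$ on $\overline{\partial_-LM_1}$ then yields $\tau_2(\varphi_0(y),(\varphi_0)_*w)\ge t(x,v)$. Using the sup definition of $\tau_2$ and the closedness of $M_2 \subset \tM_2$, one concludes $\gamma^2_{\varphi_0(y),(\varphi_0)_*w}([0,\tau_2])\subset M_2$; evaluating at parameter $t(x,v)$ gives $\varphi(x,v)=\gamma^2_{\varphi_0(y),(\varphi_0)_*w}(t(x,v))\in M_2$.

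The main technical point is this last step. The finiteness claim and the inward/null identifications are essentially bookkeeping, but ensuring that the $M_2$-side exponential actually lands inside $M_2$ (and not merely in $\tM_2$) is the place where the full complete travel time equivalence, rather than just an agreement at final exit points, is needed: it is precisely $\tau_2$ that governs the last time $\gamma^2$ is still inside $M_2$, and the inequality $t(x,v)\le \tau_2(\varphi_0(y),(\varphi_0)_*w)$ is what makes the codomain assertion $\varphi(x,v)\in M_2$ valid.
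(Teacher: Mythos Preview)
Your proof is correct and follows essentially the same approach as the paper: use non-trapping to get $t(x,v)<\infty$, then use the travel-time equivalence $\tau_1=\tau_2\circ(\varphi_0)_*$ to conclude $\tau_2(\varphi_0(y),(\varphi_0)_*w)\ge t(x,v)$, which forces the $M_2$-exponential at parameter $t(x,v)$ to land in $M_2$. One minor imprecision: in (b) you assert that $w$ has a \emph{strictly} inward component, but in general $(y,w)$ may be tangential to $\partial M_1$ (the past geodesic can exit tangentially), so you should only claim $(y,w)\in\overline{\partial_- L M_1}$; this is harmless since the hypothesis is stated on that closure. Also, here $\varphi_0$ is a genuine isometry of the collar, so $(\varphi_0)_*$ is the honest differential rather than the boundary-normal ``abuse of notation'' convention, though the two agree; note too that $\partial M_j$ is only piecewise analytic, so boundary normal coordinates need not exist at every point of $\partial M_j$.
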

\begin{proof}
    By non-trappingness of null-geodesics, $t(x, v)$ is finite. By Proposition \ref{prop: lightlike travel time data}, there exists travel time data $((y, w); (z, u); T)$ on $\tM_1$ such that
    \begin{enumerate}
        \item $T \geq t(x, v)$;
        \item it is the travel time data corresponding to $\gamma^1_{x,v}$;
        \item and $((\varphi_0(y), (\varphi_0)_*w); (\varphi_0(z), (\varphi_0)_*u); T)$ is a travel time data on $\tM_2$.
    \end{enumerate}
    As a result $\exp_{\varphi_0}^{g_2}(t(x, v)(\varphi_0)_*w) \in M_2$.
\end{proof}
Next we show that the map $\varphi(x, \cdot)$ is locally constant for any fixed $x \in M_1$. We first recall a technical lemma for analytic Riemannian metric.
\begin{lm}[Lemma 1, \cite{Var09}]\label{lm: riemannian analytic}
    Let $g^R$ be an analytic Riemannian metric on $\tilde{M}$. If $K$ is a compact subset contained in the subset of $\tilde{M}$, then there is an open $O \subset \tilde{M}$ containing $K$ and a positive number $r$ such that the squared distance function of $g^R$ is analytic on the set
    \[
    \Delta_{O,r}(K) = \{(x, y): x \in K, d_R(x, y) < r\}.
    \]
\end{lm}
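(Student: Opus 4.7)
The plan is to exploit the joint real-analyticity of the exponential map on an analytic Riemannian manifold, together with compactness of $K$, to obtain a uniform neighborhood and radius on which $d_R^2$ coincides with an explicit real-analytic function. Within the injectivity radius, the squared distance is simply $g^R(v(x,y), v(x,y))$ where $v(x,y) = \exp_x^{-1}(y)$, so the whole content of the lemma reduces to showing that $v$ depends jointly analytically on both of its arguments.

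First, I would note that in an analytic Riemannian manifold the geodesic equation has analytic coefficients, so by the real-analytic dependence of ODE solutions on initial data, the assignment $\Phi : (x, v) \mapsto (x, \exp_x(v))$ is jointly real-analytic on an open neighborhood of the zero section in $T\tilde{M}$. At every point $(x_0, 0)$ its differential is a linear isomorphism: under the canonical identification $T_{(x_0, 0)}(T\tilde{M}) \cong T_{x_0}\tilde{M} \oplus T_{x_0}\tilde{M}$, it sends $(\xi, \eta)$ to $(\xi, \xi + \eta)$. The real-analytic inverse function theorem then provides an analytic local inverse $\Psi_{x_0} : (x, y) \mapsto (x, v(x, y))$ on some product neighborhood $U_{x_0} \times U_{x_0}$ of $(x_0, x_0)$ in $\tilde{M} \times \tilde{M}$, and the function $F_{x_0}(x, y) := g^R(v(x, y), v(x, y))$ is real-analytic there.

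Next, I would identify $F_{x_0}$ with $d_R^2$. For any $(x, y) \in U_{x_0} \times U_{x_0}$ with $|v(x, y)|_{g^R}$ strictly less than the injectivity radius $\mathrm{inj}(x)$, the curve $t \mapsto \exp_x(tv(x, y))$ is the unique length-minimizing geodesic from $x$ to $y$, so $d_R(x, y) = |v(x, y)|_{g^R}$ and hence $F_{x_0}(x, y) = d_R(x, y)^2$ on an open set. The injectivity radius is continuous and strictly positive on the compact set $K$, so there exist a neighborhood $O_0$ of $K$ and a number $r_0 > 0$ with $\mathrm{inj}(x) \geq r_0$ for all $x \in O_0$; after shrinking $r_0$ we also ensure that any pair $(x, y)$ with $x \in K$ and $d_R(x, y) < r_0$ lies inside one of the analytic charts $U_{x_0} \times U_{x_0}$.

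Finally, compactness of $K$ produces a finite subcover $\{U_{x_i}\}_{i=1}^N$, and on overlaps the analytic functions $F_{x_i}$ agree because they all coincide with $d_R^2$ on a nonempty open set and therefore on the entire connected component by analytic continuation. Setting $O := \bigcup_i U_{x_i} \cap O_0$ and choosing $r \leq \min_i r_{x_i}$ small enough that $\Delta_{O, r}(K)$ is contained in the union of the charts, the pieces glue to a single real-analytic function on $\Delta_{O, r}(K)$ equal to $d_R^2$. The only delicate points are extracting a single $r > 0$ that works uniformly over $K$ and verifying that the analytic patches match on overlaps; both are handled by a routine compactness argument combined with analytic continuation, so I do not anticipate any substantial obstacle once the joint analyticity of $\exp$ is in hand.
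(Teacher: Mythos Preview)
The paper does not supply its own proof of this lemma; it is simply quoted from \cite{Var09} and used as a black box in the argument of Proposition~\ref{prop: locally constant}. Your proposal is a correct and standard proof: the joint analyticity of $(x,v)\mapsto(x,\exp_x v)$ follows from the analytic dependence of ODE solutions on initial data, the analytic inverse function theorem then gives local analyticity of $(x,y)\mapsto\exp_x^{-1}y$ and hence of $d_R^2=|\exp_x^{-1}y|_{g^R}^2$, and compactness of $K$ (via a Lebesgue-number argument and the positive lower bound on the injectivity radius) yields the uniform $r$ and the finite cover. One minor remark: invoking analytic continuation to match the $F_{x_i}$ on overlaps is unnecessary, since each $F_{x_i}$ already equals $d_R^2$ pointwise on its entire domain once you are inside the injectivity radius, so agreement on overlaps is automatic.
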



\begin{figure}
    \centering
    \begin{subfigure}{0.3\textwidth}
    \centering
    \def\svgwidth{0.9\linewidth}
    \def\svgwidth{\columnwidth}
    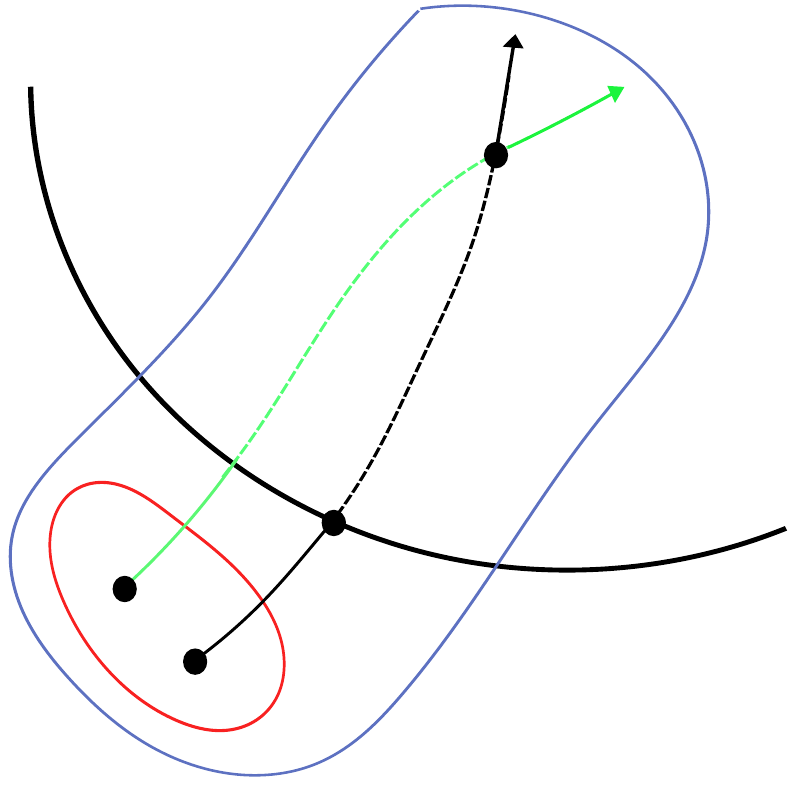

  \end{subfigure}  
    \caption{Locally consider Fermi coordinates around a fixed geodesic, and construct the analytic Riemannian metric $g^R$ there. Around $z$ and $\varphi_0(z)$, the metrics agree (preserved by $\varphi_0$).}
    \label{fig: fermi coord}
\end{figure}

\begin{pp}\label{prop: locally constant}
    For any $(x, v) \in L^+M_1$, $\varphi(x, \cdot)$ is locally constant. Hence $\varphi(x, \cdot)$ is a constant function. 
\end{pp}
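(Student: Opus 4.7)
The plan is to use an analytic-continuation argument in the spirit of Vargo \cite{Var09}. Since the future light cone $L^+_x M_1$ is connected for $n \geq 3$, it suffices to prove local constancy near each $v_0 \in L^+_x M_1$. I would fix such a $v_0$ and consider a real-analytic one-parameter family $\{v_s\}_{s\in(-\delta,\delta)} \subset L^+_x M_1$ with $v_s|_{s=0} = v_0$, and set $F(s) := \varphi(x, v_s) \in M_2$. The goal then reduces to showing $F \equiv F(0)$ on some $(-\delta,\delta)$.

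First I would establish the real-analytic dependence of $F$ on $s$. By the piecewise analytic structure of $\partial M_1$ (Lemma \ref{lm: find middle layer}), $\delta$ can be shrunk so that the past exit boundary point $y_s$ remains on a single analytic piece of $\partial M_1$. Together with analyticity of $g_1$, the exit data $(y_s, w_s, t_s)$ of $\gamma^1_{x,-v_s}$ depends analytically on $s$. Applying Proposition \ref{prop: lightlike travel time data}, $(\varphi_0(y_s), (\varphi_0)_* w_s, t_s)$ is the entry data of the corresponding lightlike geodesic in $\tilde M_2$. Since $\varphi_0$ is an analytic isometry on each analytic piece of $\partial M_1$ (Lemma \ref{lm: exterior metric}) and $\exp^{g_2}$ is analytic, $F(s) = \exp^{g_2}_{\varphi_0(y_s)}(t_s (\varphi_0)_* w_s)$ is real-analytic.

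Next I would reduce to a scalar analyticity statement. Endow $\tilde M_2$ with an auxiliary analytic Riemannian metric $g^R$; by Lemma \ref{lm: riemannian analytic}, the squared Riemannian distance $(d^R)^2$ is real-analytic on a neighborhood of $(F(0), F(0))$. After further shrinking $\delta$, the function $f(s) := (d^R(F(0), F(s)))^2$ is real-analytic on $(-\delta,\delta)$ with $f(0) = 0$. To conclude $F \equiv F(0)$, it suffices to show $f$ vanishes on a subset of $(-\delta, \delta)$ with $0$ as an accumulation point; analyticity then forces $f \equiv 0$.

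The main obstacle is producing such a vanishing. The strategy is to extend $\varphi_0$ analytically along the lightlike geodesic $\gamma^1_{y_0, w_0}$ into a neighborhood of $x$ in $\tilde M_1$. Since both metrics are analytic and $\varphi_0$ is an analytic isometry on a neighborhood of $y_0$ in $M^c_1$, an application of the Cauchy--Kowalewski theorem to the isometry PDE produces a local analytic isometry $\tilde \varphi$ on a tubular neighborhood $T_0$ of $\gamma^1_{y_0, w_0}([0, t_0]) \subset \tilde M_1$. By geodesic preservation, $\tilde\varphi$ sends $\gamma^1_{y_0, w_0}$ to $\gamma^2_{\varphi_0(y_0), (\varphi_0)_* w_0}$, so $\tilde\varphi(x) = F(0)$. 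For $s$ sufficiently small, $\gamma^1_{y_s, w_s}([0, t_s]) \subset T_0$, and the same geodesic-preservation property yields $\tilde\varphi(x) = F(s)$. This forces $F(s) = F(0)$ on a neighborhood of $s = 0$, producing the required accumulation. The delicate point, where the hypothesis of no lightlike cut points in $K_1$ is essential, is to guarantee that $\tilde\varphi$ remains single-valued on $T_0$: without this hypothesis, the analytic continuation along the geodesic could in principle branch before reaching $x$.
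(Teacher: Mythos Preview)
Your overall idea—propagating the isometric identification along the null geodesic by analytic continuation—is the right one, but two steps in your outline do not go through as written.

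\textbf{(1) The exit data need not be analytic.} You claim that by restricting to a single analytic piece of $\partial M_1$ the past exit data $(y_s,w_s,t_s)$ depends analytically on $s$. This fails when $\gamma^1_{x,-v_0}$ meets $\partial M_1$ tangentially at $y_0$: for nearby $v_s$ the first exit point can jump discontinuously, so $t_s$ and $y_s$ are not even continuous in $s$, let alone analytic. This breaks your Step~1 and hence the analyticity of $F$. The paper sidesteps the issue by going \emph{past} $y_0$ to a point $z=\gamma^1(-s)\in M_1^c$ in the open exterior (Lemma~\ref{lm: finite intersection} guarantees this is possible). Nearby light rays are then traced back the same parameter length $s$ to nearby points $z'\in M_1^c$, with no boundary-hitting involved.

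\textbf{(2) The Cauchy--Kowalevski step is circular.} The isometry system $\tilde\varphi^*g_2=g_1$ is overdetermined; Cauchy--Kowalevski does not produce a solution on a tube, and existence of a local isometric extension along the geodesic is precisely what one is trying to establish. What \emph{can} be done is to construct an explicit analytic map $W_1\to W_2$ by matching Fermi coordinates along $\gamma^1$ and $\gamma^2$, and then argue by analytic continuation that it pulls $g_2$ back to $g_1$. For that you need an \emph{open} set on which the map already coincides with $\varphi_0$; this again forces the base point into the open exterior $M_1^c$, not onto $\partial M_1$. The paper executes exactly this: it builds Fermi coordinates from $z\in M_1^c$, uses them to define an auxiliary analytic Riemannian metric $g^R_j$ on each tube, and then compares the scalar functions $\rho_j(t)=d_{R,j}^2(\exp^{g_j}_z(tu),\exp^{g_j}_{z'}(tu'))$. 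These agree for small $t$ (where $\varphi_0$ is an isometry) and are analytic in $t$ by Lemma~\ref{lm: riemannian analytic}, hence agree up to $t=s$; from $\rho_2(s)=\rho_1(s)=0$ one reads off $\varphi(x,v)=\varphi(x,v')$ after a short bookkeeping step with the travel time data.

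Two smaller points: if your Step~3 worked it would already give $F(s)=F(0)$ on a neighborhood, so Steps~1--2 are redundant; and the absence of lightlike cut points is \emph{not} used in this proposition—it enters earlier, in the reconstruction of the travel time data (Proposition~\ref{prop: lightlike travel time data}).
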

\begin{proof}
    Fix some $(x, v) \in L^+M_1$, for notation simplicity we denote $\gamma^1_{x, v}$ by $\gamma^1$, then $(\gamma^1(0), \dot{\gamma}^1(0)) = (x, v)$. By definition of $t(x, v)$ and Lemma \ref{lm: finite intersection}, there exists some $\epsilon>0$ sufficiently small such that $\gamma^1((-t(x, v)-\epsilon, -t(x, v))) \subset M^c_1$. Denote $s = t(x, v)+\epsilon$ and $(z, u) : =(\gamma^1(-s), \dot{\gamma}^1(-s))$. 

    Consider now a Fermi coordinate around $\gamma^1$ given by the following procedures. Let $e_0 = u$, $e_i \in T_z\tilde{M}_1$ for $i = 1,\dots,n-1$ be such that $e_0,\dots,e_{n-1}$ are linearly independent, and consider their parallel transport $E_i$ along $\gamma^1$. The Fermi coordinate in a neighborhood $W_1$ of $\gamma^1$ is given by
    \[
    (r^0, \cdots, r^{n-1}) \mapsto \exp^{g_1}_{\gamma^1(-s+r^0)}(\sum_{i=1}^{n-1}r^iE_i).
    \]
    One may choose $e_i$ such that the metric $g_1$ along $\gamma^1$ is given by
    \[
    g_1|_{\gamma^1} = 2dr^0\otimes dr^1 + \sum_{j,k=2}^{n-1}\theta_{jk}dr^j\otimes dr^k,
    \]
    see for example \cite[Section 4]{FO22}. Let us assume that the Fermi coordinate is constructed on $\gamma^1([-s-\delta, \delta])$ for some sufficiently small $\delta$, so that $\gamma^1([-s, 0])$ is in the interior of $W_1$. See Figure \ref{fig: fermi coord}.

    We now proceed to construct a corresponding Fermi coordinate in $\tilde{M}_2$ along $\gamma^2$, where
    \[
    \gamma^2(-s+t) = \exp^{g_2}_{\varphi_0(z)}(t(\varphi_0)_*u).
    \]
    First of all, $\gamma^2$ is well-defined on $(-s-\delta, \delta)$ for small $\delta$, since 
    \begin{align}
        &\gamma^2([-s,-t(x, v)]) = \varphi_0(\gamma^1([-s,-t(x, v)])) \subset \tM_2,\\
        &\gamma^2(0) = \exp^{g_2}_{\varphi_0(z)}(s(\varphi_0)_*u) = \exp^{g_2}_{\varphi_0(y)}(t(x, v)(\varphi_0)_*w) = \varphi(x, v) \in M_2,\label{eq: gamma_2}
    \end{align}
    where $(y, w) = (\gamma^1(-t(x, v)), -\dot{\gamma}^1(-t(x, v)))$. We then construct the Fermi coordinate using the parallel transport of $(\varphi_0)_*e_i \in T_{\varphi_0(z)}\tilde{M}_2$, denoted by $E_i'$. Note that $\varphi_0$ being isometry on $M^c_j$ implies for all $t$ around $-s$, $\varphi_0(\gamma^1(t)) = \gamma^2(t)$ and $(\varphi_0)_*E_i(t) = E_i'(t)$. The Fermi coordinate in a neighborhood $W_2$ around $\gamma^2([-s-\delta, \delta])$ is thus given by
    \[
    (r^0, \cdots, r^{n-1}) \mapsto \exp^{g_2}_{\gamma^2(-s+r^0)}(\sum_{i=1}^{n-1}r^iE_i').
    \]
    By the construction of $E_i$ and $E_i'$, we have
    \[
    g_2|_{\gamma^2} = 2dr^0\otimes dr^1 + \sum_{j,k=2}^{n-1}\theta'_{jk}dr^j\otimes dr^k.
    \]

    For $j = 1, 2$, define $X_j = \partial_{r^0} - \partial_{r^1}$ in the corresponding Fermi coordinates. Then by shrinking $W_1$ and $W_2$, $X_1$ and $X_2$ are both analytic timelike vector fields. Moreover, there exists $U_1 \subset W_1 \cap M^c_1$ neighborhood of $z$ and $U_2 \subset W_2\cap M^c_2$ neighborhood of $\varphi_0(z)$ such that $\varphi_0(U_1) = U_2$ and $(\varphi_0)_*(X_1|_{U_1}) = X_2|_{U_2}$. Consider the analytic Riemannian metric on $W_j$ given by
    \[
    g^R_j = g_j -2(g_j(X_j, X_j))^{-1}X_j^\flat \otimes X_j^\flat,
    \]
    we thus have $g^R_1|_{U_1} = \varphi_0^*(g^R_2|_{U_2})$, denote $d_{R,j}$ the corresponding distance function on $W_j$.
    
    We are now ready to show $\varphi(x, \cdot)$ is constant around $v$. By continuity there exists a sufficiently small neighborhood $\mathcal{U}$ of $v$ in $L^+_xM_1$ such that for all $v' \in \mathcal{U}$,
    \[
    \exp^{g_1}_x(-sv') \in U_1, \quad \{\exp_x^{g_1}(tv'): t\in[-s,0]\} \subset W_1.
    \]
    Denote $(z', u') := (\exp_x^{g_1}(-sv'), d\exp_x^{g_1}|_{-sv'}v')$. Define
    \begin{align*}
        &\rho_1(t) = d_{R,1}^2(\exp^{g_1}_z(tu), \exp^{g_1}_{z'}(tu')),\\
        &\rho_2(t) = d_{R,2}^2(\exp^{g_2}_{\varphi_0(z)}(t(\varphi_0)_*u), \exp^{g_2}_{\varphi_0(z')}(t(\varphi_0)_*u')).
    \end{align*}
    By Lemma \ref{lm: riemannian analytic}, $\rho_1$ and $\rho_2$ are well-defined analytic function on $[0, s]$, provided that $\mathcal{U}$ is sufficiently small. Since $\varphi_0$ preserves both Riemannian and Lorentzian metrics on $U_j$, $\rho_1(t) = \rho_2(t)$ for all $t$ sufficiently small. Hence $\rho_1 = \rho_2$ on $[0, s]$. By definition of $\rho_1$, $\rho_2(s) = \rho_1(s) = 0$; together with \eqref{eq: gamma_2}, we have
    \[
    \varphi(x, v) = \exp^{g_2}_{\varphi_0(z)}(s(\varphi_0)_*u) = \exp^{g_2}_{\varphi_0(z')}(s(\varphi_0)_*u').
    \]
    It now suffices to show
    \[
    \exp^{g_2}_{\varphi_0(z')}(s(\varphi_0)_*u') = \varphi(x, v').
    \]
    
    This follows immediately from the same argument as in \cite[Lemma 1]{Var09}, we include here for completeness. By Lemma \ref{lm: finite intersection} every lightlike geodesic only intersects $\partial M_j$ finitely many times, so the set of lightlike travel time data with respect to $\tilde{\gamma}^2(t) = \exp_{\varphi_0(z')}^{g_2}(t(\varphi_0)_*u')$ is given by
    \[
    ((\tilde{\gamma}^2(s_k), \dot{\tilde{\gamma}}^2(s_k));(\tilde{\gamma}^2(s_k+T_k), \dot{\tilde{\gamma}}^2(s_k+T_k)); T_k), \quad k = 1 \dots N
    \]
    with $0<s_1 \leq s_1+T_1 <  s_2 \leq s_2+T_2 < \dots < s_N \leq s \leq s_N+T_N$ and
    \[
    \tilde{\gamma}^2((0, s_1)) \subset M^c_2, \quad \tilde{\gamma}^2([s_k, s_k+T_k]) \subset M_2, \quad \tilde{\gamma}^2((s_k+T_k, s_{k+1})) \subset M^c_2.
    \]
    Denote $\tilde{\gamma}^1(t) = \exp^{g_1}_{z'}(tu')$. By Proposition \ref{prop: lightlike travel time data} and the fact that $\varphi_0$ preserves metrics on $M^c_j$, a simple induction shows that
    \[
    \left(\varphi_0(\tilde{\gamma}^1(t)), (\varphi_0)_*(\dot{\tilde{\gamma}}^1(t))\right) = \left(\tilde{\gamma}^2(t), \dot{\tilde{\gamma}}^2(t)\right), \quad \forall t = s_k, s_k+T_k.
    \]
    Denote $(y', w') = (\tilde{\gamma}^1(s_N), \dot{\tilde{\gamma}}^1(s_N))$ the entering point for $(x, v')$, we know $t(x, v') = s - s_N$. This implies
    \begin{align*}
        \exp^{g_2}_{\varphi_0(z')}(s(\varphi_0)_*u') &= \tilde{\gamma}_2(s_N+(s-s_N)) \\
        &= \exp^{g_2}_{\varphi_0(\tilde{\gamma}^1(s_N))}((s-s_N)(\varphi_0)_*\dot{\tilde{\gamma}}^1(s_N)) \\ 
        &=\exp^{g_2}_{\varphi_0(y')}(t(x, v')(\varphi_0)_*w') \\
        &= \varphi(x, v').
    \end{align*}
    Thus $\varphi(x, \cdot)$ is locally constant. For Lorentzian manifold of dimension at least 3 the set of future pointing lightlike directions is connected, so we can define $\varphi(x) = \varphi(x, \cdot)$.
\end{proof}

Next we show that it is a bijection.
\begin{pp}\label{prop: bijection}
    $\varphi$ is a bijection from $M_1$ to $M_2$.
\end{pp}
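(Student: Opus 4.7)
The plan is to prove bijectivity by constructing an explicit two-sided inverse to $\varphi$. Since the hypotheses of Proposition \ref{prop: complete light scattering recover metric} are symmetric under swapping $(\tM_1, g_1, M_1)$ with $(\tM_2, g_2, M_2)$ and replacing $\varphi_0$ by $\varphi_0^{-1}$, repeating the construction from Lemma \ref{lm: well-defined} and Proposition \ref{prop: locally constant} in the reverse direction produces a map $\varphi': M_2 \to M_1$. I will then verify $\varphi' \circ \varphi = \mathrm{id}_{M_1}$; the identity $\varphi \circ \varphi' = \mathrm{id}_{M_2}$ will follow by the same argument with the roles of $M_1$ and $M_2$ exchanged, which yields bijectivity.

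To establish the key identity, I would fix $x \in M_1$ and any $v \in L^+_x M_1$, and let $(y, w)$ and $t(x, v)$ be the past exit data of $\gamma^1_{x,-v}$, so that by construction
\[
q := \varphi(x) = \exp^{g_2}_{\varphi_0(y)}\bigl(t(x, v)\,(\varphi_0)_* w\bigr).
\]
Setting $\tilde{\gamma}^2(s) := \exp^{g_2}_{\varphi_0(y)}\bigl(s\,(\varphi_0)_* w\bigr)$, the central step will be to invoke Proposition \ref{prop: lightlike travel time data} to identify the complete lightlike travel time data of $\tilde{\gamma}^2$ inside $M_2$ with those of the forward geodesic from $(y, w)$ in $\tM_1$, which is precisely the time-reversal of $\gamma^1_{x,-v}|_{[0, t(x, v)]}$. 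This matching should force $\tilde{\gamma}^2([0, t(x, v)]) \subset M_2$ with $q \in M_2$, and $\tilde{\gamma}^2$ to leave $M_2$ for parameters immediately greater than $t(x, v)$, mirroring the fact that $\gamma^1_{x,-v}$ exits $M_1$ at $y$.

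Now I would take $v' := \dot{\tilde{\gamma}}^2(t(x, v)) \in L^+_q M_2$ and feed it into the definition of $\varphi'$ at $q$. Because $\gamma^2_{q,-v'}$ is just $\tilde{\gamma}^2$ traversed backwards from $q$, its past exit time from $M_2$ is exactly $t(x, v)$, with exit data $\bigl(\varphi_0(y),\,(\varphi_0)_* w\bigr)$. Applying the definition of $\varphi'$ and using $w = -\dot{\gamma}^1_{x,-v}(t(x, v))$ then yields
\[
\varphi'(q) \;=\; \exp^{g_1}_{y}\bigl(t(x, v)\, w\bigr) \;=\; \gamma^1_{x,-v}(0) \;=\; x,
\]
which gives $\varphi' \circ \varphi = \mathrm{id}_{M_1}$ as required.

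The hardest point I anticipate lies in rigorously ensuring that $\tilde{\gamma}^2$ stays in $M_2$ throughout $[0, t(x, v)]$: although $\gamma^1_{x,-v}$ lies in $M_1$ on that interval, it could in principle touch $\partial M_1$ tangentially at intermediate parameter values, and each such transition must be shown to be faithfully reproduced by $\tilde{\gamma}^2$ in $M_2$ so that no premature re-exit displaces $q$ from $M_2$. Lemma \ref{lm: finite intersection} reduces this to finitely many transitions, each of which is then transported across $\varphi_0$ by Proposition \ref{prop: lightlike travel time data}. Boundary points $x \in \partial M_1$ will be handled directly: choosing $v$ with $-v$ outward pointing gives $t(x, v) = 0$ and $\varphi(x) = \varphi_0(x) \in \partial M_2$, compatible with $\varphi$ extending $\varphi_0$.
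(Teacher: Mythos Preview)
Your approach is the same as the paper's: construct the reverse map (the paper calls it $\psi$) and verify it inverts $\varphi$ by feeding in the specific lightlike direction $v' = \dot{\tilde{\gamma}}^2(t(x,v))$ at $q$. One small correction: for the past exit time of $(q, v')$ to be exactly $t(x,v)$ you need $\tilde{\gamma}^2$ to leave $M_2$ for parameters just \emph{below $0$}, not above $t(x,v)$; this follows since $\varphi_0$ is an isometry on $M^c_j$ and the backward ray from $(y,w)$ lies in $M_1^c$ there. Your ``hardest point'' is also simpler than you indicate: Proposition~\ref{prop: lightlike travel time data} gives $\tau_2(\varphi_0(y),(\varphi_0)_*w)=\tau_1(y,w)\ge t(x,v)$ in one stroke, so $\tilde{\gamma}^2([0,t(x,v)])\subset M_2$ without tracking intermediate boundary touches.
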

\begin{proof}
    Considering the same construction of $\psi: L^+M_2 \to M_1$. Proposition \ref{prop: locally constant} shows that $\psi: M_2 \to M_1$ is also well-defined. For any $(x, v) \in L^+M_1$, by Proposition \ref{prop: lightlike travel time data}, $\exp^{g_2}_{\varphi_0(y)}(t(\varphi_0)_*w) \in M_2$ for all $0 \leq t \leq t(x, v)$ where $(y, w)$ is the first entry point for $(x, v)$. Then
    \[
    \psi(\varphi(x)) = \psi(\exp^{g_2}_{\varphi_0(y)}(t(x, v)(\varphi_0)_*w)) = \exp^{g_1}_y(t(x, v)w) = x.
    \]
    So $\varphi$ is a bijection with $\varphi^{-1} = \psi$.
\end{proof}

\subsection{Analytic isometry}\label{sec: analytic isometry}
Define $\tilde{\varphi}(x) = \varphi_0(x)$ if $x \in M^c_1$, and $\tilde{\varphi}(x) = \varphi(x)$ if $x \in M_1$.
\begin{pp}\label{prop: preserve causality}
    $\gamma$ is a future pointing lightlike geodesic in $\tilde{M}_1$ if and only if $\tilde{\varphi}(\gamma)$ is a future pointing lightlike geodesic in $\tilde{M}_2$. In particular, $\tilde{\varphi}$ and $\tilde{\varphi}^{-1}$ preserves causality.
\end{pp}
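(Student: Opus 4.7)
The plan is to decompose any future pointing lightlike geodesic $\gamma: I \to \tilde M_1$ into finitely many pieces determined by its boundary crossings, verify that $\tilde\varphi$ sends each piece to a lightlike geodesic in $\tilde M_2$, and then use the tangent-vector matching guaranteed by Proposition \ref{prop: lightlike travel time data} to see that the pieces concatenate into a single smooth lightlike geodesic.

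By Lemma \ref{lm: finite intersection}, $\gamma$ meets $\partial M_1$ at only finitely many parameter values, so $I$ splits into open subintervals on which $\gamma$ lies entirely in $M_1^\circ$ or entirely in $M_1^c$. On a subinterval with $\gamma \subset M_1^c$, the map $\tilde\varphi|_{M_1^c} = \varphi_0$ is an analytic isometry, so $\varphi_0 \circ \gamma$ is a future pointing lightlike geodesic in $M_2^c$. On a subinterval $(a,b)$ with $\gamma \subset M_1^\circ$, let $(y,w) \in \partial_- L^+M_1$ denote the past entry data at $s = a$. For every $s \in (a,b)$ the past exit data of $(\gamma(s), \dot\gamma(s))$ still equals $(y,w)$ with exit time $s - a$, so by the definition of $\varphi$,
\[
\varphi(\gamma(s)) = \exp^{g_2}_{\varphi_0(y)}\bigl((s-a)(\varphi_0)_* w\bigr).
\]
This traces out the fixed lightlike geodesic $\gamma^2(t) := \exp^{g_2}_{\varphi_0(y)}(t(\varphi_0)_*w)$ in $\tilde M_2$, showing in particular that $\varphi \circ \gamma|_{(a,b)}$ is a lightlike geodesic of $g_2$.

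It remains to glue the interior and exterior segments. At the entry parameter $s = a$, the exterior piece $\varphi_0 \circ \gamma$ arrives with position $\varphi_0(y)$ and velocity $(\varphi_0)_* w$, which coincide exactly with the initial data of $\gamma^2$; a symmetric statement at the exit parameter $s = b$ follows from Proposition \ref{prop: lightlike travel time data} applied to the exit pair of $\gamma$. By the uniqueness of geodesics, the pieces concatenate into a single future pointing lightlike geodesic in $\tilde M_2$, and iterating through the finitely many crossings completes the forward implication. The converse follows by applying the same argument to the inverse $\tilde\varphi^{-1}$ obtained from Proposition \ref{prop: bijection}, which admits an analogous construction by symmetry.

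For the ``in particular'' statement, preservation of lightlike geodesics through every point implies preservation of the full light cone at every point; combined with the built-in preservation of time orientation, this yields preservation of future causal cones and hence of piecewise smooth future pointing causal curves. The principal obstacle in the proof is the tangent-level matching across $\partial M_1$: position continuity of $\tilde\varphi$ is immediate from the two-sided definition, but the equality of tangent vectors at each crossing is the substantive assertion, supplied precisely by the preservation of the lightlike entry and exit data in Proposition \ref{prop: lightlike travel time data}, which must be invoked at every boundary crossing.
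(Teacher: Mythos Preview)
Your proof is correct and follows essentially the same approach as the paper: decompose $\gamma$ via Lemma~\ref{lm: finite intersection}, handle exterior segments through the isometry $\varphi_0$, handle interior segments through the defining formula for $\varphi$, and invoke Proposition~\ref{prop: lightlike travel time data} to match data at the crossings so the pieces glue into a single geodesic; the converse is by symmetry via $\tilde\varphi^{-1}$. Two minor points: the entry data $(y,w)$ may lie in $\overline{\partial_-L^+M_1}$ rather than $\partial_-L^+M_1$ (tangential crossings are allowed), and your decomposition by all boundary touches, rather than by maximal $M_1$/$M_1^c$ subintervals, means the ``past exit at $s=a$'' claim can fail at a tangential touch where $\gamma$ remains in $M_1$ on both sides---though in that case the formula for $\varphi\circ\gamma$ on the two adjacent intervals is the \emph{same} geodesic anyway, so the argument survives.
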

\begin{proof}
    Let $\gamma$ be a (connected) future pointing lightlike geodesic in $\tilde{M}_1$, by extending it we might as well assume $\gamma(0) \notin M_1$. By Lemma \ref{lm: finite intersection} it can be decomposed into finitely many segments: $\gamma((s_k, s_{k+1})) \subset M^c_1$ for $k$ odd, and $\gamma([s_k, s_{k+1}]) \subset M_1$ for $k$ even. Denote $(y_k, w_k) = (\gamma(s_k), \dot{\gamma}(s_k))$. Since $\varphi_0$ preserves metric on $M^c_j$, for all $k$ odd and $t \in (0, s_{k+1}-s_k)$, $\tilde{\varphi}(\gamma(s_k+t)) = \exp_{\varphi_0(y_k)}^{g_2}(t(\varphi_0)_*w_k)$ is a future pointing lightlike geodesic. By definition of $\varphi(x, v)$, for all $k$ even, $\tilde{\varphi}(\gamma(s_k+t)) = \exp^{g_2}_{\varphi_0(y_k)}(t(\varphi_0)_*w_k)$ where $t \in [0, s_{k+1}-s_k]$. So $\tilde{\varphi}(\gamma)$ is a piecewise future pointing lightlike geodesic. In particular, Proposition \ref{prop: lightlike travel time data} and a simple induction shows that for all $t$,
    \begin{equation}\label{eq: tilde varphi}
        \tilde{\varphi}(\gamma(t)) = \exp^{g_2}_{\varphi_0(\gamma(0))}(t(\varphi_0)_*\dot{\gamma}(0)).
    \end{equation}
    The proof for $\tilde{\varphi}^{-1}$ is the same, and so they preserve causality.
\end{proof}

\begin{pp}
    $\tilde{\varphi}$ is an analytic isometry.
\end{pp}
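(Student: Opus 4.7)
The plan is to upgrade $\tilde{\varphi}$ to a global analytic isometry in three stages: first establish analyticity, then show $d\tilde{\varphi}$ preserves the null cone so $\tilde{\varphi}$ is conformal, and finally pin the conformal factor to one by analytic continuation from $M_1^c$.

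For analyticity I would exploit the exponential formula \eqref{eq: tilde varphi}. On $M_1^c$ the map $\tilde{\varphi}=\varphi_0$ is already analytic by Lemma \ref{lm: exterior metric}. At any $x_0 \in M_1$, pick $v_0 \in L^+_{x_0}\tilde{M}_1$ and extend the past-null geodesic until it enters $M_1^c$, so that $x_0 = \exp^{g_1}_{z_0}(s_0 u_0)$ for some $z_0 \in M_1^c$, $s_0>0$, and $u_0 \in L^+_{z_0}\tilde{M}_1$. On a small neighborhood $W$ of $x_0$, I construct an analytic past-null vector field $v(p)$ with $v(x_0) = -v_0$: after fixing a transverse coordinate direction, this reduces to choosing the appropriate analytic branch of the quadratic $g_1$-null equation, which is a regular quadric away from the origin. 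Integrating the geodesic flow for time $s_0$ yields analytic maps $z(p) = \gamma_{p,v(p)}(s_0) \in M_1^c$ and $u(p) = -\dot{\gamma}_{p,v(p)}(s_0)$ with $p = \exp^{g_1}_{z(p)}(s_0 u(p))$. Equation \eqref{eq: tilde varphi} then writes
\[
\tilde{\varphi}(p) \;=\; \exp^{g_2}_{\varphi_0(z(p))}\bigl(s_0\,(\varphi_0)_* u(p)\bigr) \quad \text{on } W
\]
as a composition of analytic maps. The symmetric construction applied to $\tilde{\varphi}^{-1}$, combined with Proposition \ref{prop: bijection}, then gives that $\tilde{\varphi}$ is a global analytic diffeomorphism.

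Next, differentiating \eqref{eq: tilde varphi} at any $t$ identifies $d\tilde{\varphi}|_{\gamma(t)}(\dot{\gamma}(t))$ with the tangent vector of the $g_2$-null geodesic $\tilde{\varphi}\circ\gamma$ at time $t$, which is therefore $g_2$-null. Because every $p\in \tilde{M}_1$ lies on a $g_1$-null geodesic based in $M_1^c$ along any prescribed null direction at $p$ (extend backward and invoke the non-trapping hypothesis), the linear isomorphism $d\tilde{\varphi}|_p$ sends the whole $g_1$-null cone at $p$ into the $g_2$-null cone at $\tilde{\varphi}(p)$. In dimension $n\geq 3$ the null cone is an irreducible quadric, so $\tilde{\varphi}^*g_2|_p$ must be a nonzero scalar multiple of $g_1|_p$. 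This produces a positive analytic conformal factor $\Omega$ on $\tilde{M}_1$ with $\tilde{\varphi}^*g_2 = \Omega^2 g_1$.

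Finally, $\Omega \equiv 1$ on the non-empty open set $M_1^c$, since $\tilde{\varphi}=\varphi_0$ is an isometry there by Lemma \ref{lm: exterior metric}. As $\Omega$ is real analytic on the connected manifold $\tilde{M}_1$, analytic continuation forces $\Omega \equiv 1$ globally, hence $\tilde{\varphi}^*g_2 = g_1$. The main technical obstacle is the first stage: securing an analytic selection of $v(p)$, and thus of $(z(p),u(p))$, on a full neighborhood of $x_0$, rather than merely a smooth selection. Once this analytic parameterization is in hand, the remaining two stages are essentially formal.
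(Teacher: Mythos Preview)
Your argument is correct and close in spirit to the paper's. The paper actually gives three alternative proofs: one via Hawkins' conformality lemma plus Cauchy--Kowalevski, one via Hawkins plus a direct reparameterization argument forcing the conformal factor to be $1$, and one that builds analytic charts out of $n$ analytic lightlike vector fields and iterated exponential maps, in which $\tilde\varphi$ becomes the identity. Your first stage is a streamlined variant of the third approach: instead of $n$ null fields and iterated charts, you use a single analytic null field and flow backward for a fixed time $s_0$ into $M_1^c$, then read off $\tilde\varphi$ from \eqref{eq: tilde varphi} as a composition of analytic maps. This is clean and works.

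One remark: once your first stage is done, the second and third stages are redundant. With $\tilde\varphi$ analytic, the pullback $\tilde\varphi^*g_2$ is an analytic metric on $\tilde M_1$ that agrees with $g_1$ on the open set $M_1^c$; analytic continuation already gives $\tilde\varphi^*g_2 = g_1$ globally, with no need to pass through the conformal step. The paper's third proof ends exactly this way. Your null-cone argument for conformality is correct (and is essentially what underlies Hawkins' lemma), but you can shortcut it entirely. The ``technical obstacle'' you flag---selecting an analytic null direction field---is handled in the paper by an explicit quadratic formula in analytic coordinates, so there is no real difficulty there.
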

\begin{proof}
    By Proposition \ref{prop: preserve causality} and \cite[Lemma 19]{Haw14}, $\tilde{\varphi}$ is a conformal diffeomorphism. Then $g_1 = f\tilde{\varphi}^*g_2$ on $\tilde{M}_1$ with positive $f \in C^\infty(\tilde{M}_1)$ (note that so far we do not know if $f$ is analytic). We provide several different proofs from here.

    Since the dimension is at least 3, $g_1 = f\varphi^*g_2$ provides a system of $\frac{n(n+1)}{2}$ equations for $n+1$ unknowns (namely, $\partial_j \varphi^k$ and $f$ in local coordinates), and $g_1$ and $g_2$ are both analytic. By Cauchy-Kowalevski Theorem (see for example \cite[Chapter 1D]{Fol95}), the smooth solution $\varphi$ and $f$ must be analytic. As $f \equiv 1$ on $M^c_1$, it must be 1 everywhere, and the theorem is proved.
    
    We can also prove $f \equiv 1$ using the special form of $\tilde{\varphi}$ in \eqref{eq: tilde varphi}, without using any analyticity assumption. Let $\gamma^1(t)$ be a lightlike geodesic in $\tilde{M}_1$ such that $\gamma^1(0) \in M^c_1$. Then by \eqref{eq: tilde varphi}, $\gamma^1(t) = \tilde{\varphi}^{-1}(\gamma^2(t))$ for some lightlike geodesic $\gamma^2(t)$ in $\tilde{M}_2$. On the other hand, $\tilde{\varphi}^{-1}(\gamma^2(t))$ is also a lightlike geodesic for $\tilde{\varphi}^*g_2$ on $\tilde{M}_1$. Since $g_1 = f\tilde{\varphi}^*g_2$, $\tilde{\varphi}^{-1}(\gamma^2(s(t)))$ is a lightlike geodesic for $g_1$ (see for example \cite[Section 3.2]{Ste25}) with
    \[
    \dot{s} = \frac{1}{f(\tilde{\varphi}^{-1}(\gamma^{2}(s)))}, \quad s(0) = 0.
    \]
    Note that $\gamma^1(t) = \tilde{\varphi}^{-1}(\gamma^2(t))$ and $\tilde{\varphi}^{-1}(\gamma^2(s(t)))$ are both geodesics for $g_1$, and
    \[
    \tilde{\varphi}^{-1}(\gamma^2(s(0))) = \gamma^1(0), \quad \left.\frac{d}{dt}\right|_0\tilde{\varphi}^{-1}(\gamma^2(s(t))) = \frac{1}{f(\gamma^1(0))}\tilde{\varphi}_*^{-1}(\dot{\gamma}^2(0)) = \dot{\gamma}^1(0),
    \]
    where we used the fact that $\gamma^1(0) \in M^c_1$ and $\tilde{\varphi}$ is an isometry between $M^c_j$ so $f|_{M^c_1} \equiv 1$. This means they must be the same geodesic, so
    \[
    \tilde{\varphi}^{-1}(\gamma^2(t)) = \gamma^1(t) = \tilde{\varphi}^{-1}(\gamma^2(s(t))).
    \]
    Then $s(t) = t$ for all $t$, so $f \equiv 1$ on $\gamma^1$. The set of all lightlike geodesics cover the entire $\tilde{M}_1$, so $f \equiv 1$ on $\tilde{M}_1$, $g_1 = \tilde{\varphi}^*g_2$. Since both $g_1$ and $g_2$ are analytic, $\tilde{\varphi}$ is analytic.

    Finally, given the special form of $\tilde{\varphi}$ in \eqref{eq: tilde varphi}, we can modify the proof in \cite[Lemma 19]{Haw14} to show $\tilde{\varphi}$ is an analytic isometry directly. Consider some $p \in \tilde{M}_1$, locally choose an analytic coordinate system $x^1, \cdots, x^n$, we may assume $\partial_{x^1}$ is timelike and $\partial_{x^2}, \cdots, \partial_{x^n}$ are spacelike. Then for $j = 2,\cdots, n$, define
    \[
    a_j = -g_{1j}+\sqrt{g_{1j}^2-g_{11}g_{jj}}
    \]
    which is analytic because $g_{1j}^2-g_{11}g_{jj}$ is uniformly bounded below by a positive number ($g_{11} < 0$ and $g_{jj} > 0$). Then $V_j = a_j\partial_{x^1}+\partial_{x^j}$ are linearly independent analytic lightlike vector fields. Similarly, $V_1 = a_1\partial_{x^1}-\partial_{x^2}$ where
    \[
    a_1 = g_{12}+\sqrt{g_{12}^2-g_{11}g_{22}}
    \]
    is lightlike and $V_1, \cdots, V_n$ form an analytic basis for $T\tilde{M}$ around $p$. By Inverse Function Theorem and analyticity of $g_1$, locally
    \[
    (t_1, \cdots, t_n) \mapsto \exp^{g_1}(t_nV_n) \circ \cdots \circ \exp^{g_1}(t_1V_1)(p)
    \]
    is an analytic diffeomorphism from a neighborhood of 0 in $\mathbb{R}^n$ to neighborhood of $p$ in $\tilde{M}_1$, so it is an analytic chart. Let $W_j = \tilde{\varphi}_*V_j$, by \eqref{eq: tilde varphi} they are linearly independent lightlike directions in $\tilde{M}_2$, so around $\tilde{\varphi}(p)$ we have an analytic chart
    \[
    (t_1, \cdots, t_n) \mapsto \exp^{g_2}(t_nW_n) \circ \cdots \circ \exp^{g_2}(t_1W_1)(\tilde{\varphi}(p)).
    \]
    With respect to these charts, $\tilde{\varphi}$ is simply the identity map between neighborhoods of 0 by \eqref{eq: tilde varphi}, so it is an analytic map. Thus $g_1$ and $\tilde{\varphi}^*g_2$ are both analytic metric on $\tM_1$ that agrees on $M^c_1$, meaning they are the same everywhere.
\end{proof}

\begin{remark}
    We emphasize that, as shown in the proof, the form of $\tilde{\varphi}$ in \eqref{eq: tilde varphi} directly implies isometry, without referring to analyticity. Hence if two smooth Lorentzian manifolds admit a bijective causal preserving map that also preserves parametrization of lightlike geodesics, then they are isometric. 
\end{remark}

Thus the proof of Theorem \ref{thm: exterior data no cut points} can be summarized as follows.
\begin{proof}[Proof of Theorem \ref{thm: exterior data no cut points}:]
    By Lemma \ref{lm: exterior metric}, the metric on $K^c$ is determined. By Lemma \ref{lm: find middle layer} and Proposition \ref{prop: lightlike travel time data}, one can find a middle layer $K \subset\subset M \subset\subset \tM$ with piecewise analytic boundary, such that the lightlike complete travel time data for $M$ are determined. By Proposition \ref{prop: complete light scattering recover metric}, $(\tM, g)$ is determined.
\end{proof}

\subsection{Non-time-orientable case}
Indeed, Section \ref{sec: construction of bijection} and Section \ref{sec: analytic isometry} have proved Proposition \ref{prop: complete light scattering recover metric} for spacetime. When the manifold is non-time-orientable, we only need a small modification.
\begin{proof}[Proof of Proposition \ref{prop: complete light scattering recover metric}:]
    Section \ref{sec: construction of bijection} and Section \ref{sec: analytic isometry} gave the proof for when the manifolds are time-orientable. Suppose now that they are not time-orientable, then $L^+M_j$ is not well-defined. Instead, since the lightlike complete travel time data are equivalent, we can use the same definition for $\varphi$ with domain being the entire $LM_1$. Lemma \ref{lm: well-defined} and Proposition \ref{prop: locally constant} still proves $\varphi$ is well-defined and locally constant, but this time $L_xM_1$ contains two connected components, so we need to show the two agree.

    Consider some $(x, v) \in LM$, again denote \[
    (y, w) = (\gamma^1_{x, -v}(t(x, v)), -\dot{\gamma}^1_{x, -v}(t(x, v)))
    \]
    where $t(x, v)$ is the first time the backward geodesic $\gamma_{x, -v}$ leaves $M$. If we denote
    \[
    (z, u) = (\gamma^1_{x, v}(t(x, -v)), -\dot{\gamma}^1_{x, v}(t(x, -v)))
    \]
    the corresponding one for $(x, -v)$, then by construction
    \[
    (z, -u) = S_1(y, w), \quad t(x, v) + t(x, -v) = \tau_1(y, w).
    \]
    Since the lightlike complete travel time data are equivalent, we immediately have
    \begin{align*}
    \varphi(x, v) &= \exp^{g_2}_{\varphi_0(y)}(t(x, v)(\varphi_0)_*w)\\
    &= \exp^{g_2}_{\varphi_0(z)}((\tau_2(\varphi_0(y), (\varphi_0)_*w) - t(x, v))(\varphi_0)_*u)\\
    &= \exp^{g_2}_{\varphi_0(z)}(t(x, -v)(\varphi_0)_*u)\\
    &= \varphi(x, -v).
    \end{align*}
    Thus $\varphi(x, \cdot)$ is a constant function. The rest of the proof is the same as the time-orientable case.
\end{proof}


\section{Boundary rigidity}\label{sec: boundary rigidity}

We now study the boundary rigidity problem for an analytic globally hyperbolic Lorentzian manifold with timelike boundary. In particular, we prove Theorem \ref{thm: boundary distance main thm} in this section. In the setting of Theorem \ref{thm: exterior data no cut points}, since the time separation function is defined in the ambient manifold, $y \in J^+(x)$ implies the existence of a distance maximizing causal geodesic. 

In the contrast, in a manifold with boundary, the situation becomes much more subtle. 
For example, if the timelike boundary is strictly concave in a small neighborhood, then even locally one can find $x < y$ whose distance maximizing curve is not given by a pregeodesic. This makes identifying the intersection of light cone and boundary more challenging than the exterior case.

\subsection{Cut locus in Lorentzian manifolds with boundaries}\label{sec: cut locus in lorentzian manifolds with boundaries}
In this subsection, we prove a sequence of lemmas about cut points in a Lorentzian manifold $(M,g)$ with timelike boundary.

For $(x,v) \in L^+M$, we define  the null cut locus function
\begin{align}\label{def: null cut bd}
\rho(x,v) = \inf \{ s\in [0, \mathcal{T}(x,v)]:\ d(x, \gamma_{x,v}(s)) > 0 \},
\end{align}
where $\gamma_{x, v}(s)$ is the unique null geodesic starting from $x$ in the direction of $v$ and $\mathcal{T}(x,v)$ is the supremum of the maximal parameter value such that $\gamma_{x,v}(s)$ is defined in $M$.
If there is no $s$ such that $d(x, \gamma_{x,v}(s)) > 0$, then we write $\rho(x, v) = +\infty$.
For timelike $(x,v)$, we define the 
timelike cut locus function
\begin{align}\label{def: time cut bd}
\rho(x,v) = \inf \{ s\in [0, \mathcal{T}(x,v)]:\ d(x, \gamma_{x,v}(s)) > s \},
\end{align}
where $\gamma_{x, v}(s)$ is the unique timelike geodesic starting from $x$ in the direction of $v$.
If there is no such $s$, then we write $\rho(x, v) = +\infty$.
In both cases, when $\rho(x, v) < +\infty$, we call $\gamma_{x,v}(\rho(x,v))$ the first cut point of $x$ along $\gamma_{x,v}$.
Note that these definitions are slightly different from those for Lorentzian manifolds without boundaries, see Section \ref{sec: null and timelike cut locus}.
Indeed, if one still defines $\rho$ using the supremum, then $\rho(x, v) = \mathcal{T}(x, v)$ whenever $\gamma_{x, v}(\mathcal{T}(x, v))$ is the first cut point, or when $\gamma_{x, v}$ has no cut points at all.
We have the following lemmas for manifolds with timelike boundaries.
\begin{lm}\label{lm: cut_conjugate}
Let $(N,g)$ be a spacetime with timelike boundary. 
Let $x, y\in N^\circ$ and $\gamma_{x, v}:[0,1] \rightarrow N^\circ$ be a lightlike geodesic segment  with $y = \gamma_{x,v}(1)$. 
Suppose $\rho(x, v) > 1$.
Then the first conjugate point to $x$ along $\gamma_{x, v}$ comes after $y$.
\end{lm}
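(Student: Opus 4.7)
The plan is to argue by contradiction. Suppose the first $x$--conjugate point along $\gamma_{x,v}$ occurs at $\gamma_{x,v}(t_0)$ for some $t_0 \le 1$. The strategy is to produce a future directed timelike curve in $N$ from $x$ to some point $\gamma_{x,v}(s)$; this forces $d(x, \gamma_{x,v}(s)) > 0$ and hence $\rho(x,v) \le s$, and with an appropriate choice of $s$ this will contradict $\rho(x,v) > 1$.

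The key classical ingredient is that a null geodesic ceases to be achronal past its first conjugate point (see, e.g., O'Neill \cite[Proposition 10.48]{One83}): if $\gamma_{x,v}(t_0)$ is the first $x$--conjugate point along the null geodesic $\gamma_{x,v}$, then for each $t$ slightly larger than $t_0$ there exists a future directed timelike curve from $x$ to $\gamma_{x,v}(t)$, obtained by a suitable small variation of $\gamma_{x,v}$. Since $\gamma_{x,v}([0,1]) \subset N^\circ$ is compact in the open set $N^\circ$, such a variation stays inside $N^\circ$ provided the perturbation is small enough; the resulting curve is therefore a legitimate timelike curve of the manifold-with-boundary $N$ and is registered by its Lorentzian distance function $d$.

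In the case $t_0 < 1$, I would pick $t \in (t_0, 1)$ and concatenate the timelike curve from $x$ to $\gamma_{x,v}(t)$ with the remaining null segment $\gamma_{x,v}|_{[t,1]}$. This yields a future directed causal curve from $x$ to $y$ that is not a null pregeodesic, so by the standard corner-smoothing argument (e.g., \cite[Proposition 10.46]{One83}) one obtains $x \ll y$, hence $d(x,y) > 0$, which gives $\rho(x,v) \le 1$ and the desired contradiction. In the borderline case $t_0 = 1$, the hypothesis $y \in N^\circ$ allows us to extend $\gamma_{x,v}$ slightly past $s = 1$ while staying in $N^\circ$; applying the same proposition to this extension produces, for every sufficiently small $\varepsilon > 0$, a timelike curve in $N$ from $x$ to $\gamma_{x,v}(1+\varepsilon)$, so $\rho(x,v) \le 1 + \varepsilon$ for all such $\varepsilon$, and hence $\rho(x,v) \le 1$, again a contradiction.

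The main thing to be careful about is keeping the constructed timelike curves inside $N^\circ$ so that the distance function $d$ of the manifold-with-boundary $N$ actually detects them; this is where the hypothesis $\gamma_{x,v}([0,1]) \subset N^\circ$ (and not merely $\subset N$) is essential, together with the openness of $N^\circ$ and the compactness of the geodesic segment. No feature of $\partial N$ enters the argument, so the lemma effectively reduces to the boundaryless statement applied on the open submanifold $N^\circ$.
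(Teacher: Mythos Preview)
Your proof is correct and follows essentially the same contradiction strategy as the paper: assume a conjugate point $s_0 \in (0,1]$, invoke the classical variational result that a null geodesic is not maximizing past a conjugate point, and use $\gamma_{x,v}([0,1]) \subset N^\circ$ to keep the resulting timelike curve inside the manifold. The only cosmetic difference is that the paper cites \cite[Propositions 10.12 and 10.72]{BEE96} to obtain a timelike curve from $x$ directly to $y$ in both cases at once, whereas you split off the borderline case $t_0 = 1$ and handle it by extending the geodesic slightly past $y$ in $N^\circ$; your treatment is arguably more careful on this point.
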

\begin{proof}
Assume for contradiction that there contains a conjugate $s_0\in (0, 1]$ to $x$ along $\gamma_{x, v}$.
By considering the same proper variation as in the proof of \cite[Proposition 10.12]{BEE96} and \cite[Proposition 10.72]{BEE96}, there is a timelike path from $x$ to $y$ arbitrarily close to $\gamma_{x, v}([0,1])$. 
As $\gamma_{x, v}([0,1])$ is contained in the interior, such timelike path is there and we can always assume it is a piecewise smooth timelike curve. Thus, we must have $d(x, y)>0$ which contradicts with $\rho(x, v) > 1$. 
\end{proof}

\begin{lm}\label{lm:shortcut}
Let $(N,g)$ be a globally hyperbolic spacetime with timelike boundary. 
Let $x, y\in N^\circ$ and $\gamma:[0,1] \rightarrow N^\circ$ be a lightlike geodesic segment  with $y = \gamma_{x,v}(1)$. 
Suppose there is another $H^1$-causal curve $\mu:[0,1] \rightarrow N$ from $x$ to $y$, which cannot be reparameterized to $\gamma$. 
Then $y$ comes on or after the first cut point along $\gamma_{x,v}$, i.e., $\rho(x,v) \leq 1$.
\end{lm}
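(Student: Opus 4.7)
The plan is to show $d(x,y) > 0$, which immediately yields $\rho(x,v) \leq 1$ by the definition \eqref{def: null cut bd} since $y = \gamma_{x,v}(1)$. I will argue by contradiction: assuming $d(x,y) = 0$, I will construct a piecewise smooth timelike curve in $N$ from $x$ to $y$.

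First, I will pass from the $H^1$-causal curve $\mu$ to a piecewise smooth causal curve $\tilde\mu$ from $x$ to $y$ in $N$, while preserving distinctness from $\gamma$. Since $\mu$ is future-directed causal and not a reparametrization of $\gamma$, the image of $\mu$ cannot coincide with $\gamma([0,1])$, so there exists a parameter $s_0 \in (0,1)$ with $z \coloneqq \mu(s_0) \notin \gamma([0,1])$. Applying Lemma~\ref{lm: H1 curve to piecewise} separately to the $H^1$-causal subsegments $\mu|_{[0,s_0]}$ and $\mu|_{[s_0,1]}$ produces piecewise smooth causal curves joining $x$ to $z$ and $z$ to $y$, and concatenation gives a piecewise smooth causal curve $\tilde\mu$ in $N$ passing through $z$, hence not a reparametrization of $\gamma$.

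Second, I will invoke the classical Lorentzian shortcut argument. Since $L(\tilde\mu) \leq d(x,y) = 0$, the curve $\tilde\mu$ has zero Lorentzian length, so it is null almost everywhere. Being piecewise smooth, distinct from $\gamma$, and of zero length, $\tilde\mu$ must contain either a corner between two distinct future-directed null vectors or a non-geodesic null segment; in either case a standard variational deformation (cf.\ \cite[Prop.~10.46]{BEE96} and the proof of Lemma~\ref{lm: cut_conjugate}) produces a piecewise smooth timelike curve $\eta$ of positive Lorentzian length joining $x$ to $y$ and $C^0$-close to $\tilde\mu$ near the deformation region.

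Third, I will verify that $\eta$ stays in $N$. The deformation is localized near either the interior point $z \in N^\circ$ (chosen so by continuity of $\mu$ and the fact that $x, y \in N^\circ$) or near a point where $\tilde\mu$ meets $\gamma \subset N^\circ$, so the deformation region lies in $N^\circ$, giving $\eta \subset N$ and $d(x,y) \geq L(\eta) > 0$, contradicting the assumption $d(x, y) = 0$.

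The main obstacle is the low regularity of $\mu$ combined with the possibility that parts of $\mu$ touch the timelike boundary $\partial N$. Both concerns will be handled by choosing the intermediate point $z$ in $N^\circ$ — possible unless $\mu$'s deviation from $\gamma$ is entirely confined to $\partial N$, a degenerate situation that can be ruled out by a separate argument using the interior extension provided by \cite[Cor.~5.8]{AFS21} — and by performing the shortcut in a sufficiently small open neighborhood within $N^\circ$.
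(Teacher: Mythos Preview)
Your plan has a genuine gap. The claim in your second step that ``being piecewise smooth, distinct from $\gamma$, and of zero length, $\tilde\mu$ must contain either a corner between two distinct future-directed null vectors or a non-geodesic null segment'' is false: $\tilde\mu$ could simply be a \emph{different} null pregeodesic from $x$ to $y$ passing through $z$. In that case there is no corner and no non-geodesic piece to shortcut, and in fact your target $d(x,y)>0$ can fail outright. A clean counterexample is the two-dimensional Einstein cylinder $(\mathbb{R}\times S^1,-dt^2+d\theta^2)$: from $x=(0,0)$ the left- and right-moving null geodesics meet again at $y=(\pi,\pi)$, yet $d(x,y)=0$ since $y\in\partial J^+(x)$. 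Here $\rho(x,v)=1$ exactly, so the lemma's conclusion $\rho(x,v)\le 1$ holds, but your stronger intermediate claim $d(x,y)>0$ does not.

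The paper avoids this by not aiming for $d(x,y)>0$. After producing the piecewise smooth $\mu_0$ through a point $p\in N^\circ\setminus\gamma([0,1])$, it chooses $y_0=\mu_0(1-\delta)\notin\gamma([0,1])$ with $\mu_0([1-\delta,1])\subset N^\circ$, then extends $\gamma$ slightly past $y$ to $y_j=\gamma(1+\epsilon_j)\in N^\circ$. The short concatenation $\mu_0([1-\delta,1])\cup\gamma([1,1+\epsilon_j])$ cannot be a null pregeodesic, because a null geodesic through $y$ and $y_j$ must be $\gamma$ itself, while $y_0\notin\gamma$. This broken path lies entirely in $N^\circ$, so \cite[Proposition~10.46]{One83} applies there and yields $d(y_0,y_j)>0$; the reverse triangle inequality gives $d(x,y_j)>0$, and letting $\epsilon_j\to 0$ gives $\rho(x,v)\le 1$. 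The step of extending past $y$ along $\gamma$ is exactly what manufactures the guaranteed corner your argument lacks, and localizing the shortcut to a short arc near $y\in N^\circ$ also cleanly handles the boundary issue you flagged in your third step, without needing the ambient extension from \cite{AFS21}.
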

\begin{proof}
First we prove that there exists a piecewise smooth causal curve $\mu_0:[0,1]\rightarrow N$ from $x$ to $y$, which cannot be reparameterized to $\gamma$.
Indeed, with $\gamma([0,1]) \in N^\circ$, there exist $p \in \mu((0,1)) \cap N^\circ$ such that $p \notin \gamma([0,1])$.
Note that $p \in J^+_{H^1}(x)$ and $y \in J^+_{H^1}(p)$. 
By Lemma \ref{lm_causalH1}, there exists a piecewise smooth causal curve from $x$ to $p$ and then $p$ to $y$. We refer to it by $\mu_0$ in the following. 
Note that with $p \notin \gamma([0,1])$, 
there exists $\delta > 0$ such that $y_0 = \mu_0(1-\delta) \notin \gamma([0,1])$ and $\mu_0([1-\delta, 1]) \subset N^\circ$.

By Lemma \ref{lm: H1 curve to piecewise},
We consider $y_j  = \gamma(1 + \epsilon_j) \in N$ for a sequence $\epsilon_j \rightarrow 0$. 
With $y \in N^\circ$, we may assume $y_j \in N^\circ$ as well.
Note that the causal path $\mu_j \coloneqq \mu_0([1-\delta,1]) \cup \gamma([1, 1+\epsilon_j])$ from $y_0$ to $y_j$ is not a null pregeodesic. 
By \cite[Proposition 10.46]{One83}, there is a timelike curve from $y_0$ to $y_j$ arbitrarily close to $\mu_j$ and therefore $d(y_0, y_j) > 0$. 
By the reverse inequality, we conclude that $d(x, y_j) \geq d(x,y_0) + d(y_0, y_j) > 0$.
As $y_j \rightarrow y$, one has $\rho(x, v) \leq 1$.
\end{proof}

Recall that the time separation function is defined by considering all piecewise smooth causal curves. We show that global hyperbolicity guarantees any distance maximizing $H^1$-causal curve lying in the interior must be a geodesic. Clearly the same result does not hold for points on the boundary, for example the distance maximizing curve in a strictly concave part of the boundary may be realized by boundary geodesics.
\begin{lm}\label{lm: timelike_pregeodesic}
    Let $(N, g)$ be a globally hyperbolic spacetime with timelike boundary. Let $x, y \in N^\circ$ be such that $y \in I^+(x)$. 
    Let $\alpha([0,1]) \subset N^\circ$ be an $H^1$-causal curve from $x$ to $y$ such that $L(\alpha) \geq d(x, y)$. Then $\alpha$ must be a timelike pregeodesic.
\end{lm}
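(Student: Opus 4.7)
The plan is to localize the global maximality of $\alpha$ to convex normal neighborhoods contained in $N^\circ$ and apply the Lorentzian Gauss lemma on each of them. Since $\alpha([0,1]) \subset N^\circ$ is compact and disjoint from $\partial N$, I would cover it by finitely many geodesically convex neighborhoods $U_1,\dots,U_m \subset N^\circ$ and choose a partition $0 = t_0 < t_1 < \cdots < t_m = 1$ with $\alpha([t_{i-1}, t_i]) \subset U_i$. Writing $p_i = \alpha(t_i)$, the local Lorentzian Gauss lemma (cf.\ \cite[Chapter 5]{One83}), extended to $H^1$-causal curves by direct integration in normal coordinates around $p_{i-1}$, yields
\[
L\bigl(\alpha|_{[t_{i-1},t_i]}\bigr) \leq d_{U_i}(p_{i-1}, p_i) \leq d(p_{i-1}, p_i),
\]
with equality in the first inequality only when $\alpha|_{[t_{i-1},t_i]}$ is (a reparameterization of) the radial geodesic in $U_i$ from $p_{i-1}$ to $p_i$.

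Summing this bound over $i$ and applying the reverse triangle inequality gives
\[
L(\alpha) \;=\; \sum_{i=1}^{m} L\bigl(\alpha|_{[t_{i-1},t_i]}\bigr) \;\leq\; \sum_{i=1}^{m} d(p_{i-1}, p_i) \;\leq\; d(x,y) \;\leq\; L(\alpha),
\]
so equality holds throughout. In particular each restriction $\alpha|_{[t_{i-1},t_i]}$ coincides up to reparameterization with the radial geodesic from $p_{i-1}$ to $p_i$ in $U_i$, so $\alpha$ is locally a causal pregeodesic on every subinterval of the partition. To upgrade "locally a pregeodesic" to "a pregeodesic on $[0,1]$" I would rule out possible corners at the partition points: reapplying the same argument to a small interval $[t_i-\delta, t_i+\delta]$ contained in a convex neighborhood of $\alpha(t_i)$ identifies the restricted curve with a single radial geodesic on that interval, precluding any corner at $\alpha(t_i)$. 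Finally, since $L(\alpha) \geq d(x,y) > 0$ (using $y \in I^+(x)$) and null pregeodesics have vanishing Lorentzian length, $\alpha$ must be a timelike pregeodesic.

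The main obstacle is the extension of the local Gauss-lemma bound from piecewise smooth causal curves to $H^1$-causal curves, since the classical formulation assumes at least piecewise smoothness. I would handle this by writing $\alpha|_{[t_{i-1},t_i]}$ in normal coordinates based at $p_{i-1}$ and integrating the resulting absolutely continuous expression for $L$, using that the a.e.\ derivative is causal and square-integrable; the rigidity statement (equality forces the curve to be radial) then follows from strict positivity of the orthogonal components in the Gauss-lemma decomposition of the integrand. No boundary difficulties enter because the hypothesis $\alpha([0,1]) \subset N^\circ$ keeps the entire analysis in the interior of $N$; the global hyperbolicity assumption enters only through the validity of the reverse triangle inequality along the partition points $p_0 \leq p_1 \leq \cdots \leq p_m$.
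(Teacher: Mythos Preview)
Your argument is correct and shares the same core idea as the paper's proof---localize to finitely many convex neighborhoods in $N^\circ$ and invoke the local maximality of radial geodesics (the paper cites \cite[Proposition~7.2 and Remark~7.3]{Pen72}, which already covers $H^1$-causal curves, for exactly this step)---but the executions diverge. The paper argues by contradiction: assuming $\alpha$ is not a timelike pregeodesic, it inductively replaces segments of $\alpha$ by radial geodesics and propagates timelikeness from one segment to its neighbors, eventually producing a strictly longer piecewise smooth curve and contradicting $L(\alpha)\geq d(x,y)$. Your equality chain
\[
L(\alpha)\;\leq\;\sum_i d_{U_i}(p_{i-1},p_i)\;\leq\;\sum_i d(p_{i-1},p_i)\;\leq\; d(x,y)\;\leq\; L(\alpha)
\]
is more direct: it forces equality in every summand, so the rigidity case of the Gauss lemma pins each piece to the radial geodesic immediately, and your partition-refinement trick eliminates corners without any induction. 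One small correction: the reverse triangle inequality holds in any spacetime, so global hyperbolicity is not what you are using there; where it enters is in guaranteeing $p_{i-1}\leq p_i$ in the \emph{smooth} causal order of $N$ from only an $H^1$-causal arc---either via Lemma~\ref{lm_causalH1}, or more simply by observing that the radial geodesic in $U_i$ already supplies a smooth causal curve from $p_{i-1}$ to $p_i$.
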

\begin{proof}
    Assume for contradiction that $\alpha$ is not a timelike pregeodesic. 
    As $\alpha([0,1])$ is contained in the interior, there exist finitely many open convex neighborhoods $U_j$, for $j = 0, \ldots N$, covering it. 
    We can find a sequence $0 = s_0 < s_1 < \ldots < s_N = 1$ such that $\alpha([s_j, s_{j+1}]) \subset U_j$, for $j = 0, \ldots, N$. 
    We set $x_j = \alpha(s_j)$ with $x_N = y$ and note that $x_j < x_{j+1}$.
    As $L(\alpha) \geq d(x, y) > 0$, there exists $j_0 \in \{0, 1, \ldots, N\}$ such that $x_{j_0} \ll x_{j_0 + 1}$. 
    This implies one of the following two scenarios can happen:
    \begin{enumerate}
        \item If $\alpha([s_{j_0}, s_{j_0+1}])$ is the unique timelike pregeodesic connecting $x_{j_0}$ and $x_{j_0+1}$, we take $\alpha_0 = \alpha$. Certainly $L(\alpha_0) = L(\alpha)$.
        \item If $\alpha([s_{j_0}, s_{j_0+1}])$ is not the unique timelike pregeodesic connecting $x_{j_0}$ and $x_{j_0+1}$, we replace this segment of $\alpha$ with the unique timelike geodesic, and denote the new curve as $\alpha_0$. Note that $L(\alpha_0) > L(\alpha)$ by \cite[Proposition 7.2]{Pen72}\footnote{Indeed, the proof there works for any $H^1$-causal curve, see \cite[Remark 7.3]{Pen72}.}.
    \end{enumerate}
    We now inductively modify $\alpha_0$, and by abuse of notation we shall keep using $\alpha_0$ after the modification. The goal is to show that after finitely many steps, $\alpha_0$ is a piecewise timelike pregeodesic from $x$ to $y$ whose length is strictly larger than $\alpha$.
    
    We show the first inductive step in details. Let $j = j_0$, by the previous construction, $\alpha_0([s_j, s_{j+1}])$ is a timelike pregeodesic. Pick $\epsilon>0$ sufficiently small such that $\alpha_0([s_{j+1} - \epsilon, s_{j+2}])$ lies in a geodesically convex neighborhood, denote $x'_{j+1}:=\alpha_0(s_{j+1}-\epsilon)$. Since $\alpha_0([s_{j+1}-\epsilon, s_{j+1}])$ is a timelike pregeodesic, $x'_{j+1} \ll x_{j+1} < x_{j+2}$, so one of the following two scenarios can happen:
    \begin{enumerate}
        \item $\alpha([s_{j+1}-\epsilon, s_{j+2}])$ is the unique timelike pregeodesic connecting $x'_{j+1}$ and $x_{j+2}$. In this case we keep the segment, so $\alpha_0$ is unchanged.
        \item $\alpha([s_{j+1}-\epsilon, s_{j+2}])$ is not the unique timelike pregeodesic connecting $x'_{j+1}$ and $x_{j+2}$. In this case we replace this segment with the unique timelike geodesic, which strictly increases the total length of $\alpha_0$ by \cite[Proposition 7.2]{Pen72}, in particular $L(\alpha_0) > L(\alpha)$.
        See Figure \ref{fig: induction}.
    \end{enumerate}
    \begin{figure}
        \centering
        \includegraphics[width=0.5\linewidth]{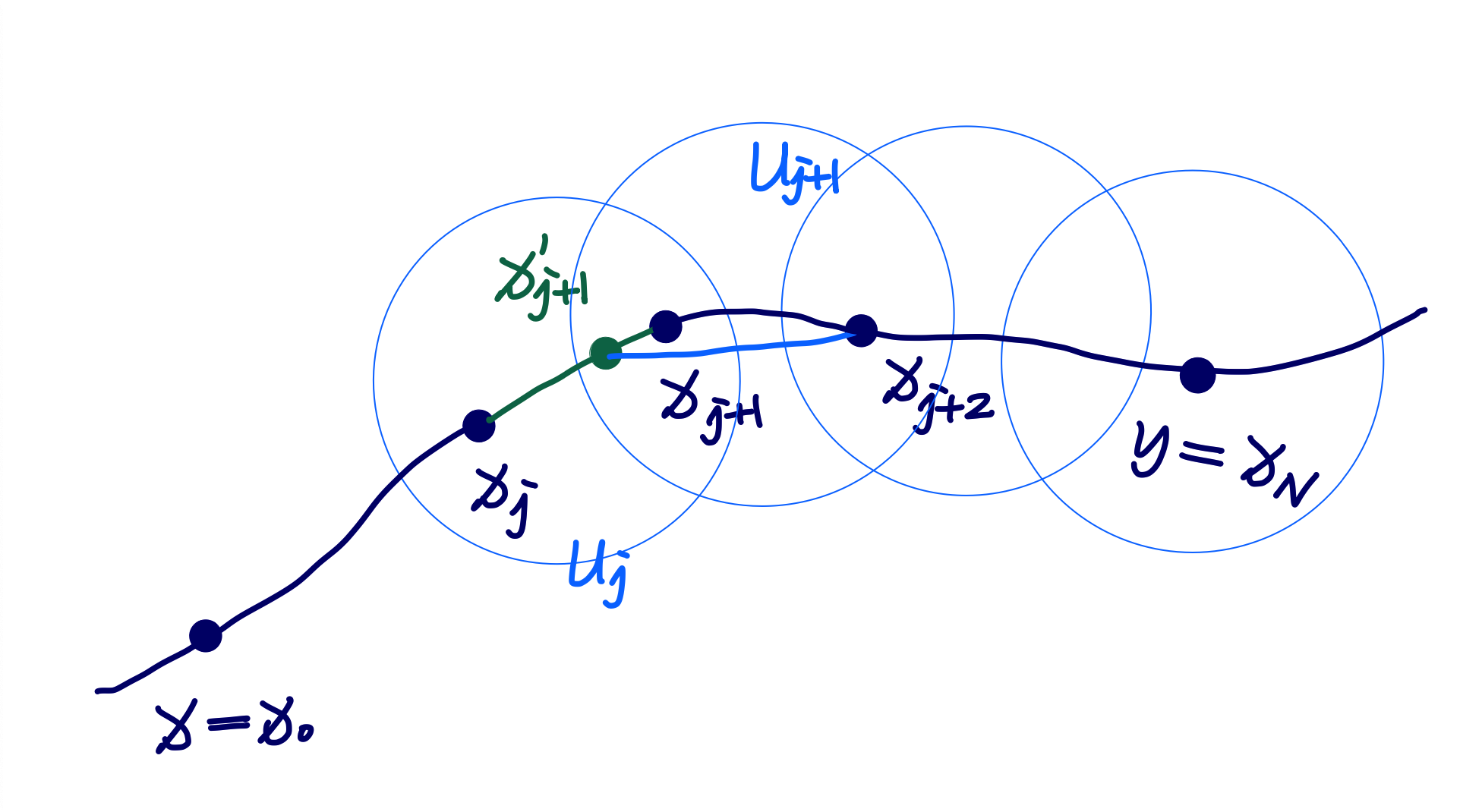}
        \caption{The  inductive step: we choose $x_{j+1}'$  sufficiently close to $x_{j+1}$ such that it is still contained in the convex neighborhood $U_{j+1}$. There exists a unique timelike geodesic segment from $x_{j+1}'$ to $x_{j+1}$.}
       \label{fig: induction}
    \end{figure}
    Now $\alpha_0([s_{j+1}, s_{j+2}])$ is a timelike pregeodesic, and the induction continuous. Certainly it terminates in finitely many steps (0 step if $j_0+1 = N$) when $j+1$ reaches $N$. Similarly, we inductively modify $\alpha_0$ in the reverse direction, by checking if $\alpha_0([s_{j-1}, s_j+\epsilon])$ is the unique timelike pregeodesic. Again, we either keep the segment if it is, or replace it with the unique timelike geodesic to increase the total length of $\alpha_0$. The induction also terminates in finitely many steps when reaching $0$.

    By construction, $\alpha_0$ is now a piecewise timelike pregeodesic. If replacement never happened, then $\alpha_0 = \alpha$, in particular we have $\alpha([s_{j_0}, s_{j_0+1}])$, $\alpha_0([s_k-\epsilon, s_{k+1}])$ and $\alpha_0([s_{l-1}, s_l + \epsilon])$ are all timelike pregeodesic, for all $k > j_0$ and $l < j_0$. As this would imply $\alpha$ is a timelike pregeodesic which contradicts our assumption, the replacement must happen at least once. In particular, $L(\alpha_0) > L(\alpha)$, contradicting $L(\alpha_0) \leq d(x, y) \leq L(\alpha)$.
\end{proof}

By \cite[Theorem 9.33]{BEE96}, the cut locus function is lower semi-continuous on a globally hyperbolic spacetime without boundary. We show similar results on a globally hyperbolic spacetime with timelike boundary, around lightlike geodesics lying in the interior.
\begin{pp}\label{pp: lower semi-cts}
Let $(N,g)$ be a globally hyperbolic spacetime with timelike boundary. 
Let $x, y\in N^\circ$ and $\gamma_{x, v}:[0,1] \rightarrow N^\circ$ be a lightlike geodesic segment with $y = \gamma_{x,v}(1)$. 
Suppose $\rho(x, v) > 1$.
Then there exists $\epsilon>0$  
such that for any $w$ in the open neighborhood 
\[
N(v, \epsilon) = \{w \in T_x M: \|w- v\|_{g^+} <  \epsilon,  \ g(w, w) < 0\},
\]
the cut locus function $\rho(x, w) > 1$.
\end{pp}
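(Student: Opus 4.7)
The plan is a short direct argument using the continuity of the time separation function $d$ on the globally hyperbolic spacetime $N$ with timelike boundary, combined with local geodesic convexity around $x$; limit-curve machinery should not be needed for this proposition.

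First, from $\rho(x, v) > 1$ together with $\gamma_{x, v}|_{[0, 1]} \subset N^\circ$ and $y \in N^\circ$, pick $\eta \in (0, 1)$ so that $\rho(x, v) > 1 + \eta$ and $\gamma_{x, v}|_{[0, 1+\eta]} \subset N^\circ$ (using openness of $N^\circ$ and continuity of the geodesic flow). The definition (\ref{def: null cut bd}) then gives $d(x, \gamma_{x, v}(s)) = 0$ for every $s \in [0, 1+\eta]$. Next choose a geodesically convex neighborhood $V \subset N^\circ$ of $x$ small enough (using strong causality) that every causal curve in $N$ from $x$ to a point of $V$ stays inside $V$, so that the geodesic segment inside $V$ is the unique distance-maximizing causal curve from $x$. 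Pick constants $\delta \in (0, 1)$ and $\epsilon_2 > 0$ such that $\gamma_{x, w}([0, \delta]) \subset V$ and $\sqrt{-g(w, w)} < 1$ for every $w \in N(v, \epsilon_2)$, the latter using $g(v, v) = 0$. Then $d(x, \gamma_{x, w}(s)) = s\sqrt{-g(w, w)} < s$ for $s \in (0, \delta]$.

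For the range $s \in [\delta, 1+\eta]$, the map $F(w, s) := d(x, \gamma_{x, w}(s))$ is continuous on a neighborhood of $\{v\} \times [0, 1+\eta]$ and vanishes on $\{v\} \times [0, 1+\eta]$, so by uniform continuity on this compact set there exists $\epsilon_1 > 0$ with $F(w, s) < \delta$ whenever $w \in N(v, \epsilon_1)$ and $s \in [0, 1+\eta]$. Setting $\epsilon = \min(\epsilon_1, \epsilon_2)$, for every $w \in N(v, \epsilon)$ one gets $d(x, \gamma_{x, w}(s)) < s$ on $(0, \delta]$ and $d(x, \gamma_{x, w}(s)) < \delta \leq s$ on $[\delta, 1+\eta]$; by (\ref{def: time cut bd}) this forces $\rho(x, w) \geq 1 + \eta > 1$.

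The main technical point is the continuity of $d$ on $N \times N$ for a globally hyperbolic spacetime with timelike boundary, which is not recorded explicitly earlier in the paper but should follow from compactness of the causal diamonds together with upper semicontinuity of the Lorentzian arc length, exactly as in the boundaryless case. Should this continuity require extra care near the boundary, an alternative route via the limit-curve Lemmas \ref{lm: limit_H1} and \ref{lm: limit_C0} combined with Lemma \ref{lm:shortcut} also yields the result by a contradiction argument, at the cost of additional analysis (via Lemma \ref{lm: timelike_pregeodesic} and Lemma \ref{lm: cut_conjugate}) to rule out the degenerate case in which the limit curve is a reparameterization of $\gamma_{x, v}$.
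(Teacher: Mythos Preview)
Your approach is genuinely different from the paper's and considerably more direct. The paper proceeds by contradiction via limit curves: assuming $w_j \to v$ with $\rho(x, w_j) < 1$, it extracts near-maximizing $H^1$-causal curves $\mu_j$ from $x$ to $y_j = \gamma_{x,w_j}(1)$, passes to a limit curve $\mu$ from $x$ to $y$, and splits into cases. If $\mu$ is not a reparametrization of $\gamma_{x,v}$, Lemma~\ref{lm:shortcut} gives $\rho(x,v)\le 1$, a contradiction. If it is, $C^0$-convergence (Lemma~\ref{lm: limit_C0}) forces $\mu_j \subset N^\circ$ for large $j$; Lemma~\ref{lm: timelike_pregeodesic} then upgrades each $\mu_j$ to a timelike pregeodesic, yielding two distinct geodesics from $x$ to $y_j$ with initial directions in $N(v,\epsilon)$, contradicting injectivity of $\exp_x$ there (guaranteed by Lemma~\ref{lm: cut_conjugate}). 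This is precisely the fallback you sketch in your last paragraph.

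Your main argument is correct \emph{provided} $d$ is upper semicontinuous on $N\times N$, and you rightly flag this as the crux. The boundaryless proof of upper semicontinuity uses that an $H^1$-causal limit curve $\gamma$ satisfies $L(\gamma)\le d(p,q)$; in the boundary case this requires knowing that any $H^1$-causal curve in $N$ can be replaced by a piecewise smooth causal curve in $N$ of at least the same length, which is strictly stronger than Lemma~\ref{lm: H1 curve to piecewise} and is not established in the paper (replacement geodesics in convex neighborhoods of an extension could a priori leave $N$, and pushing them back in using the timelike boundary requires its own argument). So your route, while cleaner, leans on a general fact about $d$ whose proof itself needs the limit-curve and convexity analysis the paper carries out directly. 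The trade-off: your argument isolates the difficulty into a single reusable statement about $d$; the paper's argument is self-contained relative to the preliminaries already in place.
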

\begin{proof}
It suffices to prove $\rho(x, w) \geq 1$, as $(x, w) \in N(v, \epsilon)$ would imply $(x, (1-\delta)w) \in N(v, \epsilon)$ for sufficiently small $\delta$, then $\rho(x, w) = \frac{1}{1-\delta}\rho(x, (1-\delta)w) \geq \frac{1}{1-\delta}$.
By Lemma \ref{lm: cut_conjugate}, the first conjugate point to $x$ along $\gamma_{x, v}$ happens after $y$.
Then there exists $\epsilon>0$ and a small neighborhood $V \subset N^\circ$ of $y$
such that 
the exponential map $\exp_x: N(v, \epsilon) \rightarrow V$ is a diffeomorphism. 

Now assume for contradiction we cannot find $\epsilon>0$ such that
$\rho(x, w) \geq 1$ for any $w \in N(v, \epsilon)$.
Then there exists a sequence $w_j \in T_xM$ such that
\[
g(w_j, w_j) < 0, \quad \text{ and } \quad w_j \rightarrow v \text{ as $j\rightarrow +\infty$},
\]
but with $\rho(x, w_j) < 1$. 
It follows that $y_j = \gamma_{x, w_j}(1) \rightarrow y$ is after the first cut point of $x$ along the timelike geodesic $\gamma_{x, w_j}$. 
We may assume $y_j \in V$ by choosing sufficiently small $\epsilon>0$.
By the definition, one has $d(x, y_j) > |w_j|_g$, 
where $d$ is the Lorentzian distance function.
Then we claim there exists an $H^1$-causal curve $\mu_j$ in $M$ from $x$ to $y_j$ such that  $L(\mu_j) \geq d(x, y_j)$, where recall $L$ denotes the Lorentzian arc length.  
Indeed, by the definition of $d(x, y_j)$, there exists a sequence of future directed piecewise smooth causal curve $\mu_{j_k}:[0,1] \rightarrow N$ from $x$ to $y_j$,  such that $L(\mu_{j_k}) \rightarrow d_M(x, y_j)$. 
By Lemma \ref{lm: limit_H1}, there exists a future directed $H^1$-casual limit curve $\mu_j:[0,1] \rightarrow N$ from $x$ to $y_j$.
As the length function is upper semi-continuous, we have 
\[
L(\mu_j) \geq \limsup L(\mu_{j_k}) = d(x, y_j).
\]
With $y_j$ being the first cut point of $x$, we emphasize that $\mu_j$ cannot be reparameterized to $\gamma_{x, w_j}$.

Next, for the sequence of $H^1$-casual curve $\{\mu_j\}$ in $M$ from $x$ to $y_j$, we apply Lemma \ref{lm: limit_H1} again and there exists an $H^1$-causal limit curve $\mu:[0,1] \rightarrow N$ from $x$ to $y$. 
If such $\mu$ cannot be reparameterized to the null geodesic $\gamma_{x, v}$ from $x$ to $y$, then by Lemma \ref{lm:shortcut}, we must have $y$ is on or after the first cut point along $\gamma_{x, v}$, which contradicts with $\rho(x,v) > 1$. 
Thus, possible limit curves of $\{\mu_j\}$ can always be reparameterized to $\gamma_{x, v}$. 
By Lemma \ref{lm: limit_C0}, 
there is a subsequence $\{\mu_j\}$ that converges to $\mu$ in the $C^0$ topology.
We abuse the notation and denote it still by $\{\mu_j\}$. 
This implies one can find a small tube neighborhood $\Gamma$ of the null geodesic $\gamma_{x,v}([0,1])$ such that $\mu_j([0,1]) \subset \Gamma$, for sufficiently large $j$.
With $\gamma_{x,v}([0,1]) \subset N^\circ$, we may choose $\Gamma \subset N^\circ$ and therefore $\mu_j \subset N^\circ$ for large $j$.
Then with $L(\mu_j) \geq d(x, y_j)$, by Lemma \ref{lm: timelike_pregeodesic}, such $\mu_j$ is a timelike pregeodesic from $x$ to $y$. 
We may reparameterize  and write it as $\gamma_{x, v_j}$ for some timelike $v_j \in T_xM$.
As such $\gamma_{x, v_j}$ from $x$ to $y_j$ is contained in $\Gamma$, one has $v_j \in N(v, \epsilon)$ for large $j$.
Now for $y_j \in V$,  we have found two different timelike geodesic $\gamma_{x,w_j}$ and $\gamma_{x,v_j}$  from $x$ and $y_j$, with both $w_j, v_j \in N(v, \epsilon)$. This contradicts with the fact that $\exp_x$ is diffeomorphism there. 
\end{proof}
Then we can prove the following analog for Lorentzian manifolds with boundaries.
\begin{lm}
Let $(N,g)$ be a globally hyperbolic spacetime with timelike boundary. 
Let $x, y\in N^\circ$ and $\gamma_{x, v}([0,1])$ be a lightlike geodesic segment contained in $N^\circ$ with $y = \gamma_{x,v}(1)$.
Then either one or possibly both of the following hold:
\begin{itemize}
    \item[(1)] The point $y$ is the first conjugate point of $x$ along $\gamma$.
    \item[(2)] There exist at least one other  piecewise smooth causal curves from $x$ to $y$ that cannot be reparameterized to $\gamma$. 
\end{itemize}
\end{lm}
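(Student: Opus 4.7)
The statement is meaningful under the implicit hypothesis that $y$ is the first cut point of $x$ along $\gamma_{x,v}$, i.e.\ $\rho(x,v)=1$ in the sense of (\ref{def: null cut bd}); under that hypothesis the argument is the timelike-boundary analogue of \cite[Theorem~9.15]{BEE96}, and I would carry it out using the limit-curve machinery and Lemma~\ref{lm: timelike_pregeodesic} already developed in this paper.

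First I would unpack the definition of $\rho$: since $\rho(x,v)=1$, pick $s_j \searrow 1$ with $y_j := \gamma_{x,v}(s_j) \in N^\circ$ and $d(x,y_j)>0$. For each such $j$, take a maximizing sequence of future-pointing piecewise smooth causal curves from $x$ to $y_j$, extract an $H^1$-causal limit curve $\mu_j:[0,1]\to N$ via Lemma~\ref{lm: limit_H1}, and use the upper semi-continuity of the Lorentzian length $L$ to conclude $L(\mu_j)\geq d(x,y_j)>0$. Then apply Lemma~\ref{lm: limit_H1} a second time to the sequence $\{\mu_j\}$ to produce an $H^1$-causal limit curve $\mu:[0,1]\to N$ from $x$ to $y$.

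Now I would split into two cases. If $\mu$ cannot be reparameterized to $\gamma_{x,v}$, then its image contains a point $p \notin \gamma_{x,v}([0,1])$, and because $\gamma_{x,v}([0,1]) \subset N^\circ$ I can arrange $p \in N^\circ$. Lemma~\ref{lm_causalH1} gives $p\in J^+(x)$ and $y\in J^+(p)$, so Lemma~\ref{lm: H1 curve to piecewise} supplies piecewise smooth causal curves $x \to p$ and $p \to y$; concatenating yields a piecewise smooth causal curve from $x$ to $y$ passing through $p$, which therefore cannot be reparameterized to $\gamma_{x,v}$, establishing alternative~(2). Otherwise $\mu$ is a reparameterization of $\gamma_{x,v}$, and Lemma~\ref{lm: limit_C0} gives, after passing to a subsequence, $C^0$-convergence $\mu_j \to \gamma_{x,v}$, so $\mu_j$ eventually lies in a tube around $\gamma_{x,v}$ contained in $N^\circ$. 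Since $L(\mu_j)\geq d(x,y_j)>0$, Lemma~\ref{lm: timelike_pregeodesic} forces $\mu_j$ to be a timelike pregeodesic, which I may reparameterize as $\gamma_{x,v_j}$ for some timelike $v_j\in T_xN$ with $\exp_x(v_j)=y_j$. If $y$ were not conjugate to $x$ along $\gamma_{x,v}$, then $\exp_x$ would be a local diffeomorphism near $v$; by local injectivity the two preimages $s_j v$ (null) and $v_j$ (timelike) of $y_j$ would have to coincide for large $j$, which is impossible. Hence $y$ is conjugate to $x$ along $\gamma_{x,v}$, giving alternative~(1).

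Finally, to upgrade ``conjugate'' to ``first conjugate'', I would argue by contradiction: a conjugate point $\gamma_{x,v}(s_0)$ with $s_0<1$ would, via the scaled geodesic $\gamma_{x,s_0 v}$ and the contrapositive of Lemma~\ref{lm: cut_conjugate}, force $\rho(x,v)\leq s_0<1$, contradicting $\rho(x,v)=1$. The main obstacle is Case~B, where one must simultaneously (i) confirm that the maximizers $\mu_j$ are genuine timelike pregeodesics lying entirely in $N^\circ$---this is exactly the role of Lemma~\ref{lm: timelike_pregeodesic} together with the $C^0$-convergence---and (ii) arrange a parameterization (for instance by $h$-arc length with respect to an auxiliary Riemannian metric $h$) in which the initial tangents $v_j$ actually converge to $v$, so that the local injectivity of $\exp_x$ at a non-conjugate $v$ can be invoked to yield the null-versus-timelike contradiction. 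Everything else is bookkeeping around the already-established lemmas.
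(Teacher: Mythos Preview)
Your proposal is correct and follows essentially the same route as the paper's proof: take points $y_j=\gamma_{x,v}(s_j)$ just past $y$ with $d(x,y_j)>0$, extract length-maximizing $H^1$-causal curves $\mu_j$ and a further limit curve $\mu$, and split on whether $\mu$ is a reparameterization of $\gamma_{x,v}$. The paper handles Case~B by a one-line appeal to ``the same argument as in the proof of Proposition~\ref{pp: lower semi-cts}'', whereas you write that argument out explicitly (tube in $N^\circ$, Lemma~\ref{lm: timelike_pregeodesic}, local injectivity of $\exp_x$); you also make explicit the upgrade from ``conjugate'' to ``first conjugate'' via Lemma~\ref{lm: cut_conjugate}, which the paper leaves implicit. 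Your flagged obstacle about arranging $v_j\to v$ is exactly the point the paper absorbs into that cross-reference, and your sketched fix via an auxiliary parameterization is adequate.
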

\begin{proof}
As $y$ is the first cut point of $x$ long $\gamma$, there exists $y_j = \gamma(1+ \epsilon_j) \rightarrow y$ in $M$, with $\epsilon_j \rightarrow 0$ and
$d(x, y_j) = \delta_j$ for some $\epsilon_j, \delta_j >0$.
Then there exists a piecewise smooth causal curve $\mu_j:[0, 1] \rightarrow M$ from $x$ to $y_j$, with the Lorentzian arc length  given by $L(\mu_j) = \delta_j$. 
As $(M,g)$ is globally hyperbolic, by Lemma \ref{lm: limit_H1}, there exists a future directed $H^1$-causal limit curve $\mu:[0,1] \rightarrow M$ connecting $x$ and $y$.
If we cannot reparameterize $\mu$ to $\gamma$, 
then by the proof of Lemma \ref{lm: H1 curve to piecewise} and Lemma \ref{lm:shortcut}, 
we may assume $\mu$ is piecewise smooth and
we have case (2). 
Otherwise, such $\mu$ can be reparameterized to $\gamma$.
By Lemma \ref{lm: limit_C0}, we can always find a subsequence of $\{\mu_j\}$ that converges to $\mu$ in the $C^0$ topology. 
Then the same argument as in the proof of Proposition \ref{pp: lower semi-cts} shows that $y$ must be a conjugate point of $x$.
\end{proof}

\subsection{The proof of the boundary rigidity}\label{sec: the proof of the boundary rigidity}
We briefly describe the general steps of proving Theorem \ref{thm: boundary distance main thm}. The first step is to determine the jet of metric on the boundary. For Riemannian manifolds, it was shown in \cite{SU09} that near a strictly convex direction, the jet of the metric can be determined from the lens relation. On the other hand, near a strictly convex direction, it is well-known that the knowledge of the distance function can imply lens data, in a geodesically convex neighborhood. Same result holds for spacetime with timelike boundary if the strictly convex direction is timelike. As their proof is not constructive and is for Riemannian manifolds, we provide a constructive one in Appendix \ref{sec: construction of jet with strictly convex direction}.

Then by analyticity one can obtain an analytic collar neighborhood extension, as explained in Section \ref{sec: normal exponential extension}. By the assumption in the theorem, we may assume the extension $(\tM, \tilde{g})$ is still an analytic globally hyperbolic spacetime with timelike boundary. The goal now is to determine the lightlike complete travel time data of $M$, then we can use Proposition \ref{prop: complete light scattering recover metric} to finish the proof. In order to do that, we first determine the time separation function $\tilde{d}$ for points in the exterior region $M^c = \tM \backslash M$.

\begin{figure}
    \centering
    \includegraphics[width=\linewidth]{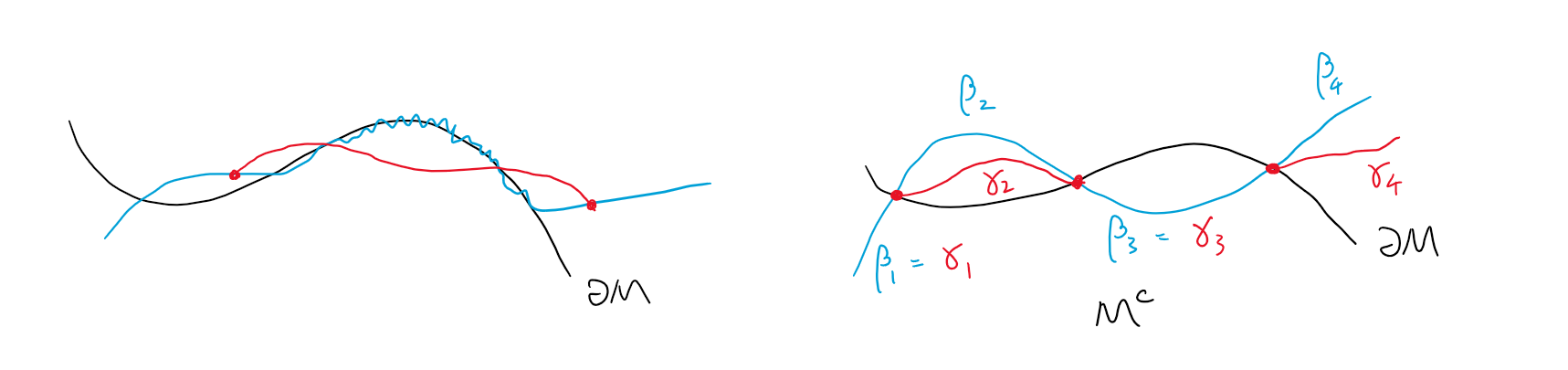}
    \caption{In the left picture, if a causal curve intersects with the boundary infinitely many times, then it can be replaced by a piecewise geodesic curve via a finite cover with geodesically convex neighborhoods. By analyticity, the piecewise geodesic curve switches between $M$ and $M^c$ only finitely many times. In the right picture, for each timelike curve $\beta_1 \circ \cdots \circ \beta_N$, we keep the same $\beta_j$ as $\gamma_j$ if it is in the exterior region, and find a $\gamma_j$ that is at least almost the same length as $\beta_j$ if it is in the interior. This is possible since the boundary time separation functions agree.}
    \label{fig: extend distance function}
\end{figure}

\begin{pp}\label{prop: extend time separation function}
    Let $(M, g)$ be an analytic Lorentzian manifold with timelike boundary, and $d$ is the boundary time separation function. Let $(\tM, \tilde{g})$ be an analytic extension, denote $M^c = \tilde{M} \backslash M$. Suppose $\tilde{g}|_{M^c}$ is given. Then $d$ uniquely determines the exterior time separation function $\tilde{d}$ with respect to $(\tilde{M}, \tilde{g})$ for all $x, y \in M^c \sqcup \partial M$.
\end{pp}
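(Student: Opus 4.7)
The plan is to characterize $\tilde{d}(x,y)$ as a supremum over finite alternating decompositions of candidate causal curves from $x$ to $y$ in $\tM$: segments lying in $\overline{M^c}$ contribute lengths read off directly from the known metric $\tilde{g}|_{M^c}$, while segments lying in $\overline{M}$ have boundary endpoints and contribute lengths controlled by the given boundary time separation $d$.

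First I reduce to curves with only finitely many boundary crossings. Given any future-pointing piecewise smooth causal curve $\alpha:[0,1]\to \tM$ from $x$ to $y$, I cover the compact image $\alpha([0,1])$ by finitely many geodesically convex open sets $U_1,\dots,U_m$ and choose a partition $0=t_0<t_1<\cdots<t_m=1$ with $\alpha([t_{k-1},t_k])\subset U_k$. Replacing each restriction $\alpha|_{[t_{k-1},t_k]}$ by the unique causal geodesic segment in $U_k$ with the same endpoints produces a piecewise geodesic causal curve $\alpha'$ from $x$ to $y$ with $L(\alpha')\geq L(\alpha)$, since causal geodesics maximize Lorentzian length inside a convex neighborhood. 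Because $\tilde{g}$ and $\partial M$ are analytic, each geodesic segment either lies entirely in $\partial M$ or meets it in an isolated, hence finite, set of parameters, so $\alpha'$ crosses $\partial M$ only finitely many times.

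Next, for such a piecewise geodesic $\alpha'$ I split it at its boundary crossings into consecutive segments $\alpha'_1,\dots,\alpha'_N$ with endpoints $P_j, Q_j$, each segment lying entirely in $\overline{M^c}$ or entirely in $\overline{M}$. Segments in $\overline{M^c}$ have lengths computed directly from $\tilde{g}|_{M^c}$, and are bounded above by the time separation $\tilde{d}_{\overline{M^c}}(P_j,Q_j)$ within the closed exterior, a quantity intrinsically determined by $\tilde{g}|_{M^c}$ alone. Segments in $\overline{M}$ must have endpoints $P_j,Q_j\in \partial M$, and their length is bounded by $d(P_j,Q_j)$ by definition of the boundary time separation. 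This yields the upper bound
\begin{equation*}
\tilde{d}(x,y) \leq \sup_{\mathcal P}\Bigl\{\sum_{j\in \mathcal P_{\mathrm{ext}}} \tilde{d}_{\overline{M^c}}(P_j,Q_j) + \sum_{j\in \mathcal P_{\mathrm{int}}} d(P_j,Q_j)\Bigr\},
\end{equation*}
where $\mathcal P$ ranges over finite alternating sequences of causally ordered points from $x$ to $y$, with interior-segment endpoints in $\partial M$. The reverse inequality is obtained by concatenation: for any such $\mathcal P$ and $\epsilon>0$, choose for each exterior segment a piecewise smooth causal curve in $\overline{M^c}$ of length within $\epsilon/N$ of $\tilde{d}_{\overline{M^c}}(P_j,Q_j)$, and for each interior segment a piecewise smooth causal curve in $\overline{M}$ of length within $\epsilon/N$ of $d(P_j,Q_j)$; concatenating produces a piecewise smooth causal curve in $\tM$ of length at least the $\mathcal P$-sum minus $\epsilon$. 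Since the right-hand side is intrinsically computable from $\tilde{g}|_{M^c}$ and the given $d$, this proves that $\tilde{d}$ is uniquely determined on $M^c\sqcup \partial M$.

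The main obstacle will be the boundary-crossing reduction: the original curve $\alpha$ can oscillate between $M$ and $M^c$ infinitely often near a tangential arc of $\partial M$, and the convex-neighborhood replacement must simultaneously remain causal from $x$ to $y$, not decrease length, and produce a finite decomposition whose interior-segment endpoints actually lie on $\partial M$ so that the bound by $d$ is legitimate. Once the analyticity of $\tilde{g}$ and $\partial M$ is used to rule out accumulation of transversal crossings on compact geodesic pieces, the remainder is a bookkeeping argument that splices the known exterior lengths with the boundary time separation.
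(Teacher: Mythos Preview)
Your approach is essentially the paper's: reduce to curves with finitely many boundary crossings via convex-neighborhood geodesic replacement plus analyticity of $\partial M$, then control interior segments by $d$ and exterior segments by the known metric $\tilde g|_{M^c}$. The paper phrases this as a direct comparison of two candidate extensions rather than an explicit sup-formula, but the content is the same.

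One point needs more care. Your supremum runs over ``causally ordered'' sequences $\mathcal P$, but causal order in $\tM$ depends on the unknown metric on $M$, so the right-hand side is not manifestly data-determined. The natural fix is to restrict $\mathcal P$ to sequences in which every interior pair has $d(P_j,Q_j)>0$ and every exterior pair is timelike-related in $\overline{M^c}$; then your concatenation step for the lower bound is legitimate. But for the upper bound you must ensure that near-maximizing curves actually produce such a $\mathcal P$, i.e.\ that every segment after reduction has strictly positive length. Your replacement step alone does not guarantee this: null geodesic pieces can survive, giving interior segments with $d(P_j,Q_j)=0$. The paper handles exactly this by first perturbing the curve to be timelike almost everywhere (without decreasing length) before decomposing; once you insert that step, each geodesic piece in a convex neighborhood is timelike, every segment has positive length, and your formula goes through.
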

\begin{proof}
    Fix some $x, y \in M^c \sqcup \partial M$, we first show that to compute $\tilde{d}(x, y)$, it suffices to consider all piecewise smooth, future pointing causal curves from $x$ to $y$ that can be decomposed into finitely many segments, with each segment either fully in $M$ or in $M^c$. Suppose not, then it enters and exits $M$ infinitely many times in a bounded time interval. Denote the curve by $\alpha$, then the set of points where this transition happens contains accumulation point $p$. Around $p$, we may consider a geodesically convex neighborhood $W$. By analyticity of the boundary, locally a geodesic either fully stays in the boundary or has only discrete intersection point, so the segment of $\alpha$ in $W$ can not be a geodesic. Then, one can replace it with a geodesic segment to increase the length (the geodesic would be causal because $\alpha$ is causal). This geodesic segment now has finitely many intersection points with the boundary. We may perform this for all accumulation points, since the travel time is bounded, by compactness we will end up with a curve that only switches finitely many times between $M$ and $M^c$. We refer to such a curve as a \textit{valid curve} for simplicity. See the left graph in Figure \ref{fig: extend distance function}.

    Suppose now there are two metrics $\tilde{g}_j$ on the same $\tM$ and $M$ with the same boundary time separation function $d_j$, and $\tilde{g}_1|_{M^c} = \tilde{g}_2|_{M^c}$. We show that their exterior time separation function must be the same. Fix $x$ and $y$ in $M^c \sqcup \partial M$ such that $\tilde{d}_1(x, y) > 0$, suppose $\alpha_k$ is a sequence of valid curves, such that $L_1(\alpha_k) \to \tilde{d}_1(x, y)$, $L_1$ computes the length of the curve with respect to $\tilde{g}_1$. We may assume $\alpha_k$ is timelike almost everywhere with respect to $\tilde{g}_1$: since $\tM$ is open and $\tilde{d}_1(x, y) > 0$, we can perform the same replacement argument as in the proof of Lemma \ref{lm: timelike_pregeodesic} so that the resulting curve is timelike almost everywhere and the length does not decrease. By the previous arguments, we may assume each $\alpha_k$ switches between $M$ and $M^c$ finitely many times, denote the decomposition as $\beta_1, \cdots, \beta_N$. If $\beta_j \subset M^c$, we let $\gamma_j = \beta_j$ be the same curve, then $L_2(\gamma_j) = L_1(\beta_j)$ since the exterior metrics are the same. If $\beta_j \subset M$, then denote the starting and ending point as $z_1, z_2 \in \partial M$, we must have $d_1(z_1, z_2) \geq L_1(\beta_j) > 0$. As a result $d_2(z_1, z_2) = d_1(z_1, z_2) \geq L_1(\beta_j) > 0$, so there exists $\gamma_j \subset M$ piecewise smooth, future pointing causal curve with respect to $g_2$ connecting $z_1$ and $z_2$, such that $L_2(\gamma_j) > d_2(z_1, z_2) - \epsilon$ for arbitrarily small $\epsilon$. As a result, $\gamma_1, \cdots, \gamma_N$ form a piecewise smooth, future pointing causal curve with respect to $\tilde{g}_2$ connecting $x$ to $y$. See the right graph of Figure \ref{fig: extend distance function}. Moreover,
    \[
    L_2(\gamma_1)+\cdots + L_2(\gamma_N) \geq L_1(\beta_1) + \cdots + L_1(\beta_N) - N \cdot \epsilon = L_1(\alpha_k) - N \cdot \epsilon.
    \]
    Since $\epsilon$ is arbitrarily small, we have $\tilde{d}_2(x, y) \geq \tilde{d}_1(x, y) > 0$. By symmetry, we also have $\tilde{d}_1(x, y) \geq \tilde{d}_2(x, y)$. As a result, $\tilde{d}_1$ and $\tilde{d}_2$ have the same support in the exterior region, and in their common support, $\tilde{d}_1 = \tilde{d}_2$.
\end{proof}

Now we have the information of two different time separation functions, $\tilde{d}$ for $(\tM, \tilde{g})$ and $d$ for $(M, g)$, for points in $M^c \sqcup \partial M$ and $\partial M$, respectively. As explained in the beginning of Section \ref{sec: boundary rigidity}, unlike the globally hyperbolic spacetime without boundary case, $x$ and $y \in \partial J^+(x)$ may not be connected by any lightlike geodesic. To select out those $y$ that are connected by a lightlike geodesic in $M$, we use a criterion that involves both $d$ and $\tilde{d}$. Intuitively, these are the points in $J^+(x) \cap \partial M$ whose distance to $x$ remains $0$ after the extension.
\begin{pp}\label{prop: identify light cone on boundary}
    Let $(M, g)$ be an analytic globally hyperbolic Lorentzian manifold with timelike boundary. Suppose there exists an analytic extension $(\tM, \tilde{g})$ such that the lightlike geodesics in $\tM$ do not have cut points. 
    For any $x \in \partial M$ and $v \in \overline{\partial_-L^+_xM}$, denote by $\gamma_{x, v}$ the corresponding inextendible geodesic in $M$. We have
    \[
    \bigcup_{v \in \overline{\partial_- L^+_xM}}\gamma_{x, v} \cap \partial M = \partial(\{y \in \partial M: d(x, y) > 0\}) \cap \{y \in \partial M: \tilde{d}(x, y) = 0\}.
    \]
\end{pp}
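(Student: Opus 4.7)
The plan is to prove both containments by leveraging Lemma~\ref{lm: boundary of causal future} applied to the extension $(\tM, \tilde g)$, where the hypothesis of no lightlike cut points gives the equivalence $\tilde d(x,y) = 0 \Longleftrightarrow$ $y$ lies on a future lightlike geodesic from $x$ in $\tM$. The key subtlety, as compared to the boundaryless case, is that $d(x,y) = 0$ alone does not force a lightlike connection inside $M$: the distance-maximizing sequence for $d(x, \cdot)$ could run along a concave portion of $\partial M$. The supplementary condition $\tilde d(x,y) = 0$ is exactly what excludes this, forcing any causal connection from $x$ to $y$ in $M$ to be a genuine lightlike pregeodesic.

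For $(\subseteq)$, suppose $y = \gamma_{x,v}(T) \cap \partial M$ for some $v \in \overline{\partial_- L^+_x M}$. Viewed in $\tM$, the curve $\gamma_{x,v}$ is still a lightlike geodesic from $x$ to $y$, so by the no-cut-point hypothesis and Lemma~\ref{lm: boundary of causal future} applied in $\tM$, we get $\tilde d(x,y) = 0$ and hence $d(x,y) = 0$ using $d \le \tilde d$. For the boundary condition, we exploit that $\partial M$ is timelike: integrate a future-pointing timelike vector tangent to $\partial M$ at $y$ to obtain a sequence $y_n \in \partial M \cap I^+(y)$ with $y_n \to y$. The reverse triangle inequality then yields $d(x,y_n) \ge d(x,y) + d(y,y_n) > 0$, so $y \in \overline{\{y' \in \partial M : d(x,y') > 0\}}$; combined with $d(x,y) = 0$, the point $y$ lies on the topological boundary of that set in $\partial M$.

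For $(\supseteq)$, fix $y \in \partial M$ satisfying both conditions. The boundary hypothesis produces $y_n \in \partial M$ with $y_n \in I^+(x)$ in $M$ and $y_n \to y$, hence $y \in \overline{I^+(x)} = J^+(x)$ in $M$ by global hyperbolicity, which yields a piecewise smooth future-pointing causal curve $\alpha$ in $M$ from $x$ to $y$. Viewing $\alpha$ in $\tM$, the inequality $L(\alpha) \le \tilde d(x,y) = 0$ forces $\alpha$ to be lightlike on each smooth piece. If $\alpha$ were not a lightlike pregeodesic, \cite[Proposition~10.46]{One83} applied in $\tM$ would produce a nearby timelike curve from $x$ to $y$ in $\tM$, contradicting $\tilde d(x,y) = 0$. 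Hence $\alpha$ reparametrizes to a lightlike geodesic $\gamma$ in $M$; its initial velocity $v = \dot\gamma(0)$ must lie in $\overline{\partial_- L^+_x M}$, since otherwise $\gamma$ would immediately leave $M$, and therefore $y \in \gamma_{x,v} \cap \partial M$.

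The main technical point lies in the shortcut step of $(\supseteq)$. The curve $\alpha$ may have corners at points of $\partial M$, and a shortcut argument performed intrinsically in $M$ could be obstructed there by the boundary. However, since the extension $\tM$ is cut-point-free and globally hyperbolic, we may apply \cite[Proposition~10.46]{One83} in $\tM$, where the shortcut is free to venture into $M^c$. The existence of any such timelike shortcut in $\tM$ contradicts $\tilde d(x,y) = 0$ regardless of where it lies, which closes the argument cleanly and explains why having the globally hyperbolic, cut-point-free extension is indispensable precisely at this step.
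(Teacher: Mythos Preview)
Your proof is correct and follows essentially the same route as the paper's: for $(\subseteq)$ both use the no-cut-point hypothesis to get $\tilde d(x,y)=0$ and the timelikeness of $\partial M$ to produce nearby points with $d>0$; for $(\supseteq)$ both produce a piecewise smooth causal curve in $M$ from $x$ to $y$ and then apply \cite[Proposition~10.46]{One83} in $\tM$ to force it to be a lightlike pregeodesic. The only cosmetic difference is that the paper obtains the causal curve via the limit curve machinery (Lemma~\ref{lm: limit_H1} and Lemma~\ref{lm: H1 curve to piecewise}) explicitly, whereas you invoke the equivalent fact that $J^+(x)$ is closed in a globally hyperbolic spacetime with timelike boundary; your added check that the initial direction lies in $\overline{\partial_- L^+_x M}$ is a nice detail the paper leaves implicit.
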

\begin{proof}
    Let $y \in \gamma_{x, v} \cap \partial M$ for some $v \in \overline{\partial_-L^+_xM}$. Since the lightlike geodesics do not have cut points in $\tM$, we must have $\tilde{d}(x, y) = 0$. Then $d(x, y) \leq \tilde{d}(x, y)$ implies $d(x, y) = 0$. Since the boundary is timelike, pick any future pointing timelike curve $\alpha$ in the boundary starting at $y$, clearly $d(x, \alpha(s)) > 0$ for any $s > 0$. Thus $y$ belongs to the right hand side.

    Now let $y$ be a point in the set on the right-hand side. Let $z_j \in \partial M$ be a sequence of points converging to $y$ such that $d(x, z_j) > 0$. 
    Then we can find a sequence of piecewise smooth causal curves $\gamma_j \subset M$ from $x$ to $z_j$. 
    By \cite[Proposition 2.19]{AFS21}, there exists a future directed $H^1$-causal limit curve $\gamma \subset M$ by the global hyperbolicity of $M$, going from $x$ to $y$. Note that $\gamma$ may not be piecewise smooth, but it can always be modified into a piecewise smooth causal curve joining $x$ and $y$, see Lemma \ref{lm: H1 curve to piecewise}.
    Now assume for contradiction that $\gamma$ is not a lightlike pregeodesic. 
    Then it is a piecewise smooth causal curve in the interior of $\tM$ which is not a lightlike pregeodesic. By \cite[Proposition 10.46]{One83}, there exists a piecewise smooth timelike curve arbitrarily close to $\gamma$ in $\tilde{M}$ from $x$ to $y$. This contradicts with the assumption that $\tilde{d}(x, y) = 0$. 
    Thus $\gamma$ is a lightlike pregeodesic in $M$, this finishes the proof.
\end{proof}

We are now ready to put these together and prove the main theorem of this section.
\begin{proof}[Proof of Theorem \ref{thm: boundary distance main thm}]

    Throughout the proof, we omit the subscript $j$, and say a quantity $A$ is determined if $A_1$ and $A_2$ are related by $\varphi_0$. We first show the boundary metric can be determined around boundary points with strictly convex directions. Consider  the strictly convex direction $(x, v)$ in $T\partial M$ given in the statement of the theorem, then strict convexity holds for all $(y, w)$ sufficiently close  to $(x,v)$. Pick a smooth curve $\alpha$ in the boundary such that $(\alpha(0), \dot{\alpha}(0)) = (y, w)$ and $d(y, \alpha(s)) > 0$ for $s \ll 1$. Such $(y, w)$ and $\alpha$ exist because the boundary is timelike and $(x, v)$ is a causal direction. In particular, $w$ is timelike, we need to determine its length. By strict convexity, for $s\ll 1$, $d(y, \alpha(s))$ is obtained by a unique timelike geodesic in $M$, and $d(y, \alpha(s)) = |\exp_y^{-1}(\alpha(s))|_g:= |w(s)|_g$, for $w(s) \in T_y\partial M$. By construction, $w(0) = 0$ and $\dot{w}(0) = d\exp_y|_{w(0)}\dot{w}(0) = \dot{\alpha}(0) = w$. In particular,
    \[
    (d(y, \alpha(s)))^2 = -g(\dot{w}(0), \dot{w}(0))s^2+O(s^3),
    \]
    so $g(w, w)$ is determined from the boundary time separation function. Since this holds for all timelike $(y, w)$ sufficiently close to $(x, v)$, the boundary metric around $x$ is determined.

    Next, we determine the scattering relation around strictly convex direction $(x, v)$. Note that because of the strict convexity, the interior and complete scattering relation for these directions coincide, so we simply use the term scattering relation. Denote by $C \subset \partial M$ the connected component that contains $x$. Since the boundary metric has been determined, we may assume $(x, v)$ is timelike, as the strict convexity is an open condition. Then there exists $V \subset C$, the image of a conic neighborhood of $(x, v)$ in $T_xC$, such that for any $y \in V$, $d(x, y) > 0$ is the length of the unique timelike geodesic connecting $x$ and $y$ in $M$. Consider a local extension around $x$ and $U$ a geodesically convex neighborhood of $x$ in the extension. Let $\bar{d}$ be the time separation function for points in $U$. Then $\bar{d}(x, y) = d(x, y)$ for all $y \in V$, since the unique timelike geodesic from $x$ to $y$ is in $M \cap U$, by shrinking $V$ if necessary. In particular, $d(x, \cdot)$ is smooth around $y$; similarly, $d(\cdot, y)$ is smooth around $x$ for every $y \in V$. As a result, by \cite[Lemma 5]{LOY16}, the two differentials
    \[
    \eta:=\diff(d(x, \cdot))|_y = [\diff (\tilde{d}(x, \cdot))|_y]|_{T_yC}, \quad \xi:= \diff (d(\cdot, y))|_x = [\diff(\tilde{d}(\cdot, y))|_x]|_{T_xC}
    \]
    are the projection of unit timelike covectors at $x$ and $y$ respectively, whose corresponding vectors are related by the projected scattering relation. In boundary normal coordinates around $x$, the scattering relation is thus given by
    \[
    (x, g^{\alpha\beta}\xi_\alpha \partial_\beta + \sqrt{-1-|\xi|_g}\partial_n) \to (y, g^{\alpha\beta}\eta_\alpha \partial_\beta + \sqrt{-1-|\eta|_g}\partial_n).
    \]
    As a result, the scattering relation is determined for causal directions sufficiently close to $(x, v)$.

    Since the metric and scattering relation are determined around the strictly convex direction $(x, v)$ (recall we can assume $(x, v)$ timelike), we can now apply Theorem \ref{thm: construct jet}, which determines the jet of metric in boundary normal coordinates at $x$. Consider the extension of a collar neighborhood using normal exponential map, denote the extended manifold as $(\tM, \tilde{g})$. By the same argument as \cite[Section 2]{Var09}, the metric $\tilde{g}$ on $M^c = \tM \backslash M$ is determined, with the only difference being that the collar neighborhood is not uniform in size as the manifold is non-compact. Moreover, by the assumption in the statement of the theorem, we can make the extension sufficiently small so that $(\tM, \tilde{g})$ is a globally hyperbolic spacetime with timelike boundary and lightlike geodesics do not have cut point in $\tilde{M}$.

    By Proposition \ref{prop: extend time separation function}, the time separation function $\tilde{d}$ for $(\tM, \tilde{g})$ is determined for points $x, y$ in the exterior region $M^c \sqcup \partial M$. By Proposition \ref{prop: identify light cone on boundary}, we can also identify the intersection between lightlike geodesics with the boundary:
    \[
    L_x := \bigcup_{v \in \overline{\partial_- L^+_xM}}\gamma_{x, v} \cap \partial M, \quad x \in \partial M.
    \]
    Let $y \in L_x$ reached by $\gamma_{x, v}$ at time $T$, by rescaling we may assume $y = \gamma_{x, v}(1)$. We first recover $w = d\exp_x|_{Tv}v$.

    Since the boundary is timelike and we know the metric in $M^c$, there exists a sequence of $z_j \to y$ such that $z_j \in I^+(y) \cap M^c$. By the assumption that lightlike geodesics do not have cut points, so they do not have conjugate points by Lemma \ref{lm: cut_conjugate}. Locally, $\exp_x$ is then a diffeomorphism around $v$. Denote by
    \[
    v_j:= \exp_x^{-1}(z_j), \quad u_j := d\exp_x|_{v_j}v_j.
    \]
    For any $v'$ sufficiently close to $v$, $\gamma_{x, v'}([0, 1])$ are in the interior of $\tM$, since $\gamma_{x, v}([0, 1])$ is in $M$. By Proposition \ref{pp: lower semi-cts}, for $v'$ sufficiently close to $Tv$, the cut point on $\gamma_{x, v'}$ comes after $z':= \gamma_{x, v'}(1)$, which implies $\tilde{d}(x, z') = |v'|_{\tilde{g}} = |\exp^{-1}_x(z')|_{\tilde{g}}$. In particular, $|v_j|_{\tilde{g}} = |u_j|_{\tilde{g}} = \tilde{d}(x, z_j)$.
    
    Moreover, $\tilde{d}(x, \cdot)$ is smooth around $z_j$ for sufficiently large $j$. By the same computation as \cite[Lemma 5]{LOY16}, $u_j = \tilde{d}(x, z_j) \cdot \nabla_{\tilde{g}}(\tilde{d}(x, \cdot))|_{z_j}$ is determined by $\tilde{d}$, where $\nabla_{\tilde{g}}$ is the gradient. As $z_j \to y$, $w$ is determined via $u_j \to w$.

    To determine the corresponding $v$ at $x$, we perform the same computation around $x$. Specifically, there exists a sequence of $z_j' \to x$ such that $z_j' \in I^-(x) \cap M^c$. Same argument shows $\tilde{d}(\cdot, y)$ is smooth around $z'_j$, and $|\tilde{d}(z', y)| = |\exp_y^{-1}(z')|_{\tilde{g}}$ for $z'$ close to $x$. Then $v$ is recovered as the limit of $-\tilde{d}(z'_j, y) \cdot \nabla_{\tilde{g}}(\tilde{d}(\cdot, y))|_{z'_j}$.

    Since this can be computed for all $y \in L_x$, for any $v \in \overline{L_x^+M}$, we group the $(y_j, w_j)$ whose corresponding $(x, v_j)$ is a scaling of $(x, v)$. The final exit point is thus the $(y_j, w_j)$ whose corresponding $v_j$ is the longest. This gives us the range of the lightlike complete travel time data $(x, v) = (x, \frac{|v|}{|v|_j}v_j) \to (y_j, \frac{|v|}{|v_j|}w_j, \frac{|v_j|}{|v|})$, where $|\cdot|$ is any measure of vector length. Finally, we apply Proposition \ref{prop: complete light scattering recover metric} to obtain isometry between $M_1$ and $M_2$.  
\end{proof}


\section{Determination of jet}\label{sec: determination of jet}

In this section, we study the determination of the boundary jet from both the interior and the complete scattering relation. The main idea is similar to \cite[Theorem 1]{SU09}, but under different assumptions.
Specifically, whereas \cite{SU09} used lens data, which under their definition does not require a priori knowledge of the boundary metric, see  \cite[Section 1]{SU09}, here we work with scattering information. 
Since we are interested in the scattering rigidity problem, we do not assume the length function or the travel time function. Nevertheless, we prove that the same result can be achieved if one has knowledge of the boundary metric, using the first variation of the length function. 
We first state the results for Riemannian manifolds, then extend the result to Lorentzian setting. Finally, we refer to Section \ref{sec: construction of jet with strictly convex direction} for a constructive proof for a strictly convex direction.

\begin{thm}
    Let $(M, g)$ be a compact Riemannian manifold with boundary. Let $(x_0, v_0) \in S\partial M$ be such that the maximal geodesic $\gamma_0$ through it leaves $M$ in finite time. Suppose $x_0$ is not conjugate to any point in $\gamma_0 \cap \partial M$. If $S^{in}$ is known on some neighborhood of $(x_0, v_0)$ and the boundary metric $g|_{T\partial M \times T\partial M}$ is known on some neighborhood of $\gamma_0 \cap \partial M$, then the jet of $g$ at $x_0$ in boundary normal coordinates is determined uniquely.
\end{thm}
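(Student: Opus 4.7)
The plan is to reduce to the lens-data-based jet determination of \cite{SU09} by first recovering the interior travel time $\ell^{in}$ from $S^{in}$ and the boundary metric via a first-variation argument.

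\textbf{Step 1: Recovering $\ell^{in}$.}
In boundary normal coordinates $g_{nn} \equiv 1$ and $g_{n\alpha} \equiv 0$, so the boundary metric determines the full bilinear form $g_p$ on $T_p M$ at every boundary point $p$ near $x_0$ or $\gamma_0 \cap \partial M$. Given a smooth curve $s \mapsto (x(s), v(s))$ in $\partial_- SM$ near $(x_0, v_0)$, set $(y(s), w(s)) := S^{in}(x(s), v(s))$. The first variation of arc length, applied to the family of unit-speed geodesic segments from $x(s)$ to $y(s)$ (whose interiors lie in $M^\circ$), yields
\[
\frac{d}{ds}\,\ell^{in}(x(s), v(s)) \;=\; g\bigl(w(s), \dot y(s)\bigr) - g\bigl(v(s), \dot x(s)\bigr).
\]
The right-hand side is computable from $S^{in}$ and the boundary metric, since $\dot x(s), \dot y(s) \in T\partial M$ and the boundary metric is known both near $x_0$ and near $\gamma_0 \cap \partial M$. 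Integrating along paths in the domain recovers $\ell^{in}$ on a neighborhood of $(x_0, v_0)$ up to a single additive constant.

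\textbf{Step 2: Jet from lens data.}
With $(S^{in}, \ell^{in})$ known (modulo a constant) on a neighborhood of $(x_0, v_0)$, the argument of \cite[Theorem 1]{SU09} applies. The non-conjugacy hypothesis makes the exponential map based at $x_0$ a local diffeomorphism along $\gamma_0$, so that inward perturbations of the tangent direction $(x_0, v_0)$ produce smooth variations of the lens data whose successive coefficients encode the normal derivatives $\partial_n^k g_{\alpha\beta}(x_0)$ for every $k \geq 0$. The additive constant from Step 1 plays no role, since only derivatives of $\ell^{in}$ enter.

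\textbf{Main obstacle.}
The delicate point is controlling the first-variation recovery as $(x, v) \to (x_0, v_0)$, where $\ell^{in}$ may behave non-smoothly depending on the sign of $\sff(v_0, v_0)$. The resolution is to integrate entirely within the open set $\partial_- SM$ and pass to the tangent direction using the continuous extension of $S^{in}$ at $(x_0, v_0)$ guaranteed by the non-conjugacy hypothesis; only derivative data survives the limit, so boundary-tangent singularities of $\ell^{in}$ itself are harmless. A fully constructive alternative for the strictly convex subcase is developed in Appendix \ref{sec: construction of jet with strictly convex direction}, which can be invoked whenever $\sff(v_0, v_0) > 0$.
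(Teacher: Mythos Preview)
The proposal is correct and takes essentially the same approach as the paper: recover the interior length function up to an additive constant via the first variation formula (using the known boundary metric to evaluate the endpoint terms), then invoke the argument of \cite[Theorem~1]{SU09}, noting that only derivatives of the length function enter through the Eikonal equation. Your Step~2 is phrased a bit loosely (``successive coefficients encode the normal derivatives'' sounds like a Taylor-expansion argument, whereas \cite{SU09} actually proceeds via the Eikonal equation on visible sets $U_j^+$ from the limiting point $y_0^*$), but the substance is the same.
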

\begin{proof}
    The proof stays close to the proof of \cite[Theorem 1]{SU09}. By the same argument there, this non-conjugacy condition holds for any $(x, v)$ sufficiently close to $(x_0, v_0)$. Now consider $v_\epsilon = v_0 + \epsilon \partial_n$ in boundary normal coordinates, and denote $(y_\epsilon, w_\epsilon) = S^{in}(x_0, v_\epsilon)$. By compactness there exists a subsequence $y_{\epsilon_j} \to y_0^*$ for some $y_0^*$, and $y_0^*$ lies in $\gamma_0 \cap \partial M$. By the non-conjugacy assumption, $y_0^*$ is not conjugate to $x_0$.
    
    Denote by $\gamma_j$ the geodesic corresponding to $(x_0, v_{\epsilon_j})$. The same argument there implies there exists $H_j$ hypersurfaces on $\partial M$ containing $x_0$ such that a small neighborhood of $x_0$ on the boundary, denoted $U_j \subset \partial M$, can be decomposed into $U^+_j \sqcup (H_j \cap U_j) \sqcup U^-_j$, where $U^+_j$ is visible from $y_{\epsilon_j}$.
    Here visible means any point in $U^+_j$ can be connected to $y_{\epsilon_j}$ by a geodesic that is in the interior except end points. Since the non-conjugacy condition is an open condition, it holds for $(x, u_j(x))$ where $x \in U^+_j$ and $u_j(x)$ is the unique direction near $(x_0, v_0)$ such that the base point of $S(x, u_j(x))$ is $y_{\epsilon_j}$. We can thus define $\tau_j(x):= \ell^{in}(x, u_j(x))$ as a function on $U^+_j$. It is smooth in $U^+_j$ because the geodesic from $y_{\epsilon_j}$ connects to $x$ is transversal at $x$. Note that so far every step is the same as \cite[Theorem 1]{SU09}, with the only difference being we do not have knowledge of $\tau_j$, since we do not have the length function. This is not a problem because we do not need to recover the boundary metric.

    Let $C_j := \tau_j(x_0, v_{\epsilon_j})$ be some positive constant. 
    Although we do not know this constant, it does not affect the future computations. By the first variation formula of the length function, for any $x(s)$ smooth curve in $U^+_j \cup \{x_0\}$ connecting $x_0$ to $x$, we obtain
    \[
    \tau_j(x) = C_j - \int_0^1 g(x'(s), u_j(x(s)))ds.
    \]
    Because we have the scattering relation and the boundary metric, this is saying $\tau_j$ can be determined up to a constant (this fact is also stated in the proof of \cite[Theorem 1]{SU09}). In particular, the derivative of $\tau_j$ can be determined from our assumptions. The remaining proof follows exact like the proof of \cite[Theorem 1]{SU09}, because the Eikonal equation
    \[
    g^{\alpha\beta}\partial_\alpha \tau_j \partial_\beta \tau_j + (\partial_n\tau_j)^2 = 1
    \]
    only involves the derivatives of $\tau_j$.
\end{proof}

In \cite[Theorem 1]{SU09}, the authors used interior lens data. Next we show that one can also use complete scattering relation. We emphasize that the existence of $U^+_j$ requires the geodesic connecting $x_0$ and $y_{\epsilon_j}$ to stay in the interior of $M$ except for end points, which can not be guaranteed if $y_{\epsilon_j}$ is obtained via $S(x_0, v_{\epsilon_j})$. On the other hand, a generic perturbation would solve the problem.

\begin{thm}
    Let $(M, g)$ be a compact Riemannian manifold with boundary. Let $(x_0, v_0) \in S\partial M$ be such that the maximal geodesic $\gamma_0$ through it leaves $M$ in finite time. Suppose $x_0$ is not conjugate to any point in $\gamma_0 \cap \partial M$. If $S$ is known on some neighborhood of $(x_0, v_0)$ and the boundary metric $g|_{T\partial M \times T\partial M}$ is known on some neighborhood of $\gamma_0 \cap \partial M$, then the jet of $g$ at $x_0$ in boundary normal coordinates is determined uniquely.
\end{thm}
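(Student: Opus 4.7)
The plan is to reduce to the preceding theorem (which uses $S^{in}$) by selecting a perturbed initial direction $\tilde v_0$ at which $S^{in}$ and $S$ coincide locally. The key observation is that if the maximal geodesic $\tilde\gamma_0$ through $(x_0, \tilde v_0)$ exits $M$ transversally at its first boundary hit after leaving $x_0$ and does not reenter $M$ thereafter, then the same holds on an open neighborhood of $(x_0, \tilde v_0)$, so $S^{in} = S$ there. In this case, the given knowledge of $S$ near $(x_0, v_0)$ restricts to knowledge of $S^{in}$ near $(x_0, \tilde v_0)$, and the preceding theorem applies at $(x_0, \tilde v_0)$ to yield the jet of $g$ at $x_0$ in boundary normal coordinates.

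To construct such $\tilde v_0$, I would carry out a transversality/genericity argument. Since $\gamma_0$ is non-trapping, it has bounded length in $M$, and the set of directions whose geodesic has a tangential interior contact with $\partial M$ is contained in a countable union of positive-codimension subsets by the transversal intersection results of Appendix~\ref{sec: transversal geodesics}. Generic nearby $\tilde v_0$ therefore have only transversal boundary contacts, and by a finer analysis of the ambient geodesic flow (now also tracking excursions into $M^c$) one selects $\tilde v_0$ so that the first transversal exit of $\tilde\gamma_0$ is also its final exit. Because $S$ is assumed known on a full open neighborhood of $(x_0, v_0)$, this selection does not have to be infinitesimal; it merely has to lie within the data region.

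The remaining hypotheses of the preceding theorem transfer by openness. Non-conjugacy is an open condition in the initial direction, as noted in the preceding proof, so $x_0$ is not conjugate to any point of $\tilde\gamma_0 \cap \partial M$ for $\tilde v_0$ close to $v_0$; and by continuous dependence of geodesics on initial data, $\tilde\gamma_0 \cap \partial M$ is contained in any prescribed neighborhood of $\gamma_0 \cap \partial M$, so the boundary metric is known in a neighborhood of $\tilde\gamma_0 \cap \partial M$. Invoking the preceding theorem at $(x_0, \tilde v_0)$ then determines the jet of $g$ at $x_0$ in boundary normal coordinates.

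The main obstacle is the genericity claim for the absence of reentry. Since reentry is an open condition once the crossings are transversal, small perturbations cannot eliminate it if $\gamma_0$ itself reenters $M$; one must exploit the full open neighborhood on which $S$ is known. A cleaner alternative, which avoids any genericity argument, is to revisit the visibility step in the preceding proof directly: that step only requires the \emph{terminal} segment of $\gamma_{v_{\epsilon_j}}$ near $y_{\epsilon_j}$ to lie in $M^\circ$, which holds automatically whenever $y_{\epsilon_j} = S(x_0, v_{\epsilon_j})$ is a transversal exit, so the set $U_j^+$ visible from $y_{\epsilon_j}$ via the terminal interior segment is well-defined regardless of earlier boundary excursions. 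With this reformulation the entire proof of the preceding theorem goes through verbatim, which would be the route I would ultimately take.
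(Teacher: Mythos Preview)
Your first route is close in spirit to the paper's, but the paper perturbs at a different level: instead of replacing the tangential $v_0$ by a single nearby $\tilde v_0$, it perturbs each inward vector $v_\epsilon = v_0 + \epsilon\,\partial_n$ to a $\tilde v_\epsilon = v_\epsilon + O(\epsilon^N)$ whose geodesic (in an ambient extension, up to a uniform exit time $T$) meets $\partial M$ only transversally. Existence of such $\tilde v_\epsilon$ for almost every direction comes from the Sard-type Lemma~\ref{lm: transversal_global}, and the paper supplies an intrinsic criterion --- the preimage $\{(z,u):S(z,u)=S(x_0,\tilde v_\epsilon)\}$ has exactly two elements, both transversal --- by which one can pick out from $S$ alone those $\tilde v_\epsilon$ for which $S=S^{in}$. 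The $O(\epsilon^N)$ control makes the replacement invisible at each finite jet level, and the rest of the preceding proof then runs unchanged. This sidesteps the single-direction reentry obstacle you identified by allowing a different perturbation at each scale $\epsilon$.

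Your second (``cleaner'') route has a real gap. The visibility step borrowed from \cite{SU09} is not merely producing a half-neighborhood $U_j^+$; it is what guarantees that $x\mapsto u_j(x)$ can be read off smoothly from the scattering data, so that the first-variation formula yields $\partial\tau_j$. If you set $y_{\epsilon_j}=S(x_0,v_{\epsilon_j})$ and ask only that the terminal segment lie in $M^\circ$, two failures are possible: $y_{\epsilon_j}$ may itself be a tangential exit, and even when it is transversal, the extended geodesic may graze $\partial M$ from outside \emph{after} $y_{\epsilon_j}$, so an arbitrarily small perturbation of $(x_0,v_{\epsilon_j})$ can trigger a later re-entry and make $S$ jump to a point far from $y_{\epsilon_j}$. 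In either case you cannot invert $S$ near $(x_0,v_{\epsilon_j})$ to obtain $u_j(x)$, and the Eikonal argument collapses. The paper explicitly flags this obstruction before giving its perturbation argument; forcing all intersections to be transversal is precisely what restores local smoothness of $S$.
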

\begin{proof}
    We show that we can still find $\tv_\epsilon$ arbitrarily close to $v_\epsilon = v+\epsilon \partial_n$ such that the geodesic from $(x_0, \tv_\epsilon)$ to $S(x_0, \tv_\epsilon)$ stays in the interior of $M$ except for two end points, whose directions are transversal to the boundary. In particular, this means $S(x_0, \tv_\epsilon) = S^{in}(x_0, \tv_\epsilon)$. Indeed, consider arbitrarily small open set of directions containing $v_\epsilon$ in $\partial_-T_xM$. By non-trapping assumption, there exists some $T$ such that all of them leaves $M$ before time $T$. Let $\tM$ be any extension of $M$, by Lemma \ref{lm: transversal_global}, the corresponding geodesic for all except a zero measure set of them will intersect with the boundary transversally every time it intersects with the boundary. Let $\tv_\epsilon$ be any such direction, then it is in $\partial_- TM$, and $\gamma_{x_0, \tv_\epsilon}$ intersects with the boundary only transversally. In particular, this means the first time it leaves the manifold is transversal, and does not touch the boundary in between. This proves what we need. We have shown existence of such $\tv_\epsilon$, to explicitly find one, use the fact that $\tv_\epsilon$ is such a direction if and only if
    \[
    \{(z, u): S(z, u) = S(x_0, \tv_\epsilon)\}
    \]
    has cardinality 2 with both elements transversal to the boundary. We can now pick $\tv_\epsilon = v_\epsilon + O(\epsilon^N)$ for any large $N$ so that $\tv_\epsilon \to v$, the rest of the proof is now the same with $\tv_\epsilon$ replacing $v_\epsilon$. The higher order error does not affect the computation for lower order jet, and since we may choose $N$ to be arbitrarily large, we may determine all the jet.
\end{proof}

Since we are mainly interested in Lorentzian manifolds with timelike boundary, we prove the corresponding statements in the Lorentzian setting. The ideas are the same when the non-conjugacy condition holds for a tangential timelike direction.

\begin{thm}\label{thm: lorentzian determine jet}
    Let $(M, g)$ be a Lorentzian manifold with timelike boundary. Let $(x_0, v_0) \in T\partial M$ be unit timelike vector tangential to the boundary, such that the maximal geodesic $\gamma_0$ through it leaves $M$ in finite time. Suppose $x_0$ is not conjugate to any point in $\gamma_0 \cap \partial M$. If $S^{in}$ or $S$ is known on some neighborhood of $(x_0, v_0)$ and the boundary metric $g|_{T\partial M \times T\partial M}$ is known on some neighborhood of $\gamma_0 \cap \partial M$, then the jet of $g$ at $x_0$ in boundary normal coordinates is determined uniquely.
\end{thm}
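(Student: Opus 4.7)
The plan is to adapt the two Riemannian proofs just above to the Lorentzian timelike setting. Because $v_0$ is unit timelike tangent to a timelike boundary, $\partial_n$ is spacelike in boundary normal coordinates, so the perturbed vector $v_\epsilon \coloneqq v_0 - \epsilon\,\partial_n$ is strictly inward pointing and still timelike, with $g(v_\epsilon,v_\epsilon) = -1+\epsilon^2 < 0$, for all small $\epsilon>0$. The geodesic $\gamma_\epsilon$ thus enters $M^\circ$ and exits in finite time since $\gamma_0$ does. By openness of the non-conjugacy condition one can pass to a subsequence $\epsilon_j \to 0$ for which $S^{in}(x_0, v_{\epsilon_j}) = (y_{\epsilon_j}, w_{\epsilon_j})$ converges to some $(y_0^\ast, w_0^\ast) \in \gamma_0 \cap \partial M$ with $x_0$ not conjugate to $y_0^\ast$.

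For each $\epsilon_j$, I would reproduce the geometric setup of \cite[Theorem 1]{SU09}: construct the hypersurface $H_j \subset \partial M$ through $x_0$ and the visible region $U_j^+$ of boundary points reachable from $y_{\epsilon_j}$ by an interior timelike geodesic close to $\gamma_{\epsilon_j}$, together with the unique smooth direction field $u_j(x)$ on $U_j^+$ with $S^{in}(x, u_j(x)) = (y_{\epsilon_j}, w_{\epsilon_j})$. The Lorentzian length $\tau_j(x)$ of the corresponding connecting geodesic is not part of the data, but the first variation of the timelike arc length along a unit speed timelike geodesic gives, for any smooth curve $x(s)$ in $U_j^+ \cup \{x_0\}$ from $x_0$ to $x$,
\[
\tau_j(x) \;=\; \tau_j(x_0) \,+\, \int_0^1 g\bigl(x'(s),\,u_j(x(s))\bigr)\, ds,
\]
where the sign is opposite to the Riemannian case because of the $\sqrt{-g(\dot\alpha,\dot\alpha)}$ in the Lorentzian length functional. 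The integrand depends only on the known boundary metric and on $u_j$, so $d\tau_j$ is determined on $U_j^+$. The Lorentzian Eikonal equation in boundary normal coordinates,
\[
g^{\alpha\beta}\partial_\alpha \tau_j \,\partial_\beta \tau_j \,+\, (\partial_n \tau_j)^2 \;=\; -1,
\]
involves only derivatives of $\tau_j$, so iterating the tangential differentiation and passing to higher jets in $\epsilon_j$ exactly as in \cite[Theorem 1]{SU09} recovers the full jet of $g$ at $x_0$ in boundary normal coordinates.

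When only the complete scattering relation $S$ is available, one must additionally arrange that the connecting geodesic does not touch $\partial M$ between endpoints, so that $S = S^{in}$ along it. Exactly as in the second Riemannian proof above, I would replace $v_\epsilon$ by a generic nearby $\tilde v_\epsilon = v_\epsilon + O(\epsilon^N)$ whose geodesic meets $\partial M$ transversally at every intersection, via Lemma~\ref{lm: transversal_global}. Such $\tilde v_\epsilon$ can be singled out from $S$ alone by requiring $\{(z,u) : S(z,u) = S(x_0,\tilde v_\epsilon)\}$ to have cardinality two with both directions transversal, which forces $S(x_0,\tilde v_\epsilon) = S^{in}(x_0,\tilde v_\epsilon)$. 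Since $N$ can be taken arbitrarily large, this higher order perturbation does not obstruct the recovery of any finite jet.

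The main technical point to verify is that the cut-locus and visibility inputs of \cite[Theorem 1]{SU09} transfer cleanly to timelike geodesics near a timelike boundary. Proposition~\ref{pp: lower semi-cts} together with the standard Lorentzian non-conjugacy apparatus provides exactly the needed openness and smoothness, so beyond careful sign tracking in the first variation formula and in the Eikonal equation no new ideas should be required.
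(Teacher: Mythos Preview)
Your proposal is correct and follows essentially the same route as the paper: perturb $v_0$ inward, pass to a subsequential limit $y_0^\ast$, build the visible sets $U_j^+$ and direction fields $u_j$ as in \cite[Theorem~1]{SU09}, recover $d\tau_j$ from the first variation with the Lorentzian sign, feed it into the Eikonal equation $g^{\alpha\beta}\partial_\alpha\tau_j\partial_\beta\tau_j+(\partial_n\tau_j)^2=-1$, and for $S$ use a generic transversal perturbation via Lemma~\ref{lm: transversal_global}. Two small corrections: Proposition~\ref{pp: lower semi-cts} concerns lightlike geodesics and is not the right input here (the paper instead invokes \cite[Lemma~2.3]{Sha02} for the timelike visibility step), and the induction step of \cite{SU09} in the short-geodesic case $y_0^\ast=x_0$ needs its contradiction inequality reversed, since timelike geodesics locally \emph{maximize} length---a point the paper works out explicitly and which is slightly more than mere ``sign tracking.''
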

\begin{proof}
    The proof is almost identical with the Riemannian case, we point out some small adjustments below.
    
    Let $\tM$ be any extension of $M$ and suppose $\gamma_0(T) \notin M$. Then there exists a small tubular neighborhood $U$ of $\gamma_0([0, T])$ and a sufficiently small open set of directions $\mathcal{U} \subset \partial IM$ containing $(x_0, v_0)$ such that the geodesics from $(x, v) \in \mathcal{U}$ will stay in $\bar{U} \cap M$ before leaving $M$. Hence we may perform all the steps in this tubular neighborhood, which is compact. Since we are only working with geodesics very close to $\gamma_0$, we may assume all the geodesics appear in the proof to be timelike geodesics. For timelike geodesics, one can check that the step involving the existence of visible set still holds, specifically the proof of the statement in \cite[Lemma 2.3]{Sha02} works for timelike geodesics as well. For timelike geodesics, the first variation formula for the length function has the opposite sign
    \[
    \tau_j(x) = C_j + \int_0^1 g(x'(s), u_j(x(s)))ds.
    \]
    The Eikonal equation also has the opposite sign
    \[
    g^{\alpha\beta}\partial_\alpha \tau_j \partial_\beta \tau_j + (\partial_n\tau_j)^2 = -1.
    \]
    Both of them do not affect the proof.

    We will rewrite the induction step in \cite[Theorem 1]{SU09} for the case $y_0^* = x_0$, since timelike geodesics locally maximizing distances instead of minimizing distances. The main idea is exactly the same. Suppose for the two metrics $g, h$, there exists $k \geq 1$ such that at $(x_0, v_0)$,
    \[
    \partial_n^jg^{\alpha\beta} v_\alpha v_\beta = \partial_n^jh^{\alpha\beta} v_\alpha v_\beta, \quad j < k; \quad \partial_n^kg^{\alpha\beta} v_\alpha v_\beta > \partial_n^jh^{\alpha\beta} v_\alpha v_\beta.
    \]
    Then $g^{\alpha\beta} v_\alpha v_\beta > h^{\alpha\beta} v_\alpha v_\beta$ for all $(x, v)$ close to $(x_0, v_0)$, excluding the boundary points. This is equivalent to
    \[
    \sqrt{-g^{\alpha\beta} v_\alpha v_\beta} < \sqrt{-h^{\alpha\beta} v_\alpha v_\beta}.
    \]
    Denote $L^g, L^h$ the length of a curve with respect to $g, h$. Then integrating along $\gamma^g_j$, the geodesic connecting $x_0$ to $y_{\epsilon_j}$ with respect to $g$, we obtain
    \[
    L^g(\gamma^g_j) < L^h(\gamma^g_j)
    \]
    since these are short geodesics with initial direction close to $(x_0, v_0)$. Use the fact that they share the same scattering relation near $(x_0, v_0)$, and the boundary metrics agree around $x_0$, the first variation formula for the length function with initial data $L^g(x_0 \mapsto y_0^*) = L^h(x_0 \mapsto y_0^*) = 0$ gives
    \[
    L^h(\gamma^h_j) = L^g(\gamma^g_j) < L^h(\gamma^g_j).
    \]
    Note that $\gamma^g_j$ is still a timelike curve with respect to $h$ because $0 > g^{\alpha\beta} v_\alpha v_\beta > h^{\alpha\beta} v_\alpha v_\beta$ along it. This is a contradiction because $\gamma^h_j$ should be the distance maximizing causal curve between $x_0$ and $y_{\epsilon_j}$ with respect to $h$, if we consider sufficiently large $j$ so that $y_{\epsilon_j}$ is in the geodesic convex neighborhood of $x_0$.
\end{proof}

\section{Interior and complete scattering information}\label{sec: interior and complete scattering information}

In this section, we study the relationship between interior and complete scattering information. As explained in Section \ref{sec: introduction}, both definitions have made their appearance in the literature, and neither of them can trivially imply the other one. Even though the paper is focusing on analytic Lorentzian manifold, we choose to study the Riemannian setting first. The techniques used in the Riemannian setting will then be used on Lorentzian manifolds.

\subsection{Riemannian setting}\label{sec: riemannian setting}
For a compact Riemannian manifold with boundary, we first show that the interior lens data can be constructed from the complete lens data.
\begin{lm}\label{lm: riemannian recover interior from complete}
    Let $(M, g)$ be a compact Riemannian manifold with boundary, suppose it is non-trapping. Then $(S^{in}, \ell^{in})$ can be recovered from $(S, \ell)$.
\end{lm}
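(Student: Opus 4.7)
My plan is to recover the interior scattering data at a non-trapping $(x,v) \in \partial_- TM$ (where $S^{in}$ is defined) by characterizing $\tau^{in}(x,v)$, $S^{in}(x,v)$, and $\ell^{in}(x,v)$ as limits of the complete data $(S,\ell)$ along perturbations $v' \to v$. The underlying geometric picture is: whenever $\tau^{in}(x,v) < \tau(x,v)$, the geodesic $\gamma_{x,v}$ meets $\partial M$ tangentially at the intermediate point $y_1 := \gamma_{x,v}(\tau^{in}(x,v))$ and then returns to $M^\circ$; this tangential contact is unstable, so a suitably chosen small perturbation of $v$ produces a geodesic that exits $M$ transversally just before $y_1$.

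The first step is the easy inequality $\liminf_{v' \to v}\tau(x,v') \ge \tau^{in}(x,v)$. If $v_k' \to v$ with $\tau(x,v_k') \to T$, then continuous dependence of the geodesic flow on initial data implies $\gamma_{x,v_k'}(\tau(x,v_k')) \in \partial M$ converges to $\gamma_{x,v}(T) \in \partial M$, so $T$ is a boundary contact time of $\gamma_{x,v}$, forcing $T \ge \tau^{in}(x,v)$.

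Next I would prove the reverse inequality by constructing an explicit perturbation. If $\tau^{in}(x,v) = \tau(x,v)$ there is nothing to show, so assume $t^* := \tau^{in}(x,v) < \tau(x,v)$. In boundary normal coordinates around $y_1$ with $M = \{z^n \ge 0\}$, the tangency at $y_1$ gives $z^n(\gamma_{x,v}(t^*)) = 0$ and $\frac{d}{dt}(z^n \circ \gamma_{x,v})(t^*) = 0$, while the second-order behavior forces $\gamma_{x,v}$ to return to $M^\circ$. The map $v' \mapsto z^n(\gamma_{x,v'}(t^*))$ is smooth, vanishes at $v'=v$, and its differential is the normal component at $y_1$ of the Jacobi field along $\gamma_{x,v}$ with initial data $(0, \delta v)$ at $x$. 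Assuming this linear functional is nonzero, we can pick $v'$ near $v$ making it strictly negative, so that $\gamma_{x,v'}(t^*) \notin M$ and hence $\tau(x,v') < t^*$, with $\tau(x,v') \to t^*$ as $v' \to v$. Combining both inequalities gives $\tau^{in}(x,v) = \liminf_{v' \to v}\tau(x,v')$, which is computable from $(S,\ell)$ via $\tau = \ell/|v|_g$ using the (known) boundary metric. To recover $S^{in}(x,v)$, I would extract a subsequence $v_k' \to v$ realizing the liminf; continuous dependence of $(\gamma,\dot\gamma)$ on initial data then yields $S(x,v_k') \to (\gamma_{x,v}(t^*),\dot\gamma_{x,v}(t^*)) = S^{in}(x,v)$. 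Finally $\ell^{in}(x,v) = \tau^{in}(x,v)\cdot|v|_g$ follows immediately.

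The main obstacle is producing the outward-pushing perturbation when the normal component of the Jacobi field vanishes, for instance when $x$ and $y_1$ are conjugate along $\gamma_{x,v}$ or when the tangency at $y_1$ is of higher order. The remedy is either to pass to higher-order variations (the map $v' \mapsto z^n(\gamma_{x,v'}(t^*))$ cannot vanish identically in a neighborhood of $v$ without contradicting the smoothness of $\partial M$ and the transversality of the geodesic flow), or to first establish the identity on the open dense set of $(x,v)$ for which the first-order argument goes through and then extend by continuity, using lower semicontinuity of $\tau^{in}$ together with continuity of $S^{in}$ on its natural domain.
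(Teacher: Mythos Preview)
Your perturbation approach is far more laborious than needed, and the obstacles you flag are genuine and not closed by the remedies you propose. The paper's argument is essentially a two-line observation that sidesteps all of this: given $(x,v)\in\partial_-TM$, set $(y,w)=S(x,v)$ and look at the preimage $S^{-1}(y,w)$. Since a geodesic is determined by any one of its tangent vectors, every $(z,u)$ with $S(z,u)=(y,w)$ lies on $\gamma_{x,v}$; conversely every boundary touch of $\gamma_{x,v}$ on $[0,\tau(x,v)]$ is tangential (if strictly before $\tau(x,v)$), hence in $\overline{\partial_-TM}$, and has $S(z,u)=(y,w)$. The values $\ell(z,u)$ then linearly order these touches along the geodesic, and $S^{in}(x,v)$ is simply the $(z,u)$ realizing the largest $\ell(z,u)<\ell(x,v)$, with $\ell^{in}(x,v)=\ell(x,v)-\ell(z,u)$. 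No limits, no Jacobi fields, and no appeal to the boundary metric.

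On your proposed fixes: the density-plus-continuity route fails outright, because $S^{in}$ and $\tau^{in}$ are \emph{not} continuous on $\partial_-TM$---precisely at the tangential touches you are trying to detect, a small perturbation can make the geodesic miss $\partial M$ near $y_1$ entirely, causing $\tau^{in}$ to jump. So recovering $\tau^{in}$ on a dense set does not determine it everywhere. The higher-order variation claim is also not justified: when $x$ and $y_1$ are conjugate, the image of $d(\exp_x)_{t^*v}$ can lie entirely in $T_{y_1}\partial M$, and nothing you have said rules out $v'\mapsto z^n(\gamma_{x,v'}(t^*))$ being nonnegative on a full neighborhood of $v$ (for instance if $\partial M$ locally coincides with a piece of the geodesic sphere of radius $t^*$ about $x$). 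One could try varying the basepoint $x$ as well, but at that point you are working much harder than the problem demands.
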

\begin{proof}
    Let $(x, v) \in \partial_- SM$, then $S(x, v) = (y, w)$ is the final exit point. If we denote $\varphi(x, v, \cdot)$ as the complete geodesic flow with respect to $(x, v)$, then
    \[
    \varphi(x, v, \cdot) \cap \partial TM = \{(z, u): S(z, u) = (y,w)\}.
    \]
    One can then order them via $\ell(z, u)$. We thus have
    \[
    \ell^{in}(x, v) = \ell(x, v) - \min\{\ell(z, u) < \ell(x, v): (z, u) \in \varphi(x, v, \cdot)\cap \partial TM\}
    \]
    and $S^{in}(x, v)$ is the $(z, u)$ that obtained the minimum.
\end{proof}

\begin{figure}
    \centering
    \includegraphics[width=\linewidth]{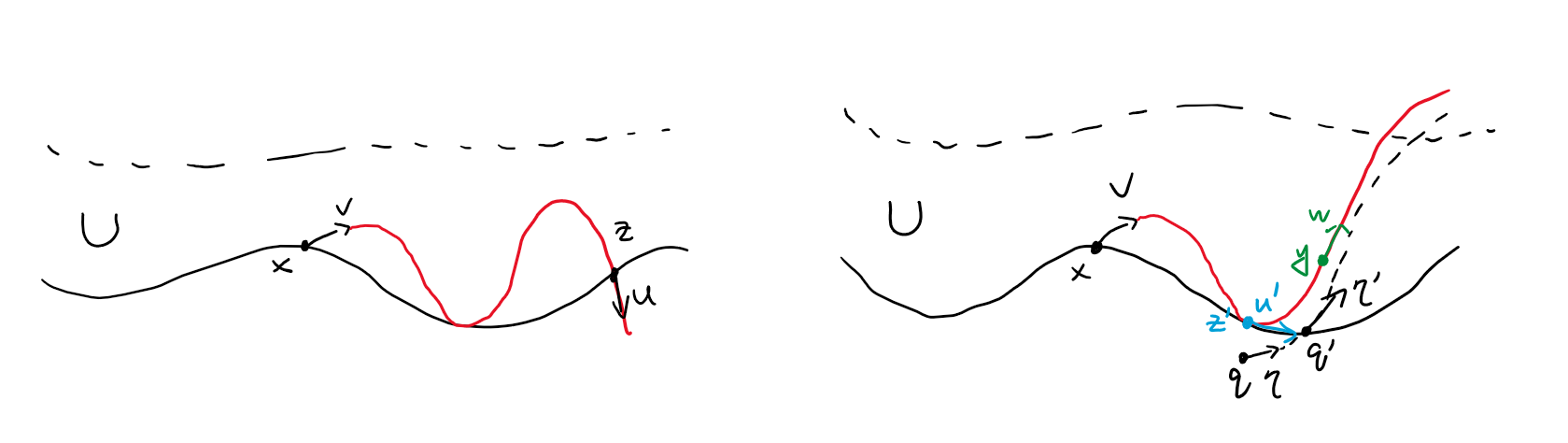}
    \caption{There are two possibilities for the geodesic to leave $U$: either it leaves $M$ or it enters $M\backslash U$. For the second case we use nearby transversal starting points to find a future boundary intersection point of $(x, v)$.}
    \label{fig: leaving U}
\end{figure}

Note that the above proof does not work when one only has the interior scattering relation. 
The other direction, however, is more challenging. This is because the interior scattering relation is defined in $\partial_-SM$, which excludes the tangential directions. As a result, if a geodesic tangentially enters $M$ and then tangentially exits $M$, then it is invisible with respect to the interior scattering information, but should be contained in the complete scattering relation.
On the other hand, if one has a prior information of the metric around the boundary, then the tangential geodesics are no longer invisible. 
Note that we do not claim this to be the optimal result: for example, one may try to approximate the tangential directions using limits of transversal directions without requiring the knowledge of the boundary metric. 
We prove this weaker version because it suffices for the scattering rigidity result in the analytic setting.
\begin{lm}\label{lm: riemannian recover complete from interior}
    Let $(M, g)$ be a compact Riemannian manifold with boundary, suppose it is non-trapping. Suppose there exists $U \subset M$ an open neighborhood of $\partial M$ such that $g|_U$ is given. Then
    \begin{itemize}
        \item $S$ can be recovered from $S^{in}$.
        \item $(S, \ell)$ can be recovered from $(S^{in}, \ell^{in})$.
    \end{itemize}
\end{lm}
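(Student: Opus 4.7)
The plan is to trace the geodesic $\gamma_{x_0,v_0}$ in $M$ from entry to final exit as a finite sequence of $\partial M$-contacts, using $S^{in}$ (together with its continuous extension to concave tangential directions) to hop between consecutive contacts, and using $g|_U$ to decide at each tangential contact whether the geodesic truly leaves $M$ or merely grazes $\partial M$ and re-enters $M^\circ$.

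Concretely, given $(x_0,v_0)\in\partial_-TM$, I set $(x_1,v_1)=S^{in}(x_0,v_0)$. Since $g|_U$ is known, the boundary-normal Taylor expansion at $x_1$ of a defining function of $\partial M$ along $\gamma_{x_1,v_1}$ is computable. If $v_1$ is strictly outward, or is tangential with the leading nonzero jet of that expansion positive (in particular if $\sff(v_1,v_1)>0$), then $\gamma$ exits $M$ at $x_1$ and I set $S(x_0,v_0)=(x_1,v_1)$. Otherwise $v_1$ is tangential-concave and $\gamma_{x_1,v_1}$ stays strictly inside $M^\circ$ on some interval $(0,\delta)$. To locate the next $\partial M$-contact I perturb to $v_1^\epsilon:=v_1-\epsilon\nu$, which is strictly inward for $\epsilon>0$, apply $S^{in}(x_1,v_1^\epsilon)$, and pass to $\epsilon\to 0^+$: by uniform convergence of the geodesic flow on compact intervals and the $\delta$-window above, this limit exists and coincides with the first forward $\partial M$-contact $(x_2,v_2)$ of $\gamma_{x_1,v_1}$. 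Iterating produces a sequence $(x_k,v_k)$, and I stop at the first index $N$ at which the classification declares the geodesic to leave $M$; that $(x_N,v_N)$ is $S(x_0,v_0)$.

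For the second bullet, the travel time between consecutive contacts $(x_k,v_k)\to(x_{k+1},v_{k+1})$ is recovered as $\lim_{\epsilon\to 0^+}\ell^{in}(x_k,v_k^\epsilon)$ (equal to $\ell^{in}(x_0,v_0)$ at $k=0$), and $\ell(x_0,v_0)$ is the sum of these contributions. Tangential starts in $\overline{\partial_-TM}\setminus\partial_-TM$ are handled identically, with the classification at $k=0$ performed purely from $g|_U$.

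The main technical obstacle I anticipate is justifying that the iteration terminates: a priori, a sequence of tangential contacts could accumulate in finite time and prevent us from ever seeing the geodesic leave $M$. I plan to rule this out with a two-step lower-bound argument in the spirit of Proposition~\ref{prop: lightlike travel time data}: $g|_U$ provides a uniform positive time window after every tangential contact during which $\gamma$ lies strictly in $M^\circ$, while non-trapping together with compactness of the set of admissible initial directions yields a uniform positive window per transversal leg. Together these force a uniform positive advance in travel time per pair of consecutive hops, and the uniform bound on total travel time (again from non-trapping and compactness) then forces termination after finitely many steps.
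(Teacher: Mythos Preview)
Your overall strategy---iterating along the geodesic, alternating between $S^{in}$-hops and local classification via $g|_U$---is essentially the paper's. There are, however, two genuine gaps.

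First, the claim that $\lim_{\epsilon\to 0^+} S^{in}(x_k, v_k^\epsilon)$ exists and equals the \emph{first} forward $\partial M$-contact of $\gamma_{x_k,v_k}$ is not justified and can fail. Suppose $\gamma_{x_k,v_k}$ has an intermediate tangential contact at $(x',v')$ where the boundary is concave, so $\gamma$ touches $\partial M$ from inside and re-enters $M^\circ$. Whether the perturbed geodesic $\gamma_{x_k,v_k^\epsilon}$ also meets $\partial M$ near $x'$ is governed by the sign of the normal component $J^n$ of the Jacobi field $J=\partial_\epsilon\gamma_\epsilon|_{\epsilon=0}$ at that time: if $J^n>0$ there, the perturbed geodesic is strictly interior near $x'$ and $S^{in}(x_k,v_k^\epsilon)$ lands at a \emph{later} contact. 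So your limit (even when it exists) may skip contacts, and the sequence $(x_k,v_k)$ is not the one you describe. The paper sidesteps this: it extends $M$ to $\tilde M$, perturbs from a point \emph{outside} $M$ (using a Sard-type lemma, Lemma~\ref{lm: transversal_local}, to force transversal intersections), passes to \emph{subsequential} limits, and only asserts that the limit lies on $\gamma$ at time at least $\delta$ after the current point---never that it is the first contact.

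Second, your classification by Taylor expansion of the boundary defining function along $\gamma$ can fail in the smooth (non-analytic) category: the geodesic may have infinite-order contact with $\partial M$, so every jet vanishes. The paper instead flows the geodesic directly inside the known region $U\cup(\tilde M\setminus M)$ until it either leaves $M$ (done) or enters $M\setminus U$ (only then is $S^{in}$ invoked); this also covers the case you omit where $\gamma$ runs along $\partial M$ for a while. Your termination sketch is in the right spirit, but the assertion that ``$\gamma$ lies strictly in $M^\circ$ after every tangential contact'' holds only at concave contacts; the paper's uniform $\delta$ comes instead from the minimum time for any boundary direction to reach $M\setminus U$, combined with a convex-neighborhood argument.
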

\begin{proof}
    We start with the first statement. Since we know the metric on $U$, we may extend $M$ to some $\tM$, then we have knowledge of the metric on $V := U \cup (\tM \backslash M)$. For every $(x, v) \in \partial SM$, since we know the metric in $U$ and the geodesics are non-trapping, we can keep track of how the geodesic from $(x, v)$ leaves $U$ for the first time. There are two possibilities (see Figure \ref{fig: leaving U}):
    \begin{enumerate}[label=(\roman*)]
        \item it leaves $M$;
        \item it enters $M \backslash U$.
    \end{enumerate}
    Denote the time of leaving by $T$ and direction by $(z, u)$. For case (i), we refer to $(z, u)$ as the exiting direction of $(x, v)$.

    
    We now deal with case (ii). Denote $\varphi$ the geodesic flow and let $(z', u') = \varphi(x, v, T')$ be such that $T' = \inf \{t > 0: \varphi(x, v, s) \in U \backslash \partial M \text{ for } s \in [t, T)\}$. In other words, $(z', u')$ is the last time the geodesic touches the boundary before entering $M \backslash U$, note that $(z', u')$ may just be $(x, v)$ itself. Consider a geodesically convex neighborhood $W$ of $z'$. 
    Then there exists small $\delta$ such that $\varphi(z', u', t)$ stays in $W \cap M$ for all $t \in [0, \delta]$, denote $(y, w) = \varphi(z', u', \delta)$. Pick $p \in W \backslash M$, then the unique geodesic from $p$ to $y$ intersects with $\partial M$, let $\xi$ be the direction from $p$, $(p, \xi)$ is close to $(z', u')$. Using Lemma \ref{lm: transversal_local}, there exists $(q, \eta)$ sufficiently close to $(p, \xi)$ such that the geodesic from $(q, \eta)$ intersects with $\partial M$ at least once before leaving $W$, and all intersections are transversal. For $(q, \eta)$ sufficiently close to $(p, \xi)$ and $p$ sufficiently close to $z'$, we have $\varphi(q, \eta, \cdot)$ stays close to $\varphi(z', u', \cdot)$ and eventually enters $M \backslash U$. In particular, we can assume that $\varphi(q, \eta, t)$ stays in the interior of $U$ for $t_W(q, \eta) \leq t \leq t_U(q, \eta)$, where $t_W(q, \eta)$ is the time it leaves $W$ and $t_U(q, \eta)$ is the time it enters $M\backslash U$. Backtracking from $\varphi(q, \eta, t_W(q, \eta))$ until it hits $\partial M$ for the first time, denote that time by $t'(q, \eta)$ and direction by $(q', \eta') = \varphi(q, \eta, t'(q, \eta))$.
    
    To summarize, we have shown that $\varphi(q, \eta, t) \in U\backslash \partial M$ for $t \in (t'(q, \eta), t_U(q, \eta))$ before entering $M \backslash U$, and is transversal to the boundary at time $t'(q, \eta)$ with direction $(q', \eta')$. In particular, $S^{in}(q', \eta')$ is well-defined. We may find a sequence of such $(p_j, \xi_j)$ converging to $(z', u')$, and a sequence of corresponding $S^{in}(q'_j, \eta'_j) = (y_j, w_j)$. By compactness and passing down to subsequence, we may assume $(y_j, w_j) \to (y_*, w_*) \in \partial SM$. By our construction, $(y_*, w_*)$ will be on the flow of $\varphi(z', u', \cdot)$ because $(q_j, \eta_j)$ will converge to $(z', u')$. Moreover, if we denote $T_j$ as the time $\varphi(q_j, \eta_j, T_j) = (y_j, w_j)$, then $T_j > t_U(q_j, \eta_j) > \delta$. Hence the limit of $T_j$ is bounded below by $\delta$, meaning $(y_*, w_*) = \varphi(z', u', t)$ for some $t > \delta$. We emphasize that this means $(y_*, w_*) = \varphi(x, v, t)$ for some $t > \delta$, and $\delta$ can be uniformly chosen as the minimum of: half of the injective radius; and minimum times for a boundary direction to enter $M\backslash U$.

    We are now ready to prove the lemma. For any $(x, v) \in \overline{\partial_- SM}$, it falls in one of the two categories. If it is case (i), we set $S(x, v)$ to be the exiting direction and we are done. If it is case (ii), then we will find some $(y_*, w_*)$ and we repeat the procedure on $(y_*, w_*)$. Note that we can guarantee $(y_*, w_*)$ is at least $\delta$ time after $(x, v)$, so the procedure will terminate in finite steps by the non-trapping assumption. In other words, we will reach case (i) in finitely many steps, and we set $S(x, v)$ to be the final exiting direction $(z, u)$. $S$ is recovered from $S^{in}$.

    To recover $(S, \ell)$ from $(S^{in}, \ell^{in})$, it now suffices to show $\ell$ can be recovered as well. During the above procedure, we can recover $L_j = \ell^{in}(q_j', \eta_j') + L((q_j, \eta_j) \to (q_j', \eta_j'))$. So $\ell^{in}(z', u') = \lim L_j$. We just need to add them up when recursively finding the next $(y_*, w_*)$, which as explained before terminates in finitely many steps. Finally, the projected versions are exactly the same.
\end{proof}

\subsection{The first variation of the travel time}\label{sec: the first variation of the travel time}
Before going into the Lorentzian setting, we compute the first variation of the travel time function. Even though on Riemannian manifolds, the travel time function is almost the same as the length function, it contains more information in the Lorentzian setting, especially for lightlike geodesics. We will show that this allows us to build stronger relationship between the interior and complete scattering information for lightlike geodesics.

Let $(M,g)$ be a Riemannian or Lorentzian manifold and $H_1, H_2 \subset M$ be smooth hypersurfaces. 
In the following, we compute the variation of the travel time function of a fixed geodesic between $H_1$ and $H_2$.
Let $(x_0, v_0) \in T_{H_1} M$ be fixed. 
In this part, for simplification, we use $\gamma_0: [0, \tau_0] \rightarrow M$ to denote the unique geodesic segment starting from $x_0 \in H_1$ in the direction of $v_0$. 
Suppose $\gamma_0$ hits $H_2$ transversally at $y_0 = \gamma_0(\tau_0)$. 
For intervals $I_t, I_\lambda \subset \mathbb{R}$ to be specified later, we consider a smooth one-parameter family of geodesics
\[
\Gamma: I_\lambda \times I_t \rightarrow M
\]
near $\gamma_0$, such that for each $\lambda \in I_\lambda$, the smooth curve $\gamma_\lambda(\cdot) = \Gamma(\lambda, \cdot)$ is the unique geodesic starting from $x(\lambda)  = \Gamma(0, \lambda)$ in the direction of $v(\lambda) = \partial_t \Gamma(0, \lambda)$, 
with $x(0) = x_0$ and $v(0) = v_0$. 
Suppose each $\gamma_\lambda(t)$ intersects $H_2$ at $t = \tau_\lambda$. 
As $\gamma_0$ intersects $H_2$ transversally, for sufficiently small $|\lambda|$ we can expect $\gamma_\lambda$ to intersect $H_2$ transversally as well. 
Thus, we may assume the interval $I_\lambda$ is a small open neighborhood of $\lambda = 0$ such that each 
$\gamma_\lambda$ intersect $H_2$ transversally
and the interval $I_t$ contains $[0, \tau_0]$ such that $\tau_\lambda \in I_t$ for every $\lambda \in I_\lambda$. 
Note that $\tau_\lambda$ is the solution to $\gamma_\lambda(\tau_\lambda) \in S_2$. 
By the Implicit Function Theorem, the travel time function $\tau_\lambda$ is smooth for $\lambda \in I_\lambda$. 

Now we consider the variation field
\[
X_\lambda(t) \coloneqq \partial_\lambda \Gamma(\lambda, t),
\]
which is a Jacobi field along the geodesic $\gamma_\lambda(t)$. 
We denote by 
$\dot{\gamma}_\lambda(t) = \partial_t \Gamma(\lambda, t)$ with
$\dot{\gamma}_\lambda(0) = v(\lambda)$. 
Along each geodesic, the quantity $g(\dot{\gamma}_\lambda(t), \dot{\gamma}_\lambda(t))$ is invariant and therefore we denote it by the level set
\begin{equation}\label{def_h}
h(\lambda) \coloneqq g(\dot{\gamma}_\lambda(t), \dot{\gamma}_\lambda(t)).
\end{equation}
In particular, we can find $h(\lambda)$ by checking the initial data of this family of geodesics at $H_1$, i.e., by $h(\lambda) = g(\dot{\gamma}_\lambda(0), \dot{\gamma}_\lambda(0))$. 
We compute its derivative as below:
\begin{align*}
     h'(\lambda) = \partial_{\lambda}\big(g(\dot{\gamma}_\lambda(t), \dot{\gamma}_\lambda(t))\big)
     = 2g(D_\lambda \dot{\gamma}_\lambda(t), \dot{\gamma}_\lambda(t))
     = 2g(D_t X_\lambda(t), \dot{\gamma}_\lambda(t)),
\end{align*}
where $D_\lambda, D_t$ are covariant derivatives along these curves and the last equality uses the symmetry lemma, for example, see \cite[Lemma 6.2]{Lee18}. 
With $D_t \dot{\gamma}_\lambda(t) = 0$, we have 
\[
 h'(\lambda) = 2g(D_t X_\lambda(t), \dot{\gamma}_\lambda(t)) 
 + 2g(X_\lambda(t), D_t\dot{\gamma}_\lambda(t)) 
 =
 2\partial_t \big(g(X_\lambda(t), \dot{\gamma}_\lambda(t))\big)
\]
Integrating it with respect to $t$ from $0$ to $\tau_\lambda$, we have
\begin{equation}\label{eq_h}
h'(\lambda) \tau_\lambda  = 2g(X_\lambda( \tau_\lambda), \dot{\gamma}_\lambda(\tau_\lambda)) 
- 2g(X_\lambda(0), \dot{\gamma}_\lambda(0)).
\end{equation}
Moreover, if we denote by
\[
y(\lambda) = \gamma_\lambda(\tau_\lambda)
\]
the endpoints of the geodesic segment $\gamma_\lambda(t)$ at $S_2$, then by the chain rules we have
\begin{equation}\label{eq_y}
y'(\lambda) = \frac{\diff }{\diff \lambda} \gamma_\lambda(\tau_\lambda) 
=\partial_\lambda \gamma_\lambda(t)|_{t = \tau_\lambda} +  \partial_t \gamma_\lambda(t)\frac{\diff \tau_\lambda}{\diff \lambda}
=X_\lambda(\tau_\lambda) + \dot{\gamma}_\lambda(\tau_\lambda) \frac{\diff \tau_\lambda}{\diff \lambda}. 
\end{equation}
Combining (\ref{eq_h}) and (\ref{eq_y}), one has
\begin{equation*}
 h'(\lambda) \tau_\lambda  =  2g(y'(\lambda), \dot{\gamma}_\lambda(\tau_\lambda))
 - 2g(\dot{\gamma}_\lambda(\tau_\lambda), \dot{\gamma}_\lambda(\tau_\lambda))\frac{\diff \tau_\lambda}{\diff \lambda} - 2g(X_\lambda(0), \dot{\gamma}_\lambda(0)).
\end{equation*}
Using (\ref{def_h}), we prove the following proposition.
\begin{pp}\label{pp_traveltime}
Let $H_1, H_2 \subset M$ be smooth hypersurfaces and $\gamma_0: [0, T_0] \rightarrow M$ be the unique geodesic segment starting from $x_0 \in H_1$ in the direction of $v_0$. 
Suppose $\gamma_0$ hits $H_2$ transversally at $y_0 = \gamma_0(\tau_0)$.
Consider a one-parameter family of geodesics $\gamma_\lambda(t)$ near $\gamma_0$ with initial data $(x(\lambda), v(\lambda))$ smoothly depending on $\lambda$, with $x(0) = x_0$ and $v(0) = v_0$. 
Let $\tau_\lambda$ be the travel time such that $\gamma_\lambda(\tau_\lambda) \in S_2$. 
Then near $\lambda = 0$, 
one has 
\begin{equation}\label{eq_T}
2h(\lambda)\frac{\diff \tau_\lambda}{\diff \lambda} + h'(\lambda) \tau_\lambda =  2g(y'(\lambda), \dot{\gamma}_\lambda(\tau_\lambda))
 - 2g(x'(\lambda), \dot{\gamma}_\lambda(0)),
\end{equation}
where $h(\lambda) = g(\dot{\gamma}_\lambda(0), \dot{\gamma}_\lambda(0))$ and  $y(\lambda) =  \gamma_\lambda(\tau_\lambda)$. 
\end{pp}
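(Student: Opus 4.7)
The plan is to follow exactly the variation-of-energy style calculation already begun in the paragraphs leading up to the statement, and verify that combining the identities produced there yields \eqref{eq_T}. The key observation is that since each $\gamma_\lambda$ is a geodesic, the function $h(\lambda) = g(\dot\gamma_\lambda(t),\dot\gamma_\lambda(t))$ is independent of $t$, so differentiating it with respect to $\lambda$ gives a total derivative in $t$ once the symmetry lemma and $D_t \dot\gamma_\lambda = 0$ are applied. Integrating that total derivative from $0$ to $\tau_\lambda$ produces boundary terms at both endpoints, which must then be re-expressed in terms of the geometric data $(x'(\lambda), y'(\lambda), \tau_\lambda')$ carried by the family.

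Concretely, first I would set $X_\lambda(t) = \partial_\lambda \Gamma(\lambda,t)$ and recall this is a Jacobi field. A direct computation, using the compatibility of $\nabla$ with $g$, the symmetry $D_t X_\lambda = D_\lambda \dot\gamma_\lambda$, and $D_t \dot\gamma_\lambda = 0$, gives
\[
h'(\lambda) \;=\; 2\,\partial_t\,g\bigl(X_\lambda(t),\dot\gamma_\lambda(t)\bigr),
\]
which is precisely the identity already derived in the excerpt. Integrating in $t$ over $[0,\tau_\lambda]$, and noting that the integrand is $\lambda$-dependent but the derivative is in $t$, yields
\[
h'(\lambda)\,\tau_\lambda \;=\; 2g\bigl(X_\lambda(\tau_\lambda),\dot\gamma_\lambda(\tau_\lambda)\bigr) \;-\; 2g\bigl(X_\lambda(0),\dot\gamma_\lambda(0)\bigr).
\]

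Next I would differentiate $y(\lambda) = \gamma_\lambda(\tau_\lambda)$ using the chain rule, which the excerpt records as $y'(\lambda) = X_\lambda(\tau_\lambda) + \dot\gamma_\lambda(\tau_\lambda)\,\tfrac{d\tau_\lambda}{d\lambda}$. Solving this for $X_\lambda(\tau_\lambda)$ and substituting into the integrated identity, together with the two identifications $X_\lambda(0) = x'(\lambda)$ (immediate from $x(\lambda) = \Gamma(\lambda,0)$) and $g(\dot\gamma_\lambda(\tau_\lambda),\dot\gamma_\lambda(\tau_\lambda)) = h(\lambda)$ (from the $t$-invariance of $h$), rearranges directly into
\[
2h(\lambda)\,\frac{d\tau_\lambda}{d\lambda} \;+\; h'(\lambda)\,\tau_\lambda \;=\; 2g\bigl(y'(\lambda),\dot\gamma_\lambda(\tau_\lambda)\bigr) \;-\; 2g\bigl(x'(\lambda),\dot\gamma_\lambda(0)\bigr),
\]
which is \eqref{eq_T}.

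There is no substantive obstacle here: the smoothness of $\lambda \mapsto \tau_\lambda$ for small $|\lambda|$ follows from the transversal intersection of $\gamma_0$ with $H_2$ via the Implicit Function Theorem, and is assumed in the setup; once that is granted, the entire proposition is a bookkeeping exercise in the first variation formula, with the only minor care being the correct use of the symmetry lemma and the invariance of $h$ along each geodesic so that the boundary term at $t = \tau_\lambda$ reads $g(\dot\gamma_\lambda(\tau_\lambda),\dot\gamma_\lambda(\tau_\lambda)) = h(\lambda)$ rather than an unknown quantity depending on $\tau_\lambda$. I will present the computation in the same order as in the excerpt and simply verify the final rearrangement.
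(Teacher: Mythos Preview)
Your proposal is correct and follows essentially the same approach as the paper: the paper's proof is precisely the computation given in the paragraphs preceding the proposition, and you have reproduced it step by step (symmetry lemma, $t$-invariance of $h$, integration in $t$, chain rule for $y'(\lambda)$, and the final substitution using $g(\dot\gamma_\lambda(\tau_\lambda),\dot\gamma_\lambda(\tau_\lambda))=h(\lambda)$).
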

This proposition relates the change of the travel time with the observations we could get from the scattering relation on $H_1$ and $H_2$.
Note that $y'(\lambda)$ and $x'(\lambda)$ is always tangent to the hypersurfaces, so the equation can give the first variation of travel time using projected scattering relation as well.
In the Riemannian setting or for timelike geodesics in the Lorentzian setting, equation \eqref{eq_T} is the same as the first variation of the length function:
\begin{align*}
&l'(\lambda) = \frac{d}{d\lambda}\left(\sqrt{\pm h(\lambda)}\tau(\lambda)\right) = \pm \frac{1}{\sqrt{\pm h}}\left( g(y', \dot{\gamma}_\lambda(\tau_\lambda)) - g(y', \dot{\gamma}_\lambda(0)) \right),
\end{align*}
with $+$ in the Riemannian case and $-$ in the Lorentzian timelike case.
However, if we consider a fixed null-geodesic, the length function fails to be differentiable there and we no longer have the first variation of the length. 
Instead, we may use \eqref{eq_T} to recover the travel time of this null-geodesic directly, provided we have timelike or spacelike scattering relation so that $h'(0) \neq 0$. Note that if $h'(0) = 0$ and $h(0) = 0$, then the equality becomes
\[
g(y'(0), \dot{\gamma}_0(\tau_0)) = g(x'(0), \dot{\gamma}_0(0)).
\]
Or equivalently, $\dot{\gamma}_0(\tau_0)^\flat (y'(0)) = \dot{\gamma}_0(0)^\flat(x'(0))$. In particular, lightlike scattering relation along can not recover the travel time.

\subsection{Lorentzian setting}\label{sec: lorentzian setting}
We are now ready for the Lorentzian setting. Let $(M, g)$ be a Lorentzian manifold with timelike boundary, in this section we do not require analyticity. Suppose the lightlike geodesics are non-trapping. Then there exists a conic open neighborhood $\mathcal{U}'$ of $\partial LM$ such that the corresponding geodesics are also non-trapping. Denote by $\mathcal{U} = \mathcal{U}' \cap \partial JM$ the causal ones. We use $\mathcal{U}_\pm$ to denote inward $(-)$ and outward $(+)$ pointing ones. Then $S$ is well-defined on $\overline{\mathcal{U}_-}$ and $S^{in}$ is well-defined on $\mathcal{U}_-$. We show that interior lightlike travel time can be recovered directly from interior scattering relation. This requires the knowledge of nearby timelike directions, so that we can apply the first variation of travel time formula \eqref{eq_T} in a non-trivial way.

\begin{lm}\label{lm: lorentzian recover travel time from interior}
    Let $(M, g)$ and $\mathcal{U}_\pm$ be defined as above. Suppose $g|_{T\partial M \times T\partial M}$ is given. Then the lightlike interior travel time data $(S^{in}, \tau^{in})|_{\partial_- LM}$ can be recovered from $S^{in}|_{\mathcal{U}_-}$. 
\end{lm}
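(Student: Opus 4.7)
The plan is to apply the first variation identity \eqref{eq_T} from Proposition \ref{pp_traveltime} to a one-parameter family of timelike geodesics degenerating to a given lightlike geodesic. Because the Lorentzian length vanishes identically along lightlike curves, first variation of length is unavailable; however the travel-time identity survives, and the structural fact that $h(\lambda) := g(\dot\gamma_\lambda(0), \dot\gamma_\lambda(0))$ satisfies $h(0) = 0$ while $h'(0)$ can be arranged to be nonzero algebraically isolates the unknown $\tau_0$ on the lightlike limit.

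Fix $(x, v_0) \in \partial_- LM$ and write $v_0 = v_0' + v_0^n \partial_n$ in boundary normal coordinates at $x$, so $v_0' \in T_x\partial M$ is timelike with $g(v_0', v_0') = -(v_0^n)^2$. Define $v_\lambda := (1+\lambda) v_0' + v_0^n \partial_n$ based at the constant point $x(\lambda) \equiv x$. For $\lambda \in [0,\epsilon)$ small, $v_\lambda$ is strictly inward pointing, lightlike at $\lambda = 0$ and timelike for $\lambda > 0$, so $v_\lambda \in \mathcal{U}_-$; the boundary metric gives $h(0) = 0$ and $h'(0) = -2(v_0^n)^2 \neq 0$, both explicitly computable. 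The given datum $S^{in}$ then supplies the curve $\lambda \mapsto (y(\lambda), w(\lambda)) = S^{in}(x, v_\lambda)$ on $[0,\epsilon)$, and whenever the exit of $\gamma_0$ at $y_0$ is transversal to $\partial M$, the implicit function theorem makes $\lambda \mapsto (\tau_\lambda, y(\lambda), w(\lambda))$ smooth with one-sided derivatives at $\lambda = 0$. Substituting $x'(0) = 0$ and $h(0) = 0$ into \eqref{eq_T} at $\lambda = 0$ collapses the identity to
\[
\tau_0 \;=\; \frac{2\, g\bigl(y'(0),\, w_0\bigr)}{h'(0)},
\]
in which $h'(0)$ comes from the known boundary metric, $y'(0) \in T_{y_0}\partial M$ is extracted as the one-sided derivative of the given data curve, $w_0$ is the direction part of $S^{in}(x, v_0)$, and $g(y'(0), w_0)$ reduces via boundary normal coordinates to a boundary-metric pairing of tangential vectors, since $y'(0)$ is $g$-orthogonal to the $\partial_n$ component of $w_0$.

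The main obstacle is the non-transversal exit case $w_0 \in T_{y_0}\partial M$. There $\tau^{in}$ is only lower semicontinuous, the map $\lambda \mapsto y(\lambda)$ need not converge to $y_0$, and the derivative step above breaks down. To handle this, one first observes that within $\partial_-LM$ the subset of directions with transversal exit is open and dense, by a transversality genericity argument in the spirit of Lemma \ref{lm: transversal_global} applied inside a small auxiliary extension of $M$. On this dense open set $\tau^{in}$ is recovered by the formula above. The extension to $v_0$ with tangential exit requires detecting tangentiality directly from $S^{in}(x, v_0)$ (namely, checking whether $w_0 \in T_{y_0}\partial M$) and then approximating $v_0$ by nearby lightlike vectors $\tilde v_0 \to v_0$ whose exits are transversal and whose exit points $\tilde y_0$ actually converge to $y_0$; for such carefully selected sequences $\tau^{in}(x, \tilde v_0) \to \tau^{in}(x, v_0)$, and justifying this convergence by a geometric analysis of the geodesic flow near the tangential stratum is the hard part of the proof.
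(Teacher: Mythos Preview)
Your transversal-exit argument is correct and the formula $\tau_0 = 2g(y'(0),w_0)/h'(0)$ is exactly the content of Proposition~\ref{pp_traveltime} in this setting. But the tangential case is not a peripheral detail to be patched by limits---it is where the difficulty actually lives, and you have not closed it. When $w_0$ is tangential the map $\lambda\mapsto S^{in}(x,v_\lambda)$ is genuinely discontinuous: nearby geodesics can sail past $y_0$ and exit arbitrarily far away, so there is no guarantee that \emph{any} sequence of lightlike $\tilde v_0\to v_0$ with transversal exit has $\tilde y_0\to y_0$. Your density appeal to Lemma~\ref{lm: transversal_global} does not by itself produce such a sequence, and the geometric analysis you defer to ``the hard part'' would have to be done case by case on the second fundamental form at $(y_0,w_0)$.

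The paper sidesteps this entirely by swapping the roles of the two endpoints. The point $x$ is \emph{always} transversal, because $(x,v)\in\partial_-LM$ means $v$ is strictly inward. So instead of fixing $x$ and varying the exit, fix the exit point $y$, choose a smooth family of timelike directions $w(\lambda)\to w$ at $y$ with $h'(0)\neq 0$, and solve $S^{in}(x(\lambda),v(\lambda))=(y,w(\lambda))$ for the entry data. The reversed geodesic from $(y,-w)$ hits $\partial M$ at $x$ transversally, so the implicit function theorem makes $(x(\lambda),v(\lambda))$ smooth near $\lambda=0$ regardless of whether $w$ is tangential or not. Now Proposition~\ref{pp_traveltime} applies with $H_2$ near $x$, the term $g(y'(\lambda),\cdot)$ drops out because $y$ is constant, and one reads off
\[
\tau^{in}(x,v)=\frac{-2}{h'(0)}\,g\bigl(x'(0),v\bigr),
\]
which is computable from the boundary metric and the given $S^{in}|_{\mathcal U_-}$. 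No case distinction, no limit argument.
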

\begin{proof}
    Given $(x, v) \in \partial_- LM$, denote $(y, w) = S^{in}(x, v)$, $w$ is either tangential to $\partial M$ or outward pointing. Then the lightlike geodesic corresponding to $(y, -w)$ leaves the manifold at $(x, -v)$ transversally, and stays in the interior of $M$ except for two ends. Pick a smooth one parameter family of outward pointing timelike vectors $w(\lambda)$ for $\lambda \in (0, 1)$ such that $w(\lambda) \to w$ as $\lambda \to 0$. Denote $h(\lambda) = g(w(\lambda), w(\lambda))$, we first show that we may choose $w(\lambda)$ such that $h'(0) = -2$.

    Consider some normal coordinate at $y$ such that $g$ is Minkowski at $y$, $\partial_t$ is tangent to $\partial M$, and $w = \partial_t + \partial_1$. Then pick $w(\lambda) = \partial_t+(1-\lambda)\partial_1$. If $w$ is outward pointing then this holds for $w(\lambda)$ when $\lambda \ll 1$; if $w$ is tangent to $\partial M$ then so are all the $w(\lambda)$. Then $h'(0) = -2$.
    
    For $\lambda \ll 1$, there exists unique $(x(\lambda), v(\lambda))$ such that $S^{in}(x(\lambda), v(\lambda)) = (y, w(\lambda))$. Moreover, by the Implicit Function Theorem, $(x(\lambda), v(\lambda))$ is a smooth curve converging to $(x, v)$ when $\lambda \to 0$. By Proposition \ref{pp_traveltime}, the first variation formula for travel time gives
    \[
    \tau^{in}(x, v) = \frac{-2}{h'(0)}g(x'(0), v) = g(x'(0), v).
    \]

\end{proof}

Comparing to Lemma \ref{lm: riemannian recover interior from complete}, we show that the lightlike interior travel time data can be recovered from complete scattering relation, without the need of complete travel time data.
\begin{lm}\label{lm: lorentzian recover travel time interior from complete}
    Let $(M, g)$ and $\mathcal{U}_\pm$ be defined as above. Suppose $g|_{T\partial M \times T\partial M}$ is given. Then the lightlike interior travel time data $(S^{in}, \ell^{in})|_{\partial_- LM}$ can be recovered from $S|_{\overline{\mathcal{U}_-}}$.
\end{lm}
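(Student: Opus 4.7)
The goal is to recover $S^{in}$ and $\tau^{in}$ on $\partial_-LM$ from $S|_{\overline{\mathcal{U}_-}}$ and the boundary metric; the length $\ell^{in}$ is trivially zero on lightlike directions and so requires no separate treatment. My plan is to first use the first variation formula from Proposition~\ref{pp_traveltime} to recover the \emph{complete} travel time $\tau$ for every lightlike direction in $\overline{\mathcal{U}_-}$, and then to use these $\tau$ values to temporally order the intermediate tangential-lightlike touches of each lightlike geodesic $\gamma_{x,v}$, identifying the first such touch as $S^{in}(x,v)$.

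For the first step, fix any $(x_0,v_0)\in\overline{\mathcal{U}_-}$ with $v_0$ lightlike and construct a smooth one-parameter family of inward-pointing timelike vectors $v(\lambda)$ at the fixed base point $x_0$, with $v(0)=v_0$ and $h(\lambda):=g(v(\lambda),v(\lambda))$ satisfying $h(0)=0$, $h'(0)\neq 0$. In a frame at $x_0$ where $g$ is Minkowski, $\partial_n$ is the outward unit normal, and $v_0=\partial_t+\partial_1$, one may take
\[
v(\lambda)=\partial_t+(1-\lambda)\partial_1-\lambda\,\partial_n,
\]
which is inward timelike for small $\lambda>0$ with $h'(0)=-2$. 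Setting $(y(\lambda),w(\lambda)):=S(x_0,v(\lambda))$, Proposition~\ref{pp_traveltime} with $x'(\lambda)\equiv 0$ reduces to
\[
2h(\lambda)\,\tau_\lambda'+h'(\lambda)\,\tau_\lambda=2\,g(y'(\lambda),w(\lambda)).
\]
At $\lambda=0$ the term $2h(\lambda)\tau_\lambda'$ vanishes since $h(\lambda)=O(\lambda)$ and $\tau_\lambda'$ is bounded, yielding
\[
\tau(x_0,v_0)=\frac{2\,g(y'(0),w(0))}{h'(0)}.
\]
The right-hand side is computable from $S$ alone together with the boundary metric: $y(\lambda),w(\lambda)$ come from $S$, and since $y'(0)\in T_{y(0)}\partial M$, in boundary normal coordinates the pairing $g(y'(0),w(0))$ uses only boundary-metric components and tangential components of $w(0)$.

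For the second step, given $(x,v)\in\partial_-LM$, I form
\[
\mathcal{P}(x,v):=\{(y,w)\in\overline{\mathcal{U}_-}\cap\partial LM:\ S(y,w)=S(x,v)\},
\]
which is directly computable from $S$. Uniqueness of the backward null geodesic flow from the common final exit $S(x,v)$ forces every element of $\mathcal{P}(x,v)$ to lie on $\gamma_{x,v}$, so $\mathcal{P}(x,v)$ consists of $(x,v)$ together with the intermediate tangential-lightlike boundary touches of $\gamma_{x,v}$ (any inward transversal intersection before the final exit would terminate the geodesic inside $M$ and cannot appear as an intermediate point). Applying Step~1 to each element yields its $\tau$-value, and because $\tau(y_i,w_i)$ records the remaining travel time to the common exit, the order induced by decreasing $\tau$ matches the order in which $\gamma_{x,v}$ visits these points. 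Consequently $S^{in}(x,v)$ is the element of $\mathcal{P}(x,v)$ attaining $\max\{\tau(y,w):\tau(y,w)<\tau(x,v)\}$ (or $S^{in}(x,v)=S(x,v)$ if $\mathcal{P}(x,v)=\{(x,v)\}$), and $\tau^{in}(x,v)=\tau(x,v)-\tau(S^{in}(x,v))$.

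The main obstacle is the rigorous application of Proposition~\ref{pp_traveltime} at $\lambda=0$ when the reference geodesic is lightlike. The proposition's hypothesis of transversal exit through $\partial M$ is an open condition, so it is inherited by the perturbed timelike family as long as the final exit $S(x_0,v_0)$ is transversal; smoothness of $\tau_\lambda$ for $\lambda>0$ then follows from the Implicit Function Theorem, and continuity of $S$ on $\overline{\mathcal{U}_-}$ together with the $O(\lambda)$ bound on $h(\lambda)\tau_\lambda'$ validates the limiting formula. The edge case of tangential-lightlike final exit would be handled by replacing $\partial M$ near $S(x,v)$ with an auxiliary transversal hypersurface and passing to the limit; I expect this to be the most delicate, but ultimately standard, piece of the argument.
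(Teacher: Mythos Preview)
Your high-level strategy—identify the boundary orbit of $\gamma_{x,v}$ as $\{(y,w):S(y,w)=S(x,v)\}$ and then order it temporally via the first variation of travel time—is exactly the paper's. The difference is which end of the geodesic you perturb, and that is where a genuine gap opens. You fix the entry point $x_0$, perturb the incoming direction to timelike $v(\lambda)$, and take the endpoint to be the \emph{complete} exit $S(x_0,v(\lambda))$. Proposition~\ref{pp_traveltime} needs the reference geodesic to meet $H_2=\partial M$ transversally at that endpoint; when $S(x_0,v_0)$ is a tangential exit this fails, and $\tau_\lambda$ need not be $C^1$ (or even continuous) at $\lambda=0$. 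Your proposed fix—an auxiliary transversal hypersurface near $S(x,v)$—cannot be executed with the data at hand: evaluating $g(y'(0),w(0))$ on any hypersurface other than $\partial M$ requires the ambient metric off the boundary, but only $g|_{T\partial M\times T\partial M}$ is given. Since every element of $\mathcal{P}(x,v)$ shares the same final exit, your entire ordering in Step~2 collapses for those $(x,v)\in\partial_-LM$ whose complete exit is tangential.

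The paper avoids this by perturbing at the intermediate touch rather than at the entry. For each tangential $(z,u)$ on the orbit it pushes $u$ to an \emph{outward} timelike $u(\lambda)$—so the hit at $z$ is transversal for $\lambda>0$ by construction—and seeks an entry family $(x(\lambda),v(\lambda))\to(x,v)$ with $S(x(\lambda),v(\lambda))=(z,u(\lambda))$; with $y(\lambda)\equiv z$ the first variation yields the partial travel time from $(x,v)$ to $(z,u)$ as $-\tfrac{2}{h'(0)}g(x'(0),v)$. The smoothness of $\tau_\lambda$ up to $\lambda=0$ here comes from shooting backward from $(z,-u(\lambda))$: that backward geodesic exits transversally near $(x,-v)$ because $(x,v)\in\partial_-LM$ is strictly inward by hypothesis, so the implicit function theorem applies at the \emph{entry} side rather than at the possibly tangential final exit. $S^{in}(x,v)$ is then the intermediate touch minimizing these partial times.
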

\begin{proof}
    Given $(x, v) \in \partial_- LM$, consider
    \[
    \varphi(x, v, \cdot) \cap \partial TM = \{(z, u) \in \overline{\partial_+LM}: S(z, u) = (x, v)\}.
    \]
    If the set has cardinality 2, then besides $(x, v)$ itself, the other element must be $S^{in}(x, v) = S(x, v)$. Otherwise $S^{in}(x, v) \neq S(x, v)$, and for each
    \[
    (z, u) \in (\varphi(x, v, \cdot) \cap \partial TM) \backslash\{(x, v), S(x, v)\},
    \]
    $(z, u)$ is tangential to $\partial M$. There are two possibilities:
    \begin{enumerate}
        \item there exists a one parameter family of outward pointing timelike $(z, u(\lambda))$ converging to $(z, u)$ as $\lambda \to 0$, and a one parameter family of inward pointing timelike $(x(\lambda), v(\lambda))$ converging to $(x, v)$ as $\lambda \to 0$, such that $S(x(\lambda), v(\lambda)) = (z, u(\lambda))$;
        \item or such situation does not happen.
    \end{enumerate}
    If it is case (1), then we may apply Proposition \ref{pp_traveltime}, the travel time from $(x, v)$ to $(z, u)$ is thus
    \[
    \tau(z, u) = \frac{-2}{h'(0)}g(x'(0), v).
    \]
    It now suffices to show that $S^{in}(x, v)$ is the point that falls in case (1), and that it is the one minimizing $\tau(z, u)$. The fact that it falls in case (1) is proven in Lemma \ref{lm: lorentzian recover travel time from interior}, and certainly it is the minimizing one.
\end{proof}

The recovery of complete travel time data from the interior scattering relation needs more delicate treatment. If we are given complete scattering relation for causal directions sufficiently close to the light cone, then we use Proposition \ref{pp_traveltime} to recover the travel time. The situation is more complicated if we are given interior scattering relation for causal directions sufficiently close to the light cone. In this case, we first use similar strategy as Lemma \ref{lm: riemannian recover complete from interior} to recover the complete scattering relation for causal directions sufficiently close to the light cone, and then use Proposition \ref{pp_traveltime}.


\begin{figure}
     \centering
    \begin{subfigure}{0.7\textwidth}
    \centering
    \def\svgwidth{\linewidth}
    \def\svgwidth{\columnwidth}
    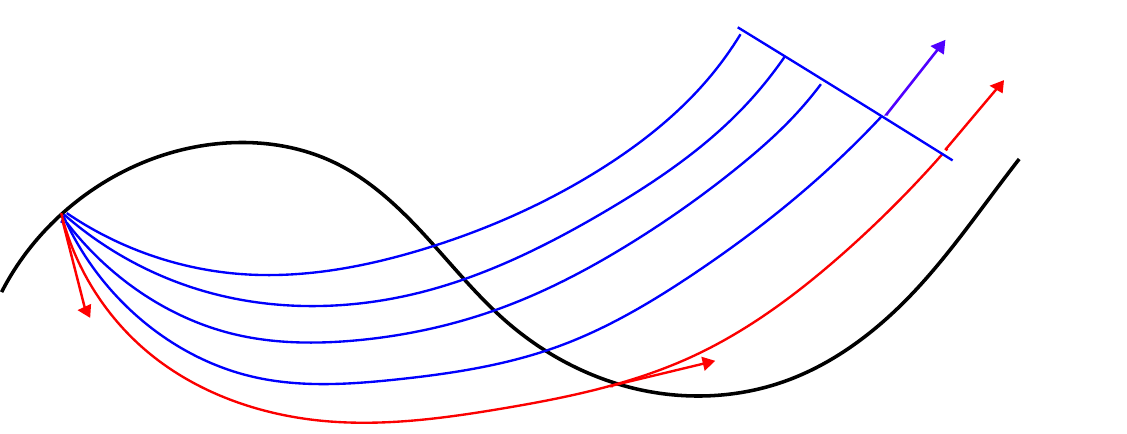

  \end{subfigure}  
    \caption{For every $(x, v) \in \overline{\partial_-LM}$, one can find a smooth one parameter family of inward pointing timelike directions converging to $v$. Their flow forms a 2-dimensional band, and in the exterior region we choose a transversal slice $z(\lambda)$ that approaches $z$ as $\lambda \to 0$. The first variation formula of travel time then recovers the travel time of $(x, v)$ to the exterior point $z$.}
    \label{fig: step 2}
\end{figure}

\begin{lm}\label{lm: lorentzian recover complete from interior}
    Let $(M, g)$ and $\mathcal{U}_\pm$ be defined as above. Suppose there exists $U \subset M$ an open neighborhood of $\partial M$ such that $g|_U$ is given. Then the lightlike complete travel time data $(S, \tau)|_{\overline{\partial_-LM}}$ can be recovered from $S^{in}|_{\mathcal{U}_-}$ 
    or $S|_{\overline{\mathcal{U}_-}}$.
\end{lm}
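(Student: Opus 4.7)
The plan is to treat the two inputs separately: the case where $S|_{\overline{\mathcal{U}_-}}$ is given reduces via Proposition \ref{pp_traveltime} directly to the recovery of $\tau$, and the case where only $S^{in}|_{\mathcal{U}_-}$ is given is first reduced to the former via an adaptation of the argument in Lemma \ref{lm: riemannian recover complete from interior}.

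Suppose first that $S|_{\overline{\mathcal{U}_-}}$ is given. Fix $(x,v)\in \overline{\partial_- LM}$; the endpoint $(y,w):=S(x,v)$ is already part of the data, and only $\tau(x,v)$ remains to be recovered. I would construct a smooth one-parameter family of inward-pointing timelike directions $(x(\lambda), v(\lambda))\in \mathcal{U}_-$ for $\lambda>0$ with $x(\lambda)\in\partial M$ and $(x(0), v(0))=(x,v)$, arranged so that $h(\lambda):=g(v(\lambda), v(\lambda))$ satisfies $h(0)=0$ and $h'(0)\neq 0$. Such a family can be built explicitly in boundary normal coordinates on $U$ by tilting $v$ slightly toward the future inward-normal direction; the known metric $g|_U$ suffices to verify $h'(0)\neq 0$. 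The map $\lambda\mapsto (y(\lambda), w(\lambda)):=S(x(\lambda), v(\lambda))$ is then given by the data and depends smoothly on $\lambda$ for $\lambda>0$ by transversality of timelike exits. Applying equation \eqref{eq_T} in Proposition \ref{pp_traveltime} and passing to the limit $\lambda\to 0^+$, the term $2h(\lambda)\tau_\lambda'$ vanishes at $\lambda=0$, yielding
\[
h'(0)\,\tau(x,v) \;=\; 2g(y'(0), w) - 2g(x'(0), v),
\]
with every quantity on the right computable from $g|_U$, the chosen family, and $S$.

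Suppose next that only $S^{in}|_{\mathcal{U}_-}$ is given. I would first recover $S|_{\overline{\mathcal{U}_-}}$ by adapting the Riemannian strategy of Lemma \ref{lm: riemannian recover complete from interior} to the present causal setting. For $(x,v)\in\overline{\mathcal{U}_-}$, use the known metric on $U$ to propagate the geodesic forward through $U$ until the first time at which it either exits $M$ (in which case $S(x,v)$ is read off immediately) or enters $M\setminus U$ at some direction $(z,u)$. In the latter case, locate the most recent tangential boundary encounter $(z',u')\in\overline{\mathcal{U}_-}$ prior to entering $M\setminus U$, take a geodesically convex neighborhood $W\ni z'$, and approximate $(z',u')$ by transversal inward-pointing causal directions $(q,\eta)$ via Lemma \ref{lm: transversal_local}, chosen close enough that $(q,\eta)\in\mathcal{U}_-$ by openness. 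Passing to the limit of $S^{in}(q,\eta)$ along a suitable approximating sequence yields a boundary point on the flow of $(x,v)$ strictly past $z'$, by an amount uniformly bounded below in terms of the local injectivity radius and the thickness of $U$. Non-trapping of lightlike geodesics (and of causal geodesics in $\mathcal{U}_-$) forces the recursion to terminate after finitely many steps, producing $S(x,v)$; then the previous case recovers $\tau$.

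The main obstacle is the tangential-continuation step in the second case: one must verify that every auxiliary direction $(q,\eta)$ used during the recursion lies in the domain $\mathcal{U}_-$ on which $S^{in}$ is known, and that each recursive step advances a uniform positive amount along the flow so the procedure terminates in finitely many steps. A secondary subtlety, relevant to the first case as well, is the passage to the limit $\lambda\to 0^+$ in Proposition \ref{pp_traveltime} when the exit of the base lightlike geodesic is itself tangential to $\partial M$; the vanishing $h(0)=0$ together with the smooth exit behavior for strictly timelike $\lambda>0$ make the limit formula well defined.
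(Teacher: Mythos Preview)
Your Case~2 reduction of $S^{in}$ to $S$ is essentially the paper's Step~1 and is fine. The gap is in Case~1, where you apply Proposition~\ref{pp_traveltime} with $H_2=\partial M$ and $(y(\lambda),w(\lambda))=S(x(\lambda),v(\lambda))$.

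Two assertions there are unjustified. First, you claim $\lambda\mapsto S(x(\lambda),v(\lambda))$ is smooth for $\lambda>0$ ``by transversality of timelike exits.'' But the \emph{complete} scattering map need not even be continuous: if the limiting lightlike geodesic $\gamma_0$ has an intermediate tangency with $\partial M$ at some $t_1<\tau_0$ (which is precisely the situation in which $S\neq S^{in}$), a perturbation can cause $\gamma_\lambda$ to exit $M$ near $t_1$, so $\tau(x(\lambda),v(\lambda))$ drops to roughly $t_1$ and $y(\lambda)$ jumps far from $y$. Second, even granting smoothness for $\lambda>0$, Proposition~\ref{pp_traveltime} requires $\gamma_0$ to hit $H_2$ transversally; when $(y,w)$ is tangential to $\partial M$ the derivative $y'(0)$ may fail to exist, and your one-line dismissal of this as a ``secondary subtlety'' does not resolve it.

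The paper avoids both problems by never taking $H_2=\partial M$. Having recovered $S$ on nearby causal directions, it extends $M$ to $\widetilde M$ and uses the known metric on $U\cup(\widetilde M\setminus M)$ together with repeated application of $S$ to identify, for each $\lambda$, the entire exterior portion $\varphi(x,v(\lambda),\cdot)\cap T(\widetilde M\setminus M)$. It then fixes a point $(z,u)$ strictly in $\widetilde M\setminus M$ on the flow of $(x,v)$, and constructs \emph{by hand} a smooth transversal slice $(z(\lambda),u(\lambda))$ of the two-dimensional flow band there, with $z'(0)$ not parallel to $u$. Smooth dependence now comes from the geodesic flow in $\widetilde M$ and the chosen exterior hypersurface, not from $S$; Proposition~\ref{pp_traveltime} yields the travel time from $x$ to $z$, and taking the infimum over all such exterior $(z,u)$ recovers $\tau(x,v)$.
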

\begin{proof}
    We first assume $S^{in}|_{\mathcal{U}_-}$ is given. Again extend $M$ to some $\tM$, and then we have knowledge of the metric on $V := U \cup (\tM \backslash M)$. We will prove the statement in two steps:
    \begin{enumerate}
        \item we use the same idea as the Riemannian setting, to show that we can recover the complete scattering relation for all causal directions sufficiently close to the light cone;
        \item we use the complete scattering relation to compute the complete travel time of lightlike geodesics using first variation formula of travel time. 
    \end{enumerate}

    \textbf{Step (1):} consider some causal vector $(x, v) \in \overline{\partial_- TM}$ that are close to the light cone. Denote by $\varphi$ the geodesic flow. By the non-trapping assumption of lightlike geodesics, the geodesic $\varphi(x, v, \cdot)$ is non-trapping as well when $(x, v)$ is sufficiently close to the light cone. As a result, $\varphi(x, v, \cdot)$ is uniformly close to $LM$. If $\varphi(x, v, t)$ is tangential to the boundary for some $t$, then we may assume it is in $\overline{U_-}$ by the closeness to $LM$. In particular, this means for inward pointing causal directions sufficiently close to $\varphi(x, v, t)$, it will be in $\mathcal{U}_-$. We denote $\mathcal{V}$ as the set of such $(x, v)$, note that it contains $\overline{\partial_- LM}$. 
    
    Now pick some $(x, v) \in \mathcal{V}$, since we know the metric in $U$ and $\varphi(x, v, \cdot)$ is non-trapping, we can keep track of how the geodesic from $(x, v)$ leaves $U$ for the first time (see Figure \ref{fig: leaving U}). There are two possibilities:
    \begin{enumerate}[label=(\roman*)]
        \item it leaves $M$;
        \item it enters $M \backslash U$.
    \end{enumerate}
    Denote the time of leaving by $T$ and direction by $(z, u)$. For case (i), we refer to $(z, u)$ as the exiting direction of $(x, v)$.
    
    For case (ii), let $(z', u') = \varphi(x, v, T')$ be such that $T' = \inf \{t > 0: \varphi(x, v, s) \in U \backslash \partial M \text{ for } s \in [t, T)\}$. In other words, $(z', u')$ is the last time the geodesic touches the boundary before entering $M \backslash U$, note that $(z', u')$ may just be $(x, v)$ itself. Consider $W$ a geodesically convex neighborhood of $z'$, there exists small $\delta$ such that $\varphi(z', u', t)$ stays in $W \cap M$ for all $t \in [0, \delta]$, denote $(y, w) = \varphi(z', u', \delta)$. Pick $p \in (W \backslash M) \cap I^-(z')$, such $p$ exists because the boundary is timelike. Then the unique timelike geodesic from $p$ to $y$ intersects with $\partial M$, let $\xi$ be the direction from $p$. Using Lemma \ref{lm: transversal_local}, there exists timelike $(q, \eta)$ sufficiently close to $(p, \xi)$ such that the timelike geodesic from $(q, \eta)$ intersects with $\partial M$ at least once before leaving $W$, and all intersections are transversal. For $(q, \eta)$ sufficiently close to $(p, \xi)$ and $p$ sufficiently close to $z'$, we have $\varphi(q, \eta, \cdot)$ stays close to $\varphi(z', u', \cdot)$ and eventually enters $M \backslash U$. In particular, we can assume that $\varphi(q, \eta, t)$ stays in the interior of $U$ for $t_W(q, \eta) \leq t \leq t_U(q, \eta)$, where $t_W(q, \eta)$ is the time it leaves $W$ and $t_U(q, \eta)$ is the time it enters $M\backslash U$. Backtracking from $\varphi(q, \eta, t_W(q, \eta))$ until it hits $\partial M$ for the first time, denote that time by $t'(q, \eta)$ and direction by $(q', \eta') = \varphi(q, \eta, t'(q, \eta))$.
    
    To summarize, we have shown that $\varphi(q, \eta, t) \in U\backslash \partial M$ for $t \in (t'(q, \eta), t_U(q, \eta))$ before entering $M \backslash U$, and is transversal to the boundary at time $t'(q, \eta)$ with direction $(q', \eta')$. Moreover, $(q', \eta')$ will be sufficiently close to the light cone if $(q, \eta)$ is sufficiently close to $(z', u')$, because $\varphi(x, v,\cdot)$ is uniformly close to light cone. As a result, $(q', \eta') \in \mathcal{U}_-$, and $S^{in}(q', \eta')$ is well-defined. We may find a sequence of such $(p_j, \xi_j)$ converging to $(z', u')$, and a sequence of corresponding $S^{in}(q'_j, \eta'_j) = (y_j, w_j)$. By non-trapping of $\varphi(x, v, \cdot)$, $\varphi(q', \eta', \cdot)$ will be non-trapping as the two are close. Hence we may assume they live in a tubular neighborhood of $\varphi(x, v, \cdot)$, whose closure is thus compact. By compactness and passing down to subsequence, we may assume $(y_j, w_j) \to (y_*, w_*) \in \partial SM$. By our construction, $(y_*, w_*)$ will be on the flow of $\varphi(z', u', \cdot)$ because $(q_j, \eta_j)$ will converge to $(z', u')$. Moreover, if we denote $T_j$ as the time $\varphi(q_j, \eta_j, T_j) = (y_j, w_j)$, then $T_j > t_U(q_j, \eta_j) > \delta$. Hence the limit of $T_j$ is bounded below by $\delta$, meaning $(y_*, w_*) = \varphi(z', u', t)$ for some $t > \delta$. We emphasize that this means $(y_*, w_*) = \varphi(x, v, t)$ for some $t > \delta$, and $\delta$ can be uniformly chosen as the minimum of: half of the injective radius; and minimum times for a boundary direction to enter $M\backslash U$.

    We can finish the first step as the Riemannian setting. For any $(x, v) \in \mathcal{V}$, it falls in one of the two categories. If it is case (i), we set $S(x, v)$ to be the exiting direction and we are done. If it is case (ii), then we will find some $(y_*, w_*)$. By definition of $\mathcal{V}$, $(y_*, w_*)$ will still be in $\mathcal{V}$, and we can repeat the procedure on $(y_*, w_*)$. Note that we can guarantee $(y_*, w_*)$ is at least $\delta$ time after $(x, v)$, so the procedure will terminate in finite steps by the non-trapping assumption. In other words, we will reach case (i) in finitely many steps, and we set $S(x, v)$ to be the final exiting direction $(z, u)$. $S|_{\mathcal{V}}$ is recovered from $S^{in}|_{\mathcal{U}_-}$.

    \textbf{Step (2):} now we use $S|_{\mathcal{V}}$ to recover the travel time of lightlike geodesics. For any $(x, v) \in \mathcal{V}$, since we know $S(x, v)$, we may identify all the $\varphi(x, v, \cdot) \cap T(\tM \backslash M)$. That is, how the inextendible geodesic from $(x, v)$ travels in the exterior region. Fix some $(x, v) \in \overline{\partial_- LM}$, denote $(y, w) = S(x, v)$ and pick some $(z, u) = \varphi(y, w, t_0)$ in the exterior region. If the boundary is analytic then $t_0$ can be computed; otherwise $t_0$ may be unknown because $\varphi(y, w, \cdot)$ may have entered and exited $M$ infinitely many times before reaching $(z, u)$. Let $v(\lambda) \in \partial_-TM$ be a smooth one parameter of timelike directions converging to $v$ as $\lambda \to 0$. Again we may choose the parameterization such that $h(\lambda) = g(v(\lambda), v(\lambda))$ satisfies $h'(0) = -2$. Since $z \in \tM \backslash M$ and we can identify $\varphi(x, v(\lambda), \cdot) \cap T(\tM \backslash M)$, we may pick a smooth one parameter family of timelike directions $(z(\lambda), u(\lambda))$ in the exterior region such that:
    \begin{itemize}
        \item $(z(\lambda), u(\lambda)) \to (z, u)$ as $\lambda \to 0$;
        \item $z'(0)$ is not scaling of $u$;
        \item and $(z(\lambda), u(\lambda)) = \varphi(x, v(\lambda), t(\lambda))$ for some unknown time $t(\lambda)$.
    \end{itemize}
    See Figure \ref{fig: step 2}. By Proposition \ref{pp_traveltime},
    \[
    t(0) = \frac{-2}{h'(0)}g(z'(0), u(0)) = g(z'(0), u).
    \]
    We gather all such $t(0)$, and take the infimum, denoted by $\tau$. By construction we know $\tau \geq \tau(x, v)$ because each $(z, u)$ is after $(y, w)$. On the other hand, there exists a sequence of such $(z_j, u_j)$ in the exterior region converging to $(y, w)$, so the infimum is precisely $\tau(x, v)$.
    
    Note that if we are given $S|_{\overline{\mathcal{U}_-}}$, then we directly apply step 2 to recover the lightlike complete travel time. Thus the proof is finished.
\end{proof}

\section{Scattering rigidity}\label{sec: scattering rigidity}
We can now prove Theorem \ref{thm: interior scattering main thm} and Theorem \ref{thm: complete scattering main thm}, using the results from Section \ref{sec: determination of jet} and Section \ref{sec: interior and complete scattering information}.

\begin{proof}[Proof of Theorem \ref{thm: interior scattering main thm}]
    Since the lightlike geodesics are non-trapping, this holds for any timelike geodesics that are sufficiently close to the light cone, so $\mathcal{U}_j$ does exist. By the assumptions stated in the theorem, the non-conjugacy condition holds for at least one tangential direction in each connected component of $\partial M_1$, where $S^{in}_1$ is known for all transversal directions close to it. We can then apply Theorem \ref{thm: lorentzian determine jet} to recover the jet of $g_1$ at that point. 
    By analyticity of the boundary and the metric, this determines the jet of $g_1$ on each component and thus the entire $\partial M_1$.

    Similar to \cite{Var09}, even though the non-conjugacy condition only holds with respect to $g_1$, the jet of $g_2$ can be computed in the same way. This is because the non-conjugacy assumption is only used to find $y_{\epsilon_j}$, $y_0^*$, and a sequence of diffeomorphisms between $U^+_j$ and $\partial_- T_{y_{\epsilon_j}}M_1$ for each $j$ in the proof of Theorem \ref{thm: lorentzian determine jet}. Then, since the scattering data are the same, these diffeomorphisms still hold between $\varphi_0(U^+_j)$ and $(\varphi_0)_* (\partial_- T_{y_{\epsilon_j}}M_1)$ (recall we can view $(\varphi_0)_*$ as a map from $\partial TM_1$ to $\partial TM_2$ in boundary normal coordinates). One can then perform the same computation for the Eikonal equation with respect to $g_2$ and use the same scattering data, which shows that the jet of $g_2$ agrees with the jet of $g_1$ (through $\varphi_0$ of course). That is, for any $(x, v) \in T\partial M_1$, $k \in \mathbb{N}$, in boundary normal coordinates we have
    \[
    (\partial_n^k (g_1)_{\alpha \beta})(x) v^{\alpha} v^{\beta} = (\partial_n^k (g_2)_{\alpha \beta})(\varphi_0(x)) ((\varphi_0)_*v)^{\alpha} ((\varphi_0)_*v)^{\beta}.
    \]
    
    As a result, one can analytically extend $M_j$ to a slightly larger manifold $\tM_j$, similar to Section 2 of \cite{Var09}, also see Section \ref{sec: normal exponential extension}. The only difference being that the extension may not be uniform in size. Nevertheless, the determination of jet gives the following: there exists $U_j \subset \tM_j$ open set of $\partial M_j$ such that $\varphi_0$ can be extended to an analytic isometry on $U_j$ via normal exponential maps. We may assume $\tM_j = U_j \cup M_j$ and by abuse of notation denote $\varphi_0$ also as the isometry on $U_j$. Now we can apply Lemma \ref{lm: lorentzian recover complete from interior}. Since $\varphi_0$ clearly maps $\overline{\partial_- LM_1}$ to $\overline{\partial_- LM_2}$, we obtain from the lemma that the lightlike complete travel time data are equivalent. Finally we apply Proposition \ref{prop: complete light scattering recover metric} to obtain isometry between $M_1$ and $M_2$.
    
\end{proof}

\begin{proof}[Proof of Theorem \ref{thm: complete scattering main thm}]
    The proof is mostly the same as the proof of Theorem \ref{thm: interior scattering main thm}. For the recovery of the jet of boundary metric, we still use Theorem \ref{thm: lorentzian determine jet}. After extending to $U_j \cup M_j$, we again use Lemma \ref{lm: lorentzian recover complete from interior} to recover the lightlike complete travel time data. The rest of the proof is the same as Theorem \ref{thm: interior scattering main thm}.
\end{proof}

\appendix

\section{Transversal geodesics}\label{sec: transversal geodesics}
We include two lemmas here for finding transversal geodesics.
\begin{lm}\label{lm: transversal_local}
Let $(M,g)$ be a Riemannian or semi-Riemannian manifold and $H \subset M$ be a smooth hypersurface. 
Let $(x_0, v_0) \in SM$ and $W \subset M$ be a convex neighborhood of $x_0$ such that $H \cap W = \{x: \phi(x) = 0\}$, where $\phi$ is the smooth defining function.
Suppose $\phi(x_0) < 0$ and $\phi(y_0) > 0$ with $y_0 = \gamma_{x_0, v_0}(t_0)$ for some $t_0> 0$. 
Then there exists a small conic neighborhood $V_0 \subset S_{x_0}M$ of $v_0$ such that for almost every $v \in V_0$, the geodesic segment $\gamma_{x_0, v}((0, t_0))$ intersects with $H$ and the intersections are always transversal. 
\end{lm}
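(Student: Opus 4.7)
The plan is to reduce the statement to an application of Sard's theorem, after pulling the hypersurface $H$ back to a vector space via the exponential map at $x_0$. After possibly shrinking $W$, I may assume $\exp_{x_0}$ is a diffeomorphism from a star-shaped open neighborhood $U$ of $0 \in T_{x_0}M$ onto $W$. Set $\tilde\phi := \phi \circ \exp_{x_0}$ and $\tilde H := \tilde\phi^{-1}(0)$, a smooth hypersurface in $U$. Since $\tilde\phi(0) = \phi(x_0) < 0$, the origin does not lie on $\tilde H$.

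The geodesic $\gamma_{x_0,v}(t) = \exp_{x_0}(tv)$ meets $H$ at time $t$ precisely when $tv \in \tilde H$. Using the identity $d\tilde\phi|_{tv}(v) = d\phi(\dot\gamma_{x_0,v}(t))$ and the canonical identification $T_{tv}(T_{x_0}M) \cong T_{x_0}M$, the intersection is tangential exactly when $v \in T_{tv}\tilde H$. I would then introduce the smooth normalization map
\[
\Psi: \tilde H \cap U \longrightarrow S_{x_0}M, \qquad \Psi(p) = p/|p|_{g_0},
\]
using any fixed auxiliary Riemannian norm $|\cdot|_{g_0}$ on $T_{x_0}M$; both source and target are smooth manifolds of dimension $n-1$. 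The kernel of $d\Psi|_p$ within $T_p(T_{x_0}M)$ is the radial line through $p$, so $d\Psi|_p$ restricted to $T_p \tilde H$ fails to be injective iff $p \in T_p\tilde H$. Writing $p = tv$ with $t > 0$, this is equivalent to $v \in T_p\tilde H$, which is exactly the tangential-intersection condition identified above.

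Next I would pick the conic neighborhood $V_0$ of $v_0$ small enough that for every $v \in V_0$ the segment $\{tv : t \in [0,t_0]\}$ lies in $U$ and $\tilde\phi(t_0 v) = \phi(\gamma_{x_0,v}(t_0)) > 0$; both conditions hold by continuity of the exponential and of $\phi$, together with star-shapedness of $U$. The Intermediate Value Theorem applied to $t \mapsto \tilde\phi(tv)$ then gives, for each $v \in V_0$, some $t \in (0, t_0)$ with $tv \in \tilde H$, so $V_0 \subset \Psi(\tilde H \cap U)$.

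Finally, Sard's theorem applied to the smooth map $\Psi$ between the equidimensional manifolds $\tilde H \cap U$ and $S_{x_0}M$ yields that the set of critical values of $\Psi$ has measure zero. By the equivalence established above, any $v \in V_0$ for which some intersection of $\gamma_{x_0,v}((0,t_0))$ with $H$ is tangential is a critical value of $\Psi$, so the set of such bad $v$ has measure zero in $V_0$. For every $v$ in the (full measure) complement, all intersections are transversal; by the implicit function theorem they are then isolated in $t$, hence finite in number on the compact interval $[0,t_0]$. The only delicate point is the joint choice of $U$ and $V_0$ ensuring that $\Psi$ is defined and smooth on the full portion of $\tilde H$ relevant to geodesics with direction in $V_0$, but this is a routine continuity argument once $W$ has been shrunk appropriately.
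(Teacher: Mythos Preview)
Your proof is correct and takes essentially the same approach as the paper: both apply Sard's theorem to a projection from the intersection locus onto the direction sphere $S_{x_0}M$. The paper parametrizes that locus as $Z=\{(v,t):\phi(\gamma_{x_0,v}(t))=0\}\subset V_0\times(0,t_0)$ with projection $\pi(v,t)=v$, whereas you pull $H$ back to $\tilde H\subset T_{x_0}M$ via $\exp_{x_0}$ and use the radial map $\Psi(p)=p/|p|_{g_0}$; the change of variables $(v,t)\leftrightarrow tv$ identifies the two pictures and matches the critical-point conditions $\partial_t f=0$ and $p\in T_p\tilde H$.
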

\begin{proof}
We prove the statement using Sard's Theorem. 
With the assumption $\phi(x_0) > 0$ and $\phi(y_0) < 0$, we can find a sufficiently small conic neighborhood $V_0 \subset S_{x_0}M$ of $v_0$, such that $\phi(\gamma_{x_0, v}(t_0)) < 0$ for any $v \in V_0$. 
Then we consider the smooth function
\[
f(v, t) = \phi(\gamma_{x_0, v}(t)), \quad \text{ for $(v, t) \in V_0 \times (0, t_0)$}
\]
and its zero set
\[
Z = \{(v,t) \in V_0 \times (0, t_0): f(v, t)  = 0\}.
\]
Note that $Z$ is a smooth one-codimensional submanifold of $V_0 \times (0, t_0)$, as we have $\diff_{(t,v)} f \neq 0$. 
Let $\pi: Z \rightarrow V_0$ be the projection given by $\pi(v, t) = v$.
By our choice of $V_0$, for each $v$ there, we can find $t_v$ such that $\phi(\gamma_{x_0, v}(t_v)) = 0$, which implies $(v, t_v) \in Z$ and therefore $\pi(Z) = V_0$.

Now consider the differential $\diff \pi_{(v,t)}: T_{(v,t)} Z \rightarrow T_v V_0$. 
Let $(\delta v, \delta t) \in T_{(v,t)} Z$, which satisfies
\[
\diff_v f (\delta v) + \diff_t f(\delta_t) = 0. 
\]
On the one hand, when $\gamma_{x_0,v}$ intersects $H$ transversally at $t = t_v$, the differential $\diff \pi$ is surjective with $\mathrm{rank}(\diff \pi) = n-1$. 
Indeed, in this case, one has $\partial_t f(v, t) = \diff \phi(\dot{\gamma}_{x_0,t}(t_v)) \neq 0$. 
Then we can solve $\delta_t$ in terms of $\delta_v$ from the condition above. 
This implies the map $\diff \pi_{(v,t)}(\delta v, \delta t) = \delta_v$ is surjective and we have the desired rank. 
On the other hand, when $\gamma_{x_0,v}$ intersects $H$ tangentially at $t = t_v$, all $(\delta v, \delta t) \in T_{(v,t)} Z$ must satisfy
\[
0 = \diff_v f (\delta v) = \diff \phi \circ \diff_v \exp_{x_0}|_{t_v v} (\delta v). 
\]
With $\gamma_{x_0, v}(t_v) \in W \cap H$, the differential $\diff_v \exp_{x_0}|_{t_v v}$ is invertible and therefore $\diff \phi \circ \diff_v \exp_{x_0}|_{t_v v} \neq 0$. 
It follows that the condition above gives restriction for $\delta v$ and the map $\diff \pi_{(v,t)}(\delta v, \delta t) = \delta_v$ has rank less than $n-1$. 
If we denote the critical set of $\pi$ by
\[
T_0 = \{(t,v) \in Z: \mathrm{rank}(\diff \pi) < n-1\}, 
\]
then the arguments above proves
\[
T_0 = \{(t, v) \in Z: \partial_t f(t, v) = 0\}. 
\]
By Sard's Theorem, the image $\pi(T_0)$ has Lebesgue measure zero in $V_0 = \pi(Z)$, where $\pi(T_0)$ is the set of directions of which the geodesics intersect $H$ tangentially before $t = t_0$. 
Thus, we prove the desired result.
\end{proof}

\begin{lm}\label{lm: transversal_global}
Let $(M,g)$ be a Riemannian or semi-Riemannain manifold with smooth boundary and $(\tM, \tilde{g})$ be its extension.
Let $(x_0, v_0) \in S \tM$ with $x_0 \in \tM \setminus M$. 
Suppose the geodesic $\gamma_{x_0, v_0}$ satisfies $\gamma_{x_0, v_0}((0, \delta)) \subset M^\circ$ and then $\gamma(t_0) \in \tM \setminus M$ for some $t_0 > \delta > 0$. 
Moreover, suppose $x_0$ is not conjugate to any point in $\gamma_{x_0, v_0}([0, t_0]) \cap \partial M$.  
Then there exists a small conic neighborhood $V_0 \subset S_{x_0}M$ of $v_0$ such that for almost every $v \in V_0$, the geodesic segment $\gamma_{x_0, v}((0, t_0))$ intersects with $\partial M$ and the intersections are always transversal. 
\end{lm}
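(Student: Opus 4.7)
The plan is to localize the Sard-theoretic argument of Lemma \ref{lm: transversal_local} along the entire reference segment $\gamma_{x_0,v_0}([0,t_0])$. First I would cover the compact image by finitely many convex open sets $W_1,\dots,W_k \subset \tM$, each carrying a smooth defining function $\phi_j$ for $\partial M \cap W_j$ (discarding pieces where $\partial M \cap W_j$ is empty), and pick a partition $0 = s_0 < s_1 < \dots < s_k = t_0$ with $\gamma_{x_0,v_0}([s_{j-1},s_j]) \subset W_j$. Continuous dependence of the geodesic flow then yields a conic neighborhood $V_0 \subset S_{x_0}\tM$ of $v_0$ for which $\gamma_{x_0,v}([s_{j-1},s_j]) \subset W_j$ holds for every $v \in V_0$ and every $j$. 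Since $\gamma_{x_0,v_0}(\delta/2) \in M^\circ$ and $\gamma_{x_0,v_0}(t_0) \in \tM \setminus M$, both of which are open, shrinking $V_0$ further forces $\gamma_{x_0,v}$ to cross $\partial M$ somewhere in $(\delta/2, t_0)$, which takes care of the existence of intersections in the conclusion.

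On each piece I would then rerun the zero-set argument from Lemma \ref{lm: transversal_local}. Setting
\[
F_j(v,t) = \phi_j(\exp_{x_0}(tv)), \qquad Z_j = F_j^{-1}(0) \cap \bigl(V_0 \times (s_{j-1}, s_j)\bigr),
\]
one has $\partial_v F_j = t\, d\phi_j \circ d\exp_{x_0}|_{tv}$, which is nonzero on $Z_j$ whenever $d\exp_{x_0}|_{tv}$ is non-singular, because $d\phi_j \neq 0$ on $\partial M$. The non-conjugacy hypothesis guarantees non-singularity along the reference geodesic, and since the subset of $V_0 \times [s_{j-1},s_j]$ on which $F_j = 0$ and $d\exp_{x_0}|_{tv}$ is singular is closed while $\{v_0\}\times[s_{j-1},s_j]$ is compact, a tube-lemma argument lets me shrink $V_0$ uniformly in $j$ so that this bad set is empty. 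Each $Z_j$ is then a smooth codimension-one submanifold, and Sard applied to the projection $\pi_j \colon Z_j \to V_0$, $(v,t)\mapsto v$, produces a measure-zero set of critical values $B_j \subset V_0$; exactly as in the local lemma, the critical points of $\pi_j$ are the tangencies $\partial_t F_j = 0$ of $\gamma_{x_0,v}$ with $\partial M \cap W_j$. Replacing $V_0$ by $V_0 \setminus \bigcup_{j=1}^k B_j$ leaves a full-measure subset on which every intersection of $\gamma_{x_0,v}((0,t_0))$ with $\partial M$ is transversal.

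The main obstacle is the tube-lemma step above. The non-conjugacy hypothesis is imposed only along the reference geodesic $\gamma_{x_0,v_0}$, whereas the Sard argument needs non-singularity of $d\exp_{x_0}|_{tv}$ at every perturbed boundary-intersection pair $(v,t) \in Z_j$. Making this uniform in $v$ requires combining the closedness of the conjugate locus in $T\tM$ with the compactness of the finitely many subintervals $[s_{j-1},s_j]$ to produce a common shrunken $V_0$ that works simultaneously for all $j$. Once that uniform shrinking is in place, the global conclusion reduces to finitely many applications of the essentially local Sard argument already carried out in Lemma \ref{lm: transversal_local}.
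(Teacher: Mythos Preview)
Your approach is correct and essentially the same as the paper's: a finite cover by convex charts, non-conjugacy to ensure each $Z_j$ is a smooth hypersurface, Sard's theorem on the projection $\pi_j$, then remove the finite union of null sets. The paper differs only in bookkeeping, covering the compact set $P_0=\gamma_{x_0,v_0}([0,t_0])\cap\partial M$ rather than the whole segment. Two minor cleanups: (i) your open subintervals $(s_{j-1},s_j)$ miss the partition endpoints, so a tangential intersection at $t=s_j$ would escape the argument---use slightly overlapping intervals; (ii) it is $d_{(v,t)}F_j$, not $\partial_v F_j$ alone, that is guaranteed nonzero: the functional $d\phi_j\circ d\exp_{x_0}|_{tv}$ is nonzero on $T_{x_0}\tM$ but could vanish on the hyperplane $T_v(S_{x_0}\tM)$, in which case $\partial_t F_j\neq0$ instead, and $Z_j$ is still a smooth hypersurface.
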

\begin{proof}
We consider the set 
\[
P_0 = \gamma_{x_0, v_0}([0, t_0]) \cap \partial M.
\]
It is compact and we can pick $N$ points $p_1, \ldots, p_N \in P_0$ such that $P_0$ is covered by the convex neighborhoods of these points, i.e., 
$
P_0 \subset \cap_{j = 1}^N W_j, 
$
where $W_j$ is the convex neighborhood of $p_j$. 
By choosing sufficiently small $V_0 \subset S_{x_0} M$, we can assume $\gamma_{x_0, v}((0, t_0))$ always intersects $\partial M$ at some points in $\cap_{j = 1}^N W_j$. 

With the assumption on conjugate points, we may shrink these neighborhood $W_j$ and $V_0$ such that for any $v \in V_0$, the exponential map $\exp_{x_0}: t v \mapsto y = \exp_{x_0}(tv) \in W_j \cap \partial M$ for some $t> 0$ and $1 \leq j \leq N$ has non-degenerate differential and therefore is a local diffeomorphism. 
Now for fixed $j$, let $\phi_j$ be the smooth boundary defining function for $W_j \cap \partial M$, when $W_j$ is sufficiently small. 
We perform the same argument as in Lemma \ref{lm: transversal_local}.
More explicitly, we define
\[
f_j(v, t) = \phi_j(\gamma_{x_0, v}(t)), \quad \text{ for $(v, t) \in V_0 \times (0, t_0)$}
\]
and its zero set
\[
Z_j = \{(v,t) \in V_0 \times (0, t_0): f_j(v, t)  = 0\}.
\]
Similarly, $Z_j$ is a smooth one-codimensional submanifold of $V_0 \times (0, t_0)$, as we have $\diff_{(t,v)} f_j \neq 0$ by the  assumption. 
Note that $\pi(Z_j)$ might be empty but we always have $\pi(Z_j) \subset V_0$ by its definition. 
If $\pi(Z_j) \neq \emptyset$,
then as we denote the critical set of $\pi$ by
\[
T_{0,j} = \{(t,v) \in Z: \mathrm{rank}(\diff \pi) < n-1\}.
\]
The same arguments as in the proof of Lemma \ref{lm: transversal_local} proves
$
T_{0,j} = \{(t, v) \in Z: \partial_t f(t, v) = 0\}. 
$
By the Sard's Theorem, 
the set $\pi(T_{0,j})$ has Lebesgue measure zero in $\pi(Z_j)$ and therefore has measure zero in $V_0$. 
Thus, we consider all elements $v \in V_0 \backslash \bigcup_{j=1}^N\pi(T_{0,j})$. This proves the desired result.

\end{proof}

\section{Construction of jet with strictly convex direction}\label{sec: construction of jet with strictly convex direction}

We include here a constructive proof of recovering the normal jet of the metric on the boundary. 
We derive the one-sided Taylor expansion of the travel time, and we inductively recover the higher order jet from the expansion.
In \cite{SU05}, similar ideas are used to prove the stability of recovery.

\subsection{Setup}
Let $(M, g)$ be a Riemannian manifold with boundary or a Lorentzian manifold with boundary. Suppose the metric around some $p \in \partial M$ and $v \in T_p\partial M$ is strictly convex in the $v$ direction. For the Lorentzian setting, we assume the boundary near $p$ is either timelike or spacelike, and $v$ is either timelike or spacelike (we do not consider the case $v$ is lightlike: since strict convexity is an open condition, one can always choose a timelike or spacelike direction that is also strictly convex). Here we define strict convexity to be
\[
\sff(v,v) = g(\nabla_v \nu, v) > 0,
\]
where $\sff$ is the second fundamental form and $\nu$ is the unit outward normal vector. Since the manifold is either Riemannian or Lorentzian with timelike or spacelike boundary, we can write the metric locally in semi-geodesic normal coordinate around $p$ as
\[
g = g_{\alpha\beta}dx^\alpha dx^\beta + (dx^n)^2, \quad \alpha,\beta = 1,\dots, n-1,
\]
where the interior is $x^n > 0$. Then strict convexity translates to
\[
0 < \sff(v, v) = g(\nabla_v \nu, v) = -g(\nabla_v \partial_n, v)  = -\frac{1}{2}\partial_ng_{\alpha\beta}v^\alpha v^\beta.
\]
Note that in this case, for directions near $(p, v)$, interior and complete scattering information coincide. We will first show that if the scattering relation is known in a neighborhood of $v \in T_p\partial M$, then the jet of $g$ can be constructed from the scattering relation $S$ around $(p, v)$ and the boundary metric $g|_{T\partial M \times T\partial M}$ around $p$.


Let $v_\varepsilon \in T_p\partial M$ be a one parameter family of directions approaching $v$, the scattering relation is known for $\varepsilon \in [0, \delta)$ where $\delta \ll 1$. In the Riemannian setting we choose $v_\varepsilon = \sqrt{1-\varepsilon^2}v+\varepsilon \partial_n$; in the Lorentzian setting we choose $v_\varepsilon = \sqrt{1+\varepsilon^2}v+\varepsilon \partial_n$.
Then the travel time (exit time) $\tau(\varepsilon) := \tau(x, v_\varepsilon)$ as a function of $\varepsilon$ is smooth in $(0, \delta)$ by the Implicit Function Theorem, since the geodesics corresponding to $v_\varepsilon$, denoted by $\gamma_\varepsilon$, leave the manifold transversally by strict convexity.

To avoid confusion, we use the notation $V_\varepsilon(s) = \dot{\gamma}_\varepsilon(s)$ to mean the vector field, in which case $V_\varepsilon^j(f)$ means $V_\varepsilon$ acting on the function $f$ for $j$ times; while $v^j_\varepsilon(s) = dx^j(V_\varepsilon)(s)$ is used as the $j$-th component of the geodesic $\gamma_\varepsilon$ at time $s$. We shall also omit $s = 0$ most of the time and simply write $v^j_\varepsilon = v^j_\varepsilon(0)$. Then
\begin{align}
    x^n(\gamma_\varepsilon(s)) &= \sum_{j=0}^N V_\varepsilon^j(x^n)|_{s=0}\cdot \frac{1}{j!}s^j + O(s^{N+1})\\
    &= \varepsilon s + \frac{1}{4}\partial_ng_{\alpha\beta}v^\alpha_\varepsilon v^\beta_\varepsilon \cdot s^2 + \sum_{j=3}^N V_\varepsilon^{j-1}(v^n_\varepsilon)|_{s=0}\cdot \frac{1}{j!}s^j + O(s^{N+1}).
\end{align}
For the last equality, we used the fact that $\Gamma^n_{\alpha\beta} = -\frac{1}{2}\partial_n g_{\alpha\beta}$, and
\begin{equation}\label{eq: V_ep(v^j_ep)}
    0 = dx^\alpha(\nabla_{V_\varepsilon}V_\varepsilon) = V_\varepsilon(v^\alpha_\varepsilon)+\Gamma^\alpha_{kl}v^k_\varepsilon v^l_\varepsilon, \quad 0 = dx^n(\nabla_{V_\varepsilon}V_\varepsilon) = V_\varepsilon(v^n_\varepsilon)+\Gamma^n_{\alpha\beta}v^\alpha_\varepsilon v^\beta_\varepsilon
\end{equation}
because $\Gamma^n_{nl} = 0$ for $l = 1, \dots, n$.
In particular, by the choice of $v_\varepsilon$, we have
\[
V_\varepsilon(0) = v_\varepsilon^\alpha \partial_\alpha + \varepsilon \partial_n.
\]
Denote
\begin{equation}\label{eq: K_varepsilon}
    K_\varepsilon = \frac{1}{2}\partial_ng_{\alpha\beta}v^\alpha_\varepsilon v^\beta_\varepsilon = \frac{1}{2}\partial_ng_{\alpha\beta}v^\alpha v^\beta + O(\varepsilon^2) := K + O(\varepsilon^2).
\end{equation}
Note that by strict convexity, we know $K, K_\varepsilon \neq 0$ for $\varepsilon \ll 1$, in fact they are uniformly bounded away from 0. Then at $s = \tau(\varepsilon)$, we have
\begin{align}
    \tau(\varepsilon) &= -2K_\varepsilon^{-1}\left[ \varepsilon + \sum_{j=2}^{N-1} V_\varepsilon^j(v^n_\varepsilon)|_{s=0}\cdot \frac{1}{(j+1)!}\tau(\varepsilon)^j  \right]+ O(\tau(\varepsilon)^N)\\
    &= -2K_\varepsilon^{-1}\left[ \varepsilon + \sum_{j=2}^{N-1} V_\varepsilon^j(v^n_\varepsilon)|_{s=0}\cdot \frac{1}{(j+1)!}\tau(\varepsilon)^j  \right]+ O(\varepsilon^N)\label{eq: expansion}
\end{align}
by repeatedly substituting the left hand side to the right hand side. Note that the terms are all finite because the only term appearing in the denominator at each level is powers of $K_\varepsilon$ which is bounded away from 0.

The construction can be outlined as follow. The above computation shows that the one-sided Taylor expansion of the travel time (exit time) function is well-defined at $\varepsilon=0^+$. We then show that the $m$-th normal derivative can be recovered from the $\varepsilon^{2m-1}$ term. To do this, we need to show that the term in the $\varepsilon^{2m-1}$ level that contains $\partial_n^mg_{\alpha\beta}$ has non-zero coefficient, and that all the other terms contains only $\partial_n^kg_{\alpha\beta}$ with $k < m$. Tangential derivatives do not pose problem: if $\partial_n^k g_{\alpha\beta}$ can be recovered at $p$, then it can be recovered in a neighborhood of $p$ since strict convexity holds for a neighborhood of $(p, v) \in T\partial M$.

\begin{example}
    Consider $B_1(0,1) \subset \mathbb{R}^2$ the unit ball centered at $(0,1)$, equipped with the Euclidean metric. Consider lines of slope $k$ that passes through the origin, we may write the length of the segment inside of $B_1(0,1)$ as a function of $k$, denote by $l(k)$. Straightforward computation shows that $l(k) = \frac{2|k|}{\sqrt{k^2+1}}$. Even though it is not differentiable at $k=0$, the coefficients of the Taylor expansion at $k >0$ will converge to a finite number when $k \to 0$. That is, the one-sided Taylor expansion at 0 is well-defined.
\end{example}

\subsection{Analyze contribution of each term}
In this subsection, we analyze what terms may contribute to the coefficient of $\varepsilon^{2m-1}\partial_n^mg_{\alpha\beta}v^\alpha v^\beta$ for $m \geq 2$.

First of all, it is straightforward to see that if $j < m$, then $V^j_\varepsilon(v^n_\varepsilon)|_{s=0}$ does not contain any term related to $\partial_n^mg_{\alpha\beta}$. On the other hand, $\tau(\varepsilon)$ is $\varepsilon$ level, so if $j > 2m-1$, then $\tau(\varepsilon)^j = O(\varepsilon^{2m})$. Therefore the contribution must come from
\begin{align}
    &-2K_\varepsilon^{-1}\sum_{j=m}^{2m-1} V_\varepsilon^j(v^n_\varepsilon)|_{s=0}\cdot \frac{1}{(j+1)!}\tau(\varepsilon)^j \\
    &= -2K_\varepsilon^{-1}\tau(\varepsilon)^m\sum_{j=0}^{m-1} V_\varepsilon^{m+j}(v^n_\varepsilon)|_{s=0}\cdot \frac{1}{(m+j+1)!}\tau(\varepsilon)^j.
\end{align}

We now make the following observation. The term $V^k_\varepsilon(v^n_\varepsilon)$ contains a finite sum of the product of tangential derivatives of $v^n_\varepsilon, v^\alpha_\varepsilon, \partial^l_ng_{\alpha\beta}$, and
\begin{align*}
    &V_\varepsilon(v_\varepsilon^n) = \frac{1}{2}\partial_ng_{\alpha\beta}v_\varepsilon^\alpha v_\varepsilon^\beta, \quad V_\varepsilon(v^\alpha_\varepsilon) = -\frac{1}{2}g^{\alpha\beta}\partial_ng_{\beta\gamma}v^n_\varepsilon v^\gamma_\varepsilon - \Gamma^\alpha_{\beta\gamma}v^\beta_\varepsilon v^\gamma_\varepsilon,\\
    &V_\varepsilon(\partial_n^lg_{\gamma\beta}) = v^n_\varepsilon\partial_n^{l+1}g_{\alpha\beta} + v^\alpha_\varepsilon\partial_\alpha\partial_n^lg_{\alpha\beta}.
\end{align*}
Since $\tau(\varepsilon)^{m+j} = O(\varepsilon^{m+j})$, to obtain the desired term $\varepsilon^{2m-1}\partial_n^mg_{\alpha\beta}v^\alpha v^\beta$, the contribution from $V^{m+j}_\varepsilon(v^n_\varepsilon)|_{s=0}$ can contain at most $\varepsilon^{m-j-1}$. Meanwhile, \eqref{eq: V_ep(v^j_ep)} tells us that
\begin{equation}\label{eq: V^m+j and V^m+j-1}
    V^{m+j}_\varepsilon(v^n_\varepsilon) = \frac{1}{2}V_\varepsilon^{m+j-1}(\partial_ng_{\alpha\beta}v^\alpha_\varepsilon v^\beta_\varepsilon),
\end{equation}
so to obtain $\partial_n^m g_{\alpha\beta}$, we need $v^n_\varepsilon \partial_n$ to repeatedly apply to $\partial_ng_{\alpha\beta}$ exactly $m-1$ times. One byproduct of this action is $(v^n_\varepsilon)^{m-1}$ which is $\varepsilon^{m-1}$ when evaluated at $s = 0$, and exceeds the $\varepsilon^{m-j-1}$ allowance for $j > 0$. Fortunately, applying $V_\varepsilon$ to $(v^n_\varepsilon)^l$ gives
\begin{equation}
    V_\varepsilon(v^n_\varepsilon)^l = l(v^n_\varepsilon)^{l-1} \cdot \frac{1}{2}\partial_ng_{\alpha\beta}v^\alpha_\varepsilon v^\beta_\varepsilon = lK_\varepsilon(v^n_\varepsilon)^{l-1} = lK(v^n_\varepsilon)^{l-1} + \varepsilon^2 lK(v^n_\varepsilon)^{l-1}
\end{equation}
by \eqref{eq: K_varepsilon} and \eqref{eq: V_ep(v^j_ep)}, which effectively brings down the power of $\varepsilon$ from $l$ to $l-1$ when evaluated at $s = 0$. To bring it down from $m-1$ to $m-j-1$ we thus need at least $j$ times $V_\varepsilon$ acting on powers of $v^n_\varepsilon$. Since we only have $m+j-1$ times $V_\varepsilon$ acting on $\partial_ng_{\alpha\beta}$, we need $m-1$ of them to repeatedly act on the derivative of $\partial_ng_{\alpha\beta}$, and $j$ of them to bring down the power of $v^n_\varepsilon$. In other words, every single $V_\varepsilon$ must be used to either raise the degree of normal derivative, or bring down the power of $v^n_\varepsilon$, and can not be wasted on any other terms.

Combine the above analysis with the fact that
\begin{align}
    K_\varepsilon = K+O(\varepsilon^2), \quad v^\alpha_\varepsilon v^\beta_\varepsilon = v^\alpha v^\beta + O(\varepsilon^2),
\end{align}
in each $V_\varepsilon^{m+j}(v^n_\varepsilon)$, the term that contributes to $\varepsilon^{2m-1}\partial_n^mg_{\alpha\beta}v^\alpha v^\beta$ must only come directly from the zeroth order of the coefficient of $(v^n_\varepsilon)^{m-j-1}\partial_n^mg_{\alpha\beta}v^\alpha_\varepsilon v^\beta_\varepsilon$. We denote the zeroth order of the coefficient as $C_j$. After expanding $K_\varepsilon$, $v^\alpha_\varepsilon$ and use the fact that $\tau(\varepsilon) = -2K^{-1}\varepsilon + O(\varepsilon^2)$, the coefficient of $\varepsilon^{2m-1}\partial_n^mg_{\alpha\beta}v^\alpha v^\beta$ is
\begin{equation}\label{eq: full coeff before computation}
    \sum_{j=0}^{m-1}\frac{C_j}{(m+j+1)!}(-2K^{-1})^{m+j+1}.
\end{equation}

\subsection{Computation of the coefficient}
We compute $C_j$ in this subsection. By the analysis from the previous section, we only need to keep track of the coefficient when $V_\varepsilon$ acts on either the power of $v^n_\varepsilon$ or highest normal derivative of $g$. To make this more clear, we compute the first several terms to illustrate what we are keeping track of.

When $V_\varepsilon$ first acts on $v^n_\varepsilon$ we obtain $\frac{1}{2}\partial_n g_{\alpha\beta}v^\alpha_\varepsilon v^\beta_\varepsilon$. Then the second $V_\varepsilon$ must directly apply to $\partial_ng_{\alpha\beta}$, specifically only the $v^n_\varepsilon\partial_n$ part of $V_\varepsilon$ would matter, which gives $\frac{1}{2}v^n_\varepsilon \partial_n^2g_{\alpha\beta}v^\alpha_\varepsilon v^\beta_\varepsilon$ plus irrelevant terms. These irrelevant terms will stay irrelevant when applying all future $V_\varepsilon$ by the analysis from the previous section. The next $V_\varepsilon$ now has a choice, it can either apply to $v^n_\varepsilon$, thus acting as lowering the power; or it can apply to $\partial_n^2g_{\alpha\beta}$, thus acting as raising the degree of normal derivative. All the other terms would stay irrelevant, and we keep applying $V_\varepsilon$ to the relevant terms as such. For a detailed computation, see Section \ref{subsection: m=2}.

Thus we need to gather the different coefficients caused by lowering the power or increasing the degree of derivative in different orders. Denote $R^{j, k}$ the relevant term that contains $(v^n_\varepsilon)^j\partial_n^kg_{\alpha\beta}$, then the relevant term in $V_\varepsilon(R^{j,k})$ is thus
\begin{equation}
    jK_\varepsilon R^{j-1,k} + R^{j+1,k+1},
\end{equation}
where the first term is obtained from acting on $(v^n_\varepsilon)^j$ (lowering the power) and use \eqref{eq: V_ep(v^j_ep)}; and the second term is obtained from the $v^n_\varepsilon \partial_n$ part of $V_\varepsilon$ acting on $\partial_n^kg_{\alpha\beta}$ (raising the degree of derivative). Ignore the irrelevant term, we obtain the following relation
\begin{equation}
    V(R^{j,k}) = jKR^{j-1,k} + R^{j+1,k+1}
\end{equation}
where for simplicity we omit the lower index in $V_\varepsilon$ and use the zeroth order term of $K_\varepsilon$ from \eqref{eq: K_varepsilon} as it is the term that would contribute to the final coefficient.

\begin{lm}
    Given the following rule
    \begin{equation}
        V(R^{j,k}) = jKR^{j-1,k} + R^{j+1,k+1}.
    \end{equation}
    For $l \geq 0$, one has
    \begin{equation}
        V^l(R^{0,1}) = \sum_{d=0}^{\lfloor l/2\rfloor}\frac{l!}{(l-2d)!d!2^d}K^dR^{l-2d, 1+l-d}.
    \end{equation}
\end{lm}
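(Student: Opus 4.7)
The plan is to prove the formula by induction on $l$. The base case $l=0$ is immediate: the right-hand side reduces to the single $d=0$ term, which is $R^{0,1}$, matching $V^0(R^{0,1})$.

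For the inductive step, assuming the formula holds for $l$, I would apply $V$ term by term using the given rule $V(R^{j,k}) = jKR^{j-1,k} + R^{j+1,k+1}$, obtaining
\begin{align*}
V^{l+1}(R^{0,1}) &= \sum_{d=0}^{\lfloor l/2\rfloor}\frac{l!}{(l-2d)!\,d!\,2^d}K^d\Bigl[(l-2d)K\,R^{l-2d-1,\,1+l-d} + R^{l-2d+1,\,2+l-d}\Bigr].
\end{align*}
The goal is to reindex this into a single sum matching the claimed formula for $l+1$, namely $\sum_{d'=0}^{\lfloor(l+1)/2\rfloor}\frac{(l+1)!}{(l+1-2d')!\,d'!\,2^{d'}}K^{d'}R^{l+1-2d',\,2+l-d'}$. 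The second bracketed term contributes directly with index $d'=d$, while the first bracketed term, after simplifying $(l-2d)/(l-2d)! = 1/(l-2d-1)!$, contributes with the shift $d' = d+1$.

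The key computation is then to collect the coefficient of $R^{l+1-2d',\,2+l-d'}$ for each fixed $d'\geq 1$, which gives
\begin{align*}
\frac{l!}{(l-2d')!\,d'!\,2^{d'}} + \frac{l!}{(l+1-2d')!\,(d'-1)!\,2^{d'-1}}
&= \frac{l!\bigl[(l+1-2d') + 2d'\bigr]}{(l+1-2d')!\,d'!\,2^{d'}} = \frac{(l+1)!}{(l+1-2d')!\,d'!\,2^{d'}},
\end{align*}
which is exactly the required coefficient. For $d'=0$ only the second bracket contributes, yielding $l!/l! = 1 = (l+1)!/(l+1)!$, matching the formula. I would then verify that the range of summation is correct: when $l+1$ is even, the maximal $d'= (l+1)/2$ arises from the first bracket with $d=(l-1)/2$ only if $l$ is odd, which is the case; when $l+1$ is odd, the maximal $d' = l/2$ arises from both brackets consistently.

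The argument is a routine binomial-style induction, so I do not anticipate a serious obstacle beyond bookkeeping the index shift and verifying the boundary terms of the sum match on both sides.
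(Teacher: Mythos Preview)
Your proposal is correct and follows essentially the same approach as the paper: both argue by induction on $l$, apply the rule term by term to split into two sums, reindex the first via $d'=d+1$, and combine the resulting coefficients into $\frac{(l+1)!}{(l+1-2d')!\,d'!\,2^{d'}}$, with a separate check of the boundary term at the top of the summation range. The paper spells out the odd/even boundary case more explicitly, but the argument is the same.
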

\begin{proof}
    We prove by induction, $l=0$ is trivial. Suppose this holds for some $l$, then
    \begin{align*}
        V^{l+1}(R^{0,1}) &= \sum_{d=0}^{\lfloor l/2\rfloor}\frac{l!}{(l-2d)!d!2^d}K^dV(R^{l-2d, 1+l-d})\\
        &=\sum_{d=0}^{\lfloor l/2\rfloor}\frac{l!}{(l-2d)!d!2^d} (l-2d) K^{d+1}R^{l-2d-1, 1+l-d}\\
        &+\sum_{d=0}^{\lfloor l/2\rfloor}\frac{l!}{(l-2d)!d!2^d}K^d R^{l-2d+1, 2+l-d}\\
        &=\left. \frac{l!}{(l-2d)!d!2^d} (l-2d) K^{d+1} R^{l-2d-1, 1+l-d} \right|_{\text{if }l \text{ odd and } d=\frac{l-1}{2}}\\
        &+ \sum_{d=0}^{\lfloor l/2\rfloor-1}\frac{l!}{(l-2d)!d!2^d} (l-2d) K^{d+1} R^{l-2d-1, 1+l-d}\\
        &+R^{l+1,2+l} + \sum_{d=1}^{\lfloor l/2\rfloor}\frac{l!}{(l-2d)!d!2^d}K^d R^{l-2d+1, 2+l-d}\\
        &=\left. \frac{l!}{(\frac{l-1}{2})!2^{\frac{l-1}{2}}} K^{\frac{l-1}{2}+1} R^{0, 1+(l+1)-\frac{l+1}{2}} \right|_{\text{if }l \text{ odd}} + R^{l+1,1+(l+1)}\\
        &+\sum_{d=1}^{\lfloor l/2 \rfloor} \frac{l!}{(l-2d)!(d-1)!2^{d-1}} \left[ \frac{1}{l-2d+1} + \frac{1}{2d} \right]K^dR^{l+1-2d, 1+(l+1)-d}\\
        &=\left. \frac{(l+1)!}{(\frac{l+1}{2})!2^{\frac{l+1}{2}}} K^{\frac{l-1}{2}+1} R^{0, 1+(l+1)-\frac{l+1}{2}} \right|_{\text{if }l \text{ odd}}\\
        &+\sum_{d=0}^{\lfloor l/2 \rfloor} \frac{(l+1)!}{(l+1-2d)!d!2^d} K^dR^{l+1-2d, 1+(l+1)-d}.
    \end{align*}
    For the last equality, we used the fact that $R^{l+1,1+(l+1)}$ is precisely the $d=0$ term. When $l$ is even, we do not have the first term and $\lfloor \frac{l+1}{2}\rfloor = \lfloor \frac{l}{2} \rfloor$; and when $l$ is odd, the $d = \lfloor \frac{l+1}{2}\rfloor = \frac{l+1}{2}$ term is the first term.
\end{proof}

Since we start from $R^{0,1}$, by \eqref{eq: V^m+j and V^m+j-1} we substitute $l = m + j-1$ and $d = l+1-m = j$. Indeed for this $d$ and $l$ we have $R^{l-2d,1+l-d} = R^{m-j-1, m}$ which corresponds to $(v^n_\varepsilon)^{m-j-1}\partial^m_ng_{\alpha\beta}$ as required. From \eqref{eq: V^m+j and V^m+j-1}, we have $C_j$ is the coefficient of the $d = j$ term in $V^{m+j-1}(R^{0,1})$ multiply by $1/2$:
\begin{equation}
    C_j = \frac{1}{2}\frac{(m+j-1)!}{(m-j-1)!j!2^j}K^j.
\end{equation}
Plug into \eqref{eq: full coeff before computation}, the entire coefficient is
\begin{equation}\label{eq: full coeff}
    \frac{1}{2}(-2K^{-1})^{m+1}\sum_{j=0}^{m-1}\frac{(-1)^j}{(m+j)(m+j+1)(m-j-1)!j!} = \frac{1}{2}(-2K^{-1})^{m+1}\frac{m!}{(2m)!}
\end{equation}
by Lemma \ref{lm: series 2}.

\begin{lm}\label{lm: series 2}
For any integer  $m\ge 1$, we have 
\[
\sum_{j=0}^{m-1} \frac{(-1)^j}{(m+j)(m+j+1)(m-j-1)!j!}
    = \frac{m!}{(2m)!}.
    \]
\end{lm}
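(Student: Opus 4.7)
The plan is to reduce the alternating sum to a difference of two Beta function integrals, which collapses telescopically to the right-hand side. First I would rewrite the factorial factor as a binomial coefficient via
\[
\frac{1}{(m-j-1)!\, j!} = \frac{1}{(m-1)!}\binom{m-1}{j},
\]
and apply the partial fraction decomposition $\frac{1}{(m+j)(m+j+1)} = \frac{1}{m+j} - \frac{1}{m+j+1}$, splitting the sum into two pieces
\[
S_k := \sum_{j=0}^{m-1} (-1)^j \binom{m-1}{j} \frac{1}{m+j+k}, \qquad k \in \{0,1\}.
\]

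Next I would recognize each $S_k$ as a Beta function value. Indeed, using $\frac{1}{m+j+k} = \int_0^1 x^{m+j+k-1}\,\diff x$ and exchanging the sum and the integral produces the binomial expansion of $(1-x)^{m-1}$, giving
\[
S_k = \int_0^1 x^{m+k-1}(1-x)^{m-1}\,\diff x = B(m+k,\,m) = \frac{(m+k-1)!\,(m-1)!}{(2m+k-1)!}.
\]
Thus the original sum equals
\[
\frac{1}{(m-1)!}\bigl(S_0 - S_1\bigr) = \frac{(m-1)!}{(2m-1)!} - \frac{m!}{(2m)!}.
\]

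Finally, I would combine the two terms over the common denominator $(2m)!$ using $(m-1)! \cdot 2m = 2 \cdot m!$, so that $\frac{(m-1)!}{(2m-1)!} = \frac{2\cdot m!}{(2m)!}$, and conclude that the sum equals $\frac{2\,m!}{(2m)!} - \frac{m!}{(2m)!} = \frac{m!}{(2m)!}$, which is the desired identity. There is no real obstacle here; the only subtle point is justifying the interchange of the finite sum and the integral, which is immediate since the sum has finitely many terms. A purely combinatorial alternative would be to write both sides as hypergeometric expressions and apply the Chu--Vandermonde or Pfaff--Saalsch\"utz identity, but the Beta integral route seems the most transparent.
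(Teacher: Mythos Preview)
Your proof is correct and follows essentially the same approach as the paper: both use the partial fraction $\frac{1}{(m+j)(m+j+1)}=\frac{1}{m+j}-\frac{1}{m+j+1}$, represent the resulting terms as integrals $\int_0^1 x^{\,\cdot}\,\diff x$, interchange with the finite sum, and collapse via the binomial theorem into a Beta integral. The only cosmetic difference is that the paper keeps the two partial fractions together as $\int_0^1 t^{m+j-1}(1-t)\,\diff t$, obtaining a single Beta integral $\frac{1}{(m-1)!}\int_0^1 t^{m-1}(1-t)^m\,\diff t$, whereas you split into $S_0-S_1$ and evaluate two Beta integrals before recombining; the content is identical.
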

\begin{proof}
We denote the left-hand side by $S_m$.
First, we observe
\[
\frac{1}{(m+j)(m+j+1)} = \frac{1}{m+j} - \frac{1}{m+j+1} = \int_0^1 t^{\,m+j-1}(1-t)dt.
\]
This implies 
\begin{align*}
S_m &=\sum_{j=0}^{m-1}\frac{(-1)^j}{(m-j-1)!j!}
\int_0^1 t^{\,m+j-1}(1-t) dt\\
& =\int_0^1 t^{m-1}(1-t)
\sum_{j=0}^{m-1}\frac{(-t)^j}{(m-1-j)!j!}dt,
\end{align*}
where we swap the finite sum with the integral. 
On the other hand, using the binomial identity, we have
\begin{align*}
(1-t)^{m-1} = \sum_{j=0}^{m-1}\binom{m-1}{j} (-t)^j =  (m-1)!\sum_{j=0}^{m-1}\frac{(-t)^j}{(m-1-j)!j!}.
\end{align*}
It follows that
\[
S_m=\frac{1}{(m-1)!}\int_0^1 t^{m-1}(1-t)(1-t)^{m-1} dt
=\frac{1}{(m-1)!}\int_0^1 t^{m-1}(1-t)^m dt.
\]
A straightforward computation shows
\[
\int_0^1 t^{m-1}(1-t)^m dt =\frac{(m-1)! m!}{(2m)!}.
\]
Thus, we have 
\[
S_m=\frac{1}{(m-1)!}\cdot\frac{(m-1)!m!}{(2m)!}
=\frac{m!}{(2m)!}.
\]
\end{proof}

\subsection{Recover normal jet}
In this subsection we recover the normal jet of the metric. From the previous sections, we have computed the expansion of the travel time $\tau$ in terms of $\varepsilon$, see \eqref{eq: expansion}. We have also computed that the coefficient for $\varepsilon^{2m-1}\partial_n^mg_{\alpha\beta}v^\alpha v^\beta$ is given by \eqref{eq: full coeff} which is bounded away from 0. Now it suffices to show that all the other terms of $\varepsilon^{2m-1}$ level only contains lower order normal derivative of the metric. This can be achieved by using the same argument we used to find contributions.

Certainly contribution from $V^j_\varepsilon(v^n_\varepsilon)$ for $j < m$ is fine since it has at most $m-1$ degree of normal derivative of $g$. When $j = m$ we have contribution from $m$-th normal derivative, which is included in \eqref{eq: full coeff} already, all the other terms have lower order normal derivative. When $m < j< 2m-1$, again the contribution from $m$-th normal derivative term has already been computed; as for higher order normal derivative term, this requires at least $m+1$ times $V_\varepsilon$ for raising the degree of normal derivative, but the byproduct is $m-1$ power of $v^n_\varepsilon$, which requires at least $j-m$ times $V_\varepsilon$ to bring down to $2m-1-j$ level (because $\tau(\varepsilon)^j = O(\varepsilon^j)$). However, there are only $j < (m+1)+(j-m)$ times $V_\varepsilon$, so the only other terms they can contribute to coefficient of $\varepsilon^{2m-1}$ are normal derivatives of the metric with degree strictly smaller than $m$. Finally, $j > 2m-1$ is at least $O(\varepsilon^{2m})$.

Since $K$ can be read from the coefficient of $\varepsilon$ by \eqref{eq: expansion} and gives $\partial_ng_{\alpha\beta}v^\alpha v^\beta$, the first order normal derivative can be recovered in a neighborhood of $p$ (this is because the boundary is strictly convex with respect to all nearby $(x, w)$). Inductively, we have that $\partial_n^mg_{\alpha\beta}v^\alpha v^\beta$ can be computed from the coefficient of $\varepsilon^{2m-1}$ term for all $m \geq 2$. Thus one can similarly compute $\partial_n^mg_{\alpha\beta}(x)w^\alpha w^\beta$ for an open set of tangential directions $(x, w)$ near $(p, v)$. This is enough information to construct the symmetric two tensor $\partial^m_ng_{\alpha\beta}$ at a neighborhood of $p$.

To conclude, we have proved the following result.
\begin{thm}\label{thm: construct jet}
    Let $(M, g)$ be either a Riemannian manifold with boundary, or a Lorentzian manifold with timelike boundary. Suppose the boundary is strictly convex with respect to some $(p, v) \in \partial TM$, with $v$ being timelike in the Lorentzian case. Suppose the scattering relation $S$ around $(p, v)$ and the boundary metric $g|_{T\partial M \times T\partial M}$ around $p$ are given. Then the normal jet of $g$ at $p$ can be constructed.
\end{thm}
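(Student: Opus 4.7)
The plan is to work in boundary normal (semi-geodesic) coordinates near $p$, in which the metric takes the form $g = g_{\alpha\beta}\,dx^\alpha dx^\beta + (dx^n)^2$ and $\partial M$ is locally $\{x^n = 0\}$, with the interior in $\{x^n > 0\}$. Strict convexity in the direction $v$ translates to $K := -\tfrac{1}{2}\partial_n g_{\alpha\beta} v^\alpha v^\beta > 0$, which is an open condition on $(p,v) \in T\partial M$ and therefore persists in a neighborhood. I would then pick a smooth one-parameter family $v_\varepsilon$ of inward-pointing unit vectors limiting to $v$ as $\varepsilon \to 0^+$, e.g.\ $v_\varepsilon = \sqrt{1 \mp \varepsilon^2}\,v + \varepsilon\,\partial_n$ (timelike in the Lorentzian case). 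By strict convexity and the implicit function theorem, the exit time $\tau(\varepsilon)$ of $\gamma_\varepsilon$ is smooth for $\varepsilon \in (0,\delta)$, and $\tau(\varepsilon)$ is recoverable from the scattering data together with the boundary metric: from $S(p,v_\varepsilon)$ one reads the exit base point, and the known boundary metric lets one compute $\tau(\varepsilon)$ geometrically (or equivalently, via the tangential displacement along the boundary).

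The central computation is to derive a one-sided Taylor expansion of $\tau(\varepsilon)$ at $0^+$ by using the exit condition $x^n(\gamma_\varepsilon(\tau(\varepsilon))) = 0$. I would Taylor-expand $s \mapsto x^n(\gamma_\varepsilon(s))$ and read off its coefficients as $V_\varepsilon^j(x^n)|_{s=0}$, where $V_\varepsilon = \dot\gamma_\varepsilon$. Using the geodesic equation and the fact that $\Gamma^n_{\alpha\beta} = -\tfrac12\partial_n g_{\alpha\beta}$ while $\Gamma^n_{n k} = 0$, these coefficients are universal polynomials in the jets $\partial_n^k g_{\alpha\beta}(p)$ and the components of $v_\varepsilon$. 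Solving the exit equation recursively for $\tau(\varepsilon)$ yields a series whose $\varepsilon^k$-coefficients depend polynomially on the normal jet of $g$ at $p$, with denominators that are powers of $K$ (hence nonzero).

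The key technical claim, which is the main obstacle, is that the $m$-th normal derivative $\partial_n^m g_{\alpha\beta}(p)\,v^\alpha v^\beta$ appears in the coefficient of $\varepsilon^{2m-1}$ with a nonzero, explicit prefactor, and that every other term at that order involves only $\partial_n^k g_{\alpha\beta}$ with $k < m$. To establish this I would track how $V_\varepsilon^{m+j}(v_\varepsilon^n)$ can produce a top-order term $\partial_n^m g_{\alpha\beta}$: each iterated $V_\varepsilon$ must either raise the normal-derivative order of $g$ (via the $v_\varepsilon^n\partial_n$ piece) or lower a power of $v_\varepsilon^n$ (via $V_\varepsilon((v_\varepsilon^n)^\ell) = \ell K_\varepsilon (v_\varepsilon^n)^{\ell-1}$). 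Balancing the $\varepsilon$-order available from $\tau(\varepsilon)^{m+j}$ against the byproduct powers of $v_\varepsilon^n = \varepsilon$ forces every derivative to play one of these two roles, leaving a rigid combinatorial count with no wasted $V_\varepsilon$'s. After collecting the resulting binomial weights, the prefactor of $\varepsilon^{2m-1}\partial_n^m g_{\alpha\beta}v^\alpha v^\beta$ reduces to a sum of the form $\sum_{j=0}^{m-1}\frac{(-1)^j}{(m+j)(m+j+1)(m-j-1)!j!}$, which I would evaluate by recognizing $\frac{1}{(m+j)(m+j+1)} = \int_0^1 t^{m+j-1}(1-t)\,dt$, swapping sum and integral, and applying the binomial theorem to obtain a Beta integral. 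The resulting prefactor is a nonzero multiple of $K^{-m-1}$, hence invertible.

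With this lemma in hand, the recovery is an induction on $m$. The base case $m=1$ is immediate from $\tau(\varepsilon) = -2K^{-1}\varepsilon + O(\varepsilon^2)$, which determines $\partial_n g_{\alpha\beta}(p)v^\alpha v^\beta$. At step $m$, the already-recovered jets through order $m-1$ account for all other contributions to the $\varepsilon^{2m-1}$ coefficient, so $\partial_n^m g_{\alpha\beta}(p)v^\alpha v^\beta$ is extracted by subtraction. Since strict convexity and the hypothesis of known scattering data hold on an open neighborhood of $(p,v)$ in $T\partial M$, I can run this procedure for $(p',w)$ ranging over such a neighborhood, recovering $\partial_n^m g_{\alpha\beta}(p')w^\alpha w^\beta$ for an open cone of tangential $w$. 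Polarization on this open cone reconstructs the full symmetric tensor $\partial_n^m g_{\alpha\beta}$ in a neighborhood of $p$, completing the determination of the normal jet.
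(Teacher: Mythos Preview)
Your proposal is correct and follows essentially the same approach as the paper's Appendix~B: the boundary-normal setup, the family $v_\varepsilon$, the Taylor expansion of $x^n(\gamma_\varepsilon(s))$ via iterated $V_\varepsilon$, the combinatorial raise/lower tracking of top-order contributions, and the Beta-integral evaluation of the alternating sum $\sum_{j=0}^{m-1}\frac{(-1)^j}{(m+j)(m+j+1)(m-j-1)!\,j!}$ all match the paper exactly. Two minor points: your sign convention for $K$ is opposite to the paper's (they take $K=\tfrac12\partial_n g_{\alpha\beta}v^\alpha v^\beta<0$, so that $-2K^{-1}\varepsilon>0$), and the recovery of $\tau(\varepsilon)$ from $S$ plus the boundary metric should be justified via the first variation of length together with $\tau(0^+)=0$ rather than ``geometrically from the exit base point.''
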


\begin{remark}
    We claim that the same result holds even if $\sff$ vanishes around $p$, as long as some higher level of convexity holds. Specifically, suppose there exists $k \in \mathbb{Z}^{\geq 1}$ such that $\partial^j_ng_{\alpha\beta} v^\alpha v^\beta = 0$ for all $j< k$, but there exists some $v \in T_p\partial M$ such that $\partial^k_ng_{\alpha\beta} v^\alpha v^\beta < 0$. What we computed was the $k=1$ case, when $k > 1$, the expansion of $x^n(\gamma_\varepsilon(s))$ will be
    \[
    \varepsilon s+\sum_{j=k+1}^NV_\varepsilon^j(x^n)|_{s=0}\cdot \frac{1}{j!}s^j + O(s^{N+1}).
    \]
    Then we obtain the expansion of $\tau(\varepsilon)^k$ with the first term being $C_k^{-1}\varepsilon$. Here $C_k$ is a nonzero constant multiple of $\partial^k_ng_{\alpha\beta} v^\alpha v^\beta$, and by the assumption it is nonzero. One can analyze the higher order terms to recover the jet of $g$ in a similar way. We do not prove this here.
\end{remark}

\subsection{Example}\label{subsection: m=2}
As an example, let us compute for $m=2$. For simplicity we omit the $\varepsilon$ subindex, and we use $R^{j,k}$ to denote any term with $j$-th power of $v^n$ and at most $k$-th normal derivative of $g$.
\begin{align*}
    V(v^n) &= \frac{1}{2}\partial_ng_{\alpha\beta}v^\alpha v^\beta\\
    V^2(v^n) &= \frac{1}{2}v^n\partial_n^2g_{\alpha\beta}v^\alpha v^\beta + \frac{1}{2} \partial_\gamma\partial_ng_{\alpha\beta}v^\gamma v^\alpha v^\beta + \partial_ng_{\alpha\beta} V(v^\alpha)v^\beta\\
    &=\frac{1}{2}v^n\partial_n^2g_{\alpha\beta}v^\alpha v^\beta + R^{0,1} + R^{1,1}\\
    V^3(v^n) &= \frac{1}{2}K\partial_n^2g_{\alpha\beta}v^\alpha v^\beta + \frac{1}{2}(v^n)^2\partial_n^3g_{\alpha\beta}v^\alpha v^\beta + \frac{1}{2}v^n\partial_\gamma\partial_n^2g_{\alpha\beta}v^\gamma v^\alpha v^\beta + v^n\partial_n^2g_{\alpha\beta}V(v^\alpha)v^\beta\\
    &+\frac{1}{2}v^n\partial_\gamma \partial_n^2g_{\alpha\beta}v^\gamma v^\alpha v^\beta + \frac{1}{2}\partial_\omega\partial_\gamma \partial_ng_{\alpha\beta}v^\omega v^\gamma v^\alpha v^\beta \\
    &+ \frac{1}{2}\partial_\gamma \partial_n g_{\alpha\beta} V(v^\gamma)v^\alpha v^\beta + \partial_\gamma \partial_n g_{\alpha\beta} V(v^\alpha)v^\beta\\
    &+ v^n\partial_n^2g_{\alpha\beta}V(v^\alpha)v^\beta + \partial_\gamma \partial_n g_{\alpha\beta}v^\gamma V(v^\alpha) v^\beta +\partial_ng_{\alpha\beta}V^2(v^\alpha)v^\beta + \partial_n g_{\alpha\beta} V(v^\alpha)V(v^\beta)\\
    &= \frac{1}{2}K\partial_n^2g_{\alpha\beta}v^\alpha v^\beta + O(v^n) + R^{0, 1}.
\end{align*}
In the above computation we used the fact that $V$ apply to tangential $v^\alpha$ belongs to $R^{1,1} + R^{0,0}$. Use $R^j$ to denote the term that contains normal derivative of at most $j$, substitute the above computation into the expansion \eqref{eq: expansion}:
\begin{align*}
    \tau(\varepsilon) &= -2K_\varepsilon^{-1}\left[ \varepsilon + (\frac{1}{2}\varepsilon\partial_n^2g_{\alpha\beta}v^\alpha_\varepsilon v^\beta_\varepsilon + R^1)\cdot \frac{1}{6}(-2K^{-1}\varepsilon+O(\varepsilon^2))^2 \right. \\
    &\left. \qquad + (\frac{1}{2}K_\varepsilon \partial_n^2g_{\alpha\beta}v^\alpha_\varepsilon v^\beta_\varepsilon + O(\varepsilon) + R^1) \cdot \frac{1}{24}(-2K^{-1}\varepsilon + O(\varepsilon^2))^3\right] + O(\varepsilon^4)\\
    &= -2K^{-1}\varepsilon + R^1\varepsilon^2 \\
    &+ \left[ (-2K^{-1})^3\frac{1}{12} \partial_n^2g_{\alpha\beta}v^\alpha v^\beta -(-2K^{-1})^3\frac{1}{24}\partial_n^2g_{\alpha\beta}v^\alpha v^\beta + R^1 \right]\varepsilon^3 + O(\varepsilon^4)\\
    &= -2K^{-1}\varepsilon + R^1\varepsilon^2 + (-2K^{-1})^3\frac{1}{24}\partial_n^2g_{\alpha\beta}v^\alpha v^\beta \varepsilon^3 + R^1 \varepsilon^3 + O(\varepsilon^4).
\end{align*}
Plug $m = 2$ into \eqref{eq: full coeff}, we obtain
\[
\frac{1}{2}(-2K^{-1})^{2+1} \frac{2!}{4!} = (-2K^{-1})^3\frac{1}{24},
\]
which agrees with the explicit computation.

\begin{footnotesize}
    \bibliographystyle{abbrv}
    \bibliography{ref.bib}
\end{footnotesize}

\end{document}